\let\c@table\c@figure 
\let\ftype@table\ftype@figure 
\numberwithin{equation}{section}
\newtheorem{theorem}[equation]{Theorem}
\newtheorem{proposition}[equation]{Proposition}
\newtheorem{corollary}[equation]{Corollary}
\newtheorem{lemma}[equation]{Lemma}
\newtheorem{conjecture}[equation]{Conjecture}
\theoremstyle{definition}
\newtheorem{definition}[equation]{Definition}
\newtheorem{remark}[equation]{Remark}
\newtheorem{notation}[equation]{Notation}
\newtheorem{setting}[equation]{Setting}
\theoremstyle{remark}
\newtheorem{example}[equation]{Example}
\DeclareMathOperator{\Ann}{Ann}
\DeclareMathOperator{\Hom}{Hom}
\DeclareMathOperator{\Sym}{Sym}
\DeclareMathOperator{\codim}{codim}
\DeclareMathOperator{\HHH}{H}
\DeclareMathOperator{\modulo}{mod}
\newcommand{\Hilb}{{\ccH}ilb}
\DeclareMathOperator{\id}{id}
\newcommand{\univ}{\operatorname{univ}}
\newcommand{\PP}{\mathbb{P}}
\def\AAA{{\mathbb A}}
\def\CC{{\mathbb C}}
\def\PP{{\mathbb P}}
\def\RR{{\mathbb R}}
\def\ZZ{{\mathbb Z}}
\def\kk{{\Bbbk}}
\newcommand{\cactus}[2]{\mathfrak{K}_{#1}\left( #2 \right)}
\newcommand{\cactussch}[2]{\mathfrak{K}^{\operatorname{sch}}_{#1}\left( #2 \right)}
\newcommand{\cactusflat}[2]{\mathfrak{K}^{\operatorname{flat}}_{#1}\left( #2 \right)}
\newcommand{\ccE}{{\mathcal{E}}}
\newcommand{\ccF}{{\mathcal{F}}}
\newcommand{\ccG}{{\mathcal{G}}}
\newcommand{\ccH}{{\mathcal{H}}}
\newcommand{\ccI}{{\mathcal{I}}}
\newcommand{\ccJ}{{\mathcal{J}}}
\newcommand{\ccM}{{\mathcal{M}}}
\newcommand{\ccO}{{\mathcal{O}}}
\newcommand{\ccP}{{\mathcal{P}}}
\newcommand{\ccQ}{{\mathcal{Q}}}
\newcommand{\ccR}{{\mathcal{R}}}
\newcommand{\ccS}{{\mathcal{S}}}
\newcommand{\ccW}{{\mathcal{W}}}
\newcommand{\gotm}{\mathfrak{m}}
\newcommand{\gotn}{\mathfrak{n}}
\newcommand{\gotq}{\mathfrak{q}}
\newcommand{\gots}{\mathfrak{s}}
\newcommand{\familiar}[1][r]{familia$#1$}
\newcommand{\Familiar}[1][r]{Familia$#1$}
\newcommand{\freemodule}[2][A]{{}_{#1}#2}
\newcommand{\affinespaceof}[2][\kk]{\AAA_{#1}\left(#2\right)}
\DeclareMathOperator{\ev}{ev}
\DeclareMathOperator{\length}{length}
\DeclareMathOperator{\Proj}{Proj}
\DeclareMathOperator{\rk}{rk}
\DeclareMathOperator{\Spec}{Spec}
\DeclareMathOperator{\sat}{sat}
\DeclareMathOperator{\Minors}{Minors} 
\newcommand{\Gor}{\operatorname{Gor}}
\newcommand{\HilbGor}[2]{\Hilb_{#1}^{\Gor}{(#2)}}
\newcommand{\SheafySpec}[1]{\ccS{}\mathit{pec}_{#1} \ }
\newcommand{\SheafyProj}[1]{\ccP{}\mathit{roj}_{#1} \ }
\def\hook{\lrcorner}
\newcommand{\set}[1]{\left\{#1\right\}}
\newcommand{\fromto}[2]{#1, \dotsc, #2}
\newcommand{\setfromto}[2]{\set{\fromto{#1}{#2}}}
\newcommand{\linspan}[1]{\left\langle#1\right\rangle}
\newcommand{\rellinspan}[1]{\linspan{#1}_{\operatorname{rel}}}
\newcommand{\reduced}[1]{{#1}_{\operatorname{red}}}
\newenvironment{prf}[1][]
  {\medskip\par\noindent{\bf Proof#1. }}
  {\nopagebreak\qed\par\smallskip}
\newcommand{\noprf}{\nopagebreak\qed\par}
\newcounter{betweenenumi}
\newenvironment{interruptedenumerate}{\setcounter{betweenenumi}{0}}{}
\newenvironment{insideenumerate}{%
\begin{enumerate}
\setcounter{enumi}{\arabic{betweenenumi}}
}
{%
\setcounter{betweenenumi}{\arabic{enumi}}
\end{enumerate}}
\renewcommand{\theenumi}{(\roman{enumi})}
\newcommand{\eemail}[1]{{\href{mailto:#1}{\nolinkurl{#1}}}}
\title{Cactus scheme, catalecticant minors, and scheme theoretic equations}
\author{Jaros{\l}aw Buczy{\'n}ski
\and Hanieh Keneshlou}
\newcommand{\ann}[1][F]{\operatorname{Ann}\!\left(#1\right)}
\newcommand{\apolar}[1][F]{\operatorname{Apolar}\!\left(#1\right)}
\begin{document}

\maketitle

\begin{abstract}
The $r$-th cactus variety of a subvariety $X$ in a projective space generalizes the $r$-th secant variety of $X$ and it is defined using linear spans of finite subschemes of $X$ of degree $r$.
One of its original purposes was to study the vanishing sets of catalecticant minors.
In this article, we equip the cactus variety with a scheme structure, via ``relative linear spans'' of families of finite schemes over a potentially non-reduced base.
In this way, we are able to study the vanishing scheme of the catalecticant minors.
For a sufficiently high degree Veronese variety, we show that $r$-th cactus scheme and the zero scheme of appropriate catalecticant minors agree on a dense open subset which is the complement of the $(r-1)$-th cactus variety (or scheme).
This article is the first part of a series.
In the follow up,
as an application,
we can describe the singular locus of (in particular) secant varieties to high degree Veronese varieties in terms of singularities of the Hilbert scheme.
We will also generalize the result to high degree Veronese reembeddings of other varieties and schemes.
\end{abstract}

\medskip
{\footnotesize
\noindent\textbf{addresses:} \\
J.~Buczy\'nski, \eemail{jabu@mimuw.edu.pl},
   Institute of Mathematics of the Polish Academy of Sciences, ul. \'Sniadeckich 8, 00-656 Warsaw, Poland.\\
H.~Keneshlou, \eemail{hanieh.keneshlou@uni-konstanz.de},
   Department of Mathematics and Statistics, Universität Konstanz, 
   Universit{\"a}tsstra{\ss}e 10, 78464 Konstanz, Germany.

\noindent\textbf{keywords:}\\
cactus variety, cactus scheme, catalecticant minors, Veronese variety, secant variety, scheme theoretical equations, families of schemes, linear span.

\noindent\textbf{AMS Mathematical Subject Classification 2020:}
Primary: 14A15, 
Secondary:
13H10, 
14B25, 
14D99, 
14N07, 
14M12, 
14M17, 
15B33
}

\tableofcontents

\section{Introduction}\label{sec_intro}
Throughout this paper, we work over an algebraically closed field $\kk$ of any characteristic.

Let $V$ be a $\kk$-vector space of dimension $(n+1)$ and $\PP_\kk(V)$
  be the projective space with a homogeneous coordinate ring
  $S:=\kk[\alpha_0,\ldots,\alpha_n]= \bigoplus_{d=0}^{\infty} S^{d}V^*$.
Here $S^{d}V^*$ is the vector space of homogeneous degree $d$ polynomials in $S$.
Let $S^{(d)}V$ denote the $d$-th symmetric tensor power of $V$,
  which we can consider as degree $d$ homogeneous part
  of the divided powers ring dual to $S$.
The main results of this article concern certain natural subschemes of the projective space $\PP_{\kk}(S^{(d)}V)$,
whose homogeneous coodrinate ring is $\bigoplus_{e=0}^{\infty} S^e(S^{d}V^*)$.
In particular, we will consider homogeneous ideals in 
$\bigoplus_{e=0}^{\infty} S^e(S^{d}V^*)$ and 
the vector space $S^{d}V^* = S^1(S^{d}V^*)$ plays a double role in our story: on one hand it is a degree $d$ homogeneous part of $S$, and on the other hand, it is the degree $1$ part of $\bigoplus_{e=0}^{\infty} S^e(S^{d}V^*)$.
  
For any $0\leqslant i\leqslant d$ the $(i,d-i)$-th catalecticant matrix is a matrix whose entries are elements of $S^d V^*$ interpreted as linear forms on $\PP_{\kk}(S^{(d)}V)$. 
More precisely, considering the linear embedding
\[
  S^{(d)}V  \subset S^{(i)}V\otimes_{\kk} S^{(d-i)} V
  \simeq \Hom_{\kk}(S^{i}V^*,S^{(d-i)} V)
\]
of symmetric tensors into partially symmetric tensors, the catalecticant matrix of $F\in S^{(d)}V$ is the matrix of the linear map $F\colon S^{i}V^* \to S^{(d-i)} V$.
Thus, such matrix determines the following map of vector bundles on $\PP_{\kk}(S^d V)$:
\[
  \Upsilon^{i,d-i}\colon \ccO_{\PP_\kk(S^{(d)}V)} \otimes_{\kk}
    S^iV^*\longrightarrow  \ccO_{\PP_\kk(S^{(d)}V)}(1)\otimes_{\kk}
    S^{(d-i)}V.
\]

For each $i$ and each positive integer $r$ the $(r+1)\times (r+1)$ minors of the $(i,d-i)$-th catalecticant matrix generate a homogeneous ideal $\Minors_r(\Upsilon^{i,d-i})$ in the homogeneous coordinate ring of $\PP_{\kk}(S^{(d)}V)$.
The purpose of this article is to study the subscheme of $\PP_{\kk}(S^{(d)}V)$ defined by this ideal, which we denote by $\Upsilon^{i,d-i}_r(\PP_{\kk} V)$.

It is known that:
\begin{itemize}
 \item If $r=1$, then for any $1\leqslant i\leqslant d-1$,
   the ideal $\Minors_1(\Upsilon^{i,d-i})$ is equal to the ideal of Veronese variety
   $\nu_d(\PP_{\kk} V)$ \cite{pucci_Veronese_variety_and_catalecticant_matrices}.
  \item If $r=2$ and the base field $\kk$ is $\CC$, then for any $2\leqslant i\leqslant d-2$,
   the ideal $\Minors_2(\Upsilon^{i,d-i})$ is the ideal of the secant variety to the Veronese variety $\sigma_2(\nu_d(\PP_{\CC} V))$ \cite{raicu_3_times_3_minors}.
   
   In particular, in the two cases above this ideal does not depend on $i$.
   \item For any $r\geqslant 1$, the $r$-th secant variety
   $\sigma_r(\nu_d(\PP_{\kk} V))$ is contained in $\Upsilon^{i,d-i}_r(\PP_{\kk} V)$.
   In fact, even more, the $r$-th cactus variety
   $\cactus{r}{\nu_d(\PP_{\kk} V)}$ is contained in $\Upsilon^{i,d-i}_r(\PP_{\kk} V)$.
   \item If $r \leqslant i \leqslant d-r$ and $d\geqslant 2r$,
   then $\reduced{\Upsilon^{i,d-i}_r(\PP_{\kk} V)}$,
 the reduced subscheme
     of $\Upsilon^{i,d-i}_r(\PP_{\kk} V)$, is equal to the cactus variety
   $\cactus{r}{\nu_d(\PP_{\kk} V)}$
   (if $\kk=\CC$, then see
   \cite[Thm~1.5]{nisiabu_jabu_cactus};
   for arbitrary $\kk$, see
   \cite[Thm~7.8]{jabu_jelisiejew_finite_schemes_and_secants}).
   In this case, the ideal
   $\Minors_r(\Upsilon^{i,d-i})$
   might potentially depend on $i$,
   but the reduced vanishing set is again
   the same for any $i$ in this range.
\end{itemize}

The \emph{cactus variety} mentioned in the last two items above is defined as the following closure:
  \[
  \cactus{r}{\nu_d(\PP_{\kk} V)}= \overline{\bigcup_{\substack{R
  \subset \PP_{\kk} V \\ \length R \leqslant r}} \set{\text{scheme theoretic linear span of }\nu_d(R)}}.
  \]
Our goal in this article is to introduce a geometrically meaningful scheme structure on
  the cactus variety and compare it to the scheme structure of
  $\Upsilon^{i,d-i}_r(\PP_{\kk} V)$ for $r \leqslant i \leqslant d-r$.

\subsection{Cactus scheme and catalecticant minors}

We define the cactus scheme (Definition~\ref{def_cactus_schemes}) using the relative linear spans (Definition~\ref{def_relative_linear_span}) of not necessarily flat families of subschemes of $\nu_d(\PP_{\kk} V)$,
and we denote the resulting scheme as
$\cactussch{r}{\nu_d(\PP_{\kk} V)}$, see Section~\ref{sec_cactus_scheme} for details and their properties. One can also define another scheme $\cactusflat{r}{\nu_d(\PP_{\kk} V)}$  using spans of only flat families
(also see Definition~\ref{def_cactus_schemes}).
This latter scheme has a nice description as an image under a morphism from a projectivised vector bundle over the Hilbert scheme of $r$ points on $\PP_{\kk} V$,
see Lemma~\ref{lem_cactus_flat_is_determined_by_Hilb}.

In the theorem below and throughout the paper, for any locally closed subschemes $X$ and $Y$ in the same ambient scheme (for instance, $X, Y \subset \PP_{\kk}(S^{(d)}V)$) by $ X \setminus Y$ we mean the difference of the schemes, that is the locally closed subscheme supported on the set $\reduced{X}\setminus \reduced{Y}$ with scheme structure around $x\in \reduced{X}\setminus \reduced{Y}$ identical as the scheme structure of $X$ around $x$.
In particular, $X\setminus Y$ does not depend on the scheme structure of $Y$.

\begin{theorem}\label{thm_cactus_scheme_and_minors_for_PV}
   We have the following relations between the locally closed subschemes
   of $\PP_{\kk}(S^{(d)}V)$:
   \begin{enumerate}
    \item
    \label{item_inclusion_of_cactus_in_catalecticant}
    $\cactusflat{r}{\nu_d(\PP_{\kk} V)} \subset \cactussch{r}{\nu_d(\PP_{\kk} V)} \subset \Upsilon^{i,d-i}_r(\PP_{\kk} V)$ for any $r\geqslant 1$, $d\geqslant r$, $r-1\leqslant i\leqslant d-1$.
    \item
    \label{item_equality_of_cactus_in_catalecticant}
If $r\geqslant 1$, $d\geqslant 2r$, $r\leqslant i\leqslant d-r$,
    then
    \begin{multline*}
      \cactusflat{r}{\nu_d(\PP_{\kk} V)} \setminus \cactusflat{r-1}{\nu_d(\PP_{\kk} V)} = \cactussch{r}{\nu_d(\PP_{\kk} V)} \setminus \cactussch{r-1}{\nu_d(\PP_{\kk} V)}\\
      = \Upsilon^{i,d-i}_r(\PP_{\kk} V) \setminus \Upsilon^{i,d-i}_{r-1}(\PP_{\kk} V).
    \end{multline*}
   \end{enumerate}
\end{theorem}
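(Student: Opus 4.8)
The plan is to establish the chain of inclusions in part (i) first, and then use it to reduce part (ii) to a local statement about points lying outside the $(r-1)$-th cactus variety. For part (i), the inclusion $\cactusflat{r}{\nu_d(\PP_{\kk} V)} \subset \cactussch{r}{\nu_d(\PP_{\kk} V)}$ should be essentially formal, since every flat family is in particular a family, so the scheme obtained by taking relative linear spans of flat families is cut out by an ideal containing that of the full cactus scheme; this needs only the definitions (Definition~\ref{def_cactus_schemes}). The inclusion $\cactussch{r}{\nu_d(\PP_{\kk} V)} \subset \Upsilon^{i,d-i}_r(\PP_{\kk} V)$ is the substantive part: one takes a family $\ccZ \to T$ of subschemes of $\nu_d(\PP_{\kk} V)$ of degree $\leqslant r$ together with its relative linear span, and checks that the pullback of the catalecticant map $\Upsilon^{i,d-i}$ factors (after the twist) through a bundle of rank $\leqslant r$ on $T$; this forces the $(r+1)\times(r+1)$ minors to pull back to zero, which is exactly the condition that $T \to \PP_{\kk}(S^{(d)}V)$ lands in $\Upsilon^{i,d-i}_r(\PP_{\kk} V)$. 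Concretely, a point of $\nu_d(R)$ for $R \subset \PP_{\kk}V$ of length $\leqslant r$ corresponds to an apolar form $F$ with $\dim (S^iV^* \hook F) \leqslant \length R \leqslant r$, and this bound propagates in families via the relative linear span; since this is the defining feature of the cactus scheme, the inclusion for the whole scheme follows by the universal property.

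For part (ii), the key observation is that the three locally closed subschemes in question all have the same underlying reduced set, namely $\cactus{r}{\nu_d(\PP_{\kk} V)} \setminus \cactus{r-1}{\nu_d(\PP_{\kk} V)}$ — this is exactly the content of the known results quoted in the introduction (\cite[Thm~1.5]{nisiabu_jabu_cactus}, \cite[Thm~7.8]{jabu_jelisiejew_finite_schemes_and_secants}) together with the analogous reduced statement for $r-1$, valid since $d \geqslant 2r \geqslant 2(r-1)$ and $r-1 \leqslant i \leqslant d-(r-1)$. By part (i) we already have set-theoretic inclusions in one direction, so by this reducedness input the underlying sets all agree; it remains to compare the scheme structures. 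Since all three are locally closed subschemes of the same ambient $\PP_{\kk}(S^{(d)}V)$ with the same support, it suffices to show, for each point $x$ in this common locally closed set, that the local rings (equivalently, the ideals in a neighbourhood) coincide. By part (i) it is enough to show the reverse local inclusions of ideals near $x$, i.e.\ that near such an $x$ the ideal of catalecticant minors is contained in the ideal of the (flat) cactus scheme.

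The heart of the argument, and the step I expect to be the main obstacle, is this last local comparison: showing that near a point $x \notin \cactus{r-1}{\nu_d(\PP_{\kk} V)}$, the scheme $\Upsilon^{i,d-i}_r(\PP_{\kk} V)$ is contained in $\cactusflat{r}{\nu_d(\PP_{\kk} V)}$ as schemes, not merely as sets. The natural strategy is: given a point $[F] \in \Upsilon^{i,d-i}_r(\PP_{\kk} V)$ (possibly with nilpotents, i.e.\ an $A$-point for an Artinian or more general local $\kk$-algebra $A$) near $x$, the rank condition says the catalecticant $\Upsilon^{i,d-i}_F \colon A \otimes S^iV^* \to A \otimes S^{(d-i)}V$ has image contained in a rank-$r$ locally free summand (this uses that away from $\Upsilon^{i,d-i}_{r-1}$ the rank is exactly $r$, so the image is a subbundle and the situation is ``generically nice''); from this one must produce a flat family of length-$r$ subschemes of $\PP_{\kk}V$ over $\Spec A$ whose relative linear span is the given $A$-point. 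This is where one needs the hypothesis $r \leqslant i \leqslant d-r$ and $d \geqslant 2r$: it guarantees, via an apolarity / Macaulay-type argument, that the rank-$r$ image determines a length-$r$ ideal (the ``kernel'' $\ann[F] \cap S^i$ in the relevant degree has the right colength), flat over $A$ because the rank is locally constant equal to $r$, and that this ideal reconstructs $F$ — i.e.\ the relative linear span of the associated family passes back through $[F]$. Making this construction work over a general (non-reduced, non-Artinian) local base $A$, and checking flatness and the relative-span condition scheme-theoretically rather than just fibrewise, is the technical crux; I would expect it to rely on the structural properties of relative linear spans and of $\cactussch{}$ established in Section~\ref{sec_cactus_scheme}, plus a careful analysis of the catalecticant as a map of bundles in a neighbourhood where its rank is constant.
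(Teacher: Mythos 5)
Your plan matches the paper's proof in structure and in the key technical ideas: the first inclusion is formal; the second inclusion $\cactussch{r}\subset\Upsilon^{i,d-i}_r$ is proved by reducing the catalecticant to a matrix with essentially $r$ rows along the (relative) linear span of a \familiar{} (Theorems~\ref{thm_X_in_constant_rank_one_then_cactus_in_bounded_rank} and \ref{thm_cactus_scheme_contained_in_vanishing_of_catalecticant_minors}, which go through Theorem~\ref{thm_Nakayama_for_linear_spans}); and the heart of part~(ii) is exactly the local lifting you describe, realized in the paper as Lemma~\ref{lem_Q_in_Upsilon_lifts_to_Hilb} and Theorem~\ref{thm_Upsilon_contained_in_cactus}, which uses constancy of the catalecticant rank to make $\ann_i$ locally free, then the Gorenstein duality (Lemma~\ref{lem_duality_of_apolar_algebra}), Macaulay bound and Gotzmann persistence (Propositions~\ref{prop_classical_Macaulay_and_Gotzmann}, \ref{prop_relative_Macaulay_and_Gotzmann}), and the relative apolarity lemma (Theorem~\ref{thm_relative_apolarity_lemma}) to build a flat \familiar{} whose relative linear span passes through the given $A$-point. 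One organizational device you did not mention but the paper leans on is the reduction to finite local \emph{Gorenstein} test schemes (Proposition~\ref{prop_scheme_inclusion_equivalence}), which is what makes the duality step available; otherwise the route is the same.
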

The theorem is proven in Section~\ref{sec_cactus_of_Veronese}, with its major ingredients stated as Theorems~\ref{thm_cactus_scheme_contained_in_vanishing_of_catalecticant_minors}, \ref{thm_relative_apolarity_lemma}, and \ref{thm_Upsilon_contained_in_cactus}.
It is perhaps worth to explicitly remark about Theorem~\ref{thm_cactus_scheme_and_minors_for_PV}
that:
\begin{itemize}
 \item The nontrivial inclusions to prove are $\cactussch{r}{\nu_d(\PP_{\kk} V)} \subset \Upsilon^{i,d-i}_r(\PP_{\kk} V)$ in~\ref{item_inclusion_of_cactus_in_catalecticant}
 and $\Upsilon^{i,d-i}_r(\PP_{\kk} V) \setminus \Upsilon^{i,d-i}_{r-1}(\PP_{\kk} V) \subset   \cactusflat{r}{\nu_d(\PP_{\kk} V)} \setminus \cactusflat{r-1}{\nu_d(\PP_{\kk} V)}$ in~\ref{item_equality_of_cactus_in_catalecticant}.
 \item
 The symmetry $\Upsilon^{i,d-i} = (\Upsilon^{d-i,i})^T$ implies
 $\Upsilon^{i,d-i}_r(\PP_{\kk} V)=\Upsilon^{d-i,i}_r(\PP_{\kk} V)$
 and thus we can swap the roles of $i$ and $d-i$ in the theorem.
 In particular, the conclusions of \ref{item_inclusion_of_cactus_in_catalecticant} hold also for $1\leqslant i \leqslant d-r+1$.
 In fact, we expect that the same conclusion holds for any $d$ and $i$.
 \item
 As a consequence of~\ref{item_equality_of_cactus_in_catalecticant},
 we see that the scheme structure of $\Upsilon^{i,d-i}_r(\PP_{\kk} V) \setminus \Upsilon^{i,d-i}_{r-1}(\PP_{\kk} V)$ does not depend on the choice of $i$, within the specified range $r\leqslant i \leqslant d-r$, similarly to $r=1$, and $r=2$.
\end{itemize}

\subsection{Structure of
Hilbert scheme of finite Gorenstein schemes and catalecticant minors}

Our second result compares the scheme structure of $\Upsilon^{i,d-i}_r(\PP_{\kk} V) \setminus \Upsilon^{i,d-i}_{r-1}(\PP_{\kk} V)$
to the scheme structure of $\HilbGor{r}{\PP_{\kk} V}$, the Hilbert scheme of finite Gorenstein subschemes of $\PP_{\kk} V$ of degree $r$.
See \cite{jelisiejew_PhD}
for a comprehensive survey on
$\Hilb_r(\PP_{\kk} V)$ and
$\HilbGor{r}{\PP_{\kk} V}$,
particularly \cite[Eq.~(4.6), Thm~4.7, \S4.3, Def.~4.33]{jelisiejew_PhD}
for the definitions and initial discussions.

\begin{theorem}\label{thm_universal_linear_span_isomorphic_to_cactus_intro}
   Suppose $r\geqslant 1$, $d\geqslant 2r$,
     and $r\leqslant i\leqslant d-r$.
  Then there exists a vector bundle
  $\ccE$ over $\HilbGor{r}{\PP_{\kk} V}$
  and a closed subset $Z^{\operatorname{Gor}} \subset \PP_{\kk}(\ccE)$
    such that:
    \begin{itemize}
       \item the projection $\PP_{\kk}(\ccE) \setminus Z^{\operatorname{Gor}} \to \HilbGor{r}{\PP_{\kk} V}$ is smooth surjective, and
       \item $\PP_{\kk}(\ccE) \setminus Z^{\operatorname{Gor}}$ is isomorphic as a scheme to $\Upsilon^{i,d-i}_r(\PP_{\kk} V) \setminus \Upsilon^{i,d-i}_{r-1}(\PP_{\kk} V)$.
    \end{itemize}
\end{theorem}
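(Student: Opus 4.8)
The plan is to build the isomorphism by hand out of the relative apolarity machinery and then invoke Theorem~\ref{thm_cactus_scheme_and_minors_for_PV}\ref{item_equality_of_cactus_in_catalecticant} to translate the answer into catalecticant language. That theorem already identifies the scheme $\Upsilon^{i,d-i}_r(\PP_{\kk} V)\setminus\Upsilon^{i,d-i}_{r-1}(\PP_{\kk} V)$ with $\cactusflat{r}{\nu_d(\PP_{\kk} V)}\setminus\cactusflat{r-1}{\nu_d(\PP_{\kk} V)}$, so it is enough to produce $\ccE$, the closed set $Z^{\Gor}$, and an isomorphism of $\PP_{\kk}(\ccE)\setminus Z^{\Gor}$ with the flat cactus difference. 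First I would set up the bundle: let $\ccR\subset\HilbGor{r}{\PP_{\kk} V}\times\PP_{\kk} V$ be the universal finite Gorenstein subscheme and $\pi\colon\ccR\to\HilbGor{r}{\PP_{\kk} V}$ the first projection. Since each fibre of $\pi$ is finite of degree $r$ and is $d$-regular for $d\geqslant 2r$, cohomology and base change show that $\pi_*\ccO_{\ccR}(d)$ is locally free of rank $r$ and that $\ccO\otimes_{\kk}S^dV^*\to\pi_*\ccO_{\ccR}(d)$ is a surjection of bundles. Dualising, $\ccE:=(\pi_*\ccO_{\ccR}(d))^\vee$ is a rank-$r$ sub-bundle of the trivial bundle with fibre $S^{(d)}V$, so $\PP_{\kk}(\ccE)$ is a closed subscheme of $\HilbGor{r}{\PP_{\kk} V}\times\PP_{\kk}(S^{(d)}V)$ whose fibre over $[R]$ is the linear span $\linspan{\nu_d(R)}$. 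Composing with the second projection yields a morphism $\psi\colon\PP_{\kk}(\ccE)\to\PP_{\kk}(S^{(d)}V)$; this is the restriction to $\HilbGor{r}{\PP_{\kk} V}$ of the morphism of Lemma~\ref{lem_cactus_flat_is_determined_by_Hilb}, so its set-theoretic image is contained in $\cactusflat{r}{\nu_d(\PP_{\kk} V)}$. Put $Z^{\Gor}:=\psi^{-1}\bigl(\cactusflat{r-1}{\nu_d(\PP_{\kk} V)}\bigr)$, a closed subset, and let $\phi\colon\PP_{\kk}(\ccE)\setminus Z^{\Gor}\to\cactusflat{r}{\nu_d(\PP_{\kk} V)}\setminus\cactusflat{r-1}{\nu_d(\PP_{\kk} V)}$ be the induced morphism.

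Next I would produce the inverse morphism. On the base $B:=\Upsilon^{i,d-i}_r(\PP_{\kk} V)\setminus\Upsilon^{i,d-i}_{r-1}(\PP_{\kk} V)$ the catalecticant $\Upsilon^{i,d-i}$ has locally constant rank $r$, so the construction carried out in the proof of Theorem~\ref{thm_Upsilon_contained_in_cactus}, fed into the relative apolarity lemma (Theorem~\ref{thm_relative_apolarity_lemma}), yields a flat family over $B$ of finite Gorenstein subschemes of $\PP_{\kk} V$ of degree $r$ — the minimal apolar schemes of the tautological form — hence a morphism $\rho\colon B\to\HilbGor{r}{\PP_{\kk} V}$. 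Moreover the tautological form lies in the associated relative linear span, and it is non-degenerate with respect to its minimal apolar scheme precisely because $[F]\notin\cactusflat{r-1}{\nu_d(\PP_{\kk} V)}$, so $\rho$ lifts to a morphism $\widetilde{\rho}\colon B\to\PP_{\kk}(\ccE)\setminus Z^{\Gor}$.

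It then remains to check that $\phi$ and $\widetilde{\rho}$ are mutually inverse. Both morphisms respect the coordinate on $\PP_{\kk}(S^{(d)}V)$ (for $\widetilde{\rho}$ because the tautological form is precisely the point of the relative span being recorded), so, after composing with the locally closed immersions $B\hookrightarrow\PP_{\kk}(S^{(d)}V)$ and $\PP_{\kk}(\ccE)\setminus Z^{\Gor}\hookrightarrow\HilbGor{r}{\PP_{\kk} V}\times\PP_{\kk}(S^{(d)}V)$, the composite $\phi\circ\widetilde{\rho}$ agrees with the inclusion of $B$, hence equals $\id_B$, while the composite $\widetilde{\rho}\circ\phi$ is pinned down by its $\HilbGor{r}{\PP_{\kk} V}$-component. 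That component classifies the minimal apolar scheme of the tautological form on $\PP_{\kk}(\ccE)\setminus Z^{\Gor}$; but this form lies non-degenerately in the relative linear span of the tautological Gorenstein family $\ccR$, so the relative apolarity lemma identifies its minimal apolar scheme family with $\ccR$ itself, i.e.\ $\widetilde{\rho}\circ\phi=\id$. Combined with Theorem~\ref{thm_cactus_scheme_and_minors_for_PV}\ref{item_equality_of_cactus_in_catalecticant}, this gives the isomorphism $\PP_{\kk}(\ccE)\setminus Z^{\Gor}\xrightarrow{\ \sim\ }\Upsilon^{i,d-i}_r(\PP_{\kk} V)\setminus\Upsilon^{i,d-i}_{r-1}(\PP_{\kk} V)$. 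Finally, $\PP_{\kk}(\ccE)\setminus Z^{\Gor}$ is open in the projective bundle $\PP_{\kk}(\ccE)$, so its projection to $\HilbGor{r}{\PP_{\kk} V}$ is smooth; it is surjective because $Z^{\Gor}$ contains no fibre, as the fibre over $[R]$ is $\linspan{\nu_d(R)}\cong\PP^{r-1}$ and a general form on it has $R$ (which is Gorenstein of degree exactly $r$) as its minimal apolar scheme, hence lies outside $\cactusflat{r-1}{\nu_d(\PP_{\kk} V)}$.

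\textbf{Main obstacle.} The delicate step is the equality $\widetilde{\rho}\circ\phi=\id$: upgrading a pointwise coincidence of minimal apolar schemes to an equality of flat families over the possibly non-reduced base $\PP_{\kk}(\ccE)\setminus Z^{\Gor}$. This is exactly what the relative apolarity lemma is designed to supply, so the real work is in verifying its hypotheses in families — local freeness and constancy of ranks coming from $d\geqslant 2r$, and non-degeneracy of the tautological form with respect to $\ccR$ — rather than re-deriving apolarity; the bundle bookkeeping of the first step and the genericity input in the surjectivity argument are routine.
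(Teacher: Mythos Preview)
Your approach differs substantially from the paper's: you try to build an explicit inverse morphism $\widetilde{\rho}$, whereas the paper shows the forward map $\linspan{\nu_d(\ccR^{\univ})}\setminus Z\to\cactusflat{r}{\nu_d(\PP_{\kk} V)}\setminus\cactusflat{r-1}{\nu_d(\PP_{\kk} V)}$ is an isomorphism by checking it is bijective on $\kk$-points and on maps from $\Spec\kk[t]/(t^2)$ (items (A)--(D) in the proof of Theorem~\ref{thm_universal_linear_span_isomorphic_to_cactus}), and only afterwards restricts to the Gorenstein locus.

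The genuine gap is the construction of $\widetilde{\rho}$. You write that ``the construction carried out in the proof of Theorem~\ref{thm_Upsilon_contained_in_cactus} \dots\ yields a flat family over $B$'', but that construction is Lemma~\ref{lem_Q_in_Upsilon_lifts_to_Hilb}, which is stated and proved only for a \emph{finite local Gorenstein} base $Q$. Its two key ingredients --- the duality $\apolar_i\simeq\Hom_A(\apolar_{d-i},A)$ of Lemma~\ref{lem_duality_of_apolar_algebra} and the relative Macaulay--Gotzmann bounds of Proposition~\ref{prop_relative_Macaulay_and_Gotzmann} --- both require $A$ finite local Gorenstein; neither is available over the global, possibly non-reduced base $B=\set{\rk\Upsilon^{i,d-i}=r}$. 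So you do not yet have a morphism $\rho\colon B\to\HilbGor{r}{\PP_{\kk} V}$, only a compatible system of maps from finite local Gorenstein test schemes into $\ccH$. Gluing those into an honest morphism is precisely the content the paper supplies via items (B) and (C) (uniqueness on points and tangent vectors), so your shortcut has smuggled in the hard step. Your ``main obstacle'' paragraph mislocates the difficulty: the problem is not verifying $\widetilde{\rho}\circ\phi=\id$ once $\widetilde{\rho}$ exists, but producing $\widetilde{\rho}$ at all.

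A secondary issue: even granting $\widetilde{\rho}$, your argument that $\widetilde{\rho}\circ\phi=\id$ appeals to the relative apolarity lemma to ``identify the minimal apolar scheme family with $\ccR$ itself''. Theorem~\ref{thm_relative_apolarity_lemma} only says $\overline{F}\subset\linspan{\nu_d(\ccR)}\Longleftrightarrow I(\ccR)\subset\ann$; it gives no uniqueness statement for $\ccR$. The uniqueness you need is again what the paper extracts by hand in items (B), (C), using the intersection lemma for two decompositions and the $\partial I(\ccR)$ construction over $\kk[t]/(t^2)$.
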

We prove the theorem in Section~\ref{sec_cactus_of_Veronese}, where we also construct explicitly the vector bundle $\ccE$ and the closed subscheme  $Z^{\operatorname{Gor}}$.

We remark that the combinations of Theorems~\ref{thm_cactus_scheme_and_minors_for_PV} and~\ref{thm_universal_linear_span_isomorphic_to_cactus_intro} give also the isomorphism:
\[
 \PP_{\kk}(\ccE) \setminus Z^{\operatorname{Gor}}
 = \cactusflat{r}{v_d(\PP_{\kk}V)}\setminus \cactusflat{r-1}{v_d(\PP_{\kk}V)}
\]
for $d\geqslant 2r$.
As a consequence, this equality gives us a foundation to study singularities of cactus and secant varieties to high degree Veronese varieties and of catalecticant loci, by comparing them to the singularities of the Hilbert scheme.
This will be explicitly executed in a follow up article, where we aim to establish the following:
\begin{itemize}
 \item If  $d\geqslant 2r$, then
 $\sigma_r(\nu_d(\PP_{\kk}^n))\setminus \sigma_{r-1}(\nu_d(\PP_{\kk}^n))$ is non-singular if and only if  either $n\leqslant 3$ or $r\leqslant 5$.
 \item Moreover, if $d\geqslant 2r$ and
 $\sigma_r(\nu_d(\PP_{\kk}^n))\setminus \sigma_{r-1}(\nu_d(\PP_{\kk}^n))$ is singular, then some of the singularities occur outside of any of the $\sigma_r(\nu_d(\PP_{\kk}^{n-1}))$ loci, compare to \cite[Sect.~6]{furukawa_han_sing_of_secant_vars_of_Veronese}.
 \item For $r\leqslant i \leqslant d-r$,
       and $n$ and $r$ sufficiently large,  $\Upsilon^{i,d-i}_r(\PP_{\kk} V)$ is non-reduced.
\end{itemize}

Results in the same direction as the first two items, but using different methods, have been independently obtained by Doyoung Choi, Justin Lacini, Jinhyung Park, John Sheridan in a paper in preparation
\cite{choi_lacini_park_sheridan_sings_and_syz_of_secant_vars}.
For the third item, we rely on results in preparation by Piotr Oszer.

\subsection{Conjecture of Deery and Geramita}

Our investigations are also partially motivated by questions about the structure of the catalecticant ideals posed by Geramita in \cite[pp.~151--154]{geramita_catalecticant_varieties}.

\begin{conjecture}[Deery, Geramita]
   \label{conj_deery_geramita}
   The ideal $\Minors_r(\Upsilon^{i,d-i})$ is saturated for any $r, d, i$.
\end{conjecture}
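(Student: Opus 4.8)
\medskip
\noindent\textbf{A strategy towards Conjecture~\ref{conj_deery_geramita}.}
Write $R=\bigoplus_{e\geqslant 0}S^e(S^{d}V^*)$ for the homogeneous coordinate ring of $\PP_{\kk}(S^{(d)}V)$, let $\gotm\subset R$ be its irrelevant ideal, and set $I_r:=\Minors_r(\Upsilon^{i,d-i})$. Saturation of $I_r$ is equivalent to $H^0_{\gotm}(R/I_r)=0$, i.e.\ to the affine cone $\widehat{\Upsilon^{i,d-i}_r(\PP_{\kk} V)}$ carrying no embedded component at its vertex, or again to $\operatorname{depth}_{\gotm}(R/I_r)\geqslant 1$. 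We may assume $1\leqslant i\leqslant d-1$, since otherwise $I_r=0$ for all $r\geqslant 1$. The plan is to establish saturation first in the range $d\geqslant 2r$, $r\leqslant i\leqslant d-r$, where Theorems~\ref{thm_cactus_scheme_and_minors_for_PV} and~\ref{thm_universal_linear_span_isomorphic_to_cactus_intro} are available, and only afterwards to attempt arbitrary $d$ and $i$.

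We argue by induction on $r$. The base case $r=1$ is classical: by Pucci's theorem $I_1$ is the prime --- hence saturated --- ideal of the Veronese $\nu_d(\PP_{\kk} V)$. For the inductive step, Laplace expansion of $(r+1)\times(r+1)$ minors along a row gives the inclusion of ideals $I_r\subseteq I_{r-1}$, hence a closed immersion $\Upsilon^{i,d-i}_{r-1}\hookrightarrow\Upsilon^{i,d-i}_r$ and a short exact sequence of graded $R$-modules $0\to I_{r-1}/I_r\to R/I_r\to R/I_{r-1}\to 0$. Applying $H^0_{\gotm}$ and using the inductive hypothesis $H^0_{\gotm}(R/I_{r-1})=0$ yields an isomorphism $H^0_{\gotm}(R/I_r)\cong H^0_{\gotm}(I_{r-1}/I_r)$, so it remains to prove that the module $I_{r-1}/I_r$ has no $\gotm$-torsion. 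Its sheafification is the ideal sheaf of $\Upsilon^{i,d-i}_{r-1}$ inside $\Upsilon^{i,d-i}_r$; by Theorem~\ref{thm_universal_linear_span_isomorphic_to_cactus_intro} the restriction of that sheaf to the dense open $\Upsilon^{i,d-i}_r\setminus\Upsilon^{i,d-i}_{r-1}\cong\PP_{\kk}(\ccE)\setminus Z^{\operatorname{Gor}}$ is simply $\ccO_{\Upsilon^{i,d-i}_r}$ there, which is smooth over $\HilbGor{r}{\PP_{\kk} V}$ and hence carries only the associated primes inherited from the (possibly non-reduced) Hilbert scheme --- none of which is embedded at the cone vertex. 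Consequently every $\gotm$-torsion element of $I_{r-1}/I_r$ is supported over the boundary $\Upsilon^{i,d-i}_{r-1}$, whose reduction is $\cactus{r-1}{\nu_d(\PP_{\kk} V)}$, and the remaining task is a local analysis of the scheme $\Upsilon^{i,d-i}_r$ along $\Upsilon^{i,d-i}_{r-1}$.

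This boundary analysis --- understanding how the $(r+1)\times(r+1)$ minors behave as a rank-$r$ catalecticant degenerates to a rank-$(r-1)$ one, equivalently how the relative linear span of a degree $r$ family of finite subschemes limits as the family specializes to degree $r-1$ --- is the step we expect to be the main obstacle, and it is why one cannot simply read saturation off a free resolution: the zero locus of the catalecticant minors usually has codimension well below the generic determinantal value $\bigl(\dim S^iV^*-r\bigr)\bigl(\dim S^{(d-i)}V-r\bigr)$, so the relevant determinantal complexes are not acyclic and $R/I_r$ is, in general, neither Cohen--Macaulay nor --- for $n$ and $r$ large --- reduced. One promising reformulation is to replace $\Upsilon^{i,d-i}_r$ by the intrinsically defined cactus scheme: saturation of $I_r$ amounts, via the structure theorem, to the graded module of sections of the ideal sheaf of $\cactussch{r}{\nu_d(\PP_{\kk} V)}$ (or of $\cactusflat{r}{\nu_d(\PP_{\kk} V)}$, which by Lemma~\ref{lem_cactus_flat_is_determined_by_Hilb} is the image of a projective bundle over $\HilbGor{r}{\PP_{\kk} V}$) being generated in every degree by the catalecticant minors; in this form the Hilbert scheme is visibly present, the $i$-independence of $\Minors_r(\Upsilon^{i,d-i})$ is part of the statement, and it is natural to prove it together with a scheme-theoretic equality $\cactussch{r}{\nu_d(\PP_{\kk} V)}=\Upsilon^{i,d-i}_r(\PP_{\kk} V)$ extending Theorem~\ref{thm_cactus_scheme_and_minors_for_PV}\ref{item_inclusion_of_cactus_in_catalecticant} to all $d$ and all $1\leqslant i\leqslant d-1$.

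Finally, as preliminary input one would verify the conjecture by computer for small triples $(n,r,d)$ --- both for evidence and to pinpoint which primary components of $I_r$ could obstruct saturation --- and settle separately the regimes already accessible: those $(n,d,i,r)$ for which the catalecticant locus happens to have the generic determinantal codimension, where $R/I_r$ is Cohen--Macaulay and hence $I_r$ is unmixed, in particular saturated; and the cases $r=1$ for any $i$, respectively $r=2$ for $2\leqslant i\leqslant d-2$ over $\CC$, where $I_r$ is the prime ideal of $\nu_d(\PP_{\kk} V)$, respectively of $\sigma_2(\nu_d(\PP_{\CC} V))$, by the results recalled in the introduction.
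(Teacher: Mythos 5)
Conjecture~\ref{conj_deery_geramita} is precisely that --- an open conjecture. The paper does not prove it, and the authors state explicitly that they ``cannot resolve Conjecture~\ref{conj_deery_geramita}, or even contribute to its special cases.'' So there is no proof in the paper to compare against. Your submission is likewise not a proof, and you are candid about this: the passage you call ``the main obstacle'' is exactly where the argument stops. Concretely, the local-cohomology setup is sound --- $I_r\subset I_{r-1}$ by Laplace expansion, the short exact sequence $0\to I_{r-1}/I_r\to R/I_r\to R/I_{r-1}\to 0$, the reduction to $H^0_{\gotm}(I_{r-1}/I_r)=0$ via the inductive hypothesis --- but that last vanishing is the conjecture repackaged, not a reduction to something simpler. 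And the geometric observation you invoke gives no leverage on it: Theorem~\ref{thm_universal_linear_span_isomorphic_to_cactus_intro} describes $\Upsilon^{i,d-i}_r(\PP_{\kk}V)\setminus\Upsilon^{i,d-i}_{r-1}(\PP_{\kk}V)$ as smooth over $\HilbGor{r}{\PP_{\kk}V}$, but any $\gotm$-torsion element of $I_{r-1}/I_r$ is supported at the cone vertex, and the vertex always lies in the affine cone over $\Upsilon^{i,d-i}_{r-1}(\PP_{\kk}V)$ (as it does over any nonempty projective subvariety), so the open stratum you control contributes nothing to $H^0_{\gotm}$. The ``boundary analysis'' you defer is therefore the whole problem, not a residual step.

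It is also worth contrasting your route with the one the paper itself hints at immediately after stating the conjecture: if Conjecture~\ref{conj_linear_ideals_are_saturated} (saturation of a linear ideal sheaf is again linear) held, and if the affine versions of $\cactussch{r}{\nu_d(\PP_{\kk}V)}$ and $\Upsilon^{i,d-i}_r(\PP_{\kk}V)$ agreed without removing the lower-rank loci, then $\Minors_r(\Upsilon^{i,d-i})$ would be saturated --- but only in the range $r\leqslant i\leqslant d-r$, not for all $r,d,i$ as Conjecture~\ref{conj_deery_geramita} asserts. That conditional path goes through the relative-linear-span machinery of Section~\ref{sec_linear_spaces_and_linear_spans} rather than a local-cohomology induction on $r$, and it has the virtue of isolating Conjecture~\ref{conj_linear_ideals_are_saturated} as a clean, self-contained commutative-algebra subproblem; your induction would in principle cover all $r,d,i$ at once, but offers no concrete foothold on the inductive step. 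Your $r=1$ base case (Pucci), the Cohen--Macaulay case of expected determinantal codimension, and the $r=2$, $\kk=\CC$ case (Raicu) are genuine and agree with what the paper records in the introduction, but neither your outline nor the paper closes the gap.
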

The conjecture is stated in \cite[p.~153]{geramita_catalecticant_varieties} as a ``Theorem of Deery''. According to \cite[p.~154]{geramita_catalecticant_varieties},  ``Deery's argument is very clever, and uses an interesting way to grade the polynomial ring. This will be part of Deery's PhD thesis.''
However, upon an inquiry about the details of the arguments,
in a personal communication from January 2012, Geramita wrote the first named author:
``[Deery] decided to not do mathematics anymore (\dots)\footnote{The omitted part of the communication involves a few more Hollywood style romantic details, with Italy involved in the background, but not relevant to mathematics.} and then he essentially disappeared off the face of the earth and I have never heard from him again. Unfortunately, any proof he might have had disappeared with him as well. I spent a great deal of time trying to remember what he had said about that very result and tried to reconstruct the proof from some notes of his that I had. I was unable to do so. Consequently, that result must remain a question mark.  I have tried, off and on, over several years to reprove it -- all without success.''

At this point we cannot resolve Conjecture~\ref{conj_deery_geramita}, or even contribute to its special cases.
However, we believe that our methods used to prove Theorem~\ref{thm_cactus_scheme_and_minors_for_PV}\ref{item_equality_of_cactus_in_catalecticant} can eventually decide if $\Minors_r(\Upsilon^{i,d-i})$ is saturated for $r\leqslant i \leqslant d-r$.
For instance, if Conjecture~\ref{conj_linear_ideals_are_saturated} is true and equality of
affine versions of
$\cactussch{r}{\nu_d(\PP_{\kk} V)}$ and $ \Upsilon^{i,d-i}_r(\PP_{\kk} V)$ holds
(without removing the lower rank loci),
then $\Minors_r(\Upsilon^{i,d-i})$ is saturated.
On the other hand, if Conjecture~\ref{conj_linear_ideals_are_saturated} fails to hold for many families of schemes, then very likely some examples of non-saturated
$\Minors_r(\Upsilon^{i,d-i})$  can be constructed.

\subsection{Overview}

In Section~\ref{sec_delibarations_schemes_modules_ideals} we propose a slightly non-standard perspective on the schemes and their subschemes, particularly, subschemes of an affine or projective space.
Moreover, we discuss in detail modules over finite local Gorenstein $\kk$-algebras $A$
 and also partially generalize some standard statements about ideals
 in a polynomial ring over $\kk$ to ideals in a polynomial ring with coefficients in $A$.
Further, in Section~\ref{sec_linear_spaces_and_linear_spans} we investigate linear algebra over $A$ as above (and in fact over more general algebras).
That is, we define ``family of linear spaces'', linear span of a family of schemes, and also we define the notion of (linearly) independent family of schemes. We introduce the notion of \familiar{}, which we heavily exploit in following sections. All these notions are accompanied by numerous examples, some of which illustrate unintuitive pathologies, and underline the major differences between this case and linear algebra over a field.

Section~\ref{sec_cactus_scheme} defines and explores the notions of relative linear span and cactus scheme. It concludes with a proof that cactus schemes of Veronese varieties are contained in the zero schemes of catalecticant minors.
In Section~\ref{sec_relative_apolarity}  we discuss the apolarity in a relative, but not necessarily flat setting. It culminates with a
version of the apolarity lemma in this setting. This section relies heavily on the algebra from Section~\ref{sec_delibarations_schemes_modules_ideals}.
Finally, Section~\ref{sec_cactus_of_Veronese} summarizes the results obtained so far
 in order to prove the main results of the article.

\subsection*{Acknowledgements}
We are grateful to Weronika Buczy{\'n}ska,
  Joachim Jelisiejew,
  and Mateusz Micha{\l}ek,
 for numerous discussions and helpful comments.
We also thank Doyoung Choi, Justin Lacini, Jinhyung Park, and John Sheridan for sharing with us their work in preparation.

JB is supported by the National Science Center, Poland, projects ``Complex contact manifolds and geometry of secants'', 2017/26/E/ST1/00231
and ``Advanced problems in contact manifolds, secant varieties, and their generalisations (\mbox{APRICOTS+})''  2023/51/B/ST1/02799.
HK is supported by the Deutsche Forschungsgemeinschaft, Projektnummer 467575307.
Moreover, part of the research towards the results of this article was done during the scientific semester Algebraic Geometry with Applications to Tensors and Secants in Warsaw.
The authors are grateful to many fruitful discussions with the participants,
 and for the partial support by the Thematic Research Programme ``Tensors: geometry, complexity and quantum entanglement'', University of Warsaw, Excellence Initiative – Research University and the Simons Foundation Award No. 663281 granted to the Institute of Mathematics of the Polish Academy of Sciences for the years 2021-2023.

\section{Delibarations about schemes, modules and ideals}
\label{sec_delibarations_schemes_modules_ideals}
In this section we collect several thoughts about scheme structures.
Our main interest is in closed or locally closed subschemes of affine and projective spaces.
In \S\ref{sec_scheme_inclusion}
  we explain that it is often sufficient to focus on finite local schemes.
In \S\ref{sec_finite_subschemes_as_tensors}
  we present our reinterpretation
  of finite subschemes
  of affine or projective spaces `as tensors in $A\otimes_{\kk} W$, where the scheme is $\Spec A$, and the affine or projective spaces corresponds to the $\kk$-vector space $W$.  This method is going to be later useful in the context of relative apolarity (Section~\ref{sec_relative_apolarity}).
  Also the discussion in this \S\ref{sec_Gorenstein_duality} serves as algebraic foundations for  Section~\ref{sec_relative_apolarity}.
  Although in this subsection we never mention the word ``scheme'' explicitly, but instead we focus on the tensor space $A\otimes_{\kk} W$ as above, seeing it as a free $A$ module and we study its submodules and related duality.
  The focus is on the case of finite local Gorenstein algebras $A$, which is justified by the observations from \S\ref{sec_scheme_inclusion}.
  Finally,
  in \S\ref{sec_Macaulay_and_Gotzmann} we investigate the bounds on growth of homogenoeus ideals in $A\otimes_{\kk}\Sym W^*$, analogous to some special cases of Macaulay bounds and Gotzmann's persistence, but in a relative setting and in not necessarily flat setting.

\subsection{Scheme inclusion criterion}\label{sec_scheme_inclusion}

By the very basic definitions in set theory,
  for a fixed set $W$ and two of its subsets $A, B \subset W$,
  we have $A\subset B$ if and only if
  for all $x\in A$ we have $x \in B$.
To check if $A=B$ is equivalent to check if for all $w\in W$, we have $x\in A$ if and only if $x\in B$.
This works well for algebraic varieties, for instance, for subvarieties of affine or projective spaces: to verify if one subvariety is contained in the the other it is enough to check the inclusion on points.
However, on the level of schemes the conditions become more interesting.

\begin{definition}\label{def_enough_k_points}
   We say that a scheme $W$ \emph{has enough $\kk$-points} if $W$ is locally Noetherian and for any point $w\in W$, the closure $\overline{\set{w}}$ contains a point $w' \simeq \Spec \kk$.
\end{definition}

For instance, any scheme $W$ of finite type over $\kk$ has enough $\kk$-points.
If $w\in W$ is a $\kk$-point, then the spectrum $\Spec \ccO_{W,w}$ of the local ring and its completion have enough $\kk$-points.
On the other hand,
  $\AAA^n_{\kk(t)}= \Spec \kk(t)[\fromto{x_1}{x_n}]$
  is Noetherian, but does not have enough $\kk$-points.
Although it is not strictly necessary for all our results to restrict to such schemes, some formulations of the statements and the arguments proving some other statements are simpler when working with schemes that have enough $\kk$-points.
At the same time, the notion is general enough for our needs.
Indeed, Noetherian or locally Noetherian is a natural assumption, many statements in  algebraic geometry
will not work without this assumption.
Projective, affine, quasiprojective, quasiaffine over an algebraically closed field $\kk$ have enough $\kk$-points.
All these are natural assumptions,
and in fact these are all we use in the statements and proofs.
The proofs are easier, as for all local schemes with enough $\kk$-points we always have the splitting of the structure ring:
$\kk \to \ccO_{W,w} \to \ccO_{W,w}/\gotm = \kk$.
thus $\ccO_{W,w} = \kk \oplus \gotm$ (as a $\kk$-vector space), which is heavily exploited in some arguments,
  see for instance statement and proof of Proposition~\ref{prop_residue_and_derivative_of_tensor_corresponding_to_scheme} or
  proof of Lemma~\ref{lem_annihilator_of_a_point_from_socle}.
The splitting fails if the base field is $\RR$ and $\ccO_{W,w} = \RR[s,t]/(s^2+1, t^2)$, or if $\kk=\CC$ and $\ccO_{W,w} = \CC(t)[s]/s^2$, or analogous examples.
Dealing with these local rings is not a disaster,
but it is a nuisance and  adds a bit to some proof.
The following proposition shows that if a scheme has enough $\kk$-points, then we do not have to worry about
  these local rings.

\begin{proposition}
\label{prop_scheme_inclusion_equivalence}
   Suppose $W$ is a $\kk$-scheme which has enough $\kk$-points,
   and let $X$ and $Y$ be two closed subschemes of $W$.
   Then the following conditions are equivalent:
   \begin{enumerate}
    \item \label{item_scheme_inclusion}
          $X\subset Y$ as schemes,
    \item \label{item_all_finite_subschemes}
          for all finite (over $\kk$) local subschemes
             $R\subset W$ if $R\subset X$, then $R \subset Y$,
    \item  \label{item_Gorenstein_finite_subschemes}
          for all finite (over $\kk$) local Gorenstein subschemes
             $R\subset W$ if $R\subset X$, then $R \subset Y$.
   \end{enumerate}
\end{proposition}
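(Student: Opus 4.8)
The implications \ref{item_scheme_inclusion} $\Rightarrow$ \ref{item_all_finite_subschemes} $\Rightarrow$ \ref{item_Gorenstein_finite_subschemes} are the easy directions: if $X \subset Y$ as closed subschemes, then any $R$ factoring through $X$ automatically factors through $Y$, and Gorenstein schemes are a special case of finite local schemes, so the plan is to dispatch these in a sentence. The real content is \ref{item_Gorenstein_finite_subschemes} $\Rightarrow$ \ref{item_scheme_inclusion}, and I would prove the contrapositive: assuming $X \not\subset Y$, I will produce a finite local \emph{Gorenstein} subscheme $R \subset W$ with $R \subset X$ but $R \not\subset Y$.

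First I would reduce to an affine-local question. Let $\ccI_X, \ccI_Y \subset \ccO_W$ be the ideal sheaves. The failure $X \not\subset Y$ means $\ccI_Y \not\subset \ccI_X$, so there is a point $w \in W$ where the stalk inclusion fails: $\ccI_{Y,w} \not\subset \ccI_{X,w}$ in $\ccO_{W,w}$. Using that $W$ has enough $\kk$-points (Definition~\ref{def_enough_k_points}), I would replace $w$ by a $\kk$-point in its closure — one must check that the stalk inclusion still fails at a suitable such $\kk$-point, which follows because $V(\ccI_X) \cap V(\ccI_{Y} : \ccI_X)$ or, more directly, the closed set where $\ccI_Y \not\subset \ccI_X$ is nonempty and closed, hence contains a $\kk$-point. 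So we may assume $w$ is a $\kk$-point and work in the Noetherian local ring $(\ccO_{W,w}, \gotm)$ with residue field $\kk$, where the splitting $\ccO_{W,w} = \kk \oplus \gotm$ is available. Write $I = \ccI_{X,w}$, $J = \ccI_{Y,w}$, so $J \not\subset I$; pick $f \in J \setminus I$.

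Next comes the main step: constructing from $f$ a Gorenstein quotient of $\ccO_{W,w}$ that detects $f \notin I$. I would pass to the quotient $\bar A := \ccO_{W,w}/I$, which is the local ring of $X$ at $w$, and let $\bar f$ be the nonzero image of $f$. By Krull intersection $\bigcap_n \gotm^n \bar A = 0$, so $\bar f \notin \gotm^n \bar A$ for some $n$; thus $\bar f$ survives in the Artinian local quotient $\bar A / \gotm^n \bar A$. Now the key algebraic fact I want is: in an Artinian local $\kk$-algebra, any nonzero element can be "seen" by a Gorenstein quotient — concretely, there is an ideal $\gotq \subset \bar A/\gotm^n\bar A$ with $\bar f \notin \gotq$ and $(\bar A/\gotm^n\bar A)/\gotq$ Gorenstein. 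The standard way to get this: choose $\gotq$ maximal among ideals not containing $\bar f$; then $\bar f$ generates the socle of $(\bar A/\gotm^n\bar A)/\gotq$ (every nonzero ideal contains $\bar f$, so the socle, being the sum of minimal ideals, equals $(\bar f)$ — here I should note $\bar f$ lands in the socle because the maximality forces $\gotm \bar f \subset \gotq$), and a local Artinian ring with $1$-dimensional socle is Gorenstein. Pulling back, $\gotq$ corresponds to an ideal $I' \supset I$ of $\ccO_{W,w}$ with $A := \ccO_{W,w}/I'$ finite local Gorenstein over $\kk$, $f \notin I'$. Set $R = \Spec A$; then $R \subset X$ (since $I \subset I'$) but $R \not\subset Y$ (since $f \in J$ but $f \notin I'$), and $R$ embeds as a closed subscheme of $\Spec \ccO_{W,w}$, hence of $W$ — this last embedding uses that $R$ is finite, so its structure map to $W$ is a closed immersion onto a finite subscheme.

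The step I expect to be the main obstacle is the honest treatment of passing from a general point $w$ to a $\kk$-point: one needs the failure of the stalk inclusion to persist, and the cleanest route is probably to observe that the locus $\{w : \ccI_{Y,w} \subset \ccI_{X,w}\}$ is open (since $\ccI_X, \ccI_Y$ are coherent, or at least $X,Y$ are closed subschemes of a locally Noetherian scheme) so its complement is a nonempty closed subset, which by the "enough $\kk$-points" hypothesis contains a $\kk$-point — but checking that a $\kk$-point of the complement actually witnesses $f \notin I$ for a global-enough $f$ requires a little care with localization (one picks $f$ after specializing). The Gorenstein-quotient construction itself is classical (it is exactly the "Macaulay's inverse system / socle" argument) and I would cite or reprove it briefly rather than belabor it.
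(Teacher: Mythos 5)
Your proof is correct and follows essentially the same strategy as the paper's: both reduce to the Noetherian local ring at a $\kk$-point, pass to an Artinian quotient by choking off with a power of the maximal ideal (Krull intersection / the degree bound $k$), and then manufacture a Gorenstein quotient in which the chosen $f$ becomes the socle generator. The only cosmetic difference is in the last step: the paper builds the ideal $I_0 \subset I_1 \subset \dotsb \subset I_l$ by iteratively adjoining a socle element not proportional to the class of $f$ and letting ACC terminate the process, whereas you invoke directly an ideal $\gotq$ maximal among those not containing $\bar f$; these are two standard phrasings of the same Macaulay-type argument and lead to the same Gorenstein quotient with socle $(\bar f)$. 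Your treatment of the reduction to a $\kk$-point (the failure locus $\{\ccI_Y \not\subset \ccI_X\}$ is closed, hence meets a $\kk$-point by the ``enough $\kk$-points'' hypothesis, and one then picks $f$ in $\ccO_{W,x}$ after localizing) is if anything slightly more explicit than the paper's.
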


The equivalence of \ref{item_all_finite_subschemes} and \ref{item_Gorenstein_finite_subschemes} can seen as the statement, that every finite local scheme is a (scheme theoretic) union of its Gorenstein subschemes.

\begin{proof}
   The implications \ref{item_scheme_inclusion} $\Longrightarrow$ \ref{item_all_finite_subschemes} $\Longrightarrow$ \ref{item_Gorenstein_finite_subschemes} are immediate.
   Thus, assume that \ref{item_Gorenstein_finite_subschemes} holds.

   Let $w\in X$ be any point and let $x$ be a $\kk$-point
     in $\overline{\set{w}}$.
   Since $X$ is closed $x \in X$ and taking the Gorenstein scheme $R = \Spec \kk$ supported at $x$ we have $x\in Y$ by Condition~\ref{item_Gorenstein_finite_subschemes}.
   Replacing $W$ by an open neighborhood of $x$
     we can assume that $W$ is affine.
   Furthermore, we can assume that $W=\Spec \ccO_{W,x}$ is local (and Noetherian).

   Let $\gotm \subset \ccO_{W,x}$ be the maximal ideal,
      $I_X \subset \ccO_{W,x}$ be the ideal of $X$
      and $I_Y \subset \ccO_{W,x}$ be the ideal of $Y$.
   We have to show that $I_Y \subset I_X$.
   Suppose on the contrary, that there exists $f \in I_Y$, but $f\notin I_X$.
   Let $k$ be the maximal integer such that $f \in \gotm^{k-1} + I_X$ (so that $f \notin \gotm^{k} + I_X$).
   Note that $k$ is finite since $\ccO_{W,x}$ is Noetherian and $k>0$ since $f\in \gotm^{0} + I_X= (1)$.

   Define $I_0 = \gotm^k + I_X$ and construct a finite sequence of ideals $I_0 \subset I_1 \subset \dotsb \subset I_l$ inductively.
   Note that the quotient $\ccO_{W,x}/I_0$ is local and finite $\kk$-algebra by our assumption on (local) Noetherianity of $W$ and $\gotm^k\subset I_0$.
   Therefore the quotient is Artinian.
   Thus also $\ccO_{W,x}/I_j$ is going to be local Artinian for any $j$, since $I_0\subset I_j$.
   If the socle of $\ccO_{W,x}/I_j $ is $ \langle f + I_j\rangle$, then set $l = j$ and stop the construction.
   If not, then there is an element $g \in \ccO_{W,x}\setminus I_j$
     such that $f \notin I_j + (g)$.
   For instance,
     one can pick any $g$ that modulo $I_j$ is a non-zero element of the socle which is not proportional to the class of $f$.
   Define $I_{j+1}:=I_j + (g)$.
   The construction will eventually stop because the algebra $\ccO_{W,x}/I_0$ is Artinian.

   Then $\Spec \ccO_{W,x}/I_l$ is a local Artinian Gorenstein subscheme of $X$.
   By \ref{item_Gorenstein_finite_subschemes} it is contained in $Y$, thus $I_Y \subset I_l$, a contradiction,
      since $f\in I_Y$, but $f \notin I_l$.
\end{proof}

Let us remark that this criterion has also an interesting analogue in an invariant setting, when there is a suitable group action involved.
Since it is not directly required in the arguments presented in this article, we leave it out of this paper.

\subsection{Finite subschemes as tensors}\label{sec_finite_subschemes_as_tensors}

For every vector space $W$ over $\kk$ of dimension $N+1$, we consider the underlying affine space
$\AAA_{\kk}^{N+1}$ and projective space $\PP_{\kk}^N$, which we denote by $\affinespaceof{W}$ and $\PP_{\kk} W$ respectively.
Fix $Q=\Spec A$ an affine scheme of finite type over $\kk$, so that $A$ is a finitely generated $\kk$-algebra.
Our main interest is in the case where $Q$ is finite and thus also $A$ is a finite $\kk$-algebra.

Pick a basis $\setfromto{x_0}{x_N}$ of $W$ and let $\setfromto{\alpha_0}{\alpha_N}$ be the dual basis of $W^*$. Then
\[
 \affinespaceof{W}= \Spec \Sym W^*
   = \Spec \kk[\fromto{\alpha_0}{\alpha_N}],
\]
and we think of $W^* = (\Sym W^*)_1$ as a subset of $\Sym W^*$. Moreover, elements of $W^*$, in particular $\alpha_i$, are functionals on $W$.
By a slight abuse of notation, if
$\Theta \in \Sym W^*$ we denote
$1 \otimes \Theta\in A\otimes_{\kk} \Sym W^*$ also by $\Theta$ and if $T\in A\otimes_{\kk} W$ then by $\Theta(T)$ we mean the element of $A$ obtained by evaluating the polynomial $\Theta$ at $T$.

The set of $\kk$-points of $\affinespaceof{W}$ is in a natural bijection with $W$.
Unless potentially confusing,
we will denote the vector in $W$
and the corresponding $\kk$-point in $\affinespaceof{W}$
by the same symbol.
Now we generalize this correspondence to $A$-points.

If $F\colon Q\to \affinespaceof{W}$ is any morphism,
then $F^*\colon \Sym W^* \to A$ is the underlying morphism of algebras.
We define the \emph{corresponding tensor} $F_{\otimes} \in A\otimes_{\kk} W = \Hom_{\kk}(W^*, A)$ as $F_{\otimes} := F^*|_{W^*}$.
Explicitly in coordinates,
\[
   F_{\otimes} = F^*(\alpha_0) \otimes x_0 + \dotsb + F^*(\alpha_N) \otimes x_N.
\]

Conversely, given any tensor $T\in A\otimes_{\kk} W$, we can define the \emph{corresponding morphism} of schemes $T_{\operatorname{sch}} \colon Q \to \affinespaceof{W}$,
  such that $T_{\operatorname{sch}}^*\colon \Sym W^* \to A$ is defined as $T\in \Hom_{\kk}(W^*, A)$ on the generators $W^*$ of $\Sym W^*$ and then we extend the linear map to an algebra homomorphism.
In coordinates,
\[
 \text{if } T=a_0\otimes x_0 + \dotsb + a_N\otimes x_N \text{ for some } a_i \in A, \text{ then } T_{\operatorname{sch}}^*(\alpha_i)=a_i.
\]

Clearly, these two operations are inverses of each other:
$(F_{\otimes})_{\operatorname{sch}} =F$
and $(T_{\operatorname{sch}})_{\otimes} =T$.
Moreover, they satisfy the following natural properties:
\begin{proposition}\label{prop_basic_properties_of_tensor_of_a_scheme}
   In the notation of $Q, A, W$ as above,
   suppose $F\colon Q\to \affinespaceof{W}$ is a morphism of schemes.
   Denote by $\overline{F}\subset Q\times_{\kk}\affinespaceof{W}$ the graph of $F$.
   Then:
   \begin{enumerate}
    \item \label{item_embedding_in_terms_of_tensor}
       $F$ is a closed embedding if and only if $F_{\otimes}(W^*)$ generate $A$.
    \item \label{item_evaluating_polynomials_vs_pullback}
       If $\Theta\in \Sym W^*$,
       then $F^*\Theta = \Theta(F_{\otimes})\in A$.
    \item  \label{item_ideal_of_the_image_in_terms_of_tensor}
   The ideal of the scheme theoretic image
   $F(Q)$ is
    \[
      I =\set{\Theta \in \Sym W^*\mid \Theta (F_{\otimes})=0} \subset \Sym W^*.
    \]
    \item  \label{item_ideal_of_graph_in_terms_of_tensor}
   The ideal in $A\otimes_{\kk} \Sym W^*$ of $\overline{F}$ is equal to
   \[
      \set{\Theta \in A\otimes \Sym W^*\mid \Theta (F_{\otimes})=0} \subset \Sym W^*.
   \]
    \item \label{item_composition_in_terms_of_tensors}
    If $Q'=\Spec A'$ is another affine scheme of finite type and $G\colon Q'\to Q$ is a morphism, then
    \[
       (F\circ G)_{\otimes} = (G^*\otimes \id_{W}) (F_{\otimes}).
    \]
   \end{enumerate}
\end{proposition}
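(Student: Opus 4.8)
The plan is to prove the five items in the order \ref{item_evaluating_polynomials_vs_pullback}, \ref{item_embedding_in_terms_of_tensor}, \ref{item_ideal_of_the_image_in_terms_of_tensor}, \ref{item_ideal_of_graph_in_terms_of_tensor}, \ref{item_composition_in_terms_of_tensors}, since each is a short unwinding of the definitions of $F_{\otimes}$ and $T_{\operatorname{sch}}$ and the later items rely on the earlier ones. I would start with \ref{item_evaluating_polynomials_vs_pullback}. Both maps $\Theta \mapsto F^*\Theta$ and $\Theta \mapsto \Theta(F_{\otimes})$ are $\kk$-algebra homomorphisms $\Sym W^* \to A$: the first tautologically, and the second because, by the very definition of $\Theta(T)$, evaluation at a tensor $T = a_0\otimes x_0 + \dotsb + a_N\otimes x_N$ is precisely the algebra homomorphism $T_{\operatorname{sch}}^*$ sending $\alpha_i \mapsto a_i$. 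These two homomorphisms agree on the algebra generators $W^* \subset \Sym W^*$, since for $\alpha \in W^*$ we have $F^*\alpha = F_{\otimes}(\alpha) = \alpha(F_{\otimes})$ directly from $F_{\otimes} = F^*|_{W^*}$; hence they coincide. (Equivalently, this is the identity $F = (F_{\otimes})_{\operatorname{sch}}$ read off on coordinate rings.)

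For \ref{item_embedding_in_terms_of_tensor} I would use that a morphism of affine schemes $\Spec A \to \Spec B$ is a closed immersion precisely when $B \to A$ is surjective. The image of $F^*\colon \Sym W^* \to A$ is the $\kk$-subalgebra generated by $F^*(W^*) = F_{\otimes}(W^*)$, because $W^*$ generates $\Sym W^*$ as a $\kk$-algebra; therefore $F^*$ is surjective if and only if $F_{\otimes}(W^*)$ generates $A$ as a $\kk$-algebra. For \ref{item_ideal_of_the_image_in_terms_of_tensor}, the scheme theoretic image of the affine (hence quasi-compact) morphism $F\colon \Spec A \to \Spec \Sym W^*$ is the closed subscheme cut out by $\ker F^*$, and by \ref{item_evaluating_polynomials_vs_pullback} this kernel equals $\set{\Theta \in \Sym W^* \mid \Theta(F_{\otimes}) = 0}$. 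For \ref{item_ideal_of_graph_in_terms_of_tensor}, the graph $\overline{F}$ is the image of the section $(\id_Q, F)\colon Q \to Q\times_{\kk}\affinespaceof{W}$; this is a closed immersion because $\affinespaceof{W}$ is separated over $\kk$, and the corresponding surjection of coordinate rings $A\otimes_{\kk}\Sym W^* \to A$ sends $a\otimes \Theta \mapsto a\cdot F^*\Theta = a\cdot \Theta(F_{\otimes})$, i.e. it is the $A$-linear extension of evaluation at $F_{\otimes}$. Hence its kernel is exactly $\set{\Theta \in A\otimes_{\kk}\Sym W^* \mid \Theta(F_{\otimes}) = 0}$, which is the asserted ideal.

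Finally, for \ref{item_composition_in_terms_of_tensors}, functoriality of pullback gives $(F\circ G)^* = G^*\circ F^*$, so restricting to $W^*$ yields $(F\circ G)_{\otimes} = G^*\circ F_{\otimes}$ as an element of $\Hom_{\kk}(W^*, A')$. Under the canonical identification $A'\otimes_{\kk} W = \Hom_{\kk}(W^*, A')$, the composite $G^*\circ F_{\otimes}$ corresponds to $(G^*\otimes \id_W)(F_{\otimes})$; concretely, if $F_{\otimes} = \sum_i a_i\otimes x_i$ then both sides equal $\sum_i G^*(a_i)\otimes x_i$, so this step is pure bookkeeping.

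I do not anticipate a genuine obstacle: every item is a translation exercise between the language of ring maps and the language of tensors. The one place deserving a sentence of care is \ref{item_ideal_of_graph_in_terms_of_tensor}, where one must match the ring homomorphism defining the graph with the $A$-linear extension of the evaluation pairing $\Theta \mapsto \Theta(F_{\otimes})$ to $A\otimes_{\kk}\Sym W^*$, and must invoke separatedness of $\affinespaceof{W}$ (equivalently, that a section of an affine morphism is a closed immersion); for \ref{item_ideal_of_the_image_in_terms_of_tensor} one similarly relies on the standard fact that the scheme theoretic image of $\Spec A\to\Spec B$ is $V(\ker(B\to A))$.
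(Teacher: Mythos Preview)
Your proposal is correct and follows essentially the same route as the paper: both unwind the definition of $F_{\otimes}$ and of evaluation at $F_{\otimes}$ to reduce each item to a basic fact about ring maps, with \ref{item_ideal_of_the_image_in_terms_of_tensor} and \ref{item_ideal_of_graph_in_terms_of_tensor} read off from the kernel of $F^*$ (respectively its $A$-linear extension). The only cosmetic difference is that for \ref{item_ideal_of_graph_in_terms_of_tensor} the paper works with the explicit generators $\alpha_i - \alpha_i(F_{\otimes})$ of the graph ideal and reduces $\Theta$ modulo them, whereas you identify the graph ideal directly as the kernel of $a\otimes\Theta \mapsto a\cdot\Theta(F_{\otimes})$; these are the same computation.
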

\begin{prf}
   All these items are straightforward consequences of definitions. We provide the details for reader's convenience.

   To prove \ref{item_embedding_in_terms_of_tensor}
   note that $F\colon Q\to \affinespaceof{W}$ is a closed embedding if and only if the ring map is surjective, that is $F^*(\Sym W^*) =A$.
   The latter equality is equivalent to the claim that for any set of generators of $\kk$-algebra $\Sym W^*$, the set of images of these generators generates $A$. In particular, the embedding assumption is equivalent to $F_{\otimes}(W^*)$ generating $A$, as claimed.

   For
   Item~\ref{item_evaluating_polynomials_vs_pullback}
   we express $\Theta = \sum_{J} k_{J} \alpha^J$ in coordinates
   (here $J$ is a multiindex, $k_J\in \kk$, and only finitely many coefficients $k_J$
    are non-zero):
   \begin{multline*}
     \Theta(F_{\otimes}) =
     \Theta\left(\textstyle{\sum_{i=0}^{N}} F^*\alpha_i\otimes x_i\right)
      = \textstyle{\sum_{J}} k_{J} \left(\alpha^J(\textstyle{\sum_{i=0}^{N}} F^*\alpha_i\otimes x_i)\right)\\
      = \textstyle{\sum_{J} k_{J}} \left(\textstyle{\prod_{i\in J}}  F^*\alpha_i\right)
      = F^*\Theta.
   \end{multline*}
   Now  \ref{item_ideal_of_the_image_in_terms_of_tensor} becomes a direct consequence of \ref{item_evaluating_polynomials_vs_pullback}, since $I = \ker F^*$.

   To prove
   \ref{item_ideal_of_graph_in_terms_of_tensor},
   let $\overline{I}$
   denote the ideal of the graph.
   By the definition of a graph, $\overline{I}$~is
     generated by $\alpha_i- F^*(\alpha_i)$.
   Since $F^*(\alpha_i) = \alpha_i(F_{\otimes})$,
   the generators can be written as
    $\set{\alpha_i - \alpha_i(F_{\otimes}) \mid i\in \setfromto{0}{N}}$.
   Now, if $\Theta \in A\otimes_{\kk}\Sym W^*$, then using the generators we replace any variable $\alpha_i$
   by $\alpha_i(F_{\otimes})$,
     to see that $\Theta \equiv \Theta(F_{\otimes}) \mod \overline{I}$.
   Since $\Theta(F_{\otimes})\in A$, and $A\otimes_{\kk}\Sym W^* /\overline{I} \simeq A$,
   it follows that $\Theta \in \overline{I}$ if and only if
   $\Theta \equiv 0 \mod \overline{I}$ if and only if $\Theta(F_{\otimes})= 0$.

   Finally, to see that \ref{item_composition_in_terms_of_tensors} holds
   note that
   \begin{multline*}
     (F\circ G)_{\otimes} =
       \textstyle{\sum_{i=0}^N} (G^*\circ F^* \alpha_i) \otimes x_i
       =
       \textstyle{\sum_{i=0}^N} (G^*\otimes \id_W) (F^* \alpha_i \otimes x_i)\\
       =
       (G^*\otimes \id_W) \left(\textstyle{\sum_{i=0}^N}  F^* \alpha_i \otimes x_i\right)
       =
       (G^*\otimes \id_W) (F_{\otimes}).
   \end{multline*}
\end{prf}

For a while we restrict our attention to finite schemes $Q$.
We commence by presenting three simple examples.

\begin{example}
   Suppose $Q=\kk[t]/t^2$.
   Any map $F\colon Q\to \affinespaceof{W}$ is determined uniquely by two vectors $v, v'\in W$:
   $F_{\otimes}= 1 \otimes v  + t \otimes v'$, which we will write as $v+t v'$ for brevity.
   The unique point of support of the image of
   $F$ is $v\in \affinespaceof{W}$.
   $F$ is an embedding if and only if $v'\ne 0$.
   These two observations are generalized in
 Proposition~\ref{prop_residue_and_derivative_of_tensor_corresponding_to_scheme}\ref{item_support_using_residue} and \ref{item_embedding_using_derivative}.
   For two morphisms $F, G \colon Q\to \affinespaceof{W}$ with $F_{\otimes}= v  + t v'$
   and  $G_{\otimes}= w  + t w'$
   their images $F(Q)$ and $G(Q)$ are equal if and only if $v=w$ and $v' = \lambda w'$ for some $\lambda \in \kk\setminus \set{0}$.
\end{example}

\begin{example}
   Suppose $Q_k=\kk[t]/t^k$.
   Any map $F\colon Q_k\to \affinespaceof{W}$ is determined uniquely by $k$ vectors $v, v', \dotsc, v^{(k-1)}\in W$:
   $F_{\otimes}=  v  + t v' + \dotsb + t^{k-1}v^{(k-1)}$.
   The unique point of support of
   $F$ is $v\in \affinespaceof{W}$.
   $F$ is an embedding if and only if $v'\ne 0$
    (again, compare to
    Proposition~\ref{prop_residue_and_derivative_of_tensor_corresponding_to_scheme}).
   On the other hand, if $v'=0$,
     then the map $F$
     factorizes through $Q_{k-1}$, that is $Q_k\to Q_{k-1}\stackrel{G}{\to} \affinespaceof{W}$
     with $G_{\otimes} = v  + t v^{(2)} + \dotsb + t^{k-2}v^{(k-1)}$.
   If $v'\neq 0$ and $F_{\otimes}= v + t v'$ (without higher order terms), then the image of $Q_k$ is contained in the affine line $\AAA^1$ passing through $v$ and tangent to $v'$ and $F(Q_k)$ is the unique subscheme of $\AAA^1$ of length $k$ supported at $v$.
   More generally, the linear span
   of $F(Q_k)$
   (the smallest linear subspace $V \subset W$ such that $F(Q_k) \subset \affinespaceof{V}$)
   is equal to the linear span of $v, v', \dotsc, v^{(k-1)}$.
\end{example}

\begin{example}
   Let $Q=\Spec\kk[s, t]/(s^2,t^2)$ and suppose $W\simeq \kk^3$ with a basis $\set{x_0, x_1, x_2}$.
   Consider $F\colon Q\to \affinespaceof{W}$ such that
   $F_{\otimes} = s x_1 + t x_2 + s t x_0$.
   This $F$ is an embedding of $Q$, $F(Q)$ is supported at $0\in \affinespaceof{W}$,
   and the ideal of $F(Q)$ is:
   $
     (\alpha_0-\alpha_1 \alpha_2, \alpha_1^2, \alpha_2^2)
   $.
\end{example}

If $Q=\Spec A$ is a finite scheme over $\kk$,
  then it is a finite disjoint union
    of its connected components
  $Q=Q_1\sqcup \dotsb \sqcup Q_l$,
  with each $Q_i= \Spec A_i$ finite local scheme,
  and $A= A_1\times \dotsb \times A_l$.

\begin{proposition}
   Suppose $Q$ is a finite scheme as above,
   and $F\colon Q\to \affinespaceof{W}$ is a morphism.
   Set $F_i\colon Q_i \to \affinespaceof{W}$ to be the restriction
     of $F$ to the irreducible component $Q_i$.
   Then
   $F_{\otimes} =
   (F_{1})_{\otimes} + \dotsb + (F_{l})_{\otimes}$.
\end{proposition}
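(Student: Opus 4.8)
The plan is to reduce the statement to the composition formula of Proposition~\ref{prop_basic_properties_of_tensor_of_a_scheme}\ref{item_composition_in_terms_of_tensors}. First I would make precise the meaning of the claimed equality. Since $A = A_1\times\dotsb\times A_l$ and tensoring with $W$ over $\kk$ commutes with finite products (a finite product of $\kk$-modules is a finite direct sum), there is a canonical decomposition of $\kk$-vector spaces
\[
 A\otimes_{\kk} W = \bigoplus_{i=1}^l \left(A_i\otimes_{\kk}W\right),
\]
where $A_i\otimes_{\kk}W$ is identified with the image of $\left(0\times\dotsb\times A_i\times\dotsb\times 0\right)\otimes_{\kk}W$ inside $A\otimes_{\kk}W$. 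The tensor $(F_i)_{\otimes}$ a priori lives in $A_i\otimes_{\kk}W$, and on the right-hand side of the asserted identity it is read inside $A\otimes_{\kk}W$ via this identification; so the content of the proposition is that the $i$-th component of $F_{\otimes}$ with respect to the above decomposition is exactly $(F_i)_{\otimes}$.

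Second, the inclusion of a connected component is a morphism $\iota_i\colon Q_i\to Q$ whose comorphism $\iota_i^*\colon A\to A_i$ is the canonical projection, and by definition $F_i = F\circ \iota_i$. Applying Proposition~\ref{prop_basic_properties_of_tensor_of_a_scheme}\ref{item_composition_in_terms_of_tensors} yields $(F_i)_{\otimes} = (\iota_i^*\otimes \id_W)(F_{\otimes})$; that is, $(F_i)_{\otimes}$ is precisely the image of $F_{\otimes}$ under the projection $A\otimes_{\kk}W\to A_i\otimes_{\kk}W$, i.e.\ the $i$-th component of $F_{\otimes}$. Summing over $i$ and using that a vector in a finite direct sum is the sum of its components gives $F_{\otimes} = (F_1)_{\otimes}+\dotsb+(F_l)_{\otimes}$, once the right-hand side is interpreted as above.

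Alternatively, one may bypass the composition lemma entirely and argue in coordinates: writing $F_{\otimes} = \sum_{j=0}^N a_j\otimes x_j$ with $a_j = (a_{j,1},\dotsc,a_{j,l})\in A$, the comorphism $F_i^* = \iota_i^*\circ F^*$ sends $\alpha_j$ to $a_{j,i}$, so $(F_i)_{\otimes} = \sum_{j=0}^N a_{j,i}\otimes x_j$, and adding these up over $i$ reassembles $\sum_{j=0}^N a_j\otimes x_j = F_{\otimes}$.

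I do not expect a genuine obstacle here; the only point requiring care is bookkeeping — spelling out the identification under which the right-hand side of the formula makes sense, since $A_i$ is a direct summand of $A$ as a $\kk$-module but not a subring (it carries a different unit). Everything else is a direct application of the definitions and of Proposition~\ref{prop_basic_properties_of_tensor_of_a_scheme}.
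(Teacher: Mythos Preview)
Your proposal is correct and follows the same approach as the paper: decompose $A\otimes_{\kk}W$ as the direct sum $\bigoplus_i A_i\otimes_{\kk}W$ and identify $(F_i)_{\otimes}$ with the $i$-th component of $F_{\otimes}$. You actually give more detail than the paper (which dispatches the claim in two sentences), in particular by invoking Proposition~\ref{prop_basic_properties_of_tensor_of_a_scheme}\ref{item_composition_in_terms_of_tensors} to justify why the projection of $F_{\otimes}$ onto $A_i\otimes_{\kk}W$ really is $(F_i)_{\otimes}$, and by flagging the bookkeeping about $A_i$ as a $\kk$-summand rather than a subring.
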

\begin{proof}
   We have $F_{\otimes} \in A\otimes_{\kk}W$ and
   as $\kk$-vector spaces
    $A\otimes_{\kk}W= A_1\otimes_{\kk}W \oplus \dotsb \oplus A_l\otimes_{\kk} W$.
    Decomposing $F_{\otimes}$ into direct components as above
    we obtain the desired expression of $F_{\otimes}$ as the sum of $(F_{i})_{\otimes}$'s.
\end{proof}

Suppose $Q=\Spec A$ is a finite local scheme over $\kk$ and
$\gotm \subset A$ is the maximal ideal.
Then we have a natural splitting $A= \kk\cdot 1 \oplus \gotm$ as $\kk$-vector spaces.
Thus, each tensor $T\in \freemodule{W}$
  can be uniquely written as
  $T=T_{\kk} + T_{\gotm}$ with
  $T_{\kk} = (T \modulo \gotm\otimes_{\kk} W) \in (A/\gotm) \otimes_{\kk} W = W$ and $T_{\gotm}\in \gotm\otimes_{\kk} W$.
As we see in Proposition~\ref{prop_residue_and_derivative_of_tensor_corresponding_to_scheme} below, $T_{\kk}$ corresponds to the point of support of $T_{\operatorname{sch}}(Q)$ and $T_{\gotm}$ corresponds to how $Q$ is mapped into $\affinespaceof{W}$ up to translation by $T_{\kk}$.
In particular, $T_{\gotm} \modulo \gotm^2$ is responsible for the image of the Zariski tangent space of $Q$.

\begin{proposition}
   \label{prop_residue_and_derivative_of_tensor_corresponding_to_scheme}
   With $Q$ as above,
   let $F \colon Q\to \affinespaceof{W}$ be a morphism.
   Then:
   \begin{enumerate}
    \item
        \label{item_support_using_residue}
        The support of $F(Q)$ is $(F_{\otimes})_{\kk}$ seen as a $\kk$ point in $\affinespaceof{W}$.
    \item
        \label{item_embedding_using_derivative}
        $F$ is an embedding if and only if
    \[
      ((F_{\otimes})_{\gotm} \modulo \gotm^2) \in (\gotm/\gotm^2)\otimes_{\kk} W \simeq \Hom_{\kk} \left((\gotm/\gotm^2)^*, W\right)
    \]
is injective.   \end{enumerate}
\end{proposition}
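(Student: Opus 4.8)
The plan is to translate both statements into the language of the algebra homomorphism $F^* \colon \Sym W^* \to A$ and the maximal ideal $\gotm$, using the splitting $A = \kk \cdot 1 \oplus \gotm$ that holds because $Q$ is finite local over $\kk$. Write $T = F_{\otimes} = \sum_i a_i \otimes x_i$ with $a_i = F^*(\alpha_i)$, and decompose each $a_i = c_i + b_i$ with $c_i \in \kk$ and $b_i \in \gotm$; then $T_{\kk} = \sum_i c_i x_i \in W$ and $T_{\gotm} = \sum_i b_i \otimes x_i \in \gotm \otimes_{\kk} W$.

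For \ref{item_support_using_residue}: the support of $F(Q)$ is a single $\kk$-point $p \in \affinespaceof{W}$, since $Q$ is local and $\kk$ is algebraically closed; this point is the one at which the maximal ideal $\gotm_p \subset \Sym W^*$ pulls back into $\gotm \subset A$, equivalently $F^*(\alpha_i) \equiv c_i \pmod{\gotm}$, i.e. the coordinates of $p$ are the residues $c_i$. Hence $p$ is exactly the vector $\sum_i c_i x_i = T_{\kk}$. More cleanly: the composite $Q \to \affinespaceof{W}$ followed by $\affinespaceof{W} \to \Spec \kk = \{\text{pt}\}$ factors through $\Spec(A/\gotm) = \Spec \kk$, and the image of that reduced point is computed by the map $\Sym W^* \to A \to A/\gotm = \kk$ sending $\alpha_i \mapsto c_i$; this is precisely the $\kk$-point $T_{\kk}$, by the identification of $\kk$-points of $\affinespaceof{W}$ with vectors in $W$ recalled before Proposition~\ref{prop_basic_properties_of_tensor_of_a_scheme}.

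For \ref{item_embedding_using_derivative}: by Proposition~\ref{prop_basic_properties_of_tensor_of_a_scheme}\ref{item_embedding_in_terms_of_tensor}, $F$ is a closed embedding iff $F^*$ is surjective, i.e. iff the $a_i = F^*(\alpha_i)$ generate $A$ as a $\kk$-algebra. After translating $p$ to the origin we may assume $c_i = 0$, so $a_i = b_i \in \gotm$ and we must decide when the $b_i$ generate $A$. Since $A = \kk \oplus \gotm$ with $\gotm$ nilpotent, the $b_i$ generate $A$ as an algebra iff they generate $\gotm$ as an ideal, and by Nakayama's lemma (applicable because $\gotm$ is the Jacobson radical of the Noetherian local ring $A$) this holds iff their images span $\gotm/\gotm^2$ as a $\kk$-vector space. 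Now the element $(T_{\gotm} \bmod \gotm^2) \in (\gotm/\gotm^2) \otimes_{\kk} W$, viewed as a map $(\gotm/\gotm^2)^* \to W$, has as its transpose the map $W^* \to \gotm/\gotm^2$ sending $\alpha_i \mapsto \bar b_i$; this transpose is surjective exactly when the $\bar b_i$ span $\gotm/\gotm^2$, and a linear map between finite-dimensional spaces is surjective iff its transpose is injective. Chaining these equivalences gives the claim. The one point requiring a little care — the main (minor) obstacle — is the reduction to $c_i = 0$: one must note that composing $F$ with the translation automorphism of $\affinespaceof{W}$ by $-T_{\kk}$ changes neither the property of being a closed embedding nor the class $T_{\gotm} \bmod \gotm^2$ (the translation only modifies the $\kk$-component of the tensor, by Proposition~\ref{prop_basic_properties_of_tensor_of_a_scheme}\ref{item_composition_in_terms_of_tensors} applied to the translation, or simply because translation by a constant vector shifts $a_i$ by a scalar and leaves $b_i$ untouched), so the hypothesis and conclusion of \ref{item_embedding_using_derivative} are both translation-invariant.
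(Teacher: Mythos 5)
Your argument is correct and follows essentially the same route as the paper: both parts rest on factoring through $\Spec\kk$ for \ref{item_support_using_residue}, and on combining Proposition~\ref{prop_basic_properties_of_tensor_of_a_scheme}\ref{item_embedding_in_terms_of_tensor} with Nakayama's lemma and the injectivity/surjectivity duality under transpose for \ref{item_embedding_using_derivative}. The only cosmetic difference is that you normalize to $T_\kk=0$ by translating, whereas the paper avoids the translation and instead directly observes that $(F_{\otimes})_{\gotm}(W^*)$ and $F_{\otimes}(W^*)$ generate the same $\kk$-subalgebra of $A$, because each lies in the other up to adding constants from $\kk\subset A$.
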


\begin{prf}
  Let $Q'\simeq \Spec \kk \subset Q$
  be the reduced subscheme of $Q$
  and let $G\colon Q'\to Q$ be the embedding morphism.
  Then the first claim follows from
  Proposition~\ref{prop_basic_properties_of_tensor_of_a_scheme}\ref{item_composition_in_terms_of_tensors}.

  An element of $\Hom_{\kk} \left((\gotm/\gotm^2)^*, W\right)$ is injective if and only if the same element seen as $\Hom_{\kk} (W^*, \gotm/\gotm^2)$ is surjective.
  Since $\gotm/\gotm^2 = \gotm \otimes_A (A/\gotm)$,
  by Nakayama's Lemma,
  $(F_{\otimes})_{\gotm} \modulo \gotm^2$
  is a surjective element of $\Hom (W^*, \gotm/\gotm^2)$
  if and only if $(F_{\otimes})_{\gotm}(W^*)\subset \gotm$ generate $\gotm$ as an $A$-module.
  For any $\alpha \in W^*$, we have
  $(F_{\otimes})_{\gotm} (\alpha)= (F_{\otimes} - (F_{\otimes})_{\kk})(\alpha) =F_{\otimes}(\alpha)-\alpha((F_{\otimes})_{\kk})$,
  with the second term in $\kk\subset A$.
  Thus $(F_{\otimes})_{\gotm}(W^*) \subset F_{\otimes}(W^*) + \kk \subset A$, and vice versa,
  $F_{\otimes}(W^*) \subset (F_{\otimes})_{\gotm}(W^*) + \kk \subset A$.
  In particular, $(F_{\otimes})_{\gotm}(W^*)$ generates $A$ if and only if $F_{\otimes}(W^*)$ does,
  which is equivalent to $F$ being an embedding by Proposition~\ref{prop_basic_properties_of_tensor_of_a_scheme}\ref{item_embedding_in_terms_of_tensor}.
\end{prf}

We want to analyse similarly subschemes of $\PP_{\kk} W=\Proj (\Sym W^*)$.
Denote by $\pi \colon \affinespaceof{W}\setminus\set{0} \to \PP_{\kk} W$ the standard quotient map.

\begin{lemma}\label{lem_lifting_finite_schemes_from_projective_spaces}
   Suppose $Q$ is finite or local scheme over $\kk$, and $F\colon Q\to \PP_{\kk} W$ is any morphism.
   Then there exists a morphism $\hat{F}\colon Q\to \affinespaceof{W}$
     such that $F=\pi \circ \hat{F}$.
\end{lemma}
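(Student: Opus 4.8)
The plan is to reduce to the affine and then to the local case, and finally to invoke the triviality of line bundles on small affine (or local) schemes. First I would observe that $\PP_{\kk} W$ is covered by the standard affine opens $U_i = \{\alpha_i \neq 0\} \simeq \affinespaceof{W/\kk x_i}$, and more importantly, that the projection $\pi \colon \affinespaceof{W}\setminus\set{0} \to \PP_{\kk} W$ is a $\Gm$-torsor, or equivalently, that $\affinespaceof{W}\setminus\set{0} = \Spec_{\PP_\kk W}\left(\bigoplus_{m\in\ZZ}\ccO_{\PP_\kk W}(m)\right)$ restricted to nonzero degrees, so that a lift $\hat F$ of $F$ along $\pi$ is the same datum as a nowhere-vanishing section of $F^*\ccO_{\PP_\kk W}(1)$ together with a compatible trivialization — concretely, a section $s$ of the line bundle $\ccL := F^*\ccO_{\PP_\kk W}(1)$ on $Q$ that generates $\ccL$ everywhere, inducing an isomorphism $\ccO_Q \xrightarrow{\sim} \ccL$. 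Given such an $s$, the composite $\Sym W^* \to \bigoplus_d \HHH^0(\PP_\kk W, \ccO(d)) \to \bigoplus_d \HHH^0(Q, \ccL^{\otimes d})$ followed by the trivializations $\ccL^{\otimes d}\simeq \ccO_Q$ built from $s$ yields the algebra map $\hat F^*\colon \Sym W^* \to A$ defining $\hat F$, and $F = \pi\circ\hat F$ by construction.

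Thus the real content is: the line bundle $\ccL = F^*\ccO_{\PP_\kk W}(1)$ on $Q$ is trivial. Here is where the hypothesis ``$Q$ finite or local over $\kk$'' enters. If $Q=\Spec A$ is local, then $A$ has no nontrivial idempotents and, being local, $\Pic(\Spec A)=0$ — every finitely generated projective $A$-module of rank $1$ is free — so $\ccL$ is trivial and any generator $s$ of the free rank-one $A$-module $\HHH^0(Q,\ccL)$ works. If $Q=Q_1\sqcup\dotsb\sqcup Q_l$ is a finite disjoint union of local schemes (the finite case), apply the local case to each $Q_j$ to get $\hat F_j\colon Q_j\to\affinespaceof{W}$ with $\pi\circ\hat F_j = F|_{Q_j}$, and then glue: $\hat F := \hat F_1\sqcup\dotsb\sqcup\hat F_l$ maps $Q$ to $\affinespaceof{W}$ (landing in $\affinespaceof{W}\setminus\set 0$ on each piece since each $\hat F_j$ does) and satisfies $\pi\circ\hat F = F$. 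Since disjointness means $A=A_1\times\dotsb\times A_l$ with each $A_j$ local, there is no obstruction to the gluing.

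The one point that needs a small argument is that the lift lands in $\affinespaceof{W}\setminus\set0$ rather than merely in $\affinespaceof{W}$: this is precisely the statement that the chosen section $s$ of $\ccL$ is nowhere vanishing, which holds because $s$ is an $A$-module generator of the free module $\HHH^0(Q,\ccL)\simeq A$, hence a unit times the canonical generator, hence nonzero at every point; equivalently, the components $\hat F^*(\alpha_0),\dotsc,\hat F^*(\alpha_N)\in A$ generate the unit ideal, because their images under $F$ do not simultaneously vanish at any point of $Q$. I expect the main (minor) obstacle to be bookkeeping the divided-powers/symmetric-powers duality so that $\hat F^*$ is genuinely an algebra homomorphism — but this is automatic once $\hat F$ is described as ``a generating section of $\ccL$'', since the graded-algebra map $\bigoplus_d\HHH^0(\PP_\kk W,\ccO(d))\to\bigoplus_d\HHH^0(Q,\ccL^{\otimes d})$ is a ring map and trivializing by powers of a single section $s$ preserves multiplication. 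No deeper input (no completeness, no flatness) is needed; ``finite or local over $\kk$'' is exactly what kills $\Pic$.
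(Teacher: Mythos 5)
Your proof is correct, but it takes a genuinely different route from the paper's. The paper's argument is purely geometric and more elementary: since $Q$ is finite or local, the image $F(Q)$ has only finitely many (or, in the local case, a unique) closed point(s), so one can choose a linear form $\alpha\in W^*$ whose vanishing hyperplane avoids them; the preimage $F^{-1}(\{\alpha=0\})$ is then a closed subscheme of $Q$ containing no closed points, hence empty, so $F$ factors through the standard affine chart $\{\alpha\neq 0\}\simeq\AAA^N$, which one embeds as $Z(\alpha-1)\subset\affinespaceof{W}$. What you do instead is reformulate a lift along the $\Gm$-torsor $\pi$ as a global trivialization of $\ccL=F^*\ccO_{\PP_\kk W}(1)$, and then kill the obstruction by observing $\Pic(Q)=0$ for local (hence for finite) $Q$. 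Both hinge on the same underlying fact — that $\ccL$ is trivial on $Q$ — but the paper exhibits the trivialization explicitly via a hyperplane, while you deduce its existence abstractly. The paper's version buys you an explicit formula for $\hat F$ and avoids any appeal to line bundle classification; yours is conceptually cleaner in isolating exactly what hypothesis is needed ($\Pic(Q)=0$) and would generalize verbatim to any base over which $\ccO(1)$ pulls back trivially. Your accounting of the disjoint-union case and the nowhere-vanishing requirement is also correct.
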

\begin{proof}
   Since $Q$ (and thus its image $F(Q)$) is finite (in particular, $0$-dimen\-sion\-al over $\kk$) or local,
   there exists a hyperplane
   $\set{\alpha=0}\subset\PP_{\kk} W$
   for some $\alpha \in W^*$
   such that $F(Q)\cap \set{\alpha=0} =\emptyset$.
   In other words,
   $F(Q)\subset \set{\alpha\ne0} \simeq \AAA^N$.
   We can now embed $\AAA^N$ onto a height $1$
   hyperplane $Z(\alpha-1)$ of $\affinespaceof{W}$,
   and $F$ factorizes through $Z(\alpha-1)$
   as claimed.
\end{proof}

Thus, by Lemma~\ref{lem_lifting_finite_schemes_from_projective_spaces}
  and the construction of the tensor corresponding to
  a morphism $Q\to \affinespaceof{W}$,
  for every finite scheme $Q$ and a morphism $F\colon Q\to\PP_{\kk} W$
  there exists a tensor $\hat{F}_{\otimes} \in A\otimes_{\kk} W$.
Note that not all tensors in $A\otimes_{\kk} W$
  determine a morphims $Q\to \PP_{\kk} W$
  and the tensor is not uniquely determined.
To see this in detail,
    write $Q=Q_1 \sqcup \dotsb \sqcup Q_l$ as a union of irreducible (and connected) components,
    where $Q_i=\Spec A_i$
    and $A= A_1 \times \dotsb \times A_l$.
Let $\gotm_i \subset A$ be the maximal ideal in
   $A$   corresponding to the local ring $A_i$.
Also let $1_{A_{i}} \in A$ denote the image of $1$ from $A_i$
  in the natural vector space splitting $A= A_1 \oplus \dotsb \oplus A_l$.
For each $\hat{T}\in A\otimes_{\kk} W$ we write uniquely
\[
\hat{T} = \sum_{i=1}^l (1_{A_i}\otimes v_i + \hat{T}_{\gotm_i})
\]
with $v_i\in W$ and $\hat{T}_{\gotm_i} \in (\gotm_i \cap A_i) \otimes_{\kk} W$.

\begin{proposition}
   In the setting as above, $\hat{T}\in A\otimes_{\kk} W$ determines a morphism of schemes $T_{\operatorname{sch}}\colon Q\to\PP_{\kk} W$ if and only if $v_i\ne 0$ for all $i$.
\end{proposition}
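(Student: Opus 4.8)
The plan is to reduce the statement to the morphism $\hat T_{\operatorname{sch}}\colon Q\to\affinespaceof{W}$ attached to the tensor $\hat T$ in \S\ref{sec_finite_subschemes_as_tensors}, together with the observation that the quotient map $\pi$ is defined exactly on the open subscheme $\affinespaceof{W}\setminus\set{0}$. First I would pin down what ``$\hat T$ determines a morphism $T_{\operatorname{sch}}\colon Q\to\PP_{\kk} W$'' should mean: the tensor $\hat T$ always gives a morphism $\hat T_{\operatorname{sch}}\colon Q\to\affinespaceof{W}$ by the construction of the corresponding morphism, and it determines a morphism to $\PP_{\kk} W$ precisely when the composite $\pi\circ\hat T_{\operatorname{sch}}$ makes sense, in which case one sets $T_{\operatorname{sch}}:=\pi\circ\hat T_{\operatorname{sch}}$.

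Next I would invoke the elementary fact that a morphism of schemes into $\affinespaceof{W}$ factors through the open subscheme $\affinespaceof{W}\setminus\set{0}$ if and only if its set-theoretic image avoids the point $0$. So the proposition reduces to identifying that image. Decomposing $Q=Q_1\sqcup\dotsb\sqcup Q_l$ into its connected (finite local) components $Q_i=\Spec A_i$, the restriction of $\hat T_{\operatorname{sch}}$ to $Q_i$ is the morphism corresponding to the tensor $1_{A_i}\otimes v_i+\hat T_{\gotm_i}\in A_i\otimes_{\kk} W$, whose $\kk$-part in the splitting $A_i=\kk\cdot 1\oplus(\gotm_i\cap A_i)$ is $v_i$; by Proposition~\ref{prop_residue_and_derivative_of_tensor_corresponding_to_scheme}\ref{item_support_using_residue} the image of $Q_i$ under $\hat T_{\operatorname{sch}}$ is the single $\kk$-point $v_i\in\affinespaceof{W}$. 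Hence the image of $\hat T_{\operatorname{sch}}$ is the finite set $\set{v_1,\dotsc,v_l}$, and this set avoids $0$ exactly when $v_i\ne 0$ for every $i$, which is the asserted equivalence.

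I expect no serious obstacle here; the only two points needing care are: (a) fixing the convention that ``$\hat T$ determines a morphism to $\PP_{\kk} W$'' means $T_{\operatorname{sch}}=\pi\circ\hat T_{\operatorname{sch}}$ is well defined — equivalently, in the classical description, that the $N+1$ components of $\hat T$ in $A$ generate the unit ideal, which, since the components of each $\hat T_{\gotm_i}$ lie in the maximal ideal $\gotm_i\cap A_i$ and hence are non-units, is again equivalent to all $v_i\ne 0$; and (b) applying the support computation of Proposition~\ref{prop_residue_and_derivative_of_tensor_corresponding_to_scheme} componentwise rather than attempting to argue directly on $Q$. Both are routine once the right statement is isolated, so the genuinely delicate part is simply the bookkeeping of unpacking what the statement asserts.
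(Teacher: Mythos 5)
Your proof is correct. The paper itself omits the proof entirely (it is marked \noprf{} with a remark that the claim is intuitive and the details are skipped), and the argument you give — interpreting ``determines a morphism to $\PP_{\kk}W$'' as $\hat T_{\operatorname{sch}}$ factoring through the open subscheme $\affinespaceof{W}\setminus\set{0}$, then computing the set-theoretic image componentwise via Proposition~\ref{prop_residue_and_derivative_of_tensor_corresponding_to_scheme}\ref{item_support_using_residue} — is exactly the routine verification the authors had in mind.
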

\noprf
   The claim of the proposition is intuitive, and straightforward to prove.
   We skip the details.

\subsection{Gorenstein duality for submodules of free modules}\label{sec_Gorenstein_duality}
In this section $A$ is a $\kk$-algebra and $W$ is a finite dimensional $\kk$-vector space.
We will consider finitely generated free modules over $A$, and to be consistent with the rest of the paper we  will view them as $\freemodule{W}:=A\otimes_\kk W$ for some $W$ as above.
We also want to study $\freemodule{W}$
in a company of another free module
$\freemodule{W^*}:= A\otimes_\kk W^*$ arising from the dual vector space $W^* = \Hom_{\kk}(W, \kk)$.
We have the natural pairing $\ev\colon \freemodule{W^*}\times \freemodule{W} \to A$: for $\phi\in \freemodule{W^*}$ and $w\in \freemodule{W}$ let $\ev(\phi, w) := \phi(w)$.
The map $\ev$ is $A$-bilinear and
we naturally have $\freemodule{(W^*)^*} = \freemodule{W}$.

Note that $\freemodule{W^*}$ is \emph{not} necessarily the dual module $D(\freemodule{W})$ to $\freemodule{W}$ in the sense of \cite[\S21.1]{Eisenbud},
as $D(\freemodule{W})$ is not necessarily free.
Instead, the geometric interpretation of $\freemodule{W}$ and  $\freemodule{W^*}$ are the modules corresponding to dual (and trivial) vector bundles on $\Spec A$.
The algebraic interpretation  of these ``weird'' duality
is that $\freemodule{W^*} = \Hom_{A} (W, A)$,
in contrast to  the dual module $D(\freemodule{W})$, which is $\Hom_{\kk} (W, \kk)$ (for finite local $A$) or, more generally,
$\Hom_{A} (W, \omega_A)$.

\begin{definition}
   $M\subset \freemodule{W}$ is an $A$-submodule. We define the \emph{perpendicular submodule}
     of $M$ to be
     $
       M^{\perp}:=\set{\phi\in \freemodule{W^*} \mid \forall_{m\in M} \  \phi(m) =0}.
     $
\end{definition}

This is a very natural generalization of the definition for vector spaces over a field, 
yet the behavior of $\perp$ might be slightly unintuitive, 
as the following examples illustrate.
Note that for $A$-submodules $M$ and $M'$ in $\freemodule{W}$ we always have:
\begin{equation}
\label{equ_double_perp_and_reversing_inclussion}
  M \subset M^{\perp\perp}, \quad \text{ and if } M\subset M' \subset \freemodule{W} \text{ then } (M')^{\perp}\subset M^{\perp} \subset \freemodule{W^*}.
\end{equation}

\begin{example}\label{ex_nonGorenstein_fails_double_perpendicularity}
   Suppose $A= \kk[s,t]/(s^2, st, t^2)$ and $W=\kk^2$.
   Let $M = \linspan{(s,0),(0,t)}$.
   Then
   \[
      M^{\perp} = \linspan{(s,0),(t,0),(0,s),(0,t)} \subset \freemodule{W^*}
   \]
   and also
   \[
     M^{\perp\perp} = \linspan{(s,0),(t,0),(0,s),(0,t)} \subset \freemodule{W}.
   \]
   In particular, $M^{\perp\perp} \ne M$.
\end{example}

\begin{example}\label{ex_infinite_fails_double_perpendicularity}
   Suppose $A= \kk[[t]]$ or $A=\kk[t]$ and $W=\kk$.
   Let $M = (t^k) \subset \freemodule{W} \simeq A$
   for some $k>0$.
   Then
   $
      M^{\perp} = 0
   $
   and
   $
     M^{\perp\perp} = \freemodule{W}.
   $
   In particular, $M^{\perp\perp} \ne M$.
\end{example}

In our proofs in Section~\ref{sec_relative_apolarity} we will however need a number of cases with $M = M^{\perp\perp}$.
Thus below we present two such desirable situations in Propositions~\ref{prop_perp_of_perp_for_section} and \ref{prop_perp_of_perp_for_Gorenstein}.
The first of these cases in a sense corresponds to a section of the trivial vector bundle over $\Spec Q$.

\begin{proposition}\label{prop_perp_of_perp_for_section}
   Suppose $A$ is a local $\kk$-algebra with maximal ideal
     $\gotm$,
     and that $T \in \freemodule{W}$ is such that
     its residue $(T \modulo \gotm) \in  A/\gotm \otimes_{\kk} W \simeq W$ is non-zero.
   Finally, let $M=T\cdot A \subset \freemodule{W}$ be the submodule generated by $T$.
   Then $M^{\perp\perp} =M$.
\end{proposition}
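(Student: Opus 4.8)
The plan is to exploit the hypothesis that the residue of $T$ is nonzero in order to split off a free rank-one summand. First I would extend the nonzero vector $\bar T = (T \bmod \gotm) \in W$ to a basis $\{x_0 = \bar T, x_1, \ldots, x_N\}$ of $W$, with dual basis $\{\alpha_0, \ldots, \alpha_N\}$ of $W^*$. Writing $T = \sum_i a_i \otimes x_i$ with $a_i \in A$, the residue condition says exactly that $a_0 \in A^\times$ (it is a unit, since $A$ is local and $a_0 \notin \gotm$). After rescaling $T$ by $a_0^{-1}$ (which does not change $M = T\cdot A$), we may assume $a_0 = 1$, so $T = x_0 + \sum_{i=1}^N a_i \otimes x_i$. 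The key observation is then that $M = T\cdot A$ is a free rank-one $A$-module and, more importantly, a direct summand: $\freemodule{W} = M \oplus N$ where $N = A\otimes_\kk \linspan{x_1,\ldots,x_N}$, because the composite $M \hookrightarrow \freemodule{W} \twoheadrightarrow A\otimes_\kk(W/\linspan{\bar T})$ is an isomorphism (it sends $T$ to the class of $x_0$, which is a basis vector of the free quotient).

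Next I would compute $M^\perp$ explicitly. Since $M$ is generated by the single element $T = x_0 + \sum_{i\ge 1} a_i x_i$, a functional $\phi = \sum_j b_j \otimes \alpha_j \in \freemodule{W^*}$ lies in $M^\perp$ if and only if $\phi(T) = b_0 + \sum_{i\ge 1} a_i b_i = 0$, i.e. $b_0 = -\sum_{i\ge 1} a_i b_i$. Thus $M^\perp$ is freely generated by the $N$ elements $\alpha_i - a_i\otimes \alpha_0$ for $i = 1,\ldots,N$; in particular $M^\perp$ is the ``graph'' submodule of $\freemodule{W^*}$ complementary to $A\otimes_\kk\linspan{\alpha_0}$, and $\freemodule{W^*} = M^\perp \oplus (A\otimes_\kk \linspan{\alpha_0})$. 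Now I repeat the same computation one more level down: $M^{\perp\perp} = (M^\perp)^\perp$ consists of those $\psi = \sum c_k \otimes x_k \in \freemodule{W}$ with $(\alpha_i - a_i\alpha_0)(\psi) = 0$ for all $i\ge 1$, i.e. $c_i = a_i c_0$ for all $i \ge 1$. That says precisely $\psi = c_0\cdot T \in T\cdot A = M$. Hence $M^{\perp\perp} = M$, as desired.

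I do not expect a serious obstacle here — the whole point of the hypothesis is to make $M$ a free direct summand, after which $\perp$ behaves just as over a field. The one place to be slightly careful is the reduction step (scaling by $a_0^{-1}$ and choosing coordinates adapted to $\bar T$): one should note that $M = T\cdot A = (a_0^{-1}T)\cdot A$ since $a_0$ is a unit, and that changing basis of $W$ induces the dual change of basis on $W^*$ compatibly with the pairing $\ev$, so $M^\perp$ and $M^{\perp\perp}$ are unaffected. Everything else is the elementary ``graph'' linear algebra carried out over $A$ rather than $\kk$, which works verbatim because no inversion other than that of the unit $a_0$ is needed. Alternatively, and perhaps more cleanly, one can phrase the argument summand-wise: for a direct sum decomposition $\freemodule{W} = M \oplus N$ with both summands free, $M^\perp = \Ann(M) = \{\phi : \phi|_M = 0\}$ is canonically identified with $(\freemodule{W}/M)^\vee \cong N^\vee$ sitting inside $\freemodule{W^*} = M^\vee \oplus N^\vee$, and then $M^{\perp\perp} = (N^\vee)^\perp = M$ by the same reasoning applied once more; this avoids coordinates entirely but uses that the relevant quotients and duals stay free, which is exactly what the hypothesis secures.
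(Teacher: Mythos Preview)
Your proof is correct and follows essentially the same idea as the paper's: both arguments use the hypothesis to realize $M$ as a free rank-one direct summand of $\freemodule{W}$, after which the double-perp identity is immediate. The only cosmetic difference is that the paper invokes Nakayama's Lemma to change to an $A$-basis $\{T, x_1, \ldots, x_N\}$ of $\freemodule{W}$ in which $T$ is literally a basis vector (so $M^\perp$ is spanned by the remaining dual basis elements and the computation is one line), whereas you keep a $\kk$-basis of $W$ and carry out the ``graph'' computation explicitly; your alternative summand argument is essentially the paper's argument stated abstractly.
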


\begin{prf}
   Let $T_{\kk} = (T \modulo \gotm) \in W$, and pick a  complement of the set $\set{T_{\kk}}$ to
   a basis $\set{T_{\kk}, \fromto{x_{1, \kk}}{x_{N, \kk}}}$ of $W$.
   Pick also lifts of $x_{i, \kk}$ to $x_i \in \freemodule{W}$ such that $(x_i \modulo \gotm) = x_{i, \kk}$.
   By Nakayama's Lemma
   $\set{T, \fromto{x_1}{x_N}}$  minimally generates $\freemodule{W}$.
   Since $\freemodule{W}$ is a free module, any minimal set of generators
   (in particular, $\set{T, \fromto{x_1}{x_N}}$)
   is a basis of
   $\freemodule{W}$.
   Let $\set{\tau, \fromto{\alpha_1}{\alpha_N}}$ be the dual basis of $\freemodule{W^*}= \Hom(\freemodule{W}, A)$.
   Then $M^{\perp} = A\cdot(\fromto{\alpha_1}{\alpha_N})$ and immediately,
   we get that $M^{\perp\perp} = A\cdot T =M$ as claimed.
\end{prf}

For the rest of  this subsection we focus on finite local Gorenstein $\kk$-algebras.
Before the next proposition we phrase a technical lemma,
  which is a perhaps standard observation
  for which we have not found any reference.

\begin{lemma}\label{lem_Gor_multiplying_collection_to_socle}
   Suppose $A$ is a finite local Gorenstein $\kk$-algebra with a socle $\gots$,
   and $\setfromto{a_1}{a_r}$ is a finite collection of elements of $A$, at least one of which is non-zero.
   Then there exists an element $b \in A$
   such that
   $\setfromto{ba_1}{ba_r} \subset \gots$ and at least one of $ba_i$ is non-zero.
\end{lemma}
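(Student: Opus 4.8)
The plan is to exploit the fact that $A$, being finite over $\kk$, is Artinian local, so its maximal ideal $\gotm$ is nilpotent, and the socle can be described as $\gots = \Ann_A(\gotm) = \set{x\in A \mid \gotm x = 0}$. Form the ideal $I = (a_1,\dotsc,a_r)\subset A$; by hypothesis $I \ne 0$. The key observation will be to track the descending chain
\[
  I \supseteq \gotm I \supseteq \gotm^2 I \supseteq \dotsb,
\]
which stabilizes at $0$ because $\gotm^N = 0$ for $N\gg 0$. Let $k\geqslant 0$ be the largest integer with $\gotm^k I \ne 0$ (with the convention $\gotm^0 = A$, so $k$ is well defined since $\gotm^0 I = I \ne 0$).

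By maximality of $k$ we have $\gotm\cdot(\gotm^k I) = \gotm^{k+1}I = 0$, hence $\gotm^k I \subseteq \Ann_A(\gotm) = \gots$. On the other hand $\gotm^k I = \sum_{i=1}^r \gotm^k a_i$ is non-zero, so $\gotm^k a_i \ne 0$ for at least one index $i$. The plan is then to pick $b\in\gotm^k$ (taking $b=1$ if $k=0$) with $b a_i \ne 0$; for every $j$ one has $b a_j \in \gotm^k I \subseteq \gots$, which gives exactly the desired conclusion.

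The only point that needs a little care is the \emph{uniformity} of $b$: we must produce a single element $b$ that simultaneously pushes \emph{all} of $a_1,\dotsc,a_r$ into the socle while keeping at least one image non-zero. This is precisely why $b$ is chosen from $\gotm^k$ rather than taken to be a specific monomial in $\gotm$ witnessing $\gotm^k a_i \ne 0$: membership $b\in\gotm^k$ already guarantees $b a_j \in \gotm^k I$ for every $j$, and the maximality of $k$ guarantees $\gotm^k I\subseteq\gots$. I expect this to be the entire substance of the argument; it is essentially immediate once the right $k$ is isolated.

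Finally, I would remark that the Gorenstein hypothesis is in fact not used here — the statement holds for any finite (Artinian) local $\kk$-algebra with $\gots$ taken to be the socle $\Ann_A(\gotm)$ — but it is stated in the Gorenstein form because that is the setting in which it is applied in Section~\ref{sec_relative_apolarity}, where $\gots$ is moreover one-dimensional.
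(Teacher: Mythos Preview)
Your proof is correct and slightly cleaner than the paper's. The paper argues inductively: it uses that for finite local Gorenstein $A$ the socle equals $\gotm^d$ for the socle degree $d$, then, starting from the largest $k$ with all $a_i\in\gotm^k$, repeatedly picks $b_k\in\gotm$ with $b_k a_{i_0}\ne 0$ to bump $k$ up by one, and takes $b$ to be the product of the $b_k$'s. Your approach sidesteps the induction by working with the ideal $I=(a_1,\dotsc,a_r)$ and the description $\gots=\Ann_A(\gotm)$, selecting $b\in\gotm^k$ directly from the last nonvanishing term of the chain $\gotm^k I$. Both arguments have the same underlying idea (push into high enough $\gotm$-power), but yours is shorter and, as you note, does not use the Gorenstein hypothesis at all; the paper's version relies on $\gots=\gotm^d$, which genuinely needs Gorenstein.
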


\begin{prf}
   Denote by $\gotm$ the maximal ideal of $A$.
   Let $d$ be the socle degree of $A$,
   so that $\gots = \gotm^{d}$.
   Suppose for some $k$ we have $a_i \in \gotm^k$
   for all $i$, but $a_{i_0} \notin \gotm^{k+1}$
   for at least one of the indices $i_0$.
   If $k=d$, then take $b_d=b=1 \in A$, and we are done.

   So suppose $k<d$. Note that $\gotm \cdot a_{i_0}$ is not identically $0$, as otherwise, $a_{i_0}$ is annihilated by the maximal ideal, that $a_{i_0} \in \gots = \gotm^d$, a contradiction with $a_{i_0} \notin \gotm^{k+1}$.
   Pick $b_k \in \gotm$
   such that $b_k a_{i_0} \ne 0$.
   Then, $\setfromto{b_k a_1}{b_k a_r} \subset \gotm^{k+1}$ and it contains a nonzero element, thus we may start over with a higher value of $k$,
   and eventually take $b$ equal to the product of all $b_k$ that appeared in the construction.
\end{prf}

Now we are ready to prove the second important case when $M=M^{\perp\perp}$.

\begin{proposition}\label{prop_perp_of_perp_for_Gorenstein}
   Suppose $A$ is a finite local Gorenstein
   $\kk$-algebra.
   Then for any finite-dimensional vector space $W$ and  an $A$-submodule $M\subset \freemodule{W}$
   we have:
   \begin{enumerate}
    \item \label{item_dual_of_submodule_versus_perp}
       $\Hom_A (M, A) = \freemodule{W^*}/M^{\perp}$, and
    \item \label{item_perp_of_perp_for_Gorenstein}
    $M^{\perp\perp} = M$.
   \end{enumerate}
\end{proposition}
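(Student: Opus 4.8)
The plan is to reduce both statements to the theory of Matlis duality for the finite local Gorenstein algebra $A$. First I would record the key input: since $A$ is finite local Gorenstein, its injective hull $E_A$ of the residue field is isomorphic to $A$ itself (as an $A$-module), and Matlis duality $N \mapsto N^{\vee} := \Hom_A(N, A)$ is an exact, length-preserving, contravariant involution on the category of finite $A$-modules. In particular $\Hom_A(A, A) = A$ and $\Hom_A(-, A)$ takes short exact sequences to short exact sequences.

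For part~\ref{item_dual_of_submodule_versus_perp}, I would start from the short exact sequence $0 \to M \to \freemodule{W} \to \freemodule{W}/M \to 0$ and apply $\Hom_A(-, A)$. Exactness of Matlis duality gives $0 \to \Hom_A(\freemodule{W}/M, A) \to \Hom_A(\freemodule{W}, A) \to \Hom_A(M, A) \to 0$. Now $\Hom_A(\freemodule{W}, A) = \Hom_A(A \otimes_\kk W, A) = A \otimes_\kk W^* = \freemodule{W^*}$ canonically (using that $W$ is $\kk$-free of finite rank), and under this identification the submodule $\Hom_A(\freemodule{W}/M, A)$ is exactly $\set{\phi \in \freemodule{W^*} \mid \phi|_M = 0} = M^{\perp}$. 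Hence $\Hom_A(M, A) = \freemodule{W^*}/M^{\perp}$, which is \ref{item_dual_of_submodule_versus_perp}.

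For part~\ref{item_perp_of_perp_for_Gorenstein}, the inclusion $M \subset M^{\perp\perp}$ is the trivial half from \eqref{equ_double_perp_and_reversing_inclussion}. For the reverse, I would apply part~\ref{item_dual_of_submodule_versus_perp} twice: applied to $M \subset \freemodule{W}$ it gives $\Hom_A(M,A) = \freemodule{W^*}/M^{\perp}$, and applied to $M^{\perp} \subset \freemodule{W^*}$ it gives $\Hom_A(M^{\perp}, A) = \freemodule{W}/M^{\perp\perp}$. Taking lengths and using that Matlis duality preserves length, $\lng(M) = \lng(\freemodule{W^*}) - \lng(M^{\perp})$ and $\lng(M^{\perp}) = \lng(\freemodule{W^*}) - \lng(M^{\perp\perp})$ (using $\lng \freemodule{W} = \lng \freemodule{W^*}$), so $\lng(M^{\perp\perp}) = \lng(M)$. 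Combined with $M \subset M^{\perp\perp}$, this forces $M = M^{\perp\perp}$.

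The one genuine point requiring care — and the main thing to get right — is the identification of $\Hom_A(\freemodule{W}/M, A)$ with $M^{\perp}$ inside $\freemodule{W^*} = \Hom_A(\freemodule{W}, A)$: one must check that the canonical isomorphism $\Hom_A(A \otimes_\kk W, A) \cong A \otimes_\kk W^*$ really does carry the pairing $\ev$ to the evaluation pairing and hence sends the annihilator of $M$ to the perpendicular submodule $M^{\perp}$. This is routine once one writes out a $\kk$-basis of $W$ as in \S\ref{sec_finite_subschemes_as_tensors}, but it is the step where the hypothesis ``$A$ Gorenstein'' is \emph{not} used — Gorensteinness enters only through $E_A \cong A$, which is what makes $\Hom_A(-,A)$ behave like a dualizing functor rather than an arbitrary left-exact one (compare Example~\ref{ex_nonGorenstein_fails_double_perpendicularity}, where $A$ is not Gorenstein and $M^{\perp\perp} \ne M$). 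I would also note explicitly that finiteness of $A$ over $\kk$ is what guarantees all the lengths above are finite, so the length-counting argument is legitimate.
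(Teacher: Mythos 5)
Your proposal is correct and structurally close to the paper's, but it differs in one substantive way that is worth noting. Both proofs reduce to exactness of $\Hom_A(-,A)$ for the finite local Gorenstein ring $A$ (the paper quotes \cite[Prop.~21.5(a,b)]{Eisenbud}, you quote self-injectivity of $A$ as $E_A\cong A$), and both finish by comparing $\kk$-lengths to promote $M\subset M^{\perp\perp}$ to equality. The difference is in how the exact sequence $0\to M^\perp\to\freemodule{W^*}\to\Hom_A(M,A)\to 0$ is obtained. You apply the exact functor $\Hom_A(-,A)$ to $0\to M\to\freemodule{W}\to\freemodule{W}/M\to 0$ and use the canonical adjunction isomorphism $\Hom_A(A\otimes_\kk W,A)\cong A\otimes_\kk W^*$ that matches the pairing $\ev$ on the nose; with this choice the kernel of the restriction map is $M^\perp$ tautologically, and the point you flag as ``requiring care'' is genuinely routine. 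The paper instead identifies $\Hom_A(\freemodule{W},A)$ with $\freemodule{W^*}$ by first passing through $\Hom_\kk(-,\kk)$ via a chosen Gorenstein isomorphism $A\to\Hom_\kk(A,\kk)$ (with $s^*\colon A\to\kk$ the image of $1$), so its second map is $\phi\mapsto(m\mapsto s^*(\phi(m)))$; the kernel of that map is not obviously $M^\perp$, and the paper invokes Lemma~\ref{lem_Gor_multiplying_collection_to_socle} to show it is. Your route therefore avoids Lemma~\ref{lem_Gor_multiplying_collection_to_socle} entirely at this step and is slightly more streamlined; the paper's version is more explicit and self-contained, and has the small side benefit of rehearsing the socle machinery that it reuses later (e.g.\ in Lemmas~\ref{lem_criterion_for_free_module_over_Gor} and~\ref{lem_criterion_for_free_submodule}).
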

Example~\ref{ex_infinite_fails_double_perpendicularity} illustrates that if we skip the finiteness assumption on the algebra $A$, the conclusions of the proposition fail to hold, even if $A$ is Gorenstein (or even regular) and local. Moreover, for finite algebras we cannot skip the Gorenstein assuption, see Example~\ref{ex_nonGorenstein_fails_double_perpendicularity}. On the other hand, the conclusions hold for finite Gorenstein algebras $A$, but we do not need the non-local case, and the proof is slightly simpler for local $A$.

\begin{prf}
   By definition of Artinian Gorenstein ring, there exists a (nonunique) isomorphism of $A$-modules $A$ and $\Hom_{\kk}(A, \kk)$.
   Pick any such isomorphism $A\to  \Hom_{\kk}(A, \kk)$ and denote by $s^*\colon A \to \kk$ the image of $1\in A$ under this isomorphism.
   Then, for any $a\in A$ the image of $a$
     under the isomorphism is the map taking
     $b\mapsto s^*(a\cdot b)$.
   Note that $s^*$ takes a non-zero value on any non-zero element of the socle of $A$.

   Consider the sequence:
   \[
     0 \to M^{\perp} \to \freemodule{W^*} \to \Hom_A(M, A) \to 0,
   \]
   where $\freemodule{W^*} \simeq \Hom_A (\freemodule{W}, A)$ (using the isomorphism $A\to  \Hom_{\kk}(A, \kk)$ as above) and the second map is the restriction map.
   We claim that the sequence is exact, which proves the first claim.
   The first non-trivial map in the sequence is injective by definition. The second one is surjective by \cite[Prop.~21.5(a,b)]{Eisenbud}.
   Thus, it remains to verify
     that for any $\phi\in \freemodule{W^*}$,
     its image in $\Hom_A(M, A)$ is $0$ if and only if $\phi \in M^{\perp}$.
   Explicitly, $\phi$ is mapped to a morphism that takes $m \to  s^*(\phi(m))$.
   Thus, if $\phi\in M^{\perp}$, then $\phi(m) =0$ for all $m \in M$ and consequently $\phi$ is mapped to $0$.
   On the other hand, if $\phi \notin M^{\perp}$, then take $m \in M$ such that $\phi(m)\ne 0$.
   Pick any $a\in A$ such that $a\cdot \phi(m)$ is non-zero and contained in the socle of $A$, by Lemma \ref{lem_Gor_multiplying_collection_to_socle}.
   Then, $a\cdot m \in M$ and
   \[
    s^*(\phi(a \cdot m)) = s^*(a \cdot \phi(m)) \ne 0,
   \]
   Thus, $\phi$ is mapped to a non-zero element of $\Hom_A(M, A)$, completing the proof of the exactness of the above short sequence.

Note that $\Hom_A(M, A) \simeq \Hom_A(M, \omega_A)\simeq \Hom_{\kk}(M,\kk)$ by \cite[\S21.1]{Eisenbud}.
Thus $\dim_{\kk}\Hom_A(M, A) = \dim_{\kk} M$ and by using the exact sequence twice (for $M$ and $M^{\perp}$) it follows that $\dim_{\kk} M^{\perp\perp}=\dim_{\kk} M$.
Since $M\subset M^{\perp\perp}$ and both $W$ and $A$ are finitely dimensional over $\kk$, we must have $M^{\perp\perp}= M$ as claimed
\end{prf}

In the remainder of this subsection we prove three lemmas
about finite local Gorenstein algebras and finitely generated modules over such rings. These lemmas are important ingredients of the main proofs, but also potentially are of independent interest.
 The first lemma is a criterion for a module over such ring to be free (or, equivalently, flat).
The second lemma is also a freeness criterion, but for submodules of a free module.
The setting for the final lemma is more specific, restricted to $A=\kk[t]/(t^2)$.

\begin{lemma}\label{lem_criterion_for_free_module_over_Gor}
   Suppose $A$ is a finite local Gorenstein $\kk$-algebra.
   Denote by $\gotm$ the maximal ideal of $A$ and by
   $s$ any nonzero element of the ($1$-dimensional) socle of $A$.
   Let $M$ be a finitely generated $A$-module.
   Then the following conditions are equivalent:
   \begin{enumerate}
    \item  \label{item_free_module_criteria_M_free}
        $M$ is a free $A$-module,
    \item \label{item_free_module_criteria_M_locally_free}
    the coherent sheaf $\ccM$ on $\Spec A$ corresponding to $M$ is locally free,
    \item \label{item_free_module_criteria_dimensions_agree}
        $\dim_\kk (s\cdot M) = \dim_{\kk} (M/(\gotm \cdot M))$.
   \end{enumerate}
\end{lemma}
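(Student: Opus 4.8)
The plan is to handle the three equivalences by first disposing of the essentially formal one and then concentrating on $\eqref{item_free_module_criteria_M_free}\Leftrightarrow\eqref{item_free_module_criteria_dimensions_agree}$. For $\eqref{item_free_module_criteria_M_free}\Leftrightarrow\eqref{item_free_module_criteria_M_locally_free}$: since $A$ is finite over $\kk$ it is Artinian, and being local it has a unique prime, so $\Spec A$ is a single point whose only nonempty open subset is $\Spec A$ itself. Hence a coherent sheaf on $\Spec A$ is locally free precisely when it is free, i.e.\ precisely when the corresponding $A$-module is free. (Equivalently, one may invoke that a finitely generated module over a Noetherian local ring is locally free/projective/flat iff free.) So the real content is $\eqref{item_free_module_criteria_M_free}\Leftrightarrow\eqref{item_free_module_criteria_dimensions_agree}$.

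For $\eqref{item_free_module_criteria_M_free}\Rightarrow\eqref{item_free_module_criteria_dimensions_agree}$ I would write $M\simeq A^{\oplus n}$. Then $M/(\gotm M)\simeq (A/\gotm)^{\oplus n}=\kk^{\oplus n}$, so $\dim_\kk(M/(\gotm M))=n$. On the other hand, using the $\kk$-vector space splitting $A=\kk\cdot 1\oplus\gotm$ together with $s\cdot\gotm=0$ (as $s$ lies in the socle), one gets $s\cdot A=\kk\cdot s$; hence $s\cdot M\simeq (s\cdot A)^{\oplus n}$ has $\kk$-dimension $n$ as well, which is exactly $\eqref{item_free_module_criteria_dimensions_agree}$.

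For the converse $\eqref{item_free_module_criteria_dimensions_agree}\Rightarrow\eqref{item_free_module_criteria_M_free}$, set $n=\dim_\kk(M/(\gotm M))$ and, by Nakayama's Lemma, choose a surjection $\phi\colon A^{\oplus n}\twoheadrightarrow M$ with kernel $K$; the goal is $K=0$. For any $A$-module $N$, multiplication by $s$ descends (since $s\gotm=0$) to a surjection $N/(\gotm N)\twoheadrightarrow sN$. For $N=A^{\oplus n}$ this is a surjection between $\kk$-spaces of dimension $n$ and $n$ (the latter by the computation above), hence an isomorphism; for $N=M$ it is a surjection between $\kk$-spaces of dimension $n$ and $n$ by hypothesis $\eqref{item_free_module_criteria_dimensions_agree}$, hence an isomorphism; and $\overline{\phi}\colon A^{\oplus n}/(\gotm A^{\oplus n})\to M/(\gotm M)$ is a surjection between $n$-dimensional spaces, hence an isomorphism. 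These three isomorphisms fit into the evident commutative square, and chasing it shows that the restriction $\phi|_{sA^{\oplus n}}\colon sA^{\oplus n}\to sM$ is an isomorphism, in particular injective. Now suppose $K\ne 0$ and pick $0\ne x\in K$; applying Lemma~\ref{lem_Gor_multiplying_collection_to_socle} to the coordinates of $x$ produces $b\in A$ with $bx\ne 0$ and all coordinates of $bx$ in the socle $\gots=\kk s$, so $0\ne bx\in \gots^{\oplus n}=sA^{\oplus n}$. But $bx\in K$ forces $\phi(bx)=0$, contradicting injectivity of $\phi|_{sA^{\oplus n}}$. Hence $K=0$ and $M\simeq A^{\oplus n}$ is free.

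The main obstacle is the last step of $\eqref{item_free_module_criteria_dimensions_agree}\Rightarrow\eqref{item_free_module_criteria_M_free}$: showing that a nonzero submodule $K\subset A^{\oplus n}$ must meet the ``socle row'' $\gots^{\oplus n}=sA^{\oplus n}$. This is exactly where the Gorenstein hypothesis enters (the socle is one-dimensional, hence essential), and it is conveniently packaged in Lemma~\ref{lem_Gor_multiplying_collection_to_socle}; everything else is a dimension count resting on the identity $sA=\kk s$ and Nakayama. It may also be worth remarking that condition~\eqref{item_free_module_criteria_dimensions_agree} is precisely the assertion that the canonical surjection $M/(\gotm M)\twoheadrightarrow sM$ is an isomorphism, which is the conceptual reformulation underlying the argument.
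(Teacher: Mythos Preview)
Your proof is correct and follows essentially the same approach as the paper: both use Nakayama to pick a minimal generating set (equivalently, a surjection $A^{\oplus n}\twoheadrightarrow M$), and both invoke Lemma~\ref{lem_Gor_multiplying_collection_to_socle} to push a nontrivial relation into the socle, where it contradicts the dimension hypothesis. The only difference is presentational: the paper argues the contrapositive $\neg\eqref{item_free_module_criteria_M_free}\Rightarrow\neg\eqref{item_free_module_criteria_dimensions_agree}$ directly by producing a $\kk$-linear dependence among the $s\cdot m_i$, whereas you package the same step as injectivity of $\phi|_{sA^{\oplus n}}$ via a commutative square.
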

\begin{prf}
   The equivalence of \ref{item_free_module_criteria_M_free} and \ref{item_free_module_criteria_M_locally_free}
   is clear, as $\Spec A$ consists of only one point.
 Pick a basis of $M/(\gotm \cdot M)$ together with its lift $\setfromto{m_1}{m_r}$ to $M$.
   By Nakayama's lemma $\setfromto{m_1}{m_r}$ minimally generate $M$.
   Then $\setfromto{s\cdot m_1}{s\cdot m_r}$
    $\kk$-linearly spans $s\cdot M$, hence we always have $\dim_\kk (s\cdot M) \leqslant \dim_{\kk} (M/(\gotm \cdot M))$.

   Now assume $M$ is free as in \ref{item_free_module_criteria_M_free}.
   Then,
   $M = A\cdot m_1 \oplus \dotsb \oplus A\cdot m_r$
   and thus $s\cdot M = \kk \cdot m_1 \oplus \dotsb \oplus \kk \cdot m_r$ as claimed.
   Conversely, suppose \ref{item_free_module_criteria_M_free} fails to hold, that is there is a non-trivial $A$-relation among $\setfromto{m_1}{m_r}$,
   say:
   \[
     a_1 \cdot m_1 + \dotsb + a_r \cdot m_r =0,
   \]
   with $a_i \in A$.
   By Lemma~\ref{lem_Gor_multiplying_collection_to_socle}, for some $b\in A$ we get $ba_i = c_i s$ with at least one $c_i$ non-zero.
   Therefore,
   \[
     0 =  b a_1 \cdot m_1 + \dotsb + b a_r \cdot m_r =
         c_1 (s \cdot m_1) + \dotsb + c_r (s \cdot  m_r),
   \]
   obtaining a non-trivial relation between the generators $s \cdot m_1, \dotsc, s \cdot m_r$
   of $s\cdot M$.
   Hence $\dim_{\kk} (s\cdot M) < r = \dim_{\kk} (M /(\gotm \cdot M))$ and \ref{item_free_module_criteria_dimensions_agree}
     fails to hold, as claimed.
\end{prf}

\begin{lemma}\label{lem_criterion_for_free_submodule}
   Suppose $A$ is a finite local Gorenstein
   $\kk$-algebra.
   Denote by $\gotm$ the maximal ideal of $A$ and by
   $s$ any nonzero element of the socle of $A$.
   Let $M \subset \freemodule{W}$ be an $A$-submodule.
   Then, the following conditions are equivalent:
   \begin{enumerate}
    \item  \label{item_free_submodule_criteria_M_free}
        $M$ is a free $A$-module,
    \item
      \label{item_free_submodule_criteria_dimensions_agree}
        $\dim_\kk (M \cap s \cdot \freemodule{W})
        = \dim_{\kk} \left(M/(\gotm \cdot \freemodule{W} \cap M)\right)$,
    \item
      \label{item_free_submodule_criteria_intersection_with_socle}
        $s\cdot M = (M \cap s \cdot \freemodule{W})$.
    \end{enumerate}
\end{lemma}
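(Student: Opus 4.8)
The plan is to match conditions \ref{item_free_submodule_criteria_dimensions_agree} and \ref{item_free_submodule_criteria_intersection_with_socle} with each other by a direct computation, and to connect them with \ref{item_free_submodule_criteria_M_free} by passing to the quotient module $\freemodule{W}/M$ and invoking the freeness criterion of Lemma~\ref{lem_criterion_for_free_module_over_Gor} for it.

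First I would record some facts about an arbitrary $A$-submodule $M\subseteq\freemodule{W}$. Since $A$ is finite local with residue field $\kk$, fix the $\kk$-linear splitting $A=\kk\oplus\gotm$ and let $p\colon\freemodule{W}=A\otimes_\kk W\to W$ be the induced projection, so $\ker p=\gotm\otimes_\kk W=\gotm\cdot\freemodule{W}$. Since $s$ lies in the socle, $s\gotm=0$, hence $sA=\kk s$; therefore $w\mapsto sw$ is a $\kk$-linear isomorphism $W\xrightarrow{\ \sim\ }s\cdot\freemodule{W}$, and $s\cdot m=s\cdot p(m)$ for every $m\in\freemodule{W}$, because the $\gotm\cdot\freemodule{W}$-component of $m$ is annihilated by $s$. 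Consequently $s\cdot M=s\cdot p(M)$, so that
\[
 \dim_\kk(s\cdot M)=\dim_\kk p(M)=\dim_\kk\bigl(M/(\gotm\cdot\freemodule{W}\cap M)\bigr),
\]
while clearly $s\cdot M\subseteq M\cap s\cdot\freemodule{W}$. Substituting the displayed equality into \ref{item_free_submodule_criteria_dimensions_agree} turns that condition into $\dim_\kk(s\cdot M)=\dim_\kk(M\cap s\cdot\freemodule{W})$, which, by the inclusion just noted and finite dimensionality, is equivalent to $s\cdot M=M\cap s\cdot\freemodule{W}$, i.e. to \ref{item_free_submodule_criteria_intersection_with_socle}. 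This gives \ref{item_free_submodule_criteria_dimensions_agree}$\Leftrightarrow$\ref{item_free_submodule_criteria_intersection_with_socle}.

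Next I would add \ref{item_free_submodule_criteria_M_free} to this equivalence. For \ref{item_free_submodule_criteria_M_free}$\Rightarrow$\ref{item_free_submodule_criteria_intersection_with_socle}: writing $\freemodule{W}\cong A^{\dim W}$ and using that the socle of $A$ is $\kk s$, one checks that $s\cdot\freemodule{W}$ is precisely the submodule of $\freemodule{W}$ annihilated by $\gotm$, hence $M\cap s\cdot\freemodule{W}$ is the submodule of $M$ annihilated by $\gotm$; if $M=\bigoplus_i A m_i$ is free, this submodule equals $\bigoplus_i\kk s m_i=s\cdot M$, which is \ref{item_free_submodule_criteria_intersection_with_socle}. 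Conversely, assume \ref{item_free_submodule_criteria_intersection_with_socle}, equivalently \ref{item_free_submodule_criteria_dimensions_agree}, and set $N:=\freemodule{W}/M$, a finitely generated $A$-module. From $s\cdot N\cong s\cdot\freemodule{W}/(s\cdot\freemodule{W}\cap M)$ and $N/\gotm N\cong\freemodule{W}/(M+\gotm\cdot\freemodule{W})$ one computes
\[
 \dim_\kk(s\cdot N)=\dim_\kk W-\dim_\kk(M\cap s\cdot\freemodule{W}),\qquad \dim_\kk(N/\gotm N)=\dim_\kk W-\dim_\kk\bigl(M/(\gotm\cdot\freemodule{W}\cap M)\bigr),
\]
and \ref{item_free_submodule_criteria_dimensions_agree} says precisely that the two subtracted quantities agree, so $\dim_\kk(s\cdot N)=\dim_\kk(N/\gotm N)$. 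By Lemma~\ref{lem_criterion_for_free_module_over_Gor} applied to $N$ this forces $N=\freemodule{W}/M$ to be free; then the sequence $0\to M\to\freemodule{W}\to N\to0$ splits because $N$ is projective, so $M$ is a direct summand of the free module $\freemodule{W}$, hence projective, hence free (being finitely generated over the local ring $A$). This closes the loop \ref{item_free_submodule_criteria_M_free}$\Rightarrow$\ref{item_free_submodule_criteria_intersection_with_socle}$\Rightarrow$\ref{item_free_submodule_criteria_M_free}, and with the previous paragraph all three conditions are equivalent.

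The step I expect to require the most care is this passage to $N=\freemodule{W}/M$, together with the auxiliary fact that a submodule of a free $A$-module whose quotient is free is itself free. That fact uses only that a free (hence projective) quotient splits off the defining short exact sequence, exhibiting $M$ as a direct summand of a free module, so no self-injectivity of $A$ enters the argument; once $N$ is recognized as free via Lemma~\ref{lem_criterion_for_free_module_over_Gor}, the remaining bookkeeping is elementary linear algebra over $\kk$ resting on the splitting $A=\kk\oplus\gotm$ and the identity $sA=\kk s$.
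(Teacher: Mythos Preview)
Your proof is correct and takes a genuinely different route from the paper's. Both arguments establish \ref{item_free_submodule_criteria_dimensions_agree}$\Leftrightarrow$\ref{item_free_submodule_criteria_intersection_with_socle} in essentially the same way (via $\dim_\kk(s\cdot M)=\dim_\kk\bigl(M/(\gotm\cdot\freemodule{W}\cap M)\bigr)$ and the inclusion $s\cdot M\subseteq M\cap s\cdot\freemodule{W}$), but they diverge on the implication \ref{item_free_submodule_criteria_intersection_with_socle}$\Rightarrow$\ref{item_free_submodule_criteria_M_free}. The paper works directly with $M$: it chooses a splitting $M=A\cdot(m_1,\dotsc,m_r)+M'$ with $M'\subset\gotm\otimes_\kk\langle w_{r+1},\dotsc,w_N\rangle$, observes that freeness of $M$ is equivalent to $M'=0$, and if $M'\neq0$ uses Lemma~\ref{lem_Gor_multiplying_collection_to_socle} to produce a nonzero element of $M'\cap s\cdot\freemodule{W}$ lying outside $s\cdot M$, contradicting \ref{item_free_submodule_criteria_intersection_with_socle}. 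You instead dualize to the quotient $N=\freemodule{W}/M$, translate \ref{item_free_submodule_criteria_dimensions_agree} into $\dim_\kk(s\cdot N)=\dim_\kk(N/\gotm N)$, and invoke Lemma~\ref{lem_criterion_for_free_module_over_Gor} for $N$ rather than for $M$; freeness of $M$ then falls out from the splitting of the short exact sequence. Your approach is arguably cleaner in that it reduces the submodule criterion entirely to the abstract module criterion already proved, avoiding a second direct appeal to Lemma~\ref{lem_Gor_multiplying_collection_to_socle}; the paper's approach, on the other hand, is more self-contained and makes the obstruction $M'$ explicit.
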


\begin{prf}
       Consider $\freemodule{W}/ (\gotm \cdot\freemodule{W})$
          and the image of $M \to \freemodule{W}/ (\gotm \cdot \freemodule{W})$.
       Pick a basis of the image $\fromto{\overline{m}_1}{\overline{m}_r}$, a complement $\fromto{\overline{w}_{r+1}}{\overline{w}_N}$
of this independent set
       to a basis of
       $\freemodule{W}/ (\gotm \cdot \freemodule{W})$,
       and
       their lifts $\setfromto{m_1}{m_r} \subset M$
       and $\setfromto{w_{r+1}}{w_N} \subset \freemodule{W}$.
       Note that $M = A\cdot (\fromto{m_1}{m_r})
       + M'$, for some $A$-submodule $M' \subset \gotm\otimes_{\kk} \linspan{\fromto{w_{r+1}}{w_N}}$.
       It follows that
       $\setfromto{s\cdot m_1}{s\cdot m_r}$
       is a $\kk$-basis of $s\cdot M$,
       in particular,
       $\dim_{\kk} (s\cdot M) =r
       = \dim_{\kk} (M/(\gotm \cdot \freemodule{W} \cap M)$.
       Since $s\cdot M \subset (M \cap s \cdot \freemodule{W})$
       we always have $\dim_\kk (M \cap s \cdot \freemodule{W}) \geqslant \dim_{\kk} (M/(\gotm \cdot \freemodule{W} \cap M)$ and thus \ref{item_free_submodule_criteria_dimensions_agree} is equivalent to \ref{item_free_submodule_criteria_intersection_with_socle}.

       On the other hand, item  \ref{item_free_submodule_criteria_M_free}
        is equivalent to $M'=0$ by Lemma~\ref{lem_criterion_for_free_module_over_Gor}.
  Thus, if \ref{item_free_submodule_criteria_M_free} holds, then \ref{item_free_submodule_criteria_intersection_with_socle} holds.

  Now assume \ref{item_free_submodule_criteria_intersection_with_socle} holds.
  If $M'\ne 0$, then by Lemma~\ref{lem_Gor_multiplying_collection_to_socle} applied to the $w_i$-coefficients of a non-zero element of $M'$, we get a non-zero element of $(M' \cap s \cdot \freemodule{W})$, which is not in $s\cdot M$, a contradiction.
  Thus, $M'=0$ and \ref{item_free_submodule_criteria_M_free} holds
       as claimed.
\end{prf}

Now assume $A =\kk[t]/(t^2)$. Considering the natural embedding $\kk\subset A$,
for an element $\Phi \in \freemodule{W}= \kk[t]/(t^2)\otimes_{\kk} W$ we can write it (uniquely) as
$\Phi= \varphi + t \varphi'$ with $\varphi, \varphi' \in W$ and define $\partial \Phi:=\varphi'$.
If $M \subset \freemodule{W}$ is a submodule,
then we define the following linear subspace:
\[
  \partial M :=
  \set{\partial\Phi \mid \Phi\in M}\subset W.
\]

\begin{lemma}\label{lem_properties_of_partial}
With $A=\kk[t]/(t^2)$ and notation as above,
for any $A$-submodule $M\subset \freemodule{W}$
we have the following properties of $\partial M$:
\begin{enumerate}
 \item \label{item_fibre_contained_in_partial}
    $M/ (M\cap t W) \subset \partial M$,
 \item
 \label{item_dimensions_of_partial_and_projection}
    Considering $W\subset \freemodule{W}$ induced by $\kk\subset A$ we have
 $\dim_{\kk} M = \dim_{\kk} \partial M +\dim_{\kk} M \cap W$; equivalently,
 $\codim_{\kk} (M\subset \freemodule{W}) = \codim_{\kk} (\partial M\subset W) +\codim_{\kk} (M \cap W\subset W)$.
\end{enumerate}

\end{lemma}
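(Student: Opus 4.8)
The plan is to realize $\partial$ as a $\kk$-linear map defined on all of $\freemodule{W}$ and to track how it interacts with multiplication by $t$ and with reduction modulo $t$. Write $\rho\colon \freemodule{W}\to W$ for the reduction map $\varphi+t\varphi'\mapsto \varphi$, so that $\ker\rho = tW$ and $\rho$ identifies $\freemodule{W}/tW$ with $W$; likewise $\partial\colon \freemodule{W}\to W$, $\varphi+t\varphi'\mapsto\varphi'$, is $\kk$-linear and surjective with $\ker\partial = W$, meaning the degree-zero part $1\otimes W\subset\freemodule{W}$. The one elementary identity to record, a consequence of $t^2=0$, is that multiplication by $t$ sends $\varphi+t\varphi'$ to $t\varphi$; hence $\partial(t\Phi)=\rho(\Phi)$ for every $\Phi\in\freemodule{W}$.

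For part~\ref{item_dimensions_of_partial_and_projection} I would simply apply rank-nullity to the restriction $\partial|_M\colon M\to W$: its image is $\partial M$ by definition and its kernel is $M\cap\ker\partial = M\cap W$, giving $\dim_\kk M = \dim_\kk\partial M + \dim_\kk(M\cap W)$. Subtracting both sides from $\dim_\kk\freemodule{W} = 2\dim_\kk W$ and regrouping yields the equivalent formula $\codim_\kk(M\subset\freemodule{W}) = \codim_\kk(\partial M\subset W) + \codim_\kk(M\cap W\subset W)$. The $A$-module structure of $M$ plays no role here.

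For part~\ref{item_fibre_contained_in_partial} the module structure enters. Since $M$ is an $A$-submodule, $tM\subseteq M$, so for any $\Phi\in M$ we have $t\Phi\in M$, and therefore $\rho(\Phi) = \partial(t\Phi)\in\partial M$ by the identity above. Thus $\rho(M)\subseteq\partial M$; since $\rho$ induces an isomorphism $M/(M\cap tW)\cong \rho(M)$, this is exactly the inclusion $M/(M\cap tW)\subseteq\partial M$ claimed, once $M/(M\cap tW)$ is identified with its image in $\freemodule{W}/tW\cong W$. There is no genuinely hard step; the only thing to get right is the bookkeeping of $\rho$ and $\partial$ and the observation that $t\cdot(\varphi+t\varphi')=t\varphi$ makes $\partial\circ(\text{multiplication by }t)$ coincide with $\rho$, which is precisely what turns the hypothesis ``$M$ is a module'' into control over $\partial M$.
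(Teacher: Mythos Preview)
Your proof is correct and follows essentially the same approach as the paper: for \ref{item_dimensions_of_partial_and_projection} the paper also applies rank--nullity to $\partial|_M$ with kernel $M\cap W$, and for \ref{item_fibre_contained_in_partial} the paper likewise uses $t\cdot(\varphi+t\varphi')=t\varphi\in M$ to conclude $\varphi=\partial(t\varphi)\in\partial M$. Your explicit naming of the reduction map $\rho$ and the identity $\partial\circ(\text{mult.\ by }t)=\rho$ makes the bookkeeping slightly cleaner, but the argument is the same.
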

\begin{prf}
  If $\varphi \in M/ (M\cap t W)$,
  then $\varphi +t\varphi'\in M$ for some $\varphi'\in W$
  thus $t\cdot(\varphi +t\varphi') = t\varphi \in M$ and $\varphi = \partial (t\varphi) \in \partial M$, proving \ref{item_fibre_contained_in_partial}.

  To prove \ref{item_dimensions_of_partial_and_projection} consider the surjective $\kk$-linear map
  $\partial\colon M\to \partial M$, taking $\Phi$ to $\partial \Phi$.
  The kernel of this map is precisely $M \cap W$,
  which proves the desired equality of dimensions.
  The equality of codimensions is then straightforward, as $\dim_{\kk}(\freemodule{W}) = 2\dim_{\kk} W$.
\end{prf}

\subsection{Relative Macaulay bound and Gotzmann's persistence}
\label{sec_Macaulay_and_Gotzmann}

Macaulay bound and Gotzmann's persistence theorems are standard tools in commutative algebra
and they govern the numerical behavior of
homogeneous ideals in a standard graded polynomial rings.
For a reference, we state in Proposition~\ref{prop_classical_Macaulay_and_Gotzmann}
the special cases of these theorems that we will need.
The goal of this section is to conclude some analogous consequences in a simplified relative case,
see Proposition~\ref{prop_relative_Macaulay_and_Gotzmann}. We restrict our attention to the generality required in this article.

Suppose $Q =\Spec A$ is a finite local Gorenstein scheme with the maximal ideal $\gotm \subset A$ and the socle $\gots \subset A$, which is $1$-dimensional. Assume further that $S$ is a $\kk$-algebra. The main cases of interest are $S=\Sym V^*$ for a vector space $V$ over $\kk$, and then we will often consider the standard grading on $S$ and $\freemodule{S}:=A\otimes_{\kk} S$: elements of $S^dV^*$ have degree $d$ and $A$ has degree $0$. However, this assumption is not required in the first two lemmas.

We will consider families of ideals in $S$ parameterized by $Q$.
That is, let $I \subset A\otimes_{\kk} S$ be an ideal, and  construct two
ideals in $S$:
\begin{itemize}
 \item $I_{\kk} = I \modulo \gotm \cdot S$ (this is the ideal of the special fibre),
 \item $I_{\gots}  = I \cap \gots \cdot S \subset \gots \cdot A\otimes_{\kk}S \simeq S$ (we call this one the \emph{ideal of socle fibre} of $I$).
\end{itemize}
Note that seemingly the latter ideal $I_{\gots}$
depends on the choice of isomorphism of $S$ modules
$\gots \cdot A\otimes_{\kk}S \simeq S$,
  but this choice is limited to a multiplication
  by an invertible scalar in $\kk\simeq \gots$.
Thus, the resulting ideal is in fact uniquely determined.

It is easy to verify that
$I_{\kk}\subset I_{\gots}$.
Moreover, as illustrated in the following lemma, these two ideals can be used to reduce a general case to the case of the base $\kk[t]/(t^2)$, which we will exploit in the proof of Proposition~\ref{prop_relative_Macaulay_and_Gotzmann}.

\begin{lemma}\label{lem_reduction_to_dual_numbers}
   Suppose $S$, $A$, $\gots \subset A$ and $I\subset \freemodule{S}$ are as above and let $A' = \kk[t]/(t^2)$.
   Fix any non-zero $s\in \gots$,
    and define $\xi\colon A' \to A$ by $t \mapsto s$.
 Denote by the same letter $\xi \colon \freemodule[A']{S} \to \freemodule{S}$  the induced map $\xi\otimes \id_S$ and let $I' = \xi^{-1} (I)$. 
 Then,
   \begin{enumerate}
    \item $I'_{\kk}=I_{\kk}$ and  $I'_{\gots'}=I_{\gots}$, where $\gots'=(t) \subset A'$ is the socle of $A'$,
 and
    \item if $\gotq \subset S$ is any ideal
    and $(I : \gotq \cdot  \freemodule{S}) =I$,
      then $(I' : \gotq \cdot  \freemodule[A']{S}) =I'$.
   \end{enumerate}
\end{lemma}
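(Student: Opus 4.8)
We must check two things for $I' = \xi^{-1}(I)$: first that passing to the dual-number base preserves the special and socle fibres, and second that saturation-type conditions with respect to an ideal $\gotq \subset S$ are inherited.

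**Plan for (i).** The key point is that $\xi\colon A' \to A$ is injective with image $\kk \oplus \kk s = \kk\cdot 1 \oplus \gots'$ mapped to $\kk \cdot 1 \oplus \gots$, and in particular $\xi$ identifies $A'/(t) = \kk$ with $A/\gotm = \kk$ compatibly, and $\gots' = (t)$ with $\gots = \kk s$. First I would compute $I'_{\kk} = I' \bmod \gotm' S$ where $\gotm' = (t)$. Since $\xi$ carries $\gotm' \cdot \freemodule[A']{S} = t\cdot\freemodule[A']{S}$ onto $s \cdot \freemodule{S} \subset \gotm\cdot\freemodule{S}$, and since $\xi$ reduces mod $\gotm'$ to the identity on $S$, an element $\Theta + t\Psi \in \freemodule[A']{S}$ (with $\Theta,\Psi \in S$) lies in $I'$ iff $\Theta + s\Psi \in I$; reducing the first condition mod $t$ gives $\Theta \in I'_{\kk}$, and the second mod $\gotm$ gives $\Theta \in I_{\kk}$. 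One inclusion is immediate ($\Theta \in I'_\kk$ means $\Theta + t\Psi \in I'$ for some $\Psi$, hence $\Theta + s\Psi \in I$, hence $\Theta \in I_\kk$); for the reverse, given $\Theta \in I_\kk$ pick $\Psi \in S$ with $\Theta + (\text{something in }\gotm\cdot S) \in I$, then apply $\xi^{-1}$ — here one uses that $I \cap (\kk\cdot 1 \oplus \gots)\cdot S$ is exactly the image under $\xi$ of $I'$, so the $\gotm$-part can be absorbed into the socle part modulo $\gotm^2$; concretely, $I_\kk = I_{\gots}$-compatibility is not needed, just that the $\kk[t]/(t^2)$-module structure sees only $1$ and $s$. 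For the socle fibre: $I'_{\gots'} = I' \cap t\cdot\freemodule[A']{S}$, and $\xi$ maps this isomorphically onto $I \cap s\cdot\freemodule{S} = I_{\gots}$ (using that $\xi$ restricted to $t\cdot\freemodule[A']{S} \to s\cdot\freemodule{S}$ is an isomorphism of $S$-modules and that $\xi^{-1}(I) \cap t\freemodule[A']{S} = \xi^{-1}(I \cap s\freemodule{S})$ because $\xi$ is injective). This identifies $I'_{\gots'}$ with $I_{\gots}$ under the canonical $\gots' \cong \kk \cong \gots$.

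**Plan for (ii).** Suppose $(I : \gotq\cdot\freemodule{S}) = I$; I want $(I' : \gotq\cdot\freemodule[A']{S}) = I'$. Take $\Theta' \in \freemodule[A']{S}$ with $\gotq\cdot\Theta' \subset I'$, i.e. $\xi(\gotq\cdot\Theta') = \gotq\cdot\xi(\Theta') \subset I$ (using that $\xi$ is $S$-linear, so commutes with multiplication by elements of $\gotq\subset S$). Then $\xi(\Theta') \in (I : \gotq\cdot\freemodule{S}) = I$, so $\Theta' \in \xi^{-1}(I) = I'$, as desired. The reverse inclusion $I' \subset (I' : \gotq\freemodule[A']{S})$ is automatic. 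So (ii) is genuinely a one-line consequence of $S$-linearity of $\xi$ and the definition $I' = \xi^{-1}(I)$.

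**Main obstacle.** The only place requiring care is the reverse inclusion $I_\kk \subset I'_\kk$ in part (i): one must produce, from a witness $\Theta + m \in I$ with $m \in \gotm\cdot S$, a witness of the form $\Theta + s\Psi \in I$ (so that it lies in the image of $\xi$ and pulls back). This uses the socle structure: multiplying the relation by a suitable $b \in A$ via Lemma~\ref{lem_Gor_multiplying_collection_to_socle} is not quite it since that would change $\Theta$; instead one observes directly that $I_\kk$ is, by definition, $\{\Theta : \Theta + \gotm\cdot S \text{ meets } I\}$ and that it suffices to land in $I + s\cdot\freemodule{S}$, because modulo $s\cdot\freemodule{S}$ the ideal $I$ already determines $I_\kk$ via the further quotient by $\gotm/\gots$; more cleanly, note $I_\kk = (I + s\freemodule{S}) \bmod \gotm S$ is false in general, so the honest argument is: $\Theta \in I_\kk$ iff the image of $\Theta$ in $\freemodule{S}/\gotm\freemodule{S}$ lies in the image of $I$; factor $A \twoheadrightarrow A/\gots \oplus (\text{correction})$—better yet, just use that the composite $A' \xrightarrow{\xi} A \twoheadrightarrow A/\gotm$ is the standard surjection $A' \to \kk$, so $\xi^{-1}(\gotm\cdot\freemodule{S}) = \gotm'\cdot\freemodule[A']{S} = t\freemodule[A']{S}$ (the containment $\supseteq$ is clear; $\subseteq$ because $\xi^{-1}(\gotm) = t A'$), hence $I'_\kk = \xi^{-1}(I)\bmod t = $ image of $\xi^{-1}(I + \gotm\freemodule{S})$, and $\xi^{-1}(I + \gotm\freemodule{S})$ surjects onto $I_\kk$ under reduction mod $\gotm$ precisely because $\xi$ is surjective onto $A/\gotm$. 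I would write this last identification carefully; everything else is routine bookkeeping with the splitting $A = \kk \oplus \gotm$ and $A' = \kk \oplus (t)$.
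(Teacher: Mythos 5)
Your treatment of part (ii) matches the paper's short argument, and your identification of $I'_{\gots'}$ with $I_{\gots}$ via the isomorphism $t\cdot \freemodule[A']{S} \to s\cdot\freemodule{S}$ is also correct. The paper dismisses all of part (i) as ``immediate from the definitions,'' but you rightly sensed that the inclusion $I_{\kk} \subset I'_{\kk}$ is where the substance lies, and your attempt to close it has a genuine gap. You assert that $I'_{\kk} = \xi^{-1}(I) \bmod t$ equals the image (mod $t$) of $\xi^{-1}(I + \gotm\cdot\freemodule{S})$. But $\xi^{-1}(I)+t\cdot\freemodule[A']{S}$ need not equal $\xi^{-1}(I+\gotm\cdot\freemodule{S})$ --- the preimage of a sum is not the sum of preimages --- and if you unwind the claimed equality you find it is exactly $I_{\kk}\subset I'_{\kk}$ restated, so the step is circular.

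In fact that inclusion is false. Take $A=\kk[u]/(u^3)$ with socle $\gots=(u^2)$, $s=u^2$, $S=\kk[\alpha,\beta]$, and $I=(\alpha+u\beta)\subset\freemodule{S}$. Then $I_{\kk}=(\alpha)$. An element $\Theta+t\Psi$ lies in $I'=\xi^{-1}(I)$ iff $\Theta+u^2\Psi=(f_0+uf_1+u^2f_2)(\alpha+u\beta)$ for some $f_i\in S$; comparing $u$-coefficients forces $\Theta=f_0\alpha$ and $f_0\beta+f_1\alpha=0$, so $\alpha\mid f_0$ and $\Theta\in(\alpha^2)$. Hence $I'_{\kk}=(\alpha^2)\subsetneq(\alpha)=I_{\kk}$, so only the easy containment $I'_{\kk}\subset I_{\kk}$ holds. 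Thus the lemma as printed is wrong, and the paper's ``immediate'' is mistaken at exactly the point you hesitated over. It is worth noting that the one-sided inclusion $I'_{\kk}\subset I_{\kk}$ together with the genuine equality $I'_{\gots'}=I_{\gots}$ (and part (ii)) are all that the paper's application in Proposition~\ref{prop_relative_Macaulay_and_Gotzmann} actually requires, so the downstream proof survives with a corrected lemma; but your proposal, like the paper, claims more than is true, and the step where you tried to bridge the gap is the step that fails.
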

\begin{prf}
   The first item is immediate from the definitions.
   For the second item, let
   $\Phi \in (I' : \gotq \cdot  \freemodule[A']{S})$, so that
   $\Phi \cdot \gotq \subset I'$.
   Then, $ \xi(\Phi\cdot \gotq)= \xi(\Phi) \cdot \gotq\cdot \freemodule{S}  \subset I$, hence $\xi(\Phi) \in I$,
   and by definition of $I'$, we must have $\Phi \in I'$ as claimed.
\end{prf}

We also need another lemma about saturated ideals.

\begin{lemma}\label{lem_socle_fibre_ideal_of_saturated_is_saturated}
   Suppose $A$, $S$, $\freemodule{S}$, $\gots$ are as above.
   Let $\gotq\subset S$ be an ideal.
   If $I \subset \freemodule{S}$
   is an ideal such that $I = (I: \gotq \cdot \freemodule{S})$,
   then also $I_{\gots}= (I_{\gots} \colon \gotq)$.
\end{lemma}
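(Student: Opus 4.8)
The plan is to unwind the definitions of the colon operation and of the socle fibre ideal, and to reduce the claim to a chase inside $\freemodule{S} = A\otimes_\kk S$ using the identification $\gots\cdot\freemodule{S}\simeq S$. Fix a nonzero $s\in\gots$, which gives an $S$-module isomorphism $S\to\gots\cdot\freemodule{S}$, $g\mapsto s\otimes g$ (this is an isomorphism of $S$-modules because $\gots$ is $1$-dimensional and annihilated by $\gotm$, so multiplication by elements of $A$ on $\gots\cdot\freemodule{S}$ factors through $A/\gotm=\kk$). Under this identification, $I_{\gots}$ is by definition the ideal $\{g\in S \mid s\otimes g\in I\}$, equivalently $(I\cap\gots\cdot\freemodule{S})$ transported to $S$.

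The core step is then the following: given $g\in S$ with $g\cdot\gotq\subset I_{\gots}$, I want to show $g\in I_{\gots}$, i.e.\ $s\otimes g\in I$. The hypothesis $g\cdot\gotq\subset I_{\gots}$ translates, via the identification, to $s\otimes(g\cdot q)\in I$ for all $q\in\gotq$, that is $(s\otimes g)\cdot q\in I$ for all $q\in\gotq$, where $s\otimes g$ is viewed as an element of $\freemodule{S}$. Thus $s\otimes g\in (I:\gotq\cdot\freemodule{S})$. But by hypothesis $(I:\gotq\cdot\freemodule{S})=I$, so $s\otimes g\in I$; and since also $s\otimes g\in\gots\cdot\freemodule{S}$, we get $s\otimes g\in I\cap\gots\cdot\freemodule{S}$, which is exactly $g\in I_{\gots}$. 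The reverse inclusion $I_{\gots}\subset(I_{\gots}:\gotq)$ is automatic since $I_{\gots}$ is an ideal. This proves $I_{\gots}=(I_{\gots}:\gotq)$.

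I do not expect any serious obstacle here: once the $S$-module identification $\gots\cdot\freemodule{S}\simeq S$ is set up carefully, the statement is a direct translation of the colon hypothesis on $I$ through an $S$-linear isomorphism, since the colon operation $(J:\gotq)$ is preserved by $S$-module isomorphisms of the ambient module and $\gots\cdot\freemodule{S}$ is an $S$-submodule of $\freemodule{S}$ on which $\gotq$ acts through its action on $\freemodule{S}$. The only point deserving a line of care is that multiplication by $q\in\gotq\subset S$ commutes with the identification $S\simeq\gots\cdot\freemodule{S}$, which holds because that identification is $S$-linear (here one uses that $S$ acts on the second tensor factor of $A\otimes_\kk S$, so $q\cdot(s\otimes g)=s\otimes(qg)$). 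I would present the argument in two or three lines following this reduction, without invoking the Gorenstein hypothesis beyond what is needed to know $\gots$ is $1$-dimensional so that the identification $\gots\cdot\freemodule{S}\simeq S$ is canonical up to scalar.
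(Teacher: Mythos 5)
Your proof is correct and follows essentially the same route as the paper's: take $\Phi$ with $\Phi\cdot\gotq\subset I_{\gots}$, pass to $s\Phi\in\freemodule{S}$, observe $s\Phi\cdot\gotq\subset I$, and invoke the hypothesis $(I:\gotq\cdot\freemodule{S})=I$ to get $s\Phi\in I\cap\gots\cdot\freemodule{S}$, i.e.\ $\Phi\in I_{\gots}$. You simply make the $S$-linear identification $S\simeq\gots\cdot\freemodule{S}$ explicit, which the paper leaves implicit, but the argument is the same.
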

\begin{proof}
   Suppose $\Phi \in (I_{\gots} \colon \gotq)$, 
   then $\Phi \cdot \gotq \subset I_{\gots}$. 
   Thus, $ \Phi \cdot \gots \cdot \gotq  \subset I$ and therefore $ \Phi \cdot \gots \subset I$ and $\Phi \in I_{\gots}$, showing that $I_{\gots}$ is saturated with respect to $\gotq$, as claimed.
\end{proof}

In the proposition below and throughout the article we will denote by $\gotn \subset A\otimes_{\kk}\Sym V^*$ the homogeneous ideal generated by $V^*$.
Moreover, for an ideal $I$ in $\Sym V^*$ or in  $A\otimes_{\kk}\Sym V^*$ we will write $I^{\sat}$ to be the saturation with respect to $\gotn$, that is
$I^{\sat} := (I : \gotn^{\infty})$.

\begin{proposition}
   \label{prop_classical_Macaulay_and_Gotzmann}
   Suppose $I\subset \Sym V^*$ is a homogeneous ideal
   with respect to the standard grading and
   let $S=\Sym V^*/I$  be the graded quotient algebra.
   Suppose $i$ and $r$ are two non-negative integers.
   Then:
   \begin{enumerate}
    \item
      \label{item_classical_Macaulay}
      (Macaulay's bound)
      If $i\ge r$ and $\dim S_i\leqslant r$,
       then
       \begin{itemize}
        \item $\dim S_{k+1} \le \dim S_k$, and for all $k\geqslant i$, and
        \item  $\dim S_k \geqslant \dim S_i$ for all $r-1\leqslant k \leqslant i$.
       \end{itemize}
    \item
      \label{item_classical_Gotzmann}
      If $i\geqslant r$, $\dim S_i=r$ and $I$ is generated in degrees at most $i$,
      then for all $k\geqslant i$ we have:
      \begin{itemize}
       \item (Gotzmann's persistence) $\dim S_k = r$,
       \item (saturation) $I^{\sat}_{k} = I_k$,
       \item (weak Lefschetz for saturation)
         If $I = I^{\sat}$, then multiplication by a general linear form $\alpha \in V^*$ determines an invertible $\kk$-linear map
       $S_{l}\to S_{l+1}$ for any $l\ge r-1$,
       \item (weak Lefschetz property) multiplication by a general linear form $\alpha \in V^*$ determines an invertible $\kk$-linear map
       $S_{k}\to S_{k+1}$.
      \end{itemize}
   \end{enumerate}
\end{proposition}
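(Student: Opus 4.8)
The statement collects classical results; the plan is to reduce part~\ref{item_classical_Macaulay} and the first two bullets of part~\ref{item_classical_Gotzmann} to Macaulay's theorem on Hilbert functions and to Gotzmann's persistence and regularity theorems, and then to extract the two weak Lefschetz assertions by hand, the only genuine input being that a saturated ideal with constant Hilbert polynomial $r$ defines a finite subscheme of $\PP_{\kk}V$ of degree $r$, on whose homogeneous coordinate ring a general linear form is a nonzerodivisor.

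For part~\ref{item_classical_Macaulay}, write $h_k := \dim S_k$. I would use a single feature of the Macaulay growth operator $(-)^{\langle k\rangle}$ (for which $h_{k+1}\le h_k^{\langle k\rangle}$, a standard fact about Hilbert functions of standard graded quotients of a polynomial ring): if $0\le t\le k$, then the $k$-th Macaulay representation of $t$ is $\binom{k}{k}+\binom{k-1}{k-1}+\dotsb$ ($t$ summands), hence $t^{\langle k\rangle}=t$. Given $i\ge r$ and $h_i\le r$ one has $h_i\le i$, so $h_{i+1}\le h_i^{\langle i\rangle}=h_i$, and inductively $h_{k+1}\le h_k$ for all $k\ge i$; this is the first bullet. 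For the second bullet, suppose $h_k<h_i$ for some $r-1\le k\le i$ and let $k_0$ be the largest such index; then $k_0<i$, and by maximality $h_{k_0+1}\ge h_i$, while $h_{k_0}<h_i\le r$ forces $h_{k_0}\le r-1\le k_0$, so $h_{k_0+1}\le h_{k_0}^{\langle k_0\rangle}=h_{k_0}<h_i$, a contradiction.

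For part~\ref{item_classical_Gotzmann}, the hypotheses ($i\ge r$, $\dim S_i=r\le i$, $I$ generated in degrees $\le i$) place us exactly in the situation of Gotzmann's persistence theorem with Hilbert polynomial the constant $r$, giving $\dim S_k=r$ for all $k\ge i$; Gotzmann's regularity theorem then shows $I^{\sat}$ is $i$-regular, hence $I^{\sat}_k=I_k$ for $k\ge i$. These two bullets I would simply quote. For the weak Lefschetz statements, note first that $\Sym V^*/I^{\sat}$ has Hilbert polynomial the constant $r$, so it is the homogeneous coordinate ring of a finite subscheme $Z\subset\PP_{\kk}V$ of degree $r$; in particular $\dim(\Sym V^*/I^{\sat})_k\le r$ for every $k$ (restricting to $H^0$ of the degree-$k$ twist of $\ccO_Z$), and combining with the lower bound from part~\ref{item_classical_Macaulay} gives $\dim(\Sym V^*/I^{\sat})_k=r$ for all $k\ge r-1$. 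Being saturated, $\Sym V^*/I^{\sat}$ has positive depth with respect to the irrelevant ideal, so a general linear form $\alpha\in V^*$ --- for instance one vanishing at no point of the finite set $Z$ --- is a nonzerodivisor on it; hence multiplication by $\alpha$ is injective from $(\Sym V^*/I^{\sat})_l$ to $(\Sym V^*/I^{\sat})_{l+1}$, and an isomorphism as soon as $l\ge r-1$. When $I=I^{\sat}$ this is precisely the third bullet. For the fourth bullet (arbitrary $I$ generated in degrees $\le i$), the equality $I_k=I^{\sat}_k$ for $k\ge i$ identifies $S_k$ with $(\Sym V^*/I^{\sat})_k$ in those degrees, so multiplication by the same $\alpha$ is an isomorphism $S_k\to S_{k+1}$ for every $k\ge i$.

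The only real obstacle is bookkeeping: matching the precise hypotheses of the quoted Macaulay and Gotzmann statements (in particular, that $\dim S_i=r$ with $r\le i$ is the minimal-growth, Gotzmann-number situation, so that persistence and $i$-regularity genuinely apply), keeping track of the exact degree ranges ($k\ge i$ versus $l\ge r-1$), and checking that the ``general'' linear form in the two Lefschetz bullets can be chosen uniformly --- which it can, since avoiding the finitely many points of $Z$ is a single nonempty open condition on $\PP_{\kk}V^*$ that makes $\alpha$ a nonzerodivisor in all degrees simultaneously. No new idea beyond assembling these classical facts is required.
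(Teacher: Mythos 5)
Your part~(i) argument is correct, and more explicit than the paper's (which simply cites Green for this): the observation that $t^{\langle k\rangle}=t$ whenever $0\le t\le k$ is exactly what turns Macaulay's growth bound into the stated monotonicity, and the downward-maximality argument for the second bullet is fine. Your treatment of the two weak Lefschetz bullets --- a general $\alpha\in V^*$ avoids the finite scheme $Z=V(I^{\sat})$, hence is a nonzerodivisor on $\Sym V^*/I^{\sat}$, so multiplication by $\alpha$ is injective and, by the dimension count $\dim(\Sym V^*/I^{\sat})_k=r$ for $k\ge r-1$, bijective --- is the same argument the paper sketches (``saturation plus finiteness of primary decomposition'' for injectivity, ``dimension count plus Macaulay's bound'' for surjectivity).

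There is, however, a genuine gap in the Gotzmann step. You assert that $i\ge r$, $\dim S_i=r$, and $I$ generated in degrees $\le i$ ``place us exactly in the situation of Gotzmann's persistence theorem with Hilbert polynomial the constant $r$''. They do not: Gotzmann's persistence needs the \emph{extremal growth} hypothesis $\dim S_{i+1}=(\dim S_i)^{\langle i\rangle}=r$, and neither this nor the claim that the Hilbert polynomial of $S$ equals the constant $r$ follows from the stated hypotheses. Concretely, take $\Sym V^*=\kk[x,y]$ and $I=(x^3,y^3)$ with $i=3$, $r=2$: then $i\ge r$, $\dim S_3=2=r$, $I$ is generated in degree $3$, yet $\dim S_4=1\ne r$, so persistence fails and with it your saturation and weak Lefschetz conclusions. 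This is in fact an imprecision already present in the statement of Proposition~\ref{prop_classical_Macaulay_and_Gotzmann}\ref{item_classical_Gotzmann}, which the paper does not detect because its ``proof'' of that bullet is a bare citation; the correct hypothesis should additionally force extremal growth, e.g.\ $\dim S_{i+1}\ge r$, or $\dim S_j= r$ for some $j>i$, or that the Hilbert polynomial of $S$ is the constant $r$. In every place the paper actually uses this proposition (the proofs of Proposition~\ref{prop_relative_Macaulay_and_Gotzmann} and Lemma~\ref{lem_Q_in_Upsilon_lifts_to_Hilb}) such extra information is available, so nothing downstream breaks; but a proof that derives persistence from the hypotheses as written cannot be correct, and yours should flag the missing assumption rather than claim the hypotheses suffice.
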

\noprf
For more general statements and proofs of Macaulay's bound and Gotzmann's persistence, see for instance \cite[Thms~3.3, 3.8]{green_generic_initial_ideals}.
The claim about saturation follows from applying
Macaulay's bound to $I^{\sat}$ and using $I\subset I^{\sat}$ with equality in very large degrees.
The injectivity in weak Lefschetz properties follows from the saturation property and finiteness of primary decomposition. The surjectivity follows from the dimension count and Macaulay's bound.

We have the following generalizations and consequences
 of Macaulay's bound and Gotzmann's Persitence theorems.

\begin{proposition}\label{prop_relative_Macaulay_and_Gotzmann}
   Fix a positive integer $r$ and two degrees $j > i \geqslant r$.
   Suppose $I\subset \freemodule{S}$ is a homogeneous ideal such that
     both $(\freemodule{S}/I)_i$ and $(\freemodule{S}/I)_j$ are flat (hence free) $A$-modules of rank $r$, and let $J = (I)_{\leqslant i}$.
   Then,
  \begin{enumerate}
   \item \label{item_Macaulay_for_I}
   $(\freemodule{S}/I)_k$ is flat of rank $r$ for all $i < k < j$ and $I$ has no minimal generators in degrees $\fromto{i+1, i+2}{j}$.
   \item \label{item_Macaulay_and_Gotzmann_for_J}
    $(\freemodule{S}/J)_k = (\freemodule{S}/J^{\sat})_k$ and they are both flat of rank $r$ in all degrees $k\geqslant i$.
   \item \label{item_Macaulay_and_Gotzmann_for_Jsat}
   $(\freemodule{S}/J^{\sat})_k$ is flat of rank $r$ in all degrees $k\geqslant r-1$.
   \end{enumerate}
\end{proposition}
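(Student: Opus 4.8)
The plan is to reduce everything to the case of the base $A' = \kk[t]/(t^2)$ using Lemma~\ref{lem_reduction_to_dual_numbers}, and then to extract the numerical content from the two ideals $I_{\kk}$ and $I_{\gots}$ (resp.\ $I'_{\kk}$ and $I'_{\gots'}$) via the classical Macaulay and Gotzmann statements of Proposition~\ref{prop_classical_Macaulay_and_Gotzmann}. First I would observe that flatness (equivalently freeness, by Lemma~\ref{lem_criterion_for_free_module_over_Gor}) of $(\freemodule{S}/I)_k$ as an $A$-module of rank $r$ should be detectable on the fibres: the special fibre $(\Sym V^*/I_{\kk})_k = (\freemodule{S}/I)_k \otimes_A A/\gotm$ always has dimension $\geqslant r$ (it is a quotient surjected onto by the rank-$r$ free module's fibre), and the socle fibre $(\Sym V^*/I_{\gots})_k$ always has dimension $\leqslant r$; freeness of rank $r$ in degree $k$ is then equivalent to both fibre dimensions being exactly $r$, by the freeness criterion of Lemma~\ref{lem_criterion_for_free_submodule} applied to $I_k\subset\freemodule{(S_k)}$ (using $I_{\kk}\subset I_{\gots}$ and the inequality $\dim(\Sym V^*/I_{\gots})_k\le\dim(\Sym V^*/I_{\kk})_k$). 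So the hypothesis in degrees $i$ and $j$ says exactly that $\dim(\Sym V^*/I_{\kk})_i=\dim(\Sym V^*/I_{\gots})_i=\dim(\Sym V^*/I_{\kk})_j=\dim(\Sym V^*/I_{\gots})_j=r$.

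Next, for item~\ref{item_Macaulay_for_I}: apply Macaulay's bound (Proposition~\ref{prop_classical_Macaulay_and_Gotzmann}\ref{item_classical_Macaulay}) to the graded algebras $\Sym V^*/I_{\kk}$ and $\Sym V^*/I_{\gots}$ separately. Since $\dim(\Sym V^*/I_{\kk})_i\le r$ (it equals $r$) and $i\ge r$, the Hilbert function of $\Sym V^*/I_{\kk}$ is non-increasing for $k\ge i$; combined with $\dim(\Sym V^*/I_{\kk})_j = r = \dim(\Sym V^*/I_{\kk})_i$ this forces it to be constantly $r$ on $[i,j]$. The same argument on $\Sym V^*/I_{\gots}$ gives $\dim(\Sym V^*/I_{\gots})_k=r$ for all $i\le k\le j$. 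By the fibrewise criterion from the previous paragraph, $(\freemodule{S}/I)_k$ is free of rank $r$ for all $i\le k\le j$. For the statement about no minimal generators of $I$ in degrees $i+1,\dots,j$: a minimal generator in degree $k$ would produce a minimal generator of $I_{\kk}$ or of $I_{\gots}$ in that degree, but Gotzmann-type persistence (the saturation clause of Proposition~\ref{prop_classical_Macaulay_and_Gotzmann}\ref{item_classical_Gotzmann}, or directly the constancy of the Hilbert function together with the surjectivity in weak Lefschetz) forbids this; I would argue that the surjection $(\freemodule{S}/J')_k \twoheadrightarrow (\freemodule{S}/I)_k$, where $J' = (I)_{\le k-1}$, is an isomorphism in degree $k$ by comparing ranks after passing to both fibres.

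For items~\ref{item_Macaulay_and_Gotzmann_for_J} and~\ref{item_Macaulay_and_Gotzmann_for_Jsat}: by Lemma~\ref{lem_reduction_to_dual_numbers} it suffices to treat $A = \kk[t]/(t^2)$, since $J_{\kk} = (J)_{\le i}\bmod\gotm S = (I_{\kk})_{\le i}$ and similarly $J_{\gots} = (I_{\gots})_{\le i}$ are unchanged under $\xi$, and the saturation property transports. Now $J_{\kk} = (I_{\kk})_{\le i}$ is generated in degrees $\le i$, and $\dim(\Sym V^*/J_{\kk})_i = \dim(\Sym V^*/I_{\kk})_i = r$ with $i\ge r$; Gotzmann's persistence (Proposition~\ref{prop_classical_Macaulay_and_Gotzmann}\ref{item_classical_Gotzmann}) gives $\dim(\Sym V^*/J_{\kk})_k = r$ for all $k\ge i$ and $(J_{\kk})^{\sat}_k = (J_{\kk})_k$ there. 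Using Lemma~\ref{lem_socle_fibre_ideal_of_saturated_is_saturated} (the socle fibre of a saturated ideal is saturated) together with the same Gotzmann argument applied to $J_{\gots}=(I_{\gots})_{\le i}$, I get $\dim(\Sym V^*/J_{\gots})_k = r$ and $(J_{\gots})^{\sat}_k = (J_{\gots})_k$ for $k\ge i$. One must then check that $(J^{\sat})_{\kk} = (J_{\kk})^{\sat}$ and $(J^{\sat})_{\gots} = (J_{\gots})^{\sat}$ in the relevant degrees — this is where some care is needed, and is the point I expect to be the main obstacle: saturation and reduction modulo $\gotm$ (or intersection with $\gots\cdot S$) need not commute in general, and one has to use flatness/freeness in high degrees plus the fact that $J\subset J^{\sat}\subset I$ in degrees $\ge i$ to pin it down. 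Granting this, item~\ref{item_Macaulay_and_Gotzmann_for_J} follows from the fibrewise freeness criterion in degrees $\ge i$, and item~\ref{item_Macaulay_and_Gotzmann_for_Jsat} follows by applying the "weak Lefschetz for saturation" clause of Proposition~\ref{prop_classical_Macaulay_and_Gotzmann}\ref{item_classical_Gotzmann} to both $J_{\kk}^{\sat}$ and $J_{\gots}^{\sat}$ to push the constancy of Hilbert functions (hence the fibrewise dimension $r$, hence freeness) down to all degrees $k\ge r-1$.
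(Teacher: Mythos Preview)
Your overall strategy matches the paper's: reduce to the two fibre ideals $I_{\kk}\subset I_{\gots}$, use Lemma~\ref{lem_criterion_for_free_submodule} to translate freeness into the numerical condition $\dim(S/I_{\kk})_k=\dim(S/I_{\gots})_k=r$, and feed both fibres into the classical Macaulay--Gotzmann machine. Your treatment of item~\ref{item_Macaulay_for_I} is essentially the paper's (the paper disposes of ``no new generators'' by the clean observation $J_k\subset I_k$ with equal $\kk$-dimensions for $i\le k\le j$, which is what your final sentence about $J'=(I)_{\le k-1}$ amounts to), and your treatment of the flatness of $(\freemodule{S}/J)_k$ for $k\ge i$ is also the same.

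The genuine gap is exactly where you flag it: you need information about $(J^{\sat})_{\kk}$ and $(J^{\sat})_{\gots}$, not about $(J_{\kk})^{\sat}$ and $(J_{\gots})^{\sat}$, and you do not establish the commutation. The paper does \emph{not} attempt to commute saturation with passage to fibres. Instead it uses the asymmetry between the two fibres. For the socle fibre, Lemma~\ref{lem_socle_fibre_ideal_of_saturated_is_saturated} says $(J^{\sat})_{\gots}$ is itself saturated; since its Hilbert polynomial is $r$ and the Hilbert function of a saturated ideal with constant Hilbert polynomial is nondecreasing, one gets $\dim\bigl(S/(J^{\sat})_{\gots}\bigr)_k=r$ for all $k\ge r-1$ directly. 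For the special fibre no such lemma is available, and the paper proves $((J^{\sat})_{\kk})_k=((J^{\sat})_{\gots})_k$ for $k\ge r-1$ by a downward induction on $k$ starting from $k=i$: given $\alpha\Phi\in((J^{\sat})_{\kk})_{k_0}$ with $\alpha$ general, lift to $\alpha\Phi+t\xi\in(J^{\sat})_{k_0}$, use weak Lefschetz on the \emph{already-known-saturated} $(J^{\sat})_{\gots}$ to write $\xi\equiv\alpha\Psi\bmod((J^{\sat})_{\gots})_{k_0}$, and conclude $\Phi+t\Psi\in(J^{\sat})_{k_0-1}$ by saturatedness of $J^{\sat}$. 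This explicit lifting step is the missing idea in your proposal; applying weak Lefschetz to $(J_{\kk})^{\sat}$ and $(J_{\gots})^{\sat}$ as you suggest gives correct Hilbert functions for the wrong ideals.
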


\begin{prf}
   Both algebras $S/I_{\kk}$ and $S/I_{\gots}$ satisfy
   \[
      \dim_{\kk}(S/I_{\bullet})_k
      \begin{cases}
          \geqslant r & \text{for } r-1 \leqslant k \leqslant j \\
          \leqslant r & \text{for } i \leqslant k
      \end{cases}\quad \text{ with } \bullet \in \set{\kk, \gots}
   \]
   by the Macaulay bound in Proposition~\ref{prop_classical_Macaulay_and_Gotzmann}.
   In particular, for $i\leqslant k \leqslant j$  we have
   $\dim_{\kk}(S/I_{\kk})_k = \dim_{\kk}(S/I_{\gots})_k = r$.
   Thus, the flatness of $I$ in degrees between $i$ and $j$ 
     follows from Lemma~\ref{lem_criterion_for_free_submodule}.

   Similarly, both algebras $S/J_{\kk}$ and $S/J_{\gots}$ satisfy
   \[
      \dim_{\kk}(S/J_{\bullet})_k
      \begin{cases}
          \geqslant r & \text{for } r-1 \leqslant k \leqslant i-1 \\
          = r & \text{for } i \leqslant k 
      \end{cases}.
   \]
   by Macaulay bound and Gotzmann's persistence of Proposition~\ref{prop_classical_Macaulay_and_Gotzmann}.
   As above, this proves the flatness of $J$
   in degrees at least $i$.
   In degrees $k$ between $i$ and $j$ both $I_k$ and $J_k$ are $A$-modules of the same $\kk$-dimension 
      and with $J \subset I$.
   Thus, $I_k=J_k$ and therefore $I$ has no generators 
      in these degrees, concluding the proof of \ref{item_Macaulay_for_I}.

   To show the remaining claims, by Lemma~\ref{lem_reduction_to_dual_numbers} it is enough to assume that $A = \kk[t]/(t^2)$.
   Since $J\subset J^{\sat}$ we must have
   \[
      \dim_{\kk}(S/(J^{\sat})_{\bullet})_k \leqslant \dim_{\kk}(S/J_{\bullet})_k 
   \]
   By Lemma~\ref{lem_socle_fibre_ideal_of_saturated_is_saturated}
   the ideal $(J^{\sat})_{\gots}$ is saturated. As $
      \dim_{\kk}(S/(J^{\sat})_{\gots})_k =
      \dim_{\kk}(S/(J)_{\gots})_k =r$ for $k\gg 0$ and the Hilbert function of a saturated ideal is non-decreasing, we obtain
   \[
      \dim_{\kk}(S/(J^{\sat})_{\gots})_k =r \text{ for } k \geqslant r-1.
   \]
   We claim that $((J^{\sat})_{\gots})_{k} = ((J^{\sat})_{\kk})_{k}$ for all $k \ge r-1$.
   Indeed, for some $k_0$ (initially equal to $i$) this is true for all $k\ge k_0$, and we always have $((J^{\sat})_{\gots})_{k} \supset ((J^{\sat})_{\kk})_{k}$.
   We argue by a downward induction on $k_0$.
   If $k_0=r-1$, then we are done.
   If $k_0\geqslant r$,
   then pick a general $\alpha\in V^*$.
   Suppose $\Phi \in S_{k_0-1}$ is such that
   $\alpha \Phi \in ((J^{\sat})_{\kk})_{k_0}$.
   Thus,
   $\alpha \Phi + t\xi \in (J^{\sat})_{k_0}$
      for some $\xi \in S_{k_0}$.
   The multiplication by $\alpha$ determines
     an isomorphism
     $\left(S/(J^{\sat})_{\gots}\right)_{k_0-1} \stackrel{\cdot \alpha}{\to} \left(S/(J^{\sat})_{\gots}\right)_{k_0}$ by the weak Lefschetz property in  Proposition~\ref{prop_classical_Macaulay_and_Gotzmann}.
   Thus, let $\Psi \in S_{k_0-1}$
     be such that $\alpha \Psi = \xi \modulo ((J^{\sat})_{\gots})_{k_0}$.
   Therefore, we have
   $
    \alpha \Phi + t\alpha \Psi \in  (J^{\sat})_{k_0}
   $
   and by generality of $\alpha$  and saturatedness of $J^{\sat}$ we must have
   $\Phi + t \Psi \in  (J^{\sat})_{k_0-1}$.
   Thus, $\Phi \in ((J^{\sat})_{\kk})_{k_0-1}$,
   and we may proceed by induction, proving the claim.

\end{prf}

\section{Families of linear subspaces and linear span}\label{sec_linear_spaces_and_linear_spans}

Fix a base scheme $Q$ over $\kk$.
In this section, our main interest is in the case where $Q$ has enough $\kk$-points in the sense of Definition~\ref{def_enough_k_points}, but in \S\ref{sec_families_of_linear_spaces_examples} this assumption is not required.
Let $W$ be a fixed vector space over $\kk$ of dimension $N+1$.
As in \S\ref{sec_finite_subschemes_as_tensors}
we denote by $\affinespaceof{W} =\AAA^{N+1}_{\kk}= \Spec \Sym W^*$ and by $\PP_{\kk}W = \PP^N=\Proj \Sym W^*$.
Consider
$\freemodule[Q]{\ccS}:=\ccO_Q[\fromto{\alpha_0}{\alpha_N}] = \ccO_Q \otimes_{\kk} \Sym W^*$,  the trivial sheaf of graded $\ccO_Q$-algebras with $\deg(\alpha_i)=1$ for all $i$, and the coefficients $\ccO_Q$ contained in degree $0$.
Here $\setfromto{\alpha_0}{\alpha_N}$ is the basis of $W^*$.
Thus  $\affinespaceof[Q]{W} = Q\times \AAA^{N+1}_{\kk}
= \SheafySpec{Q} (\freemodule[Q]{\ccS})$.
In this section, we analyze in detail the properties of the following notions:

\begin{definition}[linear ideal sheaf, families of linear spaces]
   A \emph{linear ideal sheaf} in $\freemodule[Q]\ccS$
   is a homogeneous ideal sheaf $\ccI\subset \freemodule[Q]{\ccS}$
   which is generated by its degree $0$ and $1$ parts, $\ccI_0 \subset \ccO_Q$ and $\ccI_1\subset \ccO_Q \otimes_{\kk} W^*$.
   A \emph{family of linear subspaces} of $W$ parameterized by $Q$
   is a subscheme of $\affinespaceof[Q]{W}= \SheafySpec{Q} (\freemodule[Q]{\ccS})$ defined by a linear ideal sheaf.
   A \emph{family of projective linear subspaces} of $\PP_{\kk} W$ parameterized by $Q$
   is a subscheme $Z(\ccI)$ of
   $\PP_Q W= \SheafyProj{Q} (\freemodule[Q]{\ccS})$ obtained as  the vanishing scheme of a linear ideal sheaf $\ccI\subset \freemodule[Q]{\ccS}$.
\end{definition}

The distinction between affine and projective cases above is a little subtle. Any family of linear spaces in $W$ gives rise to a family of projective linear spaces in $\PP_Q W$, by using the same linear ideal sheaf.
However, conversely, at this point we do not know if the ``affine cone'' in $\affinespaceof[Q]{W}$
over any family of projective linear subspaces of $\PP_{\kk} W$ is a family of linear spaces in $W$. See Conjecture~\ref{conj_linear_ideals_are_saturated} and the following comments for more details.

\subsection{Examples, properties, and saturatedness conjecture}
\label{sec_families_of_linear_spaces_examples}
If $Q=\Spec \kk $, then a family of linear spaces over $\Spec \kk$ is either an empty subscheme of $\affinespaceof{W}$ or a linear subspace of $\affinespaceof{W}$ (seen as a subscheme), which uniquely corresponds to a $\kk$-linear subspace of $W$.
(The empty scheme is allowed for consistency with further definitions.)
Note that already here both the linear ideals $(1)$ and $(W^*)$ give rise to the same family of projective linear subspaces in $\PP_{\kk} W$, namely to the empty scheme, while their affine counterparts are different.
More generally, any vector subbundle $\ccE$ of the trivial vector bundle
  $\ccO_Q\otimes_{\kk} W$
  can be geometrically interpreted as a family of linear subspaces of $W$ over $Q$
  with $\ccI_0=0$ and 
\[  
  \ccI_1 = \ccE^{\perp} \subset \ccO_Q \otimes_{\kk} \linspan{\fromto{\alpha_0}{\alpha_N}}= \ccO_Q\otimes_{\kk}W^*.
\]
The (naive) projectivization $\PP(\ccE)$ of this bundle is the corresponding family
 of projective linear spaces in $\PP_{\kk}W$.
These two cases are \emph{flat} cases: the sheaves $\ccI$ and $\ccS/\ccI$ are flat $\ccO_{Q}$-modules.
As we will see, in this article, the primary technical issue will be to deal with non flat cases too.

\begin{example}[Blow up]\label{ex_blow_up_as_family_of_linear_spaces}
   Let $Q=\AAA_{\kk}^2=\Spec \kk[s,t]$, and let $\ccI=(t\alpha_0-s\alpha_1)\subset \kk[s,t, \alpha_0,\alpha_1]$. Then $\ccI$ is a linear ideal sheaf and the corresponding family
   of projective linear subspaces in $\PP_{\kk}^1$ is the the blow of $Q$ in the origin.
   The general fibre of the family is a point, while the special fibre (over the origin) is $1$-dimensional.
   The affine analogue would have $1$ or $2$ dimensional fibres instead.
\end{example}

In  Example \ref{ex_blow_up_as_family_of_linear_spaces}, we have a nice stratification of $Q$ by the dimension of fibres.
Such stratification however makes little sense if the base scheme $Q$ is not reduced.

\begin{example}
   An interesting variant of Example~\ref{ex_blow_up_as_family_of_linear_spaces} is when we replace the base $Q$ with $\Spec \kk[s,t]/(s^2,t^2)$, and consider the linear ideal generated by the same polynomial, $\ccI=(t\alpha_0-s\alpha_1)\subset \kk[s,t]/(s^2,t^2) \otimes_{\kk} \kk[\alpha_0,\alpha_1]$.
   Then the primary decomposition shows that 
   \[
      \ccI = \left(t\alpha_0-s\alpha_1, st \right)\cap \left( \alpha_0, \alpha_1\right).
   \]
   Both ideals in the intersection are linear ideals. 
   The first ideal has degree $0$ generator $st$, 
     and its properties are very similar to properties of the blow up ideal, 
     but over the base $\Spec \kk[s,t]/(s^2,st,t^2)$.
   The second ideal is a flat ideal sheaf, describing the zero vector subbundle 
     of $Q\times \kk^2$.
   Thus, one can claim that the whole stratum of $Q$ 
      with positive dimensional ``fibres'' is contained in the proper subscheme $\Spec \kk[s,t]/(s^2,st,t^2)$.
   However, this family arises as a pullback of the (affine version of) family in the previous example,
      where there was no dimension $0$ part at all.
\end{example}

\begin{proposition}\label{prop_linear_sheaf_ideals_locally}
   Suppose $Q$ is a base scheme and $\ccI \subset \freemodule[Q]\ccS$
      be a homogeneous ideal sheaf.
   Then the following conditions are equivalent:
   \begin{enumerate}
     \item \label{item_sheaf_is_linear}
     $\ccI$ is a linear ideal sheaf,
     \item \label{item_affines}
     for any open affine subset $U\subset Q$,
     the $\ccI(U) \subset 
        \ccO_Q(U)[\fromto{\alpha_0}{\alpha_N}]$ 
        is generated in degrees $0$ and $1$.
     \item \label{item_locallyholds}
     for any $q\in Q$ the localisation
       $\ccI_q \subset \ccO_{Q,q}[\fromto{\alpha_0}{\alpha_N}]$ 
       is generated in degrees $0$ and $1$.
   \end{enumerate}
\end{proposition}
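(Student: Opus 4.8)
The plan is to verify the single equality $\ccI = \ccJ$ at three different levels --- globally as sheaves, on sections over affine opens, and on stalks --- where $\ccJ \subseteq \ccI$ is the (quasi-coherent) homogeneous ideal subsheaf of $\freemodule[Q]{\ccS}$ generated by the degree-$0$ and degree-$1$ pieces $\ccI_0 \subseteq \ccO_Q$ and $\ccI_1 \subseteq \ccO_Q \otimes_\kk W^*$ of $\ccI$. By the definition of a linear ideal sheaf, condition~\ref{item_sheaf_is_linear} is precisely the assertion $\ccJ = \ccI$. It is convenient to realise $\ccJ$ as the image of the $\ccO_Q$-linear multiplication morphism
\[
  \mu \colon \freemodule[Q]{\ccS} \otimes_{\ccO_Q} \bigl(\ccI_0 \oplus \ccI_1\bigr) \longrightarrow \freemodule[Q]{\ccS},
\]
induced by the ring multiplication, where $\ccI_0 \oplus \ccI_1$ is viewed as a quasi-coherent $\ccO_Q$-submodule of $\freemodule[Q]{\ccS}$ concentrated in degrees $0$ and $1$; thus $\ccJ = \im \mu$.

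The key step is to observe that the formation of $\ccJ$ is compatible with restriction to an affine open $U \subseteq Q$ and with localisation at a point $q \in Q$. The functors $\Gamma(U, -)$ (for $U$ affine) and $(-)_q$ are exact on quasi-coherent $\ccO_Q$-modules, commute with $\otimes_{\ccO_Q}$, and --- since $\freemodule[Q]{\ccS} = \bigoplus_{d \geqslant 0} \ccO_Q \otimes_\kk \Sym^d W^*$ is a direct sum while $U$ is quasi-compact --- commute with the grading; in particular $\ccI_0(U)$, resp.\ $\ccI_{0,q}$, is the degree-$0$ part of $\ccI(U)$, resp.\ of $\ccI_q$, and likewise in degree $1$. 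Applying $\Gamma(U, -)$, resp.\ $(-)_q$, to $\mu$ and to $\ccJ = \im \mu$ therefore identifies $\ccJ(U)$, resp.\ $\ccJ_q$, with the image of the corresponding multiplication map on sections, resp.\ on stalks --- that is, with the ideal of $\ccO_Q(U)[\fromto{\alpha_0}{\alpha_N}]$, resp.\ of $\ccO_{Q,q}[\fromto{\alpha_0}{\alpha_N}]$, generated by the degree-$0$ and degree-$1$ parts of $\ccI(U)$, resp.\ of $\ccI_q$. Consequently, ``$\ccI(U)$ is generated in degrees $0$ and $1$'' is exactly $\ccJ(U) = \ccI(U)$, and ``$\ccI_q$ is generated in degrees $0$ and $1$'' is exactly $\ccJ_q = \ccI_q$.

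To conclude, I would invoke the standard fact that an inclusion $\ccJ \subseteq \ccI$ of quasi-coherent sheaves on $Q$ is an equality if and only if it is an equality on sections over every affine open (equivalently, over the members of one affine cover), if and only if it is an equality on every stalk: one considers the quasi-coherent quotient $\ccI / \ccJ$, whose value on an affine $U$ is $\ccI(U)/\ccJ(U)$ and whose stalk at $q$ is $\ccI_q / \ccJ_q$, and which is the zero sheaf exactly when all of these vanish. Combining this with the previous paragraph yields \ref{item_sheaf_is_linear}~$\Leftrightarrow$~\ref{item_affines}~$\Leftrightarrow$~\ref{item_locallyholds}. I do not expect any genuine difficulty here; the one point deserving care is the compatibility of ``forming the homogeneous ideal generated by the degree-$\leqslant 1$ part'' with localisation and with taking sections over affine opens, which is exactly the exactness and grading-compatibility recalled in the middle paragraph.
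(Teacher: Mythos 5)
Your proof is correct, and it takes a cleaner, more systematic route than the paper's. You package all three conditions as the single equality $\ccI = \ccJ$, where $\ccJ = \im\mu$ is the quasi-coherent ideal subsheaf generated by $\ccI_{\leqslant 1}$, and then reduce everything to the standard fact that an inclusion of quasi-coherent sheaves is an equality iff it is so on stalks iff it is so on sections over one (equivalently, every) affine cover. The paper instead argues implication-by-implication: \ref{item_affines}$\Rightarrow$\ref{item_sheaf_is_linear} is immediate from the definition; \ref{item_sheaf_is_linear}$\Leftrightarrow$\ref{item_locallyholds} is argued via surjectivity of the degree-$d$ multiplication maps $\ccI_1 \otimes_{\ccO_Q} \ccS_{d-1} \to \ccI_d$ being checkable on stalks; and \ref{item_sheaf_is_linear}$\Rightarrow$\ref{item_affines} is done by hand with a quasi-compactness and ``clear denominators'' argument over an affine $U$ covered by basic opens $U_f$. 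Your approach absorbs that last step into the exactness of $\Gamma(U,-)$ for $U$ affine, avoiding the explicit combination of the $f^{k_f}$'s. One small discrepancy worth noting: the paper's proof explicitly interprets \ref{item_sheaf_is_linear} as ``there exists an open cover (not assumed affine) on which $\ccI(U)$ is generated in degrees $\leqslant 1$ at the level of sections,'' whereas you interpret it as the sheaf equality $\ccJ = \ccI$. These coincide once one passes to an affine refinement of the cover, which your argument implicitly does (and the paper's does as well, somewhat less carefully); it is not a gap, but if you adopt the ``exists a cover'' reading verbatim you should say in one sentence why it may be taken affine.
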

\begin{proof}
These equivalences are consequences of the triviality of the sheaf of algebras $\freemodule[Q]{\ccS}$ over $Q$, and also of its grading.

In detail,
 \ref{item_sheaf_is_linear}
 means that there exists an open cover of $Q$ such that for each open set $U$ in this
  cover the ideal $\ccI(U)$ is generated in degrees $0$ and $1$.
 In particular, the implication
 \ref{item_affines}$\Longrightarrow$ \ref{item_sheaf_is_linear}
 is clear.
Also \ref{item_sheaf_is_linear}$\iff$\ref{item_locallyholds}
holds since each grading of $\ccI$ is a coherent sheaf and surjectivity of sheaf map $\ccI_1\otimes_{\ccO_{Q}} \ccS_{d-1} =  \ccI_1\otimes_{\kk} S^{d-1}W^* \to \ccI_d$  is equivalent to surjectivity of its localisations at each point  \cite[\href{https://stacks.math.columbia.edu/tag/01AG}{Lem.~01AG}]{stacks_project}.

The final implication is a ``\cite{hartshorne}-style exercise''. Suppose \ref{item_sheaf_is_linear}
holds, and that $U= \Spec A \subset Q$ is an open affine subset.
It is covered by basic open affine subsets $\set{U_f}$, with $f\in A$ and each $U_f = \Spec A_f = \set{f\ne 0} \subset U$,  such that $\ccI(U_f) $ is generated in degrees $0$ and $1$.
Fix $\Phi\in \ccI(U)$.
For any $f$ (after clearing the denominators) we can express
   $f^{k_f} \cdot \Phi \in \ccI(U)_{\le 1} \cdot \freemodule[Q]\ccS(U)$
   for some non-negative integer $k_f$.
Since $U$ is quasicompact by
 \cite[\href{https://stacks.math.columbia.edu/tag/00E8}{Lem.~00E8}]{stacks_project},
there is a finite combination of the $f^{k_f}$'s 
  that gives a unit in $A$, and thus 
  $ \Phi \in \ccI(U)_{\le 1} \cdot \freemodule[Q]\ccS(U)$,
  proving \ref{item_affines}.
\end{proof}

\begin{proposition}
Suppose $Q$ and $Q'$ are two base schemes, $G\colon Q' \to Q$ is a morphism of $\kk$-schemes and $\ccI$ and $\ccJ$ are two linear ideal sheaves in $\freemodule[Q]{\ccS}$. Then:
  \begin{enumerate}
    \item $\ccI +\ccJ$ is a linear ideal sheaf in $\freemodule[Q]{\ccS}$,
    \item $G^*\ccI$ is a linear ideal sheaf in $\freemodule[Q']{\ccS} = \ccO_{Q'} \otimes_{\kk} \Sym W^*$.
  \end{enumerate}
\end{proposition}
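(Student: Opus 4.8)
The plan is to prove (i) by a direct formal argument at the level of sheaves, and (ii) by reducing to commutative algebra with the help of Proposition~\ref{prop_linear_sheaf_ideals_locally}. For (i), first note that $\ccI + \ccJ$ is again a homogeneous ideal sheaf, being the sum of two such. Write $\ccI_{\leqslant 1}$ for the subsheaf $\ccI_0 + \ccI_1 \subset \freemodule[Q]{\ccS}$, and similarly $\ccJ_{\leqslant 1}$; the hypothesis says that $\ccI$ is the ideal sheaf generated by $\ccI_{\leqslant 1}$ and $\ccJ$ the one generated by $\ccJ_{\leqslant 1}$. Then the ideal sheaf generated by $\ccI_{\leqslant 1} + \ccJ_{\leqslant 1}$ contains both $\ccI$ and $\ccJ$, hence equals $\ccI + \ccJ$; and since $\ccI_{\leqslant 1} + \ccJ_{\leqslant 1} \subset (\ccI+\ccJ)_0 + (\ccI+\ccJ)_1$, this exhibits $\ccI + \ccJ$ as generated by its degree $0$ and $1$ parts, i.e.\ as a linear ideal sheaf. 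There is no obstacle here.

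For (ii), the first thing to pin down is the meaning of $G^*\ccI$. Since $\freemodule[Q]{\ccS} = \ccO_Q \otimes_{\kk} \Sym W^*$ is pulled back from $\Spec\kk$, there is a canonical identification $G^*\freemodule[Q]{\ccS} = \freemodule[Q']{\ccS}$ of sheaves of graded $\ccO_{Q'}$-algebras, and $G^*\ccI$ is understood as the inverse image ideal sheaf: the ideal sheaf of $\freemodule[Q']{\ccS}$ generated by the image of the natural map $G^*\ccI \to G^*\freemodule[Q]{\ccS} = \freemodule[Q']{\ccS}$. I would then invoke Proposition~\ref{prop_linear_sheaf_ideals_locally} to reduce the linearity claim to a local affine check. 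Choose affine opens $U' = \Spec A' \subset Q'$ and $U = \Spec A \subset Q$ with $G(U') \subset U$, so that $G|_{U'}$ corresponds to a ring homomorphism $\varphi\colon A \to A'$. Over these opens the base-changed morphism $\affinespaceof[Q']{W} \to \affinespaceof[Q]{W}$ is $\Spec$ of the ring map $\varphi \otimes \id\colon A[\fromto{\alpha_0}{\alpha_N}] \to A'[\fromto{\alpha_0}{\alpha_N}]$, which fixes the variables $\alpha_i$ and therefore preserves the grading, and the inverse image ideal sheaf corresponds to the extended ideal $(\varphi\otimes\id)(\ccI(U)) \cdot A'[\fromto{\alpha_0}{\alpha_N}]$. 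This is generated by the images under $\varphi\otimes\id$ of any set of generators of $\ccI(U)$; choosing such a set in degrees $0$ and $1$ (possible since $\ccI$ is linear, by Proposition~\ref{prop_linear_sheaf_ideals_locally}) and using that $\varphi\otimes\id$ is degree-preserving, the extended ideal is generated in degrees $0$ and $1$. Applying Proposition~\ref{prop_linear_sheaf_ideals_locally} in the other direction then finishes the proof.

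The only genuinely delicate point — and it is minor — is the bookkeeping around $G^*$: the $\ccO$-module pullback $G^*\ccI$ need not inject into $\freemodule[Q']{\ccS}$, so one must pass to its image; and one must observe that the pulled-back comorphism respects the grading, which holds precisely because $\freemodule[Q]{\ccS}$ is a constant sheaf of graded algebras (pulled back from $\Spec\kk$), so no mixing of degrees can occur. Both issues dissolve once the computation is carried out on affine charts with the explicit degree-preserving homomorphism $\varphi \otimes \id$, so I do not anticipate any real obstruction.
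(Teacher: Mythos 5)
Your proof is correct and follows essentially the same route as the paper, which simply invokes the local characterizations of Proposition~\ref{prop_linear_sheaf_ideals_locally} for both items; you have spelled out the details that the paper leaves to the reader. The bookkeeping you note for (ii) — passing to the image of $G^*\ccI$ in $\freemodule[Q']{\ccS}$ and observing that the affine comorphism $\varphi\otimes\id$ is degree-preserving — is exactly the content one needs to make the paper's terse appeal to Proposition~\ref{prop_linear_sheaf_ideals_locally} rigorous.
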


\begin{prf}
  Both items follow from the equivalences of Proposition~\ref{prop_linear_sheaf_ideals_locally}
\end{prf}

\begin{conjecture}[Linear ideals are saturated]
\label{conj_linear_ideals_are_saturated}
   Suppose $\ccI \subset \freemodule[Q]{\ccS}$ is a linear ideal sheaf.
   Let $\gotn:=(\fromto{\alpha_0}{\alpha_N}) \subset \freemodule[Q]{\ccS}$ be the ideal sheaf $\freemodule[Q]{\ccS}_{\geqslant 1}$.
   Then, $(\ccI \colon \gotn^{\infty})$ is a linear ideal sheaf too.
\end{conjecture}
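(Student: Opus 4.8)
The plan is to perform a sequence of routine reductions and thereby recast the statement as an assertion about torsion in symmetric algebras of modules; we do not know how to settle that assertion, which is why this is recorded as a conjecture. First, by Proposition~\ref{prop_linear_sheaf_ideals_locally}\ref{item_locallyholds} the property of being a linear ideal sheaf can be tested on stalks, so it suffices to prove the statement when $Q=\Spec A$ with $A$ a Noetherian local ring (and, by faithful flatness, one may additionally pass to the completion of $A$). Writing $\ccI^{\sat}:=(\ccI\colon\gotn^{\infty})$ and $\mathfrak b:=(\ccI^{\sat})_{0}\subset A$, we have $\ccI_{0}\subset\mathfrak b$ and hence $\mathfrak b\cdot\freemodule[Q]{\ccS}\subset\ccI^{\sat}$; a Noetherian argument (a uniform power of $\gotn$ carries $\mathfrak b$ into $\ccI$) identifies $\ccI^{\sat}/(\mathfrak b\cdot\freemodule[Q]{\ccS})$ with the $\gotn$-saturation of the image of $\ccI$ in $(A/\mathfrak b)[\fromto{\alpha_{0}}{\alpha_{N}}]$, which is again a linear ideal, now with vanishing degree-$0$ part. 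Replacing $A$ by $A/\mathfrak b$ we may thus assume $\ccI_{0}=0$, so that $\ccI=M\cdot\freemodule[Q]{\ccS}$ for some $A$-submodule $M\subset A\otimes_{\kk}W^{*}$.

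In this situation $\freemodule[Q]{\ccS}=\Sym_{A}(A\otimes_{\kk}W^{*})$ and $\freemodule[Q]{\ccS}/\ccI=\Sym_{A}N$ with $N:=(A\otimes_{\kk}W^{*})/M$, and the torsion quotient is $\ccI^{\sat}/\ccI=H^{0}_{\gotn}(\Sym_{A}N)$, where $\gotn$ now denotes the irrelevant ideal $\bigoplus_{d\geqslant 1}\Sym_{A}^{d}N$. Since $\ccI$ is generated in degrees $\leqslant 1$, the conjecture is equivalent to: \emph{the $\Sym_{A}N$-module $H^{0}_{\gotn}(\Sym_{A}N)$ is generated in degrees $\leqslant 1$.} Over the open subset of $\Spec A$ on which $N$ is locally free this module vanishes ($\Sym_{A}N$ is then locally a polynomial algebra, on which each $\alpha_{i}$ is a nonzerodivisor), so the whole content is supported on the closed non-free locus. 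The natural line of attack is a Noetherian induction along that locus, coupled with non-flat analogues of the relative Macaulay bound and Gotzmann persistence of Proposition~\ref{prop_relative_Macaulay_and_Gotzmann}: one would want these to force $\ccI_{k}=(\ccI^{\sat})_{k}$ for all large $k$ and then, by the weak-Lefschetz multiplication-by-a-general-linear-form argument used in the proof of Proposition~\ref{prop_relative_Macaulay_and_Gotzmann}\ref{item_Macaulay_and_Gotzmann_for_Jsat}, to propagate the equality down to degree $1$.

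The main obstacle is exactly the pathology flagged after Example~\ref{ex_blow_up_as_family_of_linear_spaces}: when $A$ is non-reduced there is no stratification of $Q$ by fibre dimension to induct on, and --- more seriously --- formation of the saturation does not commute with base change along a closed immersion of the base. Consequently the scheme-inclusion criterion of Proposition~\ref{prop_scheme_inclusion_equivalence}, which would otherwise let us test the inclusion $\ccI^{\sat}\subset\ccJ$ (with $\ccJ$ the linear ideal generated by the degree-$\leqslant 1$ part of $\ccI^{\sat}$) on finite local Gorenstein subschemes, is of no help here: over a fat point of $Q$ the ideal $\ccI^{\sat}$ is a genuinely global object in the base direction and is not recovered from the fibres. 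What appears to be needed is a bound, depending only on $\dim_{\kk}W$ and independent of $A$ and $N$, on the degrees of a minimal generating set of $H^{0}_{\gotn}(\Sym_{A}N)$ --- a base-independent ``relative Gotzmann regularity'' for symmetric algebras of modules. Producing such a uniform bound is the step we are unable to carry out.
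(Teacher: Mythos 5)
This statement is recorded in the paper as an open conjecture with no proof attached --- the authors only list a handful of known special cases (true over $\Spec\kk$, true in the flat case, true for $A=\kk[t]/(t^{s})$, true when $N=0$, and a Nakayama-type reduction to the case $\ccI\subset(\fromto{t_1}{t_m})\cdot\freemodule[Q]{\ccS}$). You correctly identify the statement as an unresolved conjecture and do not claim to have settled it, so there is nothing to fault on the level of correctness.

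Your proposed reductions are sound and go somewhat beyond the paper's remarks. The localization step is justified by Proposition~\ref{prop_linear_sheaf_ideals_locally}, and the passage to completion is legitimate since a module over a Noetherian local ring vanishes if and only if it does after the faithfully flat base change to the completion. The degree-zero reduction is carefully done: since $\ccI_{0}\subset(\ccI^{\sat})_{0}=:\mathfrak b$, and a uniform power of $\gotn$ carries $\mathfrak b$ into $\ccI$, one checks directly that $\ccI^{\sat}/(\mathfrak b\cdot\freemodule[Q]{\ccS})$ is the $\gotn$-saturation of the image of $\ccI$ in $(A/\mathfrak b)[\fromto{\alpha_0}{\alpha_N}]$, and that linearity of the latter implies linearity of $\ccI^{\sat}$. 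The reformulation of the problem as ``$H^{0}_{\gotn}(\Sym_{A}N)$ is generated in degrees $\le 1$'' is a clean invariant restatement that the paper does not make, and your observation that this torsion module vanishes on the locally free locus of $N$ subsumes the paper's remark about the flat case.

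Where your write-up and the paper's discussion diverge: the paper's Nakayama reduction (to $\ccI\subset\gotm\cdot\freemodule[Q]{\ccS}$) is obtained by splitting off unit-coefficient linear generators and dropping variables, which is complementary to (not the same as) your reduction to $\ccI_{0}=0$; combining the two is presumably the sensible first step for anyone attacking the conjecture seriously. Your closing diagnosis --- that the missing ingredient is a base-independent bound on the generating degrees of $H^{0}_{\gotn}(\Sym_{A}N)$ depending only on $\dim_{\kk}W$, and that the obstruction is exactly that saturation fails to commute with closed immersions of the base, so Proposition~\ref{prop_scheme_inclusion_equivalence} cannot be used --- is an accurate account of why the natural arguments break down and is a useful complement to the paper's remarks.
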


The conjecture is a standard statement if the base $Q = \Spec \kk$: a homogeneous ideal generated in degree $1$ is always either saturated or its saturation is $(1)$.
It seems that the natural generalization to any base should also be true and is a surprisingly easy-to-state, but non-trivial open problem. For the purpose of applications we have in mind, it would be sufficient to prove the claim for a finite local base $Q= \Spec A$ with $A = \kk[\fromto{t_1}{t_m}]/J$ with
$(\fromto{t_1}{t_m})^{s} \subset J$ for some $s$. The following comments might be relevant to someone trying to resolve the conjecture (over finite local $Q$ as above):
\begin{itemize}
 \item The conjecture is true in the flat case (if $\ccI$ is a flat sheaf over $Q$), with a similar proof as in the case $Q=\Spec \kk$.
 \item By Nakayama's Lemma, 
        it is sufficient to prove the conjecture for ideal sheaf $\ccI \subset (\fromto{t_1}{t_m}) \cdot \freemodule[Q]{\ccS}$.
 \item The conjecture is true if $A = \kk[t]/(t^s)$, by induction on $s$.
 \item The conjecture is true if $N=0$.
\end{itemize}

\subsection{\texorpdfstring{\Familiar's}{Familiars} and linear span of a family}

Before dealing with a relative setting 
we fix some notation about linear span and embedded finite schemes.

\begin{definition}[linear span and linearly independent subchemes]
\label{def_linear_span}
Suppose $R \subset \PP_{\kk} W$ is a closed subscheme and $S=\freemodule[\kk]{S}=\Sym W^*$
  is the homogeneous coordinate ring of $\PP_{\kk} W$ with $I(R)\subset S$ the homogeneous ideal of $R$.
Let $\affinespaceof{W} = \Spec S$ be the scheme (affine space) corresponding to $W$.
Then, we define
\begin{itemize}
 \item \emph{the affine linear span} of $R$ to be the affine subspace $\affinespaceof{W'} \subset \affinespaceof{W}$, where $W'\subset W$  is perpendicular to $I(R)_{1} \subset W^*$, and
  \item
  \emph{the (projective) linear span} of $R$ to be the closed subscheme (isomorphic to a projective space)
  $\linspan{R} = \PP_{\kk}^{k-1} \subset \PP_{\kk} W$ defined by $I(R)_{\leqslant 1}$.
\end{itemize}
We also say that $R$ is \emph{(linearly) independent} if $R$ is a finite subscheme  and its degree is equal to the $\kk$-dimension of the affine linear span.
Such $R$ is also sometimes called linearly non-degenerate,
 or $R$ imposes independent conditions on forms of degree $1$.
For brevity we will simply call such schemes \emph{independent}.
\end{definition}

The purpose of this subsection is to define and discuss how analogous notions behave in families.
The following setting will be relevant to  the rest of
  this section.
\begin{setting}[family of subschemes of $X$]
\label{set_family_of_subschemes}
 Consider a projective scheme $X$
    with a fixed closed embedding
    $\iota \colon X \hookrightarrow \PP W$.
    Unless potentially confusing,
      $\iota$ will be skipped from the notation and
      we will just consider
      $X\subset \PP_{\kk} W$ as a subscheme.
Moreover, $Q$ is a base scheme which has enough $\kk$-points.
Suppose $\ccR \subset  Q\times X \subset \PP_{Q} W = Q\times_{\kk} \PP_{\kk} W $ is a subscheme.
Denote by $\phi \colon \ccR \to Q$ the natural projection and we think of $\phi$ as a \emph{family of subschemes of $X$} parametrised by $Q$.
\end{setting}
Note that at this point, 
we do not assume that $\phi$ in Setting~\ref{set_family_of_subschemes} is flat or surjective.
Instead, in most of the situations we will assume that $\phi$ is finite. Moreover, the following notion of linearly independent family is  going to be frequently used.

\begin{definition}[linearly independent family]
   In Setting~\ref{set_family_of_subschemes},
   we say that $\phi$ is \emph{(linearly) independent}
    if the morphism $\phi$ is finite, and
   for all $\kk$-points $q\in Q$,
     the scheme theoretic fibre $\phi^{-1} (q)$ is an independent subscheme of $ \set{q} \times \PP^N_{\kk}  \simeq \PP^N_{\kk}$ (in the sense of Definition~\ref{def_linear_span}).
\end{definition}

\begin{definition}[\familiar's]
In Setting~\ref{set_family_of_subschemes},
suppose in addition that $\phi\colon \ccR \to Q$
  is finite.
We say that $\phi$ is
  a \emph{family of finite subschemes of $X$ of degree at most $r$}
  if for every closed point $q\in Q$
  the fiber $\phi^{-1}(q)$
is a scheme of length at most $r$.
Furthermore, we say that $\phi$ is a  \emph{\familiar{}} (more precisely, a \familiar{} in $X$ over $Q$)
if in addition each fiber $\phi^{-1}(q)$ is a scheme of length equal to $r$.
A \familiar{} is \emph{flat} (respectively,
\emph{independent}) if in addition $\phi$ is flat (respectively, independent).
\end{definition}

\begin{SCfigure}
 \centering
 \includegraphics[width=0.47\textwidth]{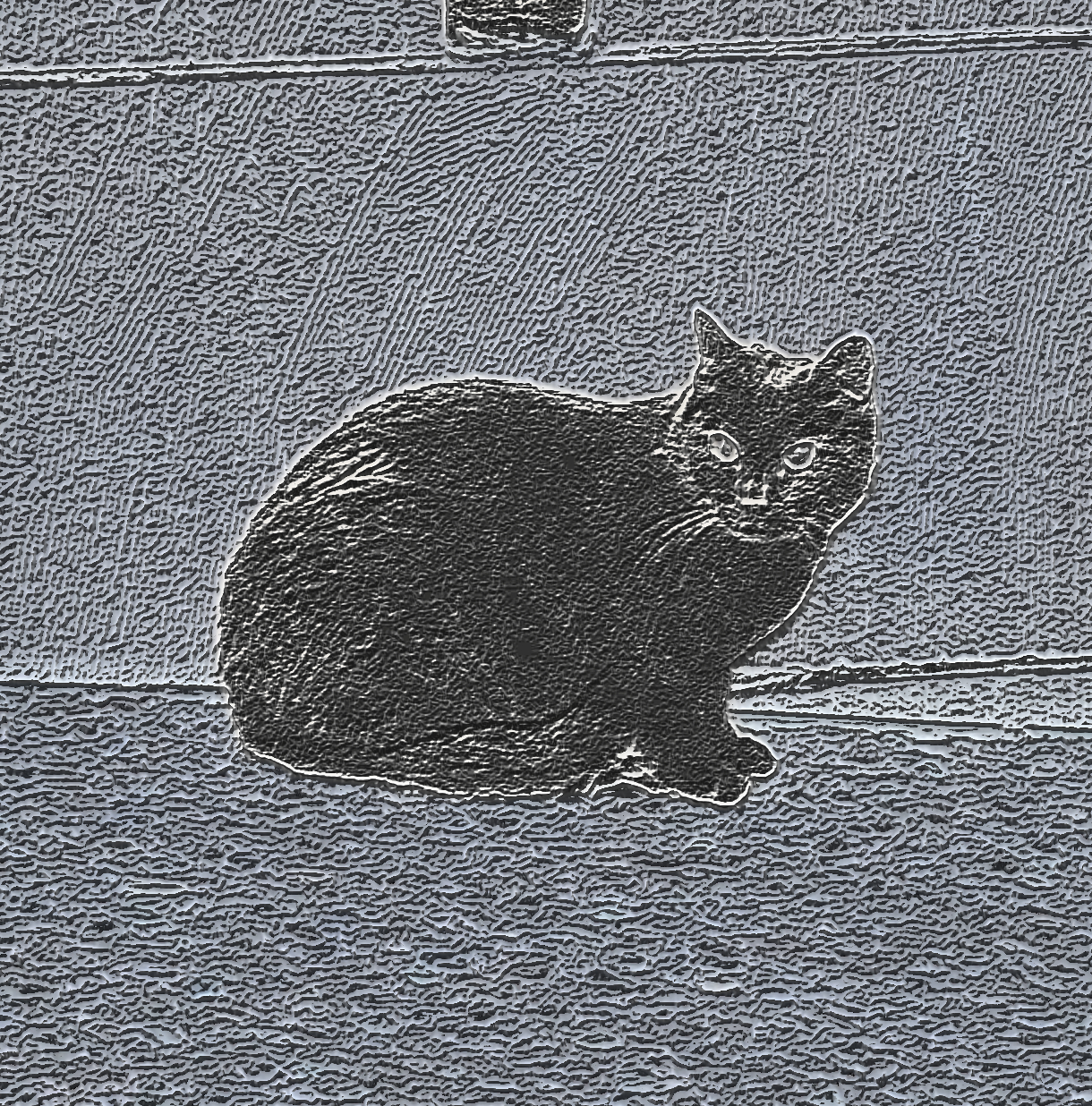}
 \caption{
  In European folklore, a familiar was a demon that assumed an animal shape, such as a black cat, dog, or toad, and that would assist witches and cunning folk in their practice of magic.
 In this article, \familiar{} is an abbreviation for ``family of finite subschemes of $X$ tightly of degree at most $r$'', where ``tightly'' refers to the assumption that the family is not of lower degree than $r$ at any point. \Familiar's will assist us in the investigation of the scheme structure of catalecticant minors. This might also sound like witchcraft to tribes not accustomed to the magical beauty of algebraic geometry.
}
\end{SCfigure}

%


Couple of examples of \familiar{}'s for low values of $r$ are: for $r=0$, a \familiar[0]{} is an empty subscheme $Q\times X \supset \emptyset \to Q$ and it is always flat and linearly independent;
for $r=1$, a \familiar[1]{} is a subscheme $\ccR\subset Q$ with the same support as $Q$, and an embedding into $Q\times X$ and it is always linearly independent.
Also for $r=2$, any \familiar[2]{} is necessarily linearly independent.
We remark that a \familiar{} over a reduced base scheme $Q$ is automatically flat.

\begin{example}
   Let $Q= \Spec\kk[t]/(t^2)$ and set $\ccR = \Spec\kk[t, s]/(s^2, st,t^2)$ with a natural morphism $\phi\colon \ccR\to Q$ induced by $\kk[t]/(t^2) \to \kk[t, s]/(s^2, st,t^2)$.
   Then $\phi$ is of degree at most $3$ (or at most $r$ for any $r\geqslant 2$).
   Moreover, $\phi$ is a \familiar[2], but $\phi$ is not a flat \familiar[2].
   Instead,
   $\Spec\kk[t, s]/(s^2, t^2) \to  \Spec\kk[t]/(t^2)$
   is a flat \familiar[2].
\end{example}

\begin{example}\label{ex_finite_family_not_tight_for_any_r}
   Let $Q = \AAA_{\kk}^1$ and $\ccR=\Spec \kk$ (single reduced point).
   Then any embedding $\ccR\to Q$ is of degree at most $1$,
     but it is not a \familiar{} for any~$r$.
\end{example}

\begin{proposition}\label{prop_base_change_of_familiars}
   Suppose  $\xi\colon Q'\to Q$
      is a morphism of schemes which have enough $\kk$-points and
      $\phi:\ccR \to Q$ is a \familiar{} over $Q$.
      Then, the induced morphism
       $\phi'\colon \ccR \times_{Q} Q' \to Q'$
       is a \familiar{} over $Q'$.
\end{proposition}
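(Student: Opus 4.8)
The plan is to verify directly that $\phi'$ satisfies the three requirements of a \familiar{}: that $\ccR':=\ccR\times_Q Q'$ sits as a subscheme of $Q'\times X\subset\PP_{Q'}W$ (so that the data $(\ccR',\phi')$ falls under Setting~\ref{set_family_of_subschemes} over $Q'$), that $\phi'$ is finite, and that every closed fibre of $\phi'$ has length exactly $r$. The first two are formal. Base change preserves (locally) closed immersions, so from $\ccR\hookrightarrow Q\times X$ one gets $\ccR\times_Q Q'\hookrightarrow (Q\times_\kk X)\times_Q Q'$, and the canonical identifications $(Q\times_\kk X)\times_Q Q'=Q'\times_\kk X$ and $(Q\times_\kk\PP_\kk W)\times_Q Q'=\PP_{Q'}W$ exhibit $\ccR'$ as a subscheme of $Q'\times X\subset\PP_{Q'}W$. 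Finiteness of $\phi'$ is just the stability of finite morphisms under base change. Note that $Q'$ has enough $\kk$-points by hypothesis, so Setting~\ref{set_family_of_subschemes} genuinely applies to $\phi'$.

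For the fibre condition, let $q'\in Q'$ be a closed point. Since $Q'$ has enough $\kk$-points (Definition~\ref{def_enough_k_points}) and $\overline{\{q'\}}=\{q'\}$, the point $q'$ is a $\kk$-point, so $\kk(q')=\kk$ and $q'$ corresponds to a morphism $\Spec\kk\to Q'$. Composing with $\xi$ gives a morphism $\Spec\kk\to Q$, that is, a $\kk$-point $q:=\xi(q')$ of $Q$; recall that for schemes with enough $\kk$-points the closed points are exactly the $\kk$-points, so $q$ is a closed point of $Q$. By associativity of fibre products together with $\kk(q')=\kk$,
\[
  (\phi')^{-1}(q')=\ccR'\times_{Q'}\Spec\kk(q')=(\ccR\times_Q Q')\times_{Q'}\Spec\kk\cong\ccR\times_Q\Spec\kk=\phi^{-1}(q),
\]
where the last equality uses that the composite $\Spec\kk\to Q'\to Q$ is the point $q$. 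Since $\phi$ is a \familiar{}, its fibre $\phi^{-1}(q)$ over the closed point $q$ has length $r$; hence $(\phi')^{-1}(q')$ has length $r$ as well, which is precisely what is needed.

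The argument is entirely formal, so there is no genuine ``hard part''. The only step deserving a little attention is the bookkeeping in the second paragraph: one must ensure that the closed point $q'$ of $Q'$ is carried by $\xi$ to a point of $Q$ at which the defining fibre-length property of a \familiar{} is available, i.e.\ to a closed point of $Q$, and this is exactly where the hypothesis that both $Q$ and $Q'$ have enough $\kk$-points is used (it identifies closed points with $\kk$-points, the latter being preserved under composition of morphisms). Observe also that no flatness of $\phi$ is invoked, in accordance with the fact that the conclusion only asserts that $\phi'$ is a \familiar{}, not necessarily a flat one.
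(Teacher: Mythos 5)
Your proof is correct and takes essentially the same route as the paper's: base change preserves finiteness, and a fibre of $\phi'$ over a closed ($=\kk$-)point of $Q'$ is identified via associativity of fibre products with a fibre of $\phi$ over the image $\kk$-point of $Q$. The only difference is that you make explicit the bookkeeping—that closed points of a scheme with enough $\kk$-points are precisely its $\kk$-points, and that the image $\xi(q')$ is therefore a closed point of $Q$—which the paper leaves implicit; this is a worthwhile clarification rather than a divergence in method.
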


\begin{prf}
     The map $\phi'$ is finite, as it is a base change of a finite map $\phi$.
     Each fibre of a $\kk$-point $q'\in Q'$ is equal to a fibre of
       $\xi(q')\in Q$ and hence the degree of $(\phi')^{-1}(q')$ is equal to
     the degree of $\phi^{-1}(\xi(q'))$, that is $r$.
     These show that $\phi'$ is indeed a \familiar{} over $Q'$.
\end{prf}

\begin{definition}
   In Setting~\ref{set_family_of_subschemes}
   let
$\ccI\subset \ccO_Q\otimes_{\kk}\Sym W^*$
   be the homogeneous ideal sheaf of
    $\ccR \subset \PP_Q W$.
   The \emph{affine (equation) linear span} of the family
      $\phi\colon\ccR \to Q$ is
      the family of linear subspaces
      of $\affinespaceof{W}$
      defined by the linear ideal sheaf $(\ccI_{1})$.
   We also define \emph{(projective equation) linear span} to be the family of projective linear subspaces
   $\langle\phi\rangle \colon \langle\ccR\rangle \to Q$ of $\PP_{\kk}W$
      defined by the linear ideal sheaf $(\ccI_{\leqslant 1})$.
\end{definition}

The equation linear span behaves well
 and in a controlled way under the assumption that
 $\phi$ is linearly independent.
Without this assumption, wild things may happen,
  compare Lemma~\ref{lem_dimension_of_linear_span} 
to Example~\ref{ex_bad_linear_span}.

\begin{example}\label{ex_bad_linear_span}
   Let $A=\kk[s,t]/(s^2, st, t^2)$, $Q=\Spec A$ and let $\ccR\subset \PP_{Q}^3$ be the subscheme defined by the following ideal in $A[\alpha, \beta, \gamma, \delta]$:
\begin{align*}
\ccI &:= (\alpha-\beta, \gamma,\delta) \cap (\alpha, \gamma, \delta) \cap (\beta, \gamma- s\alpha, \delta - t \alpha) \\
    &\phantom{:}= (t\delta, s\delta, t\gamma, s\gamma, \\
    &\phantom{:=}\delta^2, \gamma\delta, \beta\delta, \gamma^2, \beta\gamma, t\alpha^2-t\alpha\beta-\alpha\delta, s\alpha^2-s\alpha\beta-\alpha\gamma, \alpha^2\beta-\alpha\beta^2)
 \end{align*}   
   One can think of $\ccR$ as of a family of three points in $\PP^3$, over $Q$: two constant points $(1:1:0:0)$ and $(0:1:0:0)$ and one ``moving'' infinitesimally, $(1:0:s:t)$.
   Then $\phi\colon \ccR \to Q$ is a flat \familiar[3], but it is not linearly independent (for $t=s=0$, the third point is in the span of the first two points).
   Then, counter-intuitively, $\dim_\kk \langle\ccR\rangle = 3$ (or the dimension of the affine linear span is $4$)
   and this illustrates that equation linear span 
   does not behave well under the base change $\Spec \kk \to Q$, since
   $\dim_{\kk}\linspan{\ccR_{\kk}} =1$, while   $\dim_{\kk}\linspan{\ccR}_{\kk} = 3$ (respectively, $2$ and $4$ for the affine linear span).
\end{example}
Due to pathologies such as the one described in Example~\ref{ex_bad_linear_span} we expect that different notions of the linear span of families might need to be considered.
This is why we add the adjective ``equation'' to ``equation linear span'' to leave space for some other notions, depending on the context. However, in this article, we will primarily consider the spans of linearly independent families and the only linear span of a family we consider is the equation linear span.

   The same conclusion as in Example~\ref{ex_bad_linear_span} is true (although the explicit defining equations may be different) if instead of the base scheme $Q$ as above we use some other scheme, such as spectrum of $\kk[t,s]$, $\kk[[t,s]]$, $\kk[t,s]/(t^3, s^3)$ etc. In particular, this pathology is not a consequence of the singularity type of $Q$.
   In Example~ \ref{ex_double_bad_linear_span}, 
     we will use a variant of the above example where $A=\kk[t,s]/(t^3, s^3)$.

We conclude this subsection with two observarions about comparing the linear span of a base change to base change of linear span: one inclusion always holds and if the base change is flat (for instance, an open immersion or an embedding $\Spec \ccO_{Q, q} \to Q$ of the local scheme near a closed point $q\in Q$), then the equality holds.

\begin{lemma}\label{lem_inclusion_on_linear_spans}
   Suppose as in Proposition~\ref{prop_base_change_of_familiars}
   that $\xi\colon Q'\to Q$ is a
   morphism of schemes which have enough $\kk$-points and
      $\phi$ is a \familiar{} over $Q$,
   and $\phi'\colon \ccR' \to Q'$
       with $\ccR \times_{Q} Q'$ is the induced \familiar{} over $Q'$.
   Then, $\xi\times \id_{\PP_\kk W} (\linspan{\ccR'}) \subset \linspan{\ccR}$.
\end{lemma}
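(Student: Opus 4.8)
The plan is to deduce the containment of images from a stronger scheme-theoretic inclusion inside $\PP_{Q'}W$. Write $G = \xi\times\id_{\PP_\kk W}\colon\PP_{Q'}W\to\PP_Q W$; since $\PP_{Q'}W = \PP_Q W\times_Q Q'$ and $\ccR' = \ccR\times_Q Q'$, the family $\ccR'$ is the scheme-theoretic preimage $G^{-1}(\ccR)$. I would first record two easy facts. (a) $\ccR\subseteq\linspan{\ccR}$, because the linear ideal sheaf $(\ccI_{\leqslant 1})$ defining $\linspan{\ccR}$ is contained in the homogeneous ideal sheaf $\ccI$ of $\ccR$. (b) The scheme-theoretic preimage of the family of projective linear subspaces $\linspan{\ccR}$ is again a family of projective linear subspaces over $Q'$, namely the one defined by the ideal sheaf $G^*(\ccI_{\leqslant 1})\cdot(\ccO_{Q'}\otimes_{\kk}\Sym W^*)$, which is linear because pullbacks of linear ideal sheaves are linear (the Proposition preceding Conjecture~\ref{conj_linear_ideals_are_saturated}). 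Combining (a) and (b), $G^{-1}(\linspan{\ccR})$ is a family of projective linear subspaces containing $G^{-1}(\ccR) = \ccR'$, so it will suffice to show $\linspan{\ccR'}\subseteq G^{-1}(\linspan{\ccR})$ and then apply $G$.

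To prove this last inclusion I would compare homogeneous ideal sheaves directly. Let $\ccI'\subset\ccO_{Q'}\otimes_{\kk}\Sym W^*$ be the homogeneous ideal sheaf of $\ccR'$. Because $\ccR' = G^{-1}(\ccR)$, the ideal sheaf $\ccI'$ contains the extension of $\ccI$ along $G$; taking graded pieces, the image of $G^*\ccI_0$ lies in $\ccI'_0$ and the image of $G^*\ccI_1$ lies in $\ccI'_1$. Consequently the degree $\leqslant 1$ part of $G^*(\ccI_{\leqslant 1})\cdot(\ccO_{Q'}\otimes_{\kk}\Sym W^*)$ — which is generated by these images together with the $(\ccO_{Q'}\otimes_{\kk}W^*)$-multiples of the image of $G^*\ccI_0$ — is contained in $\ccI'_{\leqslant 1}$, using that $\ccI'$ is an ideal. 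Hence $G^*(\ccI_{\leqslant 1})\cdot(\ccO_{Q'}\otimes_{\kk}\Sym W^*)\subseteq(\ccI'_{\leqslant 1})$, and applying $Z(-)$ reverses the inclusion: $\linspan{\ccR'} = Z\bigl((\ccI'_{\leqslant 1})\bigr)\subseteq G^{-1}(\linspan{\ccR})$. Applying $G$ to both sides then gives $\xi\times\id_{\PP_\kk W}(\linspan{\ccR'})\subseteq G\bigl(G^{-1}(\linspan{\ccR})\bigr)\subseteq\linspan{\ccR}$, as claimed.

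The argument is essentially formal, so there is no serious obstacle; the one point deserving care is the relationship between the homogeneous ideal sheaf $\ccI'$ of the base-changed family and the ideal obtained by extending $\ccI$ along $G$. Since we only need the inclusion ``$\ccI'\supseteq$ (extension of $\ccI$)'' — and this follows from $\ccR'=G^{-1}(\ccR)$ irrespective of whether ``homogeneous ideal sheaf'' is read as the extended ideal or its saturation — nothing subtle is actually required. One may equivalently phrase the proof conceptually, by checking that $\linspan{\ccR'}$ is the smallest family of projective linear subspaces over $Q'$ containing $\ccR'$ (any such family is $Z(\ccK)$ for a linear ideal sheaf $\ccK$ with $\ccK_{\leqslant 1}\subseteq\ccI'_{\leqslant 1}$, hence $\ccK=(\ccK_{\leqslant 1})\subseteq(\ccI'_{\leqslant 1})$), and then invoking (a)--(b). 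Finally, note that the hypothesis that $Q$ and $Q'$ have enough $\kk$-points, inherited from Proposition~\ref{prop_base_change_of_familiars}, plays no role in this argument.
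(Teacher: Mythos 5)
Your proof is correct and, at its core, takes the same route as the paper's: the paper's entire proof is the one sentence that the pullback of a degree $0$ or $1$ equation of $\ccR$ is a degree $0$ or $1$ equation of $\ccR'$, which is precisely your middle paragraph (the extension of $\ccI_{\leqslant 1}$ along $G$ lands in $(\ccI'_{\leqslant 1})$, so $\linspan{\ccR'}\subseteq G^{-1}(\linspan{\ccR})$, and applying $G$ finishes). Your preliminary observations (a) and (b) are not actually used in that main chain of reasoning — they only feed the alternative ``smallest linear family'' phrasing you sketch at the end — so they could be dropped without loss; and your remark that the enough-$\kk$-points hypothesis plays no role here is accurate.
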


\begin{prf}
   It is straightforward as the pullback of any degree $0$ or $1$ equation of $\ccR$ is a degree $0$ or $1$  equation of $\ccR'$.
\end{prf}

\begin{lemma}[flat base change]
\label{lem_flat_base_change_of_linear_spans}
   Suppose $\xi$, $Q$, $Q'$, $\phi$, $\phi'$ are as in Lem\-ma~\ref{lem_inclusion_on_linear_spans}.
   If $\xi\colon Q'\to Q$ is in addition
   a flat morphism, then
   $(\xi\times \id_{\PP_\kk W})^* (\linspan{\phi}) = \linspan{\phi'}$.
\end{lemma}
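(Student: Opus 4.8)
The plan is to reduce the statement to a purely algebraic claim about the degree $\leqslant 1$ part of an ideal sheaf and its behavior under flat base change, then verify that claim by a local computation. Since the assertion is local on $Q$ and on $Q'$, I would first replace $Q$ by an open affine $\Spec A$ and $Q'$ by an open affine $\Spec A'$ mapping into it, so that $\xi$ corresponds to a flat ring homomorphism $A \to A'$. Write $\ccI \subset A\otimes_\kk \Sym W^*$ for the homogeneous ideal of $\ccR$ and $\ccI' \subset A'\otimes_\kk \Sym W^*$ for the homogeneous ideal of $\ccR' = \ccR\times_Q Q'$. By definition, $\linspan{\phi}$ is cut out by $(\ccI_{\leqslant 1})$ and $\linspan{\phi'}$ by $(\ccI'_{\leqslant 1})$, while $(\xi\times\id)^*\linspan{\phi}$ is cut out by the extension $(\ccI_{\leqslant 1})\cdot (A'\otimes_\kk\Sym W^*)$. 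So the lemma amounts to the equality of ideals
\[
 (\ccI_{\leqslant 1})\cdot (A'\otimes_\kk \Sym W^*) = (\ccI'_{\leqslant 1})
\]
inside $A'\otimes_\kk \Sym W^*$, where $\ccI' = \ccI \cdot (A'\otimes_\kk\Sym W^*)$ since forming the fiber product corresponds to extending the ideal.

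The key step is then to observe that, because $A\to A'$ is flat, extension of scalars commutes with taking the graded pieces and in particular with truncation: for each degree $d$ we have $(\ccI')_d = \ccI_d \otimes_A A'$ as submodules of $A'\otimes_\kk S^d W^* = (A\otimes_\kk S^d W^*)\otimes_A A'$. Indeed $\ccI_d = \ker\bigl(A\otimes_\kk S^dW^* \to (A\otimes_\kk S^dW^*/\ccI_d)\bigr)$, and flatness preserves this kernel. Summing over $d\leqslant 1$ gives $(\ccI'_{\leqslant 1}) \supset (\ccI_{\leqslant 1})\cdot(A'\otimes_\kk\Sym W^*)$ with the right side being exactly the ideal generated by $\ccI_0\otimes_A A'$ and $\ccI_1\otimes_A A'$, hence equal to $(\ccI'_{\leqslant 1})$. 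The reverse inclusion $\linspan{\phi'}\subset (\xi\times\id)^*\linspan{\phi}$ is the content of Lemma~\ref{lem_inclusion_on_linear_spans} (pullbacks of degree $0$ or $1$ equations are degree $0$ or $1$ equations), so only the inclusion just displayed is new.

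I expect the main obstacle to be bookkeeping rather than depth: one must be careful that ``degree $\leqslant 1$ part'' of the extended ideal $\ccI'$ really is the extension of the degree $\leqslant 1$ part of $\ccI$, and not something larger — a priori, extending an ideal and then truncating need not agree with truncating and then extending. This is precisely where flatness of $\xi$ is used, via the identification $(\ccI\cdot(A'\otimes_\kk S))_d = \ccI_d\otimes_A A'$; the example following the lemma (Example~\ref{ex_bad_linear_span}) shows that for a non-flat base change such as $\Spec\kk\to Q$ the equality genuinely fails, so flatness is not a technical convenience here but the crux. Once this identification is in place, the rest is the immediate matching of generators of the two ideals, and the equality of the associated subschemes of $\PP_{\kk} W$ follows.
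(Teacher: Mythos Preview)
Your approach is essentially the paper's, recast in graded-ring language: the paper observes that $\linspan{\ccR}$ is cut out by $\phi_*\ccI\oplus\phi_*\ccI(1)\subset\ccO_Q\otimes_\kk(\kk\oplus W^*)$ and applies cohomological flat base change (Stacks~02KH) directly to the sheaves $\ccI(0)$ and $\ccI(1)$ to identify $\xi^*\phi_*\ccI(i)$ with $\phi'_*(\xi\times\id)^*\ccI(i)$.

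One step in your write-up is looser than it looks. You assert that the \emph{saturated} homogeneous ideal $\ccI'$ of $\ccR'$ equals the extended ideal $\ccI\cdot(A'\otimes_\kk\Sym W^*)$, justified only by ``forming the fiber product corresponds to extending the ideal''. The extended ideal certainly cuts out $\ccR'$, but a priori its saturation could acquire new elements in degree~$\leqslant 1$, and it is precisely $(\ccI')_{\leqslant 1}$ (of the saturated ideal) that defines $\linspan{\phi'}$. Your kernel argument only shows that the degree-$d$ piece of the \emph{extended} ideal is $\ccI_d\otimes_A A'$; it does not by itself show this equals $(\ccI')_d$. The missing ingredient is that saturation commutes with flat base change (e.g.\ because $(\ccI:\gotn)\otimes_A A'=(\ccI\otimes_A A':\gotn)$ under flatness), or equivalently that $\HHH^0(\ccI_{\ccR'}(1))=A'\otimes_A\HHH^0(\ccI_\ccR(1))$ --- which is exactly the sheaf-theoretic flat base change the paper invokes. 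Once you add this line, the two proofs coincide.
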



\begin{prf}
   Let $\ccI\subset \ccO_{\PP_Q W}$ be the ideal sheaf of $\ccR$
   and set $\ccI(i):=\ccI \otimes_{\ccO_{\PP_Q W}}\ccO_{\PP_Q W}(i)$
   for each integer $i$.
   Then, the linear span  $\linspan{\ccR}$ 
     is defined by the coherent sheaf
   $\phi_*\ccI\oplus\phi_*\ccI(1) \subset
   \ccO_Q\otimes_{\kk}(\kk\oplus W^*)$, while its pullback by $\xi\times \id_{\PP_{\kk} W}$ is defined by
   $\xi^*\phi_*\ccI\oplus\xi^*\phi_*\ccI(1) \subset
   \ccO_{Q'}\otimes_{\kk}(\kk\oplus W^*)$.
   Similarly,
   $\linspan{\ccR'}$ is defined by the coherent sheaf
   $\phi'_* (\xi\times \id_{\PP_{\kk} W})^*\ccI\oplus\phi'_*(\xi\times \id_{\PP_{\kk} W})^*\ccI(1) \subset
   \ccO_{Q'}\otimes_{\kk}(\kk\oplus W^*)$.
   Thus, the claim follows from the flat base change 
    \cite[\href{https://stacks.math.columbia.edu/tag/02KH}{Lem.~02KH}]{stacks_project}
  applied to $\ccI(i)$ for $i=0$ and $i=1$.
\end{prf}

In Corollary~\ref{cor_base_change_of_linear_span_of_independent_familiar}, 
we further observe that such natural base change property also holds for linearly independent \familiar's, without the flatness assumption on the morphism $\xi$.

\subsection{Linear spans of independent \texorpdfstring{\familiar's}{familiars} and minors}

Fix a base scheme $Q$
  which has enough of $\kk$-points.
In this section, we will often consider
  the special case $Q=\Spec A$,
  where $(A,\mathfrak{m}, \kk)$ is a local ring with residue field
  $\kk \subset A$.
The statements here are either valid only for such local case or proven using restrictions to such cases.

\begin{remark}
   In the local  setting as above, if $\phi\colon \ccR\to Q$
     with $\ccR\subset \PP_A^N$,
     then $\phi$ is a \familiar{}
     if and only if
     $\phi$ is finite of degree at most $r$,
     but not of degree at most $r-1$.
     In other words,
     the situation as in  Example~\ref{ex_finite_family_not_tight_for_any_r}
     is not possible here.
\end{remark}

\begin{remark}
   All the claims in this subsection also hold 
   if instead of projective linear span we consistently 
    consider the affine linear span 
    (and shift the dimension accordingly). 
    The proofs of the affine case are identical. Since our main concern in this article is about the projective linear span, we skip the affine case.
\end{remark}

\begin{theorem}\label{thm_Nakayama_for_linear_spans}
Suppose $(A,\mathfrak{m}, \kk)$ is a local ring with residue field
  $\kk \subset A$ and
let $\ccR\subset \PP_A^{N}$ be a linearly independent familiar over $Q=\Spec A$.
Then
\begin{enumerate}
  \item
  \label{item_linear_forms_surjective_for_independent_familiar}
  the restriction map
  $\HHH^0(\PP^N_A,\ccO_{\PP^N_A}(1)) \to \HHH^0(\ccR,\ccO_{\ccR}(1)) \simeq\HHH^0(\ccR,\ccO_{\ccR})$ is surjective, and
  \item
  \label{item_extending_linear_span_to_flat_family}
  there exists an $A$-linear embedding (a flat family of linear subspaces) $\PP_A^{r-1}\subset \PP_A^{N}$ such that $\ccR\subset \PP_A^{r-1}$.
\end{enumerate}

\end{theorem}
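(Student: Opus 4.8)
The plan is to reduce both statements to Nakayama's Lemma over the local ring $A$, exploiting that $\phi$ is finite (so the relevant $\HHH^0$'s are finitely generated $A$-modules) and that the scheme-theoretic fibre $\ccR_{\kk} := \ccR\times_Q \Spec\kk$ over the closed point is, by hypothesis, an independent subscheme of $\PP^N_{\kk}$ of length $r$. Let $\ccI\subset \ccO_Q\otimes_{\kk}\Sym W^*$ be the homogeneous ideal sheaf of $\ccR$. First I would record the base-change identity $\HHH^0(\ccR,\ccO_{\ccR}(1))\otimes_A\kk \simeq \HHH^0(\ccR_{\kk},\ccO_{\ccR_{\kk}}(1))$ (valid since $\phi$ is affine and $\ccO(1)$ is flat over $Q$), together with the fact that $\ccO(1)$ restricts to the trivial line bundle on the finite scheme $\ccR_{\kk}$, so that $\dim_{\kk}\HHH^0(\ccR_{\kk},\ccO_{\ccR_{\kk}}(1)) = r$. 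Under these identifications, the composite $\HHH^0(\PP^N_A,\ccO(1)) = W^*\otimes_{\kk}A \to \HHH^0(\ccR,\ccO_{\ccR}(1)) \to \HHH^0(\ccR_{\kk},\ccO_{\ccR_{\kk}}(1))$ is, after reducing modulo $\gotm$, exactly the linear-forms restriction map $W^*\to \HHH^0(\ccR_{\kk},\ccO_{\ccR_{\kk}}(1))$ for $\ccR_{\kk}\subset\PP^N_{\kk}$, which is surjective precisely because $\ccR_{\kk}$ is independent in the sense of Definition~\ref{def_linear_span}. As $\HHH^0(\ccR,\ccO_{\ccR}(1))$ is a finitely generated $A$-module and $W^*\otimes_{\kk}A\to \HHH^0(\ccR,\ccO_{\ccR}(1))$ becomes surjective after applying $-\otimes_A\kk$, Nakayama's Lemma yields surjectivity over $A$, which is~\ref{item_linear_forms_surjective_for_independent_familiar}.

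For~\ref{item_extending_linear_span_to_flat_family}, I would choose linear forms $\ell_1,\dotsc,\ell_r\in W^*$ whose images form a $\kk$-basis of $\HHH^0(\ccR_{\kk},\ccO_{\ccR_{\kk}}(1))$; these are $\kk$-linearly independent (the restriction map is $\kk$-linear), so I complete them to a $\kk$-basis $\ell_1,\dotsc,\ell_{N+1}$ of $W^*$, hence to an $A$-basis of $\ccO_Q\otimes_{\kk}W^*$. By~\ref{item_linear_forms_surjective_for_independent_familiar} and Nakayama, the images of $\ell_1,\dotsc,\ell_r$ generate $\HHH^0(\ccR,\ccO_{\ccR}(1))$ as an $A$-module, so for each $j>r$ there are $a_{j1},\dotsc,a_{jr}\in A$ with $\ell_j-\sum_{i=1}^r a_{ji}\ell_i$ restricting to $0$ on $\ccR$, i.e.\ lying in $\ccI_1$. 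Since $\{\ell_1,\dotsc,\ell_r\}\cup\{\ell_j-\sum_i a_{ji}\ell_i\}_{j>r}$ differs from the basis $\ell_1,\dotsc,\ell_{N+1}$ by a unipotent (hence invertible over $A$) change of coordinates, the $A$-module $\ccE := \bigoplus_{j>r} A\cdot\bigl(\ell_j-\sum_i a_{ji}\ell_i\bigr)$ is a free direct summand of $\ccO_Q\otimes_{\kk}W^*$ of rank $N+1-r$ contained in $\ccI_1$, and the quotient $(\ccO_Q\otimes_{\kk}W^*)/\ccE$ is free of rank $r$. Therefore the linear ideal sheaf generated by $\ccE$ cuts out a flat family of linear subspaces, an $A$-linear $\PP^{r-1}_A\subset\PP^N_A$, and $\ccR\subset\PP^{r-1}_A$ because $\ccE\subset\ccI_1\subset\ccI$.

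I do not expect a deep obstacle; the work is in bookkeeping. The one point requiring care is the compatibility of the various restriction maps with the base-change isomorphism for $\ccO(1)$ and with the trivialization of $\ccO(1)$ on the finite schemes $\ccR$ and $\ccR_{\kk}$, so that ``reduction modulo $\gotm$'' of the restriction map is literally the classical restriction for $\ccR_{\kk}\subset\PP^N_{\kk}$. A second, lighter, check is that the free direct summand $\ccE$ of corank $r$ really does define an $A$-linear $\PP^{r-1}_A$ which is flat over $Q$ in the sense of the definition of a linear ideal sheaf; this is immediate once one observes that, after the invertible change of coordinates above, $\ccE$ is spanned by $N+1-r$ of the coordinate forms.
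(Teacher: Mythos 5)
Your proof is correct, and it reaches the same end result by a somewhat lighter route than the paper's.

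For~\ref{item_linear_forms_surjective_for_independent_familiar} the two arguments are essentially identical: both reduce modulo $\gotm$, observe that the reduced map is the classical restriction $W^*\to\HHH^0(\ccR_{\kk},\ccO_{\ccR_{\kk}})$ (surjective precisely because $\ccR_{\kk}$ is linearly independent in the sense of Definition~\ref{def_linear_span}), and apply Nakayama's Lemma. One small remark: you justify the base-change identity $\HHH^0(\ccR,\ccO_{\ccR}(1))\otimes_A\kk\simeq\HHH^0(\ccR_{\kk},\ccO_{\ccR_{\kk}}(1))$ by appeal to flatness, but in fact no flatness is needed --- $\phi$ is affine over an affine base, so $\HHH^0(\ccR,-)$ is just a module, and restricting a module along $B\to B\otimes_A\kk$ is always $(-)\otimes_A\kk$. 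This matters because $\phi$ is \emph{not} assumed flat in the theorem.

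For~\ref{item_extending_linear_span_to_flat_family} the paper first proves that the restriction map on ideal sheaves $g\colon\HHH^0(\PP_A^N,\ccI_{\ccR}(1))\to\HHH^0(\PP_{\kk}^N,\ccI_{\ccR_{\kk}}(1))$ is surjective, by applying the Snake Lemma twice to a $3\times 3$ diagram, and then lifts a $\kk$-basis $\beta_0,\dotsc,\beta_{N-r}$ of $\HHH^0(\ccI_{\ccR_{\kk}}(1))$ through $g$ to $A$-linearly independent linear forms. You instead pick linear forms $\ell_1,\dotsc,\ell_r$ lifting a basis of $\HHH^0(\ccO_{\ccR_{\kk}})$, write each $\ell_j$ with $j>r$ as $\sum_i a_{ji}\ell_i$ modulo $\ccI_1$ using that $\ell_1,\dotsc,\ell_r$ generate $\HHH^0(\ccR,\ccO_\ccR(1))$ by~\ref{item_linear_forms_surjective_for_independent_familiar} and Nakayama, and observe that the resulting $\ell_j'=\ell_j-\sum_i a_{ji}\ell_i$ together with $\ell_1,\dotsc,\ell_r$ give an $A$-basis by a unipotent change of coordinates. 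This constructs the same rank-$(N+1-r)$ free direct summand of $\freemodule{W^*}$ inside $\ccI_1$ as the paper does (indeed your $\ell_j'$ reduce modulo $\gotm$ to a basis of $\HHH^0(\ccI_{\ccR_{\kk}}(1))$, so they \emph{are} lifts through $g$), but it bypasses the explicit surjectivity claim for $g$ and the two Snake Lemma applications. What your route buys is brevity and elementariness; what the paper's route buys is the intermediate statement about $g$, which records more cleanly that the linear equations of $\ccR$ lift those of the special fibre --- a fact that is reused implicitly in the discussion following Example~\ref{ex_bad_linear_span} (the failure of exactly this surjectivity when $\phi$ is not independent).
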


\begin{proof}
We focus on proving Item~\ref{item_extending_linear_span_to_flat_family}, while Item~\ref{item_linear_forms_surjective_for_independent_familiar} will pop out along the way.

Denote by $\ccR_{\kk} \subset \ccR$ the fibre over the unique closed point $\Spec \kk\in Q$.
Let $\ccI_{\ccR}\subset \ccO_{\PP_A^N}$ and $\ccI_{\ccR_{\kk}} \subset \ccO_{\PP_{\kk}^N}$ be the sheaves of  ideals of $\ccR$ and of $\ccR_{\kk}$.
We will first show that the natural map
$$g:\HHH^0(\PP_A^N, \mathcal{I}_{\ccR}(1))\longrightarrow  \HHH^0(\PP_{\kk}^N, \mathcal{I}_{\ccR_{\kk}}(1))$$
is surjective (note that Example~\ref{ex_bad_linear_span} illustrates the failure of this conclusion if $\phi$ is not linearly independent).
Let
\[
 0\longrightarrow \mathcal{I}_{\ccR}(1)\longrightarrow \ccO_{\PP_A^N}(1)\longrightarrow \ccO_{\ccR}\longrightarrow 0
\]
be the twisted ideal--ring short exact sequence of $\ccR$, taking into account that $\ccO_{\ccR}(1)\simeq \ccO_{\ccR}$ since $Q$ is local and $\ccR\to Q$ is finite, thus a general hyperplane in $\PP_A^N$ avoids $\ccR$.
Now we consider the following commutative diagram, in which we exploit that
$W^* = \HHH^0(\PP_{\kk}^N,\ccO_{\PP_{\kk}^N}(1))$ and
$\freemodule{W^*} =\HHH^0(\PP_A^N,\ccO_{\PP_A^N}(1))$:
\begin{equation}
\label{equ_ideal_ring_sequences}
\xymatrix{
&  & 0\ar[d]&& 0\ar[d]\\
&  & \mathfrak{m} \cdot \freemodule{W^*}\ar[d]\ar[rr]^{f_{\mathfrak{m}}}&& \mathfrak{m}\cdot \HHH^0(\ccR,\ccO_{\ccR})\ar[d]\\
0\ar[r]&  \HHH^0(\PP_A^N,\mathcal{I}_{\ccR}(1))\ar[d]^{g} \ar[r]& \freemodule{W^*}\ar[d]^{h}\ar[rr]^{f_A}&& \HHH^0(\ccR, \ccO_{\ccR})\ar[d]\\
0\ar[r]&  \HHH^0(\PP_{\kk}^N,\mathcal{I}_{\ccR_{\kk}}(1)) \ar[r]& W^*\ar[d]\ar[rr]^{f_{\kk}}&& \HHH^0(\ccR_{\kk},\ccO_{\ccR_{\kk}})\ar[r]\ar[d]&0\\
& & 0 && 0.
}
\end{equation}
The vertical and horizontal sequences of the diagram are exact.
In particular, as
$\linspan{\ccR_{\kk}}\simeq \PP_{\kk}^{r-1}$
and $\dim_{\kk} \HHH^0(\ccO_{\ccR_{\kk}}) =  r$,
the map $f_{\kk}$ is surjective.  Since $f_{\kk}$ is surjective, there is a linear subspace $V^* \subset
W^*$ mapping isomorphically onto $\HHH^0\left(\ccR_{\kk}, \ccO_{\ccR_{\kk}}(1)\right)$. We can lift this subspace up to a free $A$-submodule $\freemodule{V^*}\subset \freemodule{W^*}$, such that the restriction  $h\vert_{\freemodule{V^*}}:\freemodule{V^*} \longrightarrow V^*$ is surjective with kernel $\mathfrak{m}\otimes_A (\freemodule{V^*}) = \mathfrak{m}\cdot \freemodule{V^*}$.

Furthermore, Nakayama's Lemma implies that
$
  (f_A)|_{\freemodule{V^*}}\colon \freemodule{V^*} \to \HHH^0(\ccR, \ccO_{\ccR})
$
is surjective, which proves \ref{item_linear_forms_surjective_for_independent_familiar}.
Moreover, the surjectivity constructs another commutative diagram with exact rows and columns:
\begin{equation}\label{equ_ideal_ring_sequences_restricted}
\xymatrix{
  & 0\ar[d]& && 0\ar[d]
\\
  & \mathfrak{m} \cdot \freemodule{V^*}\ar[d]\ar[rrr]^{(f_{\mathfrak{m}})|_{\mathfrak{m} \cdot \freemodule{V^*}}}&&& \mathfrak{m}\cdot \HHH^0(\ccR,\ccO_{\ccR})\ar[d]
\\
  & \freemodule{V^*}\ar[d]^{h|_{\freemodule{V^*}}}\ar[rrr]^{(f_A)|_{\freemodule{V^*}}}&&& \HHH^0(\ccR, \ccO_{\ccR})\ar[d]\ar[r]& 0
\\
  0 \ar[r]& V^*\ar[d]\ar[rrr]^{\simeq}&&& \HHH^0(\ccR_{\kk},\ccO_{\ccR_{\kk}})\ar[r]\ar[d]&0
\\
 & 0 && &0
}
\end{equation}
The Snake Lemma applied to \eqref{equ_ideal_ring_sequences_restricted} implies that $(f_{\mathfrak{m}})|_{\mathfrak{m} \cdot \freemodule{V^*}}$ is surjective, hence also
$f_{\mathfrak{m}}$ is
surjective.
Using this, we can apply Snake Lemma again, this time to \eqref{equ_ideal_ring_sequences},
and conclude that $g$ is surjective as claimed.

Since $\linspan{\ccR_{\kk}}=\PP_{\kk}^{r-1}$, take a basis
$\setfromto{\beta_0}{\beta_{N-r}}$
of $\HHH^0\left(\PP_{\kk}^N,\mathcal{I}_{\ccR_{\kk}}(1)\right)$
and lift it to $\setfromto{\overline{\beta}_0}{\overline{\beta}_{N-r}} \subset \HHH^0\left(\PP_A^N,\mathcal{I}_{\ccR}(1)\right)$
so that $g(\overline{\beta}_i) = \beta_i$.
Mapping these two sets to $W^*$ and $\freemodule{W^*}$ respectively, we see that $\setfromto{\overline{\beta}_0}{\overline{\beta}_{N-r}}$ is a collection of $A$-linearly independent forms, as it is a lift of a linearly independent subset from $W^*$ and $\freemodule{W^*}$ is a free module.
Therefore, 
$\ccR\subset\PP_A^{r-1}$ where $\PP_A^{r-1}$ is the zero scheme of
$\setfromto{\overline{\beta}_0}{\overline{\beta}_{N-r}}$ and this concludes the proof of \ref{item_extending_linear_span_to_flat_family}.
\end{proof}

\begin{lemma}\label{lem_dimension_of_linear_span}
   Let $\phi \colon \ccR \to Q$ be linearly independent \familiar{}.
   Then, the dimension of each fibre of
            $\langle\phi\rangle$ over any closed point of $Q$ is $r-1$.
\end{lemma}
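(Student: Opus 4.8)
The plan is to reduce to a local base and then trap the fibre between two copies of $\PP^{r-1}_\kk$. If $r=0$ then $\ccR$ is the empty subscheme, all fibres of $\langle\phi\rangle$ are empty, and there is nothing to prove; so assume $r\geqslant 1$. Fix a closed point $q\in Q$; since $Q$ has enough $\kk$-points, $q$ is a $\kk$-point, hence $\Spec\ccO_{Q,q}$ has enough $\kk$-points, its residue field is $\kk\subset\ccO_{Q,q}$, and the canonical morphism $\xi\colon\Spec\ccO_{Q,q}\to Q$ is flat. By Lemma~\ref{lem_flat_base_change_of_linear_spans}, the fibre of $\langle\phi\rangle$ over $q$ coincides with the fibre over the closed point of the equation linear span of the induced familiar $\phi'$; by Proposition~\ref{prop_base_change_of_familiars}, $\phi'$ is again a \familiar{}, and it is linearly independent because its unique $\kk$-point is the closed one, with fibre the independent subscheme $\phi^{-1}(q)$. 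Thus we may assume $Q=\Spec A$ with $(A,\gotm,\kk)$ a local ring of residue field $\kk$, with $q$ the closed point; set $\ccR_\kk=\phi^{-1}(q)\subset\PP^N_\kk$ and let $\ccI\subset\ccO_Q\otimes_\kk\Sym W^*$ be the homogeneous ideal sheaf of $\ccR$.

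For the upper bound, I would invoke Theorem~\ref{thm_Nakayama_for_linear_spans}\ref{item_extending_linear_span_to_flat_family}, which provides an $A$-linear subspace $\PP^{r-1}_A\subset\PP^N_A$ with $\ccR\subset\PP^{r-1}_A$. The ideal of $\PP^{r-1}_A$ is generated by $N-r+1$ linear forms that are $A$-linearly independent, hence it is a linear ideal, and since $\ccR\subset\PP^{r-1}_A$ it is contained in $\ccI$; in particular those forms lie in $\ccI_1$. Therefore $(\ccI_{\leqslant 1})$ contains the ideal of $\PP^{r-1}_A$, so $\langle\ccR\rangle\subset\PP^{r-1}_A$ as subschemes of $\PP^N_A$; restricting to the fibre over $q$ gives $\langle\phi\rangle^{-1}(q)\subset\PP^{r-1}_\kk$.

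For the reverse inclusion, I would apply Lemma~\ref{lem_inclusion_on_linear_spans} to the inclusion of the closed point $\Spec\kk\to Q$: the base change of $\phi$ along it is the \familiar{} $\ccR_\kk\to\Spec\kk$, and the lemma yields $\langle\ccR_\kk\rangle\subset\langle\phi\rangle^{-1}(q)$ inside $\{q\}\times\PP_{\kk}W$. Since $\phi$ is linearly independent, $\ccR_\kk$ is an independent subscheme of degree $r$, so by Definition~\ref{def_linear_span} its linear span $\langle\ccR_\kk\rangle$ is a linearly embedded $\PP^{r-1}_\kk$. Combining the two inclusions, $\PP^{r-1}_\kk=\langle\ccR_\kk\rangle\subset\langle\phi\rangle^{-1}(q)\subset\PP^{r-1}_\kk$, hence $\langle\phi\rangle^{-1}(q)=\PP^{r-1}_\kk$ has dimension $r-1$.

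All of the real content has been imported: the only reason this is not completely formal is that, as Example~\ref{ex_bad_linear_span} shows, the equation linear span does not commute with base change, and the remedy is precisely the linear independence hypothesis, exploited through Theorem~\ref{thm_Nakayama_for_linear_spans}. Within this lemma, the single point demanding a moment's care is that linear independence survives the passage to $\Spec\ccO_{Q,q}$, which is immediate once one notes that $\Spec\ccO_{Q,q}$ has a single $\kk$-point.
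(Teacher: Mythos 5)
Your proof is correct and follows essentially the same route as the paper: flat base change (Lemma~\ref{lem_flat_base_change_of_linear_spans}) reduces to a local base, the lower bound comes from Lemma~\ref{lem_inclusion_on_linear_spans}, and the upper bound comes from Theorem~\ref{thm_Nakayama_for_linear_spans}\ref{item_extending_linear_span_to_flat_family}. You supply a few details the paper leaves implicit (the $r=0$ case, the fact that $\Spec\ccO_{Q,q}$ has a single $\kk$-point so linear independence persists, and the observation that the ideal of the containing $\PP^{r-1}_A$ sits inside $(\ccI_{\leqslant 1})$), all of which are sound.
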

\begin{proof}
    By the flat base change Lemma~\ref{lem_flat_base_change_of_linear_spans}, the statement is local, so without loss of generality we can assume $Q=\Spec(A)$ where $(A,\gotm, \kk)$ is a local ring.
    The lower bound on the dimension follows from Lemma~\ref{lem_inclusion_on_linear_spans}, the upper bound follows
    from Theorem~\ref{thm_Nakayama_for_linear_spans}.
\end{proof}

Note that Example~\ref{ex_bad_linear_span} ilustrates that the conclusion of Lemma~\ref{lem_dimension_of_linear_span}
fails to hold if $\ccR$ is not linearly independent --- in fact, in such case the dimension of the fibre of a linear span of the \familiar{} can be larger than $r-1$.
We can also show that, unlike in Example~\ref{ex_bad_linear_span},
the linear span of an independent \familiar{} behaves well under base change.

\begin{corollary}
  \label{cor_base_change_of_linear_span_of_independent_familiar}
  Suppose $\xi\colon Q'\to Q$ is a morphism of schemes with enough of $\kk$-points,  and $\phi\colon\ccR \to Q$ is an independent \familiar.
  Define $\phi'\colon\ccR' \to Q'$ with $\ccR' := \ccR \times_Q Q'$
  to be the pullback \familiar.
  Then,
  $(\xi\times {\PP_{\kk} W})^* (\linspan{\phi}) = \linspan{\phi'}$.
\end{corollary}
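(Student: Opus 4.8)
The plan is to reduce to an affine local base and then compare the relevant cohomology groups before and after the base change $\xi$. Both $(\xi\times\id_{\PP_\kk W})^*\linspan{\phi}$ and $\linspan{\phi'}$ are closed subschemes of $\PP_{Q'}W$ cut out by coherent ideal sheaves, so equality may be checked at the closed points of $\PP_{Q'}W$. Since $Q'$, and hence $\PP_{Q'}W = Q'\times\PP^N_\kk$, has enough $\kk$-points, every closed point lies over a $\kk$-point $q'\in Q'$, and then $q:=\xi(q')$ is a $\kk$-point of $Q$ (the field map $\kappa(q)\to\kappa(q')=\kk$ splits the structure map, forcing $\kappa(q)=\kk$); the induced $\Spec\ccO_{Q',q'}\to\Spec\ccO_{Q,q}$ is a local homomorphism of Noetherian local rings with residue field $\kk$. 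By the flat base change Lemma~\ref{lem_flat_base_change_of_linear_spans}, applied to the flat maps $\Spec\ccO_{Q,q}\to Q$ and $\Spec\ccO_{Q',q'}\to Q'$, forming linear spans commutes with these localisations, and the restriction of $\phi$ stays a linearly independent familiar (finiteness and the fibre condition over the closed point are preserved). Thus we may assume $Q=\Spec A$, $Q'=\Spec A'$ with $(A,\gotm,\kk)$ and $(A',\gotm',\kk)$ local Noetherian and $\xi$ corresponding to a local homomorphism $A\to A'$.

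Write $\ccI:=\ccI_\ccR\subset\ccO_{\PP^N_A}$, $\ccI':=\ccI_{\ccR'}\subset\ccO_{\PP^N_{A'}}$ for the ideal sheaves, and set $\ccI_i:=\HHH^0(\PP^N_A,\ccI(i))$, $\ccI'_i:=\HHH^0(\PP^N_{A'},\ccI'(i))$, so that $\linspan{\phi}=Z(\ccI_0\oplus\ccI_1)$, $\linspan{\phi'}=Z(\ccI'_0\oplus\ccI'_1)$, and $(\xi\times\id_{\PP_\kk W})^*\linspan{\phi}=Z\big(\operatorname{Im}(\ccI_0\otimes_A A')\oplus\operatorname{Im}(\ccI_1\otimes_A A')\big)$. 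The pullback of a degree $\leqslant 1$ equation of $\ccR$ is a degree $\leqslant 1$ equation of $\ccR'$, so $\operatorname{Im}(\ccI_i\otimes_A A')\subseteq\ccI'_i$ for $i=0,1$; this gives the inclusion $\linspan{\phi'}\subseteq(\xi\times\id_{\PP_\kk W})^*\linspan{\phi}$ (which also follows from Lemma~\ref{lem_inclusion_on_linear_spans}, the pullback being itself a family of linear subspaces containing $\ccR'$). Hence it remains to prove $\ccI'_i=\operatorname{Im}(\ccI_i\otimes_A A')$ for $i=0,1$. The tool is the elementary base change fact: if $0\to K\to M\to P\to 0$ is exact with $M$ flat over $A$, then $\operatorname{Im}(K\otimes_A A'\to M\otimes_A A')=\ker(M\otimes_A A'\to P\otimes_A A')$ — immediate from the $\operatorname{Tor}$-sequence and $\operatorname{Tor}_1^A(M,A')=0$.

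For $i=1$: since $\ccR\to\Spec A$ is finite we have $\ccO_\ccR(1)\simeq\ccO_\ccR$, and by Theorem~\ref{thm_Nakayama_for_linear_spans}\ref{item_linear_forms_surjective_for_independent_familiar} the restriction $\HHH^0(\PP^N_A,\ccO(1))\to\HHH^0(\ccR,\ccO_\ccR(1))$ is surjective. So $0\to\ccI_1\to M\to P\to0$ with $M:=\HHH^0(\PP^N_A,\ccO(1))$ free over $A$ and $P:=\HHH^0(\ccR,\ccO_\ccR(1))$. Now $\HHH^0(\ccO(1))$ commutes with the base change $A\to A'$ (it is free), and so does $\HHH^0(\ccR,\ccO_\ccR(1))=\Gamma\big(\Spec A,\phi_*\ccO_\ccR(1)\big)$ (because $\phi$ is affine, so $\phi_*$ commutes with arbitrary base change); under these identifications $M\otimes_A A'\to P\otimes_A A'$ is the restriction $\HHH^0(\PP^N_{A'},\ccO(1))\to\HHH^0(\ccR',\ccO_{\ccR'}(1))$, whose kernel is $\ccI'_1$. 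The base change fact above then yields $\ccI'_1=\operatorname{Im}(\ccI_1\otimes_A A')$. The case $i=0$ is run analogously, using $\HHH^0(\PP^N_A,\ccO)=A$ (free) and $\HHH^1(\PP^N_A,\ccO)=0$; the one extra point is that $A\to\HHH^0(\ccR,\ccO_\ccR)$ need not be onto, and to conclude one must know that its image, tensored with $A'$, still injects into $\HHH^0(\ccR',\ccO_{\ccR'})$. This is where the linear independence of the familiar is used again: by Theorem~\ref{thm_Nakayama_for_linear_spans}\ref{item_extending_linear_span_to_flat_family} (trivial if $r=0$) we may enlarge $\ccR$ to a flat $L\simeq\PP^{r-1}_A$ with $\ccR\subseteq L$, which does not affect $\linspan{\phi}$, $\linspan{\phi'}$, or their base changes, and one exploits $\HHH^0(L,\ccO_L)=A$, $\HHH^1(L,\ccO_L)=0$ together with the already established degree $1$ identity to pin down $\ccI'_0=\operatorname{Im}(\ccI_0\otimes_A A')$.

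I expect the main obstacle to be precisely the degree $0$ part, i.e. the possible nonvanishing of the constant term $\ccI_0$ of the linear ideal, which occurs exactly when $\ccR$ fails to schematically dominate $Q$ (for instance $Q=\Spec\kk[t]/(t^2)$ with $\ccR$ supported over the reduced point). In that case $\HHH^0(\ccR,\ccO_\ccR)$ is not flat over $A$, so cohomology-and-base-change cannot be invoked directly, and the delicate step is to verify that $\operatorname{Im}\!\big(A\to\HHH^0(\ccR,\ccO_\ccR)\big)\otimes_A A'$ still injects into $\HHH^0(\ccR',\ccO_{\ccR'})$; the degree $1$ argument (hence Theorem~\ref{thm_Nakayama_for_linear_spans}) and the embedding of $\ccR$ into the flat $\PP^{r-1}_A$ are what make this work, in contrast to the badly behaved non‑independent case of Example~\ref{ex_bad_linear_span}.
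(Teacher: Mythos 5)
Your degree-$1$ analysis is correct and essentially coincides with the paper's: both tensor the short exact sequence $0 \to \ccI_1 \to A \otimes_{\kk} W^* \to \HHH^0(\ccR,\ccO_\ccR(1)) \to 0$ coming from Theorem~\ref{thm_Nakayama_for_linear_spans}\ref{item_linear_forms_surjective_for_independent_familiar} with $A'$, compare with the analogous sequence for $\ccR'$, and conclude $\ccI'_1 = \im\bigl(\ccI_1 \otimes_A A' \to A'\otimes_{\kk} W^*\bigr)$.

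The degree-$0$ step, which you leave as a sketch, is where the plan fails: the identity you set out to ``pin down,'' namely $\ccI'_0 = \im(\ccI_0\otimes_A A')$, is false. Take $A = \kk[x,y]/(x^2,xy,y^2)$, $Q=\Spec A$, and $\ccR = Z(\alpha_1^2,\,x\alpha_1 - y\alpha_0) \subset \PP^1_A$, so that $\HHH^0(\ccO_\ccR)\simeq \kk[x,e]/(x^2,e^2)$ with $A$ acting via $x\mapsto x$, $y\mapsto xe$. This $\phi\colon\ccR\to Q$ is a linearly independent \familiar[2]{} with $\ccI_0 = \ker\bigl(A\to\HHH^0(\ccO_\ccR)\bigr) = 0$. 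For $\xi\colon\Spec A' \hookrightarrow Q$ with $A' = A/(x) = \kk[y]/(y^2)$ one gets $\HHH^0(\ccO_{\ccR'}) = \kk[e]/(e^2)$, and $y\mapsto \overline{xe}=0$, so $\ccI'_0 = (y) \neq 0 = \im(\ccI_0\otimes_A A')$. The scheme-theoretic image of $\ccR'$ is strictly smaller than the base change of the scheme-theoretic image of $\ccR$, and neither $\HHH^1$-vanishing on the enlargement $L\simeq\PP^{r-1}_A$ nor the degree-$1$ identity can recover the false equality --- exactly the injectivity you flag as the ``delicate step'' genuinely fails. (Incidentally, the paper's own proof has the same gap: it asserts that $\xi^*\HHH^0(\ccI)$ and $\HHH^0(A'\otimes_A\ccI)$ ``both equal the ideal in $A'$ defining the image scheme $\phi'(\ccR')$,'' but in the example they are $0$ and $(y)$, respectively.)

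What makes the corollary nonetheless true is that it asserts equality of \emph{subschemes}, and the two linear ideals differ only up to saturation by the irrelevant ideal. You already have $\xi^*\ccI_1 = \ccI'_1$ and $\xi^*\ccI_0 \subset \ccI'_0$, hence $\linspan{\phi'}\subset(\xi\times\id_{\PP_\kk W})^*\linspan{\phi}$; for the reverse inclusion take any $a'\in\ccI'_0$. For every coordinate $\alpha_j\in W^*$ the degree-$1$ form $a'\alpha_j$ vanishes on $\ccR'$, so $a'\alpha_j\in\ccI'_1 = \xi^*\ccI_1$, and dehomogenising on the chart $\{\alpha_j\neq 0\}$ places $a'$ in the local ideal of $(\xi\times\id_{\PP_\kk W})^*\linspan{\phi}$. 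Since the charts cover $\PP_{Q'}W$, the extra degree-$0$ equations cut out nothing new and the two subschemes coincide. (In the example, $(y\alpha_0,y\alpha_1)$ and $(y)$ have the same saturation in $A'[\alpha_0,\alpha_1]$.) So matching the linear ideals term by term cannot close in degree $0$; you must argue via saturations, or directly on affine charts as above.
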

\begin{prf}
   By Lemma~\ref{lem_flat_base_change_of_linear_spans}
   it is sufficient to prove the claim if both $Q$ and $Q'$ are local, $Q=\Spec A$ and $Q'=\Spec A'$ for local rings $A$ and $A'$.

   Let $\ccI\subset \ccO_{\PP_A W}$ be the ideal sheaf of $\ccR$
   and set $\ccI(i):=\ccI\otimes_{\ccO_{\PP_A W}} \ccO_{\PP_A W}(i)$.
   The linear span  $\linspan{\ccR}$ is defined by
   $\HHH^0(\ccI\oplus\ccI(1)) \subset
   A\oplus \freemodule{W^*}$, while the pullback by $\xi\times \id_{\PP_kk W}$ is defined by
   $
    \xi^*\HHH^0(\ccI\oplus\ccI(1))  \subset
   A'\oplus \freemodule[A']{W}^*
   $,
   where $\xi^*\HHH^0(\ccI\oplus\ccI(1))$
   denotes the image of the natural map:
   \[
    A'\otimes_{A}\HHH^0(\ccI\oplus\ccI(1)) \to     A'\otimes_{A}\HHH^0(\ccO_{\PP_A W}\oplus\ccQ_{\PP_A W}(1)) = A'\oplus \freemodule[A']{W}^*.
   \]
   Similarly,
   $\linspan{\ccR'}$ is defined by
   \[
   \HHH^0((\xi\times \id_{\PP_{\kk} W})^*(\ccI\oplus \ccI(1))) = \HHH^0(A'\otimes_{A}(\ccI\oplus \ccI(1))) \subset
   A'\oplus \freemodule[A']{W}^*.
   \]
   By Lemma~\ref{lem_inclusion_on_linear_spans}
   we always have
   \[
     A'\otimes_{A}\HHH^0(\ccI\oplus\ccI(1)) \subset \HHH^0(A'\otimes_{A}(\ccI\oplus \ccI(1))),
   \]
   and
   $
     \xi^*\HHH^0(\ccI) = \HHH^0(A'\otimes_{A}\ccI)
   $ always holds since both are equal to the ideal in $A'$ defining the image scheme $\phi'(\ccR')$.
   Whence it remains to prove
    $
      \HHH^0(A'\otimes_{A}\ccI(1))\subset \xi^*\HHH^0(\ccI(1)).
    $
   Since both $\phi$ and $\phi'$ are linearly independent, by Theorem~\ref{thm_Nakayama_for_linear_spans}\ref{item_linear_forms_surjective_for_independent_familiar} there are short exact sequences:
   \begin{alignat*}{1}
      0 \to
      \HHH^0(\ccI(1))
      & \to
      \freemodule{W} \to
      \HHH^0(\ccO_{\ccR}(1)) \to 0,\\
      0 \to
      \HHH^0(A'\otimes_{A}\ccI(1))
      & \to
      \freemodule[A']{W} \to
      \HHH^0(\ccO_{\ccR'}(1)) \to 0.
   \end{alignat*}
   We tensor the top sequence with
   $A'\otimes_{A}(\dotsc)$ and observe that
   \[A'\otimes_{A}\HHH^0(\ccO_{\ccR}(1))\simeq A'\otimes_{A}\HHH^0(\ccO_{\ccR}) = \HHH^0(\ccO_{\ccR'})\simeq \HHH^0(\ccO_{\ccR'}(1)).\]
   As a result
   we obtain a commutative diagram with exact rows:
   \[
   \begin{tikzcd}
      &
      A'\otimes_{A} \HHH^0(\ccI(1))
       \arrow[r]
      &
      \freemodule[A']{W} \arrow[r] \arrow[d, equal]
      &
      \HHH^0(\ccO_{\ccR'}) \arrow[r]
      \arrow[d, equal]
      &  0\\
      0 \arrow[r]
            &
      \HHH^0(A'\otimes_{A}\ccI(1))
       \arrow[r]
      &
      \freemodule[A']{W} \arrow[r]
      &
      \HHH^0(\ccO_{\ccR'}) \arrow[r] &  0.
\end{tikzcd}
   \]
Thus, the image of the top left map, that is
$ \xi^* \HHH^0(\ccI(1))$, concides with
$\HHH^0(A'\otimes_{A}\ccI(1))$ as claimed.
\end{prf}

\begin{corollary}
   \label{cor_linear_span_is_union_of_subspans_over_finite_local_Gor}
   Suppose $Q$ is a scheme with enough $\kk$ points and
   $\phi\colon \ccR\to Q$ is a \familiar[r].
   Then
   \begin{enumerate}
    \item \label{item_local_presentation}
    (Local presentation)
      $\linspan{\ccR} =
     \bigcup_{q\in Q(\kk)}
      \linspan{\ccR|_{\Spec \ccO_{Q,q}}}
   $.
    \item
    \label{item_finite_subscheme_presentation}
    (Finite subscheme presentation)
    If $\phi$ is independent,
    then
         \[
           \linspan{\ccR} =
     \textstyle{\bigcup_{Q' \subset Q}}
     \linspan{\ccR|_{Q'}},
         \]
   where the union is over
      all finite local subschemes $Q' \subset Q$.
    \item \label{item_finite_Gor_morphism_presentation}
    (Finite Gorenstein  morphism presentation)
     If $\phi$ is independent,
    then
  \[
     \linspan{\ccR} =
     \bigcup_{g\colon Q' \to Q}
     (g\times \id_{\PP_{\kk}W} )(\linspan{\ccR'}),
   \]
   where the union is over
     all morphism $g$ from any finite over $\kk$ local Gorenstein scheme $Q'$, $\ccR' =Q' \times_Q \ccR$ and $\phi':\ccR' \to Q'$ is the pulled back \familiar.
  \item \label{item_finite_Gor_subscheme_presentation}
  (Finite Gorenstein subscheme presentation)
     If $\phi$ is flat and independent,
     then
   \[
     \linspan{\ccR} =
     \bigcup_{Q' \subset Q}
     \linspan{\ccR|_{Q'}},
   \]
   where the union is over
      all finite local Gorenstein subschemes $Q' \subset Q$.
   \end{enumerate}
\end{corollary}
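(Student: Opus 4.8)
Each of the four equalities I would prove by two inclusions. The inclusion in which the union is contained in $\linspan{\ccR}$ is the easy one: in \ref{item_local_presentation} the flat base change Lemma~\ref{lem_flat_base_change_of_linear_spans} applied to the localisation $\Spec\ccO_{Q,q}\to Q$ shows that $\linspan{\ccR|_{\Spec\ccO_{Q,q}}}$ is the restriction of $\linspan{\ccR}$, while in \ref{item_finite_subscheme_presentation}--\ref{item_finite_Gor_subscheme_presentation} each member of the union sits inside $\linspan{\ccR}$ by Lemma~\ref{lem_inclusion_on_linear_spans}; a scheme-theoretic union of subschemes of $\linspan{\ccR}$ is again a subscheme of $\linspan{\ccR}$.

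For the reverse inclusion $\linspan{\ccR}\subseteq Y$, with $Y$ the pertinent union, I would first note that $\PP_Q W$ has enough $\kk$-points: it is locally Noetherian and proper over $Q$, so any point over $q\in Q$ specialises to a point over a $\kk$-point $q_0\in\overline{\{q\}}$, which, lying in the fibre $\PP^N_{\kk}$, specialises further to a $\kk$-point. By Proposition~\ref{prop_scheme_inclusion_equivalence} it then suffices to show that every finite local Gorenstein subscheme $\mathfrak{R}\subset\PP_Q W$ with $\mathfrak{R}\subseteq\linspan{\ccR}$ is contained in $Y$. Such an $\mathfrak{R}$ is supported at one $\kk$-point lying over a $\kk$-point $q_0\in Q$; let $Q_0\subseteq Q$ be the (finite local) scheme-theoretic image of $\mathfrak{R}$ under $\PP_Q W\to Q$, so that $\mathfrak{R}\to Q$ factors through $Q_0$.

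Now the three cases. In case~\ref{item_local_presentation}, the morphism $\mathfrak{R}\to Q$ factors through $\Spec\ccO_{Q,q_0}$, so $\mathfrak{R}\subseteq\linspan{\ccR}\times_Q\Spec\ccO_{Q,q_0}=\linspan{\ccR|_{\Spec\ccO_{Q,q_0}}}$ by Lemma~\ref{lem_flat_base_change_of_linear_spans}. In case~\ref{item_finite_subscheme_presentation}, independence of $\phi$ and Corollary~\ref{cor_base_change_of_linear_span_of_independent_familiar} give $\linspan{\ccR}\times_Q Q_0=\linspan{\ccR|_{Q_0}}$, and $\mathfrak{R}\subseteq\linspan{\ccR}\times_Q Q_0$ since $\mathfrak{R}\to Q$ factors through $Q_0$; as $Q_0$ is a finite local subscheme of $Q$, we are done. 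In case~\ref{item_finite_Gor_morphism_presentation}, take $Q':=\mathfrak{R}$ itself, which is finite local Gorenstein, with $g\colon\mathfrak{R}\hookrightarrow\PP_Q W\to Q$; then $\ccR':=\mathfrak{R}\times_Q\ccR$ and $\linspan{\ccR'}=\linspan{\ccR}\times_Q\mathfrak{R}$ by Corollary~\ref{cor_base_change_of_linear_span_of_independent_familiar}. The graph section $\sigma\colon\mathfrak{R}\to\PP_{\mathfrak{R}}W$ of the map $\mathfrak{R}\to\PP^N_{\kk}$ coming from $\mathfrak{R}\subset\PP_Q W=\PP^N_{\kk}\times_{\kk}Q$ has composite with $\PP_{\mathfrak{R}}W\to\PP_Q W$ equal to the inclusion $\mathfrak{R}\hookrightarrow\PP_Q W$; since $\mathfrak{R}\subseteq\linspan{\ccR}$, that composite factors through $\linspan{\ccR}$, hence $\sigma$ factors through $\linspan{\ccR'}=\linspan{\ccR}\times_Q\mathfrak{R}$, and pushing forward along $g\times\id$ yields $\mathfrak{R}=(g\times\id)(\sigma(\mathfrak{R}))\subseteq(g\times\id)(\linspan{\ccR'})\subseteq Y$.

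The step I expect to be the main obstacle is \ref{item_finite_Gor_subscheme_presentation}, because now the Gorenstein schemes must embed as subschemes of $Q$ while $Q_0$ itself need not be Gorenstein, so the argument of \ref{item_finite_Gor_morphism_presentation} does not apply verbatim. Here the plan is to reduce to part~\ref{item_finite_subscheme_presentation}: it is enough to show that for every finite local subscheme $Q_0\subseteq Q$ one has $\linspan{\ccR|_{Q_0}}=\bigcup_{Q'}\linspan{\ccR|_{Q'}}$, the union over finite local Gorenstein $Q'\subseteq Q_0$ (which are also subschemes of $Q$), and then reindex against part~\ref{item_finite_subscheme_presentation}. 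Since $\ccR|_{Q_0}$ is a flat independent familiar, Theorem~\ref{thm_Nakayama_for_linear_spans}\ref{item_linear_forms_surjective_for_independent_familiar} shows its degree-one equations form a submodule $M\subseteq\ccO_{Q_0}\otimes_{\kk}W^*$ in an exact sequence $0\to M\to\ccO_{Q_0}\otimes_{\kk}W^*\to\HHH^0(\ccO_{\ccR|_{Q_0}})\to0$ with locally free cokernel of rank $r$ (the case $r=0$ being trivial); thus $M$ is a subbundle, and $\linspan{\ccR|_{Q_0}}$ has no degree-zero equations as $\ccR|_{Q_0}$ dominates $Q_0$. By Corollary~\ref{cor_base_change_of_linear_span_of_independent_familiar} the degree-one equations of $\linspan{\ccR|_{Q'}}$ are the image of $M$ in $\ccO_{Q'}\otimes_{\kk}W^*$, which, $M$ being a subbundle, equals $\ker\bigl(\ccO_{Q'}\otimes_{\kk}W^*\to(\ccO_{Q_0}\otimes_{\kk}W^*/M)\otimes_{\ccO_{Q_0}}\ccO_{Q'}\bigr)$; a form of $\ccO_{Q_0}\otimes_{\kk}W^*$ lies in all of these for every such $Q'$ exactly when its image in the locally free sheaf $\ccO_{Q_0}\otimes_{\kk}W^*/M$ has all coordinates in the ideal $I_{Q'}$ of $Q'$, and $\bigcap_{Q'}I_{Q'}=0$ because $Q_0$ is the scheme-theoretic union of its finite local Gorenstein subschemes --- which is precisely the equivalence \ref{item_all_finite_subschemes}$\iff$\ref{item_Gorenstein_finite_subschemes} of Proposition~\ref{prop_scheme_inclusion_equivalence}. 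Hence the intersection of the degree-one equations of the $\linspan{\ccR|_{Q'}}$ is $M$, and with the degree-zero parts vanishing this gives $\linspan{\ccR|_{Q_0}}=\bigcup_{Q'}\linspan{\ccR|_{Q'}}$; combined with part~\ref{item_finite_subscheme_presentation} this proves \ref{item_finite_Gor_subscheme_presentation}. The real difficulty, and where flatness is indispensable, is this passage from formal manipulations with the poset of subschemes to an honest comparison of ideal sheaves, which works only because the linear span of a flat independent familiar is cut out by a genuine vector subbundle.
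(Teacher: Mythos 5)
Your overall strategy agrees with the paper's: one inclusion via Lemma~\ref{lem_inclusion_on_linear_spans} (or flat base change), and the other by picking a finite local Gorenstein $\mathfrak{R}\subset\linspan{\ccR}$ via Proposition~\ref{prop_scheme_inclusion_equivalence} and pushing it into one summand of the union. For items~\ref{item_local_presentation}--\ref{item_finite_Gor_morphism_presentation} your argument is essentially the one in the paper; in~\ref{item_finite_Gor_morphism_presentation} your graph-section computation spells out in more detail why $\mathfrak{R}\subseteq(g\times\id)(\linspan{\ccR'})$, where the paper is terser, but the content is the same.

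For item~\ref{item_finite_Gor_subscheme_presentation} the paper and you diverge in presentation. The paper uses Theorem~\ref{thm_Nakayama_for_linear_spans}\ref{item_extending_linear_span_to_flat_family} to identify $\linspan{\ccR}$ over a finite local base as a trivial $\PP^{r-1}$-bundle, rewrites it as $Q\times_{\kk}\PP_{\kk}^{r-1}$, and then "multiplies'' the scheme-theoretic decomposition $Q=\bigcup Q'$ by the flat factor $\PP_{\kk}^{r-1}$; flatness of $\times_{\kk}\PP^{r-1}$ is exactly what makes the intersection of ideal sheaves commute with the product. You instead work directly with the degree-one part $M\subset\ccO_{Q_0}\otimes W^*$, invoke Theorem~\ref{thm_Nakayama_for_linear_spans}\ref{item_linear_forms_surjective_for_independent_familiar} to make it a subbundle, and intersect the degree-one parts of the ideals. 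Both hinge on the same structural fact (linear span of a flat independent familiar is cut out by a genuine subbundle), so the approaches are morally equivalent; the paper's is slicker, yours is closer to the definitions.

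There is, however, a real gap in the last step of~\ref{item_finite_Gor_subscheme_presentation} as written. You compute that the degree-$0$ and degree-$1$ pieces of $\bigcap_{Q'}\ccI\left(\linspan{\ccR|_{Q'}}\right)$ agree with those of $\ccI\left(\linspan{\ccR|_{Q_0}}\right)=(M)$, and conclude equality of the schemes. This does not follow: the scheme-theoretic union corresponds to the full intersection of ideals, and an intersection of linear ideals need not be linear, so agreement in degrees $\leqslant 1$ does not a priori force agreement in higher degrees (think of a linear ideal $\ccI$ versus $\ccI+(\alpha^2)$ for $\alpha\notin\ccI_1$). The fix is easy and uses the very observation you flag at the end: because $M$ is an $\ccO_{Q_0}$-direct summand of $\ccO_{Q_0}\otimes W^*$, the quotient $\left(\ccO_{Q_0}\otimes\Sym W^*\right)/(M)$ is a free $\ccO_{Q_0}$-module in every degree, and freeness gives $\bigcap_{Q'} I_{Q'}\cdot F = \bigl(\bigcap_{Q'} I_{Q'}\bigr)\cdot F = 0$ for any free $\ccO_{Q_0}$-module $F$. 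Applying this in each degree $k$, any $\Theta$ lying in all the ideals $(M)+(I_{Q'})$ has zero image in the free module $\bigl(\ccO_{Q_0}\otimes S^k W^*\bigr)/(M)_k$, hence lies in $(M)_k$. With that degree-by-degree check added, your argument is complete and correct.
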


\begin{prf}
   In all cases, the inclusion
   $ \supset
     $
   follows from Lemma~\ref{lem_inclusion_on_linear_spans}.
   To argue for the opposite inclusions we use Proposition~\ref{prop_scheme_inclusion_equivalence}, and we pick a finite local Gorenstein $R'\subset \linspan{\ccR}$.
   Our goal is to prove that $R'$ is contained in the appropriate union, or more  precisely in one of the summands.

   In case of \ref{item_local_presentation}
   we set $q$ to be the image of the point of support of $R'$
   and the containment $R'\subset \linspan{\ccR|_{\Spec \ccO_{Q,q}}}$ follows from the flat base change, Lemma~\ref{lem_flat_base_change_of_linear_spans}.

   In case of
   \ref{item_finite_subscheme_presentation},
   we set $Q'$ to be the image of $\phi(R')$ and the containment $R'\subset \linspan{\ccR|_{Q'}}$ follows from  Corollary~\ref{cor_base_change_of_linear_span_of_independent_familiar}.

   In case of
   \ref{item_finite_Gor_morphism_presentation},
   we set $Q':=R'$ and $g\colon Q'\to Q$ 
    to be the projection $\phi$ and the containment
     $R'\subset (g\times \id_{\PP_{\kk} W} )(\linspan{\ccR'})$ 
      follows again from  Corollary~\ref{cor_base_change_of_linear_span_of_independent_familiar}.

Finally, we prove \ref{item_finite_Gor_subscheme_presentation}, where $\phi$ is flat and independent.
Using \ref{item_finite_subscheme_presentation}, we may suppose $Q$ is finite and local.
Thus, $\linspan{\ccR} \simeq \PP_Q^{r-1}$ 
and for each subscheme $Q' \subset Q$ the linear span of the restriction $\linspan{\ccR|_{Q'}}$ is the restriction of linear span $\linspan{\ccR}|_{Q'}$
   by Corollary~\ref{cor_base_change_of_linear_span_of_independent_familiar}.
That is, $\linspan{\ccR|_{Q'}} = \PP_{Q'}^{r-1}$, and we can apply
Proposition~\ref{prop_scheme_inclusion_equivalence} to $Q$ and multiply it by $\PP_{\kk}^{r-1}$:
   \[
     \linspan{\ccR} =
     Q\times_{\kk}\PP_{\kk}^{r-1} =
     \left(\bigcup_{Q' \subset Q}
      Q'\right)\times_{\kk}\PP_{\kk}^{r-1} =
     \bigcup_{Q' \subset Q} (Q'\times_{\kk}\PP_{\kk}^{r-1}) =
     \bigcup_{Q' \subset Q}
     \linspan{\ccR|_{Q'}}.
   \]
\end{prf}

It is tempting to claim that  it should be enough to consider only embeddings $Q'\subset Q$  of Gorenstein schemes as in Corollary~\ref{cor_linear_span_is_union_of_subspans_over_finite_local_Gor}\ref{item_finite_Gor_subscheme_presentation} also in the non-flat case.
This however is not the case, as illustrated by the following example.

\begin{example}
  Consider
  $A=\kk[s,t]/(s^2, st,t^2)$
  and let $\ccR \subset \PP_A^2$
  be the finite scheme of length $6$, with the following homogeneous ideal $\ccI_{\ccR} \subset A[\alpha,\beta, \gamma]$:
  \[
     \ccI_{\ccR} = (t\alpha, s\beta, s\alpha-t\beta, \alpha^2, \alpha\beta, \beta^2)
  \]
  Let $\phi\colon \ccR \to Q=\Spec A$ be the projection morphism.
  Then $Q$ is not Gorenstein, $\phi$ is an independent but not flat \familiar[3], and its linear span is defined by the ideal $(t\alpha, s\beta, s\alpha-t\beta)$.
  For any proper subscheme $Q'\subsetneqq Q$
  the ideal of $\ccR' := \ccR|_{Q'}$ contains $s\alpha$ (and hence also $t\beta$), which is a linear function in the ideal of each $\linspan{\ccR'}$,
  and thus also in the intersection of all those ideals.
  Therefore, the union of linear spans of restrictions to Gorenstein subschemes does not cover
  $\linspan{\ccR}$.
\end{example}

\subsection{Minors and linear spans}

Suppose $Q$ is a scheme with enough $\kk$-points,
and let $M$ be a $u_1\times u_2$ matrix 
 whose entries are linear forms on $\affinespaceof[\ccO_Q(Q)]{W}$.
In a coordinate free manner, this is equivalent to existence of vector spaces $U_1,U_2$ over $\kk$ of dimensions respectively $u_1$ and $u_2$,
and the $\ccO_Q$-linear map $M\colon \freemodule[Q]{W} \to \freemodule[Q]{U_1}\otimes_{Q} \freemodule[Q]{U_2}$.
We will also consider the corresponding map of projective spaces, which however might be defined only on an open subset of $\PP_Q W$.
Explicitly, let $Z= \set{M=0}$ be the closed subscheme obtained by vanishing of all entries of $M$.
Then we get a map:
\[
 M_{\PP} \colon \PP_Q W \setminus Z \to \PP_Q (U_1\otimes_{\kk} U_2).
\]
Note that $\PP_Q W \setminus Z$ is not necessarily dense, even if $M$ in nonzero in the neighbourhood of each point.
\begin{example}
   Suppose $Q=\Spec A$ with $A$  a finite local ring with the maximal ideal $\gotm\subset A$.
   Then if all entries of $M$ are in $\gotm$, then $\PP_A W \setminus Z =\emptyset$ and $M_{\PP}$ is undefined.
\end{example}

\begin{example}
   Suppose $Q=\Spec \kk[s,t]/(st)$ is a union of two affine lines intersecting in one point
   and
   $M = \begin{pmatrix}
         s\alpha & t^2s\beta \\
         0 & ts \gamma
        \end{pmatrix}
$. Then $Z$ contains the whole projective space over one of the affine lines $\set{s=0}$ and $M_{\PP}$ is defined on an open subset of $\PP_B W$, where $B=\kk[t, t^{-1}]$.
\end{example}

For an integer $r$, we define the scheme 
$\set{\rk M \leqslant r}:=Z(I_{r})\subset \PP_Q W$,
 where $I_{r}\subset \ccO_Q(Q)\otimes_{\kk} \Sym W^* $
 is the homogeneous ideal of $(r+1)\times (r+1)$ minors of $M$.
 Hence, the scheme $\set{\rk M \leqslant r}$ is a closed subscheme of $\PP_Q W$,
 and in particular, it is projective over $Q$.
Note that that if $r\geqslant u_1$ or $r\geqslant u_2$, then $I_r=\set{0}$ and $\set{\rk M \leqslant r} = \PP_Q W$.
We also define the quasiprojective (over $Q$) scheme
\[
 \set{\rk M = i}:= \set{\rk M \leqslant i}\setminus \set{\rk M \leqslant i-1}.
\]

An important special case is $X=\set{\rk M = 1}$.
Then the image of the map
$M_{\PP}|_{X} \colon X \to \PP_Q(U_{1}\otimes_{\kk} U_{2})$ is contained in $\PP_{Q} U_1\times_{Q}\PP_{Q} U_2$,
and therefore $X$ admits two projections $\pi_{i,M} \colon X\to \PP_{Q} U_i$ for $i=1,2$.

 \begin{corollary}\label{cor_relative_linear_span_of_constant_rank_subschemes}
With the above notation,
suppose $\ccR\subset \set{\rk M = 1}\subset \PP_{Q} W$ is a subscheme such that the image $\pi_{1,M}(\ccR)$ is a linearly independent \familiar{} in $\PP_{\kk} U_1$ over $Q$.
Then  $\langle \ccR\rangle \subset \PP_{Q} W$ is contained in $\set{\rk M \leqslant r}$.
\end{corollary}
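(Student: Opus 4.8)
The plan is to reduce to a local base $Q=\Spec A$, use Theorem~\ref{thm_Nakayama_for_linear_spans} to enclose the first projection of $\ccR$ in a \emph{flat} linear subbundle, and then realise $\linspan{\ccR}$ inside a family of linear subspaces on which the matrix $M$ visibly drops rank to at most $r$.

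First I would reduce to the case $Q=\Spec A$ with $(A,\gotm,\kk)$ local with residue field $\kk$. Both $\linspan{\ccR}$ and $\set{\rk M\leqslant r}$ are closed subschemes of $\PP_Q W$, which has enough $\kk$-points, so by Proposition~\ref{prop_scheme_inclusion_equivalence} it is enough to test the inclusion against finite local Gorenstein subschemes $P\subset\PP_Q W$. Such a $P$ is supported over a $\kk$-point $q\in Q$, hence factors through $\PP_{Q_0}W$ for $Q_0=\Spec\ccO_{Q,q}$; the argument proving Lemma~\ref{lem_flat_base_change_of_linear_spans} applies to any closed subscheme $\ccR$ (which is proper over $Q$), so $\linspan{\ccR}|_{Q_0}=\linspan{\ccR\times_Q Q_0}$, and since the scheme-theoretic image commutes with the flat base change $Q_0\to Q$, the hypotheses of the corollary persist over $Q_0$ (for the familiar condition use Proposition~\ref{prop_base_change_of_familiars}). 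Thus I may assume $Q=\Spec A$ is local.

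Now put $\ccR_1:=\pi_{1,M}(\ccR)\subset\PP_A U_1$, which by hypothesis is a linearly independent \familiar{}. Theorem~\ref{thm_Nakayama_for_linear_spans}\ref{item_extending_linear_span_to_flat_family} then yields a flat family of linear subspaces $\PP_A(\ccL)\subset\PP_A U_1$ containing $\ccR_1$, where $\ccL\subset\freemodule[A]{U_1}$ is a free rank-$r$ direct summand; write $\ccQ_1:=\freemodule[A]{U_1}/\ccL$, which is a free $A$-module. Viewing $M$ as a map $\mu\colon\ccO_{\PP_A W}\otimes_\kk U_2^{*}\to\ccO_{\PP_A W}(1)\otimes_\kk U_1$, let $M'$ be the composite of $\mu$ with the projection $\ccO_{\PP_A W}(1)\otimes_\kk U_1\to\ccO_{\PP_A W}(1)\otimes_A\ccQ_1$; its entries are linear forms, so $\set{M'=0}$ is a family of linear subspaces of $\PP_A W$. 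The crucial point is that $\ccR\subset\set{M'=0}$: over $\set{\rk M=1}$ the map $\mu$ factors through the line subbundle $\bigl(\pi_{1,M}^{*}\ccO_{\PP_A U_1}(-1)\bigr)(1)$ of $\ccO(1)\otimes_\kk U_1$, and the containment $\pi_{1,M}(\ccR)\subset\PP_A(\ccL)$ says precisely that the restriction of $\pi_{1,M}^{*}\ccO_{\PP_A U_1}(-1)$ to $\ccR$ lands in the pullback of $\ccL$, hence dies in $\ccQ_1$; therefore $M'|_{\ccR}=0$. Consequently the entries of $M'$ lie in $I(\ccR)_1$, so they vanish on $\linspan{\ccR}$, i.e.\ $\linspan{\ccR}\subset\set{M'=0}$.

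It remains to check $\set{M'=0}\subset\set{\rk M\leqslant r}$. On the homogeneous coordinate ring of $\set{M'=0}$ the map $\mu$ factors through the kernel of $\ccO(1)\otimes_\kk U_1\to\ccO(1)\otimes_A\ccQ_1$; since $0\to\ccL\to\freemodule[A]{U_1}\to\ccQ_1\to 0$ is a split exact sequence of $A$-modules, this kernel is $\ccL$ twisted by $\ccO(1)$, free of rank $r$. Hence, over $\set{M'=0}$, the matrix $M$ is the product of a matrix with $r$ columns and a matrix with $r$ rows, so the Cauchy--Binet formula forces all $(r+1)\times(r+1)$ minors of $M$ to vanish there; equivalently $I_{r}$ is contained in the homogeneous ideal of $\set{M'=0}$. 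Combining the two inclusions gives $\linspan{\ccR}\subset\set{M'=0}\subset\set{\rk M\leqslant r}$, as desired.

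The step I expect to be the main obstacle is the crucial claim $\ccR\subset\set{M'=0}$ in the third paragraph: it is where the geometry, rather than bookkeeping, takes place, and it requires translating the statement "$\pi_{1,M}(\ccR)\subset\PP_A(\ccL)$" — which involves a scheme-theoretic image and the universal property of the relative projective bundle $\PP_A(\ccL)$ — into the vanishing of the sheaf map $M'|_{\ccR}$, while keeping track that it is exactly the direct-summand property of $\ccL$ that lets one pass freely between the description of $\PP_A(\ccL)$ as the projectivisation of a subbundle and its description by linear equations.
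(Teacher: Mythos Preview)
Your proof is correct and follows essentially the same route as the paper. The paper reduces to local $Q$ via Corollary~\ref{cor_linear_span_is_union_of_subspans_over_finite_local_Gor}\ref{item_local_presentation} (whose proof, like your argument, is just the flat base change of Lemma~\ref{lem_flat_base_change_of_linear_spans}), then applies Theorem~\ref{thm_Nakayama_for_linear_spans} to obtain the same flat $\PP_A^{r-1}\subset\PP_A U_1$; your intermediate locus $\set{M'=0}$ is exactly the paper's $\PP_A W':=M_{\PP}^{-1}\bigl(\PP_A(U_1'\otimes_{\kk}U_2)\bigr)$, and your Cauchy--Binet step is the paper's observation that after replacing $U_1$ by the $r$-dimensional $U_1'$ all $(r+1)\times(r+1)$ minors are trivially zero.
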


\begin{proof}
By Corollary~\ref{cor_linear_span_is_union_of_subspans_over_finite_local_Gor}\ref{item_local_presentation}
it is enough to prove the claim locally, for $Q=\Spec A$ with $(A, \gotm, \kk)$ a local ring.
Then by Theorem~\ref{thm_Nakayama_for_linear_spans} there is an inclusion
$\pi_{1, M}(\ccR) \subset \PP_A U_1'\simeq \PP_A^{r-1}$ 
for some linear subspace $\PP_A U_1' \subset \PP_A U_1$
 with $\dim_{\kk} U_1' = r$ 
(note that the embedding 
$A\otimes_{\kk} U_1' \subset A \otimes_{\kk} U_1$
 is $A$-linear, 
but not necessarily comes from any $\kk$-linear embedding $U_1' \subset U_1$).
 Thus,
\begin{align*}
\ccR & \subset
M_{\PP}^{-1}(\PP_A U_1' \times_{A} \PP_A U_2)
\subset  M_{\PP}^{-1}(\PP_A(U_1'\otimes_{\kk} U_2)) =: \PP_A W'  \text{, and also}\\
\linspan{\ccR} &\subset M_{\PP}^{-1}(\linspan{\PP_A U_1' \times_{A} \PP_A U_2}) = \PP_A W' \subset \PP_A W.
\end{align*}
Thus we have reduced the problem to the $'$-ed situation, where we replace $U_1$ with $U_1'$,  $W$ with $W'$, etc.
But here, since $\dim U_1'= r$,
the claim is trivial, as all $r\times u_2$ matrices have all $(r+1) \times (r+1)$ minors trivial.
\end{proof}

The following example shows that we cannot replace the constant rank assumption
$\ccR\subset \set{\rk M=1}$ with only
bounded rank assumption
$\ccR\subset \set{\rk M\leqslant 1}$.
\begin{example}\label{ex_injectivity_assump}
Let $A=\kk$  and over $\PP_{\kk}^2$ let
\[
M=\begin{pmatrix}
\alpha& 0 & 0\\
0 & \alpha& 0\\
0 & 0 &\alpha
\end{pmatrix}
\]
We take the flat linearly independent \familiar[2]{}
$\ccR\subset \set{\rk M\leqslant 1} = Z(\alpha^2)$, given by the ideal
\[
I_{\ccR}=(\alpha^2, \beta)\subset \kk[\alpha, \beta, \gamma]
\]
Then $\linspan{\ccR} = Z(\beta)$,
but $\set{\rk M\le 2} = Z(\alpha^3)$, thus $\linspan{\ccR} \not\subset \set{\rk M\le 2}$.
\end{example}

At the moment it is not clear to us if the assumption
that $\pi_{1,M}(\ccR)$ is linearly independent in Corollary~\ref{cor_relative_linear_span_of_constant_rank_subschemes}
is strictly necessary, see Example~\ref{ex_double_bad_linear_span}.
Moreover, we do not know yet if any generalization to higher constant rank is true, 
by analogy to \cite[Thm~1.18] {galazka_phd}.
However, none of those is  necessary to prove the claims in this article.

\begin{example}\label{ex_double_bad_linear_span}
   We consider the second Veronese reemebedding of a variant Example~\ref{ex_bad_linear_span}.
   That is, suppose $Q=\Spec A= \Spec \kk[s,t]/(s^3, t^3)$ and (ignoring two identically zero coordinates) consider $\PP_{A}^7 = \Proj A[\alpha_{00},\alpha_{01},\alpha_{02},\alpha_{03},\alpha_{11},\alpha_{22},\alpha_{23},\alpha_{33}]$. Take the following three $A$-points in $\PP_A^7$:
   \[
    [1,1,0,0,1,0,0,0], [0,0,0,0,1,0,0,0],
    [1,0,s,t,0,s^2,st, t^2].
   \]
   Their homogeneous ideals are, respectively:
   \begin{align*}
     I'&=(\alpha_{00}-\alpha_{01},\alpha_{01}-\alpha_{11}, \alpha_{02},\alpha_{03},\alpha_{22},\alpha_{23},\alpha_{33}),\\
     I''&=(\alpha_{00},\alpha_{01}, \alpha_{02},\alpha_{03},\alpha_{22},\alpha_{23},\alpha_{33})\\
     I'''&=(\alpha_{01},\alpha_{11}, \alpha_{02}- s\alpha_{00},\alpha_{03}- t\alpha_{00},\alpha_{22}- s^2\alpha_{00},\alpha_{23}- st\alpha_{00},\alpha_{33}- t^2\alpha_{00}),
   \end{align*}
and the scheme $\ccR\subset \PP_A^7$ we consider is given by the intersection ideal
$I= I'\cap I''\cap I'''$.
It is straightforward to verify that $\ccR$ is a flat linearly independent \familiar[3]{} over $Q$.
Its linear span is defined by the following $5$ linear equations:
\begin{align*}
 I_1 = \langle&
 \alpha_{02}- s(\alpha_{00}-\alpha_{01}),
 \alpha_{03}- t(\alpha_{00}-\alpha_{01}),
 \alpha_{22}- s^2(\alpha_{00}-\alpha_{01}),\\
&\alpha_{23}- st(\alpha_{00}-\alpha_{01}),
 \alpha_{33}- t^2(\alpha_{00}-\alpha_{01})\rangle.
\end{align*}
Moreover,
$\ccR\subset \set{\rk M =1}$ where $M$ is the following symmetric matrix:
\[
  M=\begin{pmatrix}
       \alpha_{00} & \alpha_{01} & \alpha_{02} &\alpha_{03}\\
       \alpha_{01} & \alpha_{11} & 0 &0\\
       \alpha_{02} & 0 & \alpha_{22}& \alpha_{23} \\
       \alpha_{03} & 0 & \alpha_{23} &  \alpha_{33}
    \end{pmatrix}
\]
In the notation of Corollary~\ref{cor_relative_linear_span_of_constant_rank_subschemes},
$\pi_{1,M}(\ccR)$ and $\pi_{2,M}(\ccR)$ 
are not linearly independent.
Yet, despite the assumption of 
Corollary~\ref{cor_relative_linear_span_of_constant_rank_subschemes} is not satisfied (even after swapping the roles of $\pi_1$ and $\pi_2$),
we still have a containment
$\PP\linspan{\ccR} \subset \set{\rk M\le 3}$.
\end{example}

\section{Relative linear span and cactus scheme}\label{sec_cactus_scheme}

In this section, we introduce and discuss the cactus scheme and its ``flat'' version (Definition~\ref{def_cactus_schemes}).
These are defined as unions of relative linear spans (Definition~\ref{def_relative_linear_span}) of independent \familiar's. We show in Proposition~\ref{prop_cactus_scheme_supported_at_cactus_variety} that (under suitable assumptions) their reduced subschemes coincide with the cactus varieties known from earlier works. We also show that appropriate minors tend to vanish on the cactus scheme (Theorem~\ref{thm_X_in_constant_rank_one_then_cactus_in_bounded_rank}).

\begin{definition}
\label{def_relative_linear_span}
Let $\ccR\subset \PP_{Q}W$ be a \familiar{} over a scheme $Q$ with enough $\kk$-points, given by the homogeneous ideal sheaf $\mathcal{I}_\ccR\subset \ccS$.
Suppose that $\ccR \to Q$ is linearly independent.
We define \textit{the relative linear span}
$\rellinspan{\ccR}$ of $\ccR$ to be the scheme-theoretic image of $\linspan{\ccR}$ under the projection map
$\pi\colon \PP_Q W \to \PP_{\kk} W$, that is
\[
  \rellinspan{\ccR}:=\overline{\pi(\linspan{\ccR})}\subset \PP_{\kk} W.
\]
\end{definition}
Similarly, one can define the affine version of relative linear span inside $\affinespaceof{W} \simeq\AAA_{\kk}^{N+1}$, but we will not use it here.

Relative linear spans of flat \familiar's without the assumption of linear independence, but only over a reduced base $Q$ are discussed in \cite[\S5.6--5.7]{jabu_jelisiejew_finite_schemes_and_secants}.
In particular, the definition there is ``generic'', component by component.
If the base is not reduced, 
 then the generic definition would not take into account the non-reduced structure. 
In the case of reduced base and linearly independent flat \familiar, the two definitions agree.
On the other hand, if the \familiar{} is not independent,
then one has to deal with pathologies similar to Example~\ref{ex_bad_linear_span}, which is beyond the scope of this article.

\begin{proposition}
\label{prop_basic_properties_of_relative_span}
   Let $Q$ be a scheme with enough $\kk$-points,
    and let $\ccR \subset \PP_Q W$ be a linearly independent \familiar{} over $Q$ with $\pi\colon \PP_Q W \to \PP_{\kk}W$ the natural projection.
  Let $Q'$ be another scheme with enough $\kk$-points.
   The following are basic properties of relative linear span.
   \begin{enumerate}
    \item \label{item_scheme_is_contained_in_its_relative_linear_span}
    $\pi(\ccR) \subset \rellinspan{\ccR}$
    \item
    \label{item_relative_linear_span_via_pullback}
    If $Q'\to Q$ is a morphism and $\ccR'= Q'\times_{Q}\ccR \subset \PP_{Q'} W$ is the pullback,
    then $\rellinspan{\ccR'}\subset \rellinspan{\ccR}$ and if in addition the scheme theoretic image of $Q'$ under $Q' \to Q$ is $Q$, then $\rellinspan{\ccR'} = \rellinspan{\ccR}$.
   \end{enumerate}
\end{proposition}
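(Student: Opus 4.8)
The plan is to settle item~\ref{item_scheme_is_contained_in_its_relative_linear_span} and the inclusion half of item~\ref{item_relative_linear_span_via_pullback} by formal properties of scheme-theoretic images, and then to attack the equality by reducing to an affine local base and comparing the defining ideals, using the results of this section on independent \familiar's.

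For item~\ref{item_scheme_is_contained_in_its_relative_linear_span}: the linear ideal sheaf $(\ccI_{\ccR})_{\leqslant 1}$ cutting out $\linspan{\ccR}$ is contained in $\ccI_{\ccR}$, so $\ccR\subseteq\linspan{\ccR}$ as closed subschemes of $\PP_Q W$; since the scheme-theoretic image is monotone in closed subschemes of the source (if $X_1\subseteq X_2$ are closed in $Y$ and $f\colon Y\to Y'$, then $f|_{X_1}$ factors through the closed subscheme $\overline{f(X_2)}$, whence $\overline{f(X_1)}\subseteq\overline{f(X_2)}$), applying $\pi$ gives $\pi(\ccR)\subseteq\overline{\pi(\ccR)}\subseteq\overline{\pi(\linspan{\ccR})}=\rellinspan{\ccR}$. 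For the first half of item~\ref{item_relative_linear_span_via_pullback}, write $\xi\colon Q'\to Q$ for the morphism, $\tilde{\xi}:=\xi\times\id_{\PP_{\kk}W}$, and $\pi'\colon\PP_{Q'}W\to\PP_{\kk}W$ for the projection, so that $\pi'=\pi\circ\tilde{\xi}$. By Lemma~\ref{lem_inclusion_on_linear_spans}, $\tilde{\xi}$ maps $\linspan{\ccR'}$ into $\linspan{\ccR}$, hence $\pi'|_{\linspan{\ccR'}}=\pi|_{\linspan{\ccR}}\circ\tilde{\xi}|_{\linspan{\ccR'}}$ factors through $\rellinspan{\ccR}$, and by minimality of the scheme-theoretic image $\rellinspan{\ccR'}\subseteq\rellinspan{\ccR}$.

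It remains to prove equality when $\ccO_Q\to\xi_*\ccO_{Q'}$ is injective; by the above only $\rellinspan{\ccR}\subseteq\rellinspan{\ccR'}$ is left. First I would reduce to $Q=\Spec A$ with $(A,\gotm,\kk)$ local: by flat base change, Lemma~\ref{lem_flat_base_change_of_linear_spans}, and Corollary~\ref{cor_linear_span_is_union_of_subspans_over_finite_local_Gor}\ref{item_local_presentation}, $\linspan{\ccR}$ is the union of its restrictions to the local schemes $\Spec\ccO_{Q,q}$, and the scheme-theoretic image commutes with such (scheme-theoretic) unions of the source, so $\rellinspan{\ccR}$ is the corresponding union; localising $Q$ and then $Q'$ in this way, I may assume $Q'=\Spec A'$ with $A\hookrightarrow A'$. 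By Corollary~\ref{cor_base_change_of_linear_span_of_independent_familiar}, $\linspan{\ccR'}=\tilde{\xi}^*\linspan{\ccR}=\linspan{\ccR}\times_Q Q'$. Since $\rellinspan{\ccR}$ (resp.\ $\rellinspan{\ccR'}$) is cut out by those homogeneous $\Theta\in\Sym W^*$ whose pullback to $\PP_Q W$ (resp.\ $\PP_{Q'}W$) vanishes on $\linspan{\ccR}$ (resp.\ $\linspan{\ccR'}$), the remaining inclusion reduces to: if the pullback of $\Theta$ to $\PP_{Q'}W$ vanishes on $\linspan{\ccR}\times_Q Q'$, then the pullback of $\Theta$ to $\PP_Q W$ vanishes on $\linspan{\ccR}$. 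This last implication is the heart of the matter, and it is here that the independence of $\ccR$ and the injectivity $A\hookrightarrow A'$ are really used. By Theorem~\ref{thm_Nakayama_for_linear_spans}\ref{item_extending_linear_span_to_flat_family}, $\linspan{\ccR}$ sits inside a flat $A$-linear $\PP_A^{r-1}=Z(\overline{\beta}_0,\dots,\overline{\beta}_{N-r})$ whose defining forms reduce modulo $\gotm$ to a basis of the linear equations of $\linspan{\ccR_{\kk}}=\linspan{\ccR}_{\kk}=\PP_{\kk}^{r-1}$ (the last identity by Corollary~\ref{cor_base_change_of_linear_span_of_independent_familiar}), so $\linspan{\ccR}$ is cut out of $\PP_A^{r-1}$ by a linear ideal that is trivial modulo $\gotm$; base-changing, the same picture describes $\linspan{\ccR'}$ inside the flat $A'$-linear $\PP_{A'}^{r-1}$. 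When $\linspan{\ccR}$ is already flat over $A$ — for instance when $A$ is reduced, so that the \familiar{} is flat and $\linspan{\ccR}=\PP_A^{r-1}$ — the claimed descent of the vanishing is immediate, because $(A\otimes_{\kk}\Sym W^*)/(\overline{\beta}_i)$ is a free $A$-module and hence stays injective under $A\hookrightarrow A'$. In the non-flat case one reduces $\Theta$'s vanishing on $\linspan{\ccR'}$ first modulo $(\overline{\beta}_i)$ and then modulo $\gotm A'$, using that $\kk\hookrightarrow A'/\gotm A'$ (as $\gotm A'$ lies in the nilradical on the infinitesimal part of the base), and then has to propagate the information back to vanishing on $\linspan{\ccR}$ along the $\gotm$-adic filtration, tracking both the possibly infinitesimally ``rotated'' ambient $\PP^{r-1}$ and the remaining nilpotent equations. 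I expect this final propagation — faithfully matching the non-flat, vertical part of the linear span under base change — to be the main obstacle; every earlier step is formal manipulation of scheme-theoretic images together with the base-change results already established in this section.
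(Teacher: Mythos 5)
Your treatment of item~\ref{item_scheme_is_contained_in_its_relative_linear_span} and of the inclusion $\rellinspan{\ccR'}\subset\rellinspan{\ccR}$ in item~\ref{item_relative_linear_span_via_pullback} is correct and is essentially the paper's own argument: $\ccR\subset\linspan{\ccR}$ plus functoriality of scheme-theoretic images for the first, and Lemma~\ref{lem_inclusion_on_linear_spans} (or equivalently Corollary~\ref{cor_base_change_of_linear_span_of_independent_familiar} combined with the factorisation $\pi'=\pi\circ\tilde\xi$) for the second. The paper is terser still: for item~\ref{item_relative_linear_span_via_pullback} it simply declares that everything ``follows immediately from Corollary~\ref{cor_base_change_of_linear_span_of_independent_familiar}''.

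For the equality in item~\ref{item_relative_linear_span_via_pullback}, however, your proposal stops short of a proof, and you say so yourself. After invoking Corollary~\ref{cor_base_change_of_linear_span_of_independent_familiar} to identify $\linspan{\ccR'}=\tilde\xi^*\linspan{\ccR}$, the desired equality of scheme-theoretic images reduces exactly to the following purity statement: the map $\ccO_{\PP_QW}/\ccI_{\linspan{\ccR}}\to\tilde\xi_*\tilde\xi^*\bigl(\ccO_{\PP_QW}/\ccI_{\linspan{\ccR}}\bigr)$ is injective, equivalently $\linspan{\ccR'}\to\linspan{\ccR}$ is scheme-theoretically dominant. You correctly observe that this is automatic when $\linspan{\ccR}$ is flat over $Q$ (a free $A$-module stays injective under $A\hookrightarrow A'$), but scheme-theoretic dominance is \emph{not} stable under base change along a non-flat morphism, and $\linspan{\ccR}\to Q$ is precisely not flat in the situations this paper cares about. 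Your final paragraph proposes to control the non-flat part via the $\gotm$-adic filtration, but you never carry the propagation out, and you explicitly flag it as ``the main obstacle.'' That is a genuine gap in the argument, not a cosmetic omission: the purity of $A\hookrightarrow A'$ applied to the (non-free) module $\bigl(A\otimes\Sym W^*\bigr)/\ccI_{\linspan{\ccR}}$ is exactly the content of the equality, and it is not supplied.

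Two further points in the write-up deserve a warning. First, the intermediate claim that $\kk\hookrightarrow A'/\gotm A'$ ``as $\gotm A'$ lies in the nilradical on the infinitesimal part'' is unjustified as stated; scheme-theoretic dominance of $Q'\to Q$ does not by itself force $\gotm A'$ to be a proper ideal of $A'$ (think of $\kk[t]_{(t)}\hookrightarrow\kk(t)$). Second, the reduction ``localising $Q$ and then $Q'$ in this way, I may assume $Q'=\Spec A'$ with $A\hookrightarrow A'$'' is also not automatic: after localising $Q$ at a closed point, there may be no single local ring $\ccO_{Q',q'}$ of $Q'$ that still contains $\ccO_{Q,q}$ — scheme-theoretic dominance may be a global phenomenon spread over several components of $Q'$, so you cannot in general pass to a local $A'$ without losing the hypothesis. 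Both of these need to be addressed (or circumvented) to make the reduction sound.
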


\begin{prf}
Item~\ref{item_scheme_is_contained_in_its_relative_linear_span} follows from the definition,
  since $\ccR\subset \linspan{\ccR}$,
  and thus $\overline{\pi(\ccR)}\subset \overline{\pi\left(\linspan{\ccR}\right)} = \rellinspan{\ccR}$.
Item~\ref{item_relative_linear_span_via_pullback} follows immediately from Corollary~\ref{cor_base_change_of_linear_span_of_independent_familiar}.
\end{prf}

\begin{definition}
\label{def_cactus_schemes}
Let $X\subset \PP_{\kk} W$
  be a locally closed subscheme.
\textit{The $r$-th cactus scheme} of $X$ is defined as
\[
  \cactussch{r}{X}:=
   \overline{\bigcup_{\ccR \to Q} \rellinspan {\ccR}},
\]
where the scheme theoretic union
  is over all independent \familiar[(r')]'s $\ccR\to Q$ in $X$ over schemes $Q$ with enough $\kk$-points for all $r'\le r$.
Furthermore, we define \textit{the flat} version of $r$-th cactus scheme:
\[
  \cactusflat{r}{X}:=\overline{\bigcup_{\ccR\to Q \text{ flat}} \rellinspan {\ccR}},
\]
but here the union is over all independent and flat  \familiar[(r')]'s
$\ccR\to Q$ in $X$ over schemes $Q$ with enough $\kk$-points for all $r'\le r$.
\end{definition}

\begin{proposition}\label{prop_basic_properties_of_cactus_schemes}
  For any locally closed subschemes $X, Y \subset \PP_{\kk}(W)$ and any integer $r\geqslant 1$,
  the following are basic properties
   of their cactus schemes.
  \begin{enumerate}
   \item  \label{item_cactus_flat_in_cacactus_scheme_in_linspan}
    $\cactusflat{r}{X} \subset \cactussch{r}{X} \subset \linspan{X}$.
   \item If $r=1$, then $\cactusflat{1}{X}= \cactussch{1}{X} = \overline X$.
   \item If $Y\subset X$, then
     $\cactusflat{r}{Y} \subset \cactusflat{r}{X}$
     and
     $\cactussch{r}{Y} \subset \cactussch{r}{X}$.
  \end{enumerate}
\end{proposition}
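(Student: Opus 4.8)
My plan is to treat the three items in turn; items~(i) and~(iii) are essentially formal monotonicity statements, and all the scheme-theoretic content sits in item~(ii).

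\emph{Item (i).} The inclusion $\cactusflat{r}{X}\subset\cactussch{r}{X}$ is immediate: a flat independent \familiar{} is in particular an independent \familiar{}, so $\cactusflat{r}{X}$ is $\overline{\bigcup_{\ccR}\rellinspan{\ccR}}$ taken over a sub-collection of the families used for $\cactussch{r}{X}$, and enlarging the collection over which one forms this closure of a scheme-theoretic union can only enlarge the result. For $\cactussch{r}{X}\subset\linspan{X}$ (reading $\linspan{X}:=\linspan{\overline X}$ when $X$ is merely locally closed) it suffices to check $\rellinspan{\ccR}\subset\linspan{\overline X}$ for a single independent \familiar{} $\ccR\subset Q\times X\subset\PP_Q W$. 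Since $\ccR$ is a closed subscheme of $Q\times\overline X$, every linear form in $I(\overline X)_1$ pulls back to a linear form vanishing on $\ccR$, hence lies in $(\ccI_\ccR)_1$; therefore $\linspan{\ccR}=Z((\ccI_\ccR)_{\leqslant 1})\subset Q\times\linspan{\overline X}$. Projecting along $\pi\colon\PP_Q W\to\PP_\kk W$ and passing to the scheme-theoretic image, which remains inside the closed subscheme $\linspan{\overline X}$, gives $\rellinspan{\ccR}\subset\linspan{\overline X}=\linspan{X}$, and so $\cactussch{r}{X}\subset\linspan{X}$.

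\emph{Item (ii).} By~(i) we already have $\cactusflat{1}{X}\subset\cactussch{1}{X}\subset\linspan{X}$, so it remains to sharpen the last term to $\overline X$ and to produce $\overline X$ inside $\cactusflat{1}{X}$. The crucial local fact is that for an independent \familiar{} $\ccR\to Q$ of degree $\leqslant 1$ one has $\linspan{\ccR}=\ccR$: working over a local ring $A$, Theorem~\ref{thm_Nakayama_for_linear_spans}\ref{item_extending_linear_span_to_flat_family} confines $\ccR$ to a $\PP^0_A\subset\PP^N_A$ cut out by linear forms, and inside $\PP^0_A$ the closed subscheme $\ccR$ is defined by a degree-$0$ ideal of $A$, so $\ccI_\ccR$ is generated in degrees $\leqslant 1$ and $\linspan{\ccR}=\ccR$; the degree-$0$ member contributes $\rellinspan{\ccR}=\emptyset$. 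Consequently every degree-$\leqslant 1$ independent \familiar{} $\ccR\subset Q\times X$ has $\rellinspan{\ccR}=\overline{\pi(\ccR)}$, the scheme-theoretic image of a morphism $\ccR\to X\hookrightarrow\overline X\hookrightarrow\PP_\kk W$, hence a closed subscheme of $\overline X$; therefore $\cactussch{1}{X}\subset\overline X$. For the reverse inclusion I take $Q_0:=X$ and let $\ccR_0\subset X\times X\subset\PP_{X}W$ be the graph of $\iota\colon X\hookrightarrow\PP_\kk W$; the first projection $\ccR_0\to Q_0$ is an isomorphism, so $\ccR_0$ is a flat independent \familiar[1]{}, $\linspan{\ccR_0}=\ccR_0$, and $\rellinspan{\ccR_0}$ is the scheme-theoretic image of $\iota$, namely $\overline X$. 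Hence $\overline X=\rellinspan{\ccR_0}\subset\cactusflat{1}{X}$, and combining with the chain above all four schemes coincide.

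\emph{Item (iii).} Being an independent (resp.\ flat independent) \familiar[(r')]{} over $Q$ contained in a subscheme is a property of the pair $\bigl(\ccR\subset\PP_Q W,\ \phi\colon\ccR\to Q\bigr)$ alone --- finiteness, the fibre lengths, linear independence, flatness --- so if $\ccR\subset Q\times Y\subset Q\times X$ then the very same $\ccR$ is also such a \familiar{} in $X$, with an unchanged relative linear span. Thus the union defining $\cactussch{r}{Y}$ (resp.\ $\cactusflat{r}{Y}$) runs over a sub-collection of the one defining $\cactussch{r}{X}$ (resp.\ $\cactusflat{r}{X}$), and the inclusions follow exactly as in~(i). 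The step I expect to be the real obstacle --- as opposed to these formal monotonicity arguments --- is the scheme-theoretic control in~(ii): verifying $\linspan{\ccR}=\ccR$ for degree-$\leqslant 1$ \familiar's so that $\cactussch{1}{X}$ acquires no embedded or thickened structure beyond $\overline X$, which is exactly where Theorem~\ref{thm_Nakayama_for_linear_spans} is used essentially.
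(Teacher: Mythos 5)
The paper states this proposition without proof, regarding it as immediate from the definitions, so there is no argument in the text to compare against. Your proof is correct and fills in the details in a natural way. Items~(i) and~(iii) are, as you say, monotonicity of the closure of a union over a larger index set; for item~(iii) the essential point, correctly identified, is that the conditions defining a (flat) independent \familiar{} depend only on the pair $\ccR\subset\PP_Q W$ and $\phi\colon\ccR\to Q$, not on which ambient $X$ or $Y$ contains $\ccR$. For item~(ii) the crux is that $\linspan{\ccR}=\ccR$ for any \familiar{} of degree at most~$1$, so that $\rellinspan{\ccR}=\overline{\pi(\ccR)}$ is a closed subscheme of $\overline X$; your deduction of this from Theorem~\ref{thm_Nakayama_for_linear_spans}\ref{item_extending_linear_span_to_flat_family} over a local base is right, though it is worth saying aloud that the reduction to local $Q$ is licensed by the flat base change Lemma~\ref{lem_flat_base_change_of_linear_spans} (or by Corollary~\ref{cor_linear_span_is_union_of_subspans_over_finite_local_Gor}\ref{item_local_presentation}). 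The self-family $\Gamma(\iota)\subset X\times X$ over $Q=X$ is exactly the right witness for $\overline X\subset\cactusflat{1}{X}$; note $X$ is quasiprojective, hence has enough $\kk$-points, so this is a legitimate base. Finally, you read $\linspan{X}$ as $\linspan{\overline X}$ for locally closed $X$; this is the only sensible interpretation since Definition~\ref{def_linear_span} is phrased for closed subschemes, but it deserved the explicit remark you gave it.
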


\begin{lemma}\label{lem_cactus_is_determined_by_finite_local_Gor}
  In the definitions of $\cactussch{r}{X}$ and $\cactusflat{r}{X}$
  it is enough to use $Q$ which are finite local and Gorenstein.
\end{lemma}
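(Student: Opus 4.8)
The plan is to derive the lemma directly from Corollary~\ref{cor_linear_span_is_union_of_subspans_over_finite_local_Gor} together with the base-change compatibility of the relative linear span; the substantive work has already gone into the results leading up to that corollary. Write $\ccC$ for the closed subscheme $\overline{\bigcup\rellinspan{\ccR}}$ of Definition~\ref{def_cactus_schemes} (in either the general or the flat version) and $\ccC^{\operatorname{Gor}}$ for the analogous closed subscheme in which the union ranges only over independent (resp.\ flat independent) \familiar[(r')]'s over finite local Gorenstein $\kk$-schemes. A finite local Gorenstein $\kk$-scheme is of finite type over $\kk$, hence has enough $\kk$-points, so every term of the union defining $\ccC^{\operatorname{Gor}}$ also occurs in the union defining $\ccC$; therefore $\ccC^{\operatorname{Gor}}\subseteq\ccC$. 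For the reverse inclusion, since $\ccC^{\operatorname{Gor}}$ is closed, it suffices to prove $\rellinspan{\ccR}\subseteq\ccC^{\operatorname{Gor}}$ for a single independent (resp.\ flat independent) \familiar{} $\phi\colon\ccR\to Q$ of degree $r'\le r$ in $X$; by the local presentation in Corollary~\ref{cor_linear_span_is_union_of_subspans_over_finite_local_Gor}\ref{item_local_presentation} we may even reduce to the case in which $Q=\Spec A$ is local Noetherian with residue field $\kk$, so that the projection $\pi\colon\PP_Q W\to\PP_\kk W$ is quasi-compact and quasi-separated.

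For each morphism $g\colon Q'\to Q$ from a finite local Gorenstein $\kk$-scheme $Q'$, put $\ccR_{Q'}:=Q'\times_Q\ccR\subseteq\PP_{Q'}W$. By Proposition~\ref{prop_base_change_of_familiars} this is again an independent \familiar{} of degree $r'$ over $Q'$, it lies in $X$ (since $\ccR\subseteq Q\times X$), and it is flat whenever $\ccR$ is. By Proposition~\ref{prop_basic_properties_of_relative_span}\ref{item_relative_linear_span_via_pullback} we have $\rellinspan{\ccR_{Q'}}\subseteq\rellinspan{\ccR}$, and each $\rellinspan{\ccR_{Q'}}$ is one of the terms of the union defining $\ccC^{\operatorname{Gor}}$. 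Hence it is enough to establish the equality
\[
  \rellinspan{\ccR}\;=\;\overline{\bigcup_{g\colon Q'\to Q}\rellinspan{\ccR_{Q'}}},
\]
with $g$ ranging over all morphisms from finite local Gorenstein $\kk$-schemes.

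By Corollary~\ref{cor_linear_span_is_union_of_subspans_over_finite_local_Gor}\ref{item_finite_Gor_morphism_presentation}, which applies since $\phi$ is independent, the linear span inside $\PP_Q W$ satisfies $\linspan{\ccR}=\bigcup_{g\colon Q'\to Q}(g\times\id_{\PP_\kk W})(\linspan{\ccR_{Q'}})$. We now take scheme-theoretic images under $\pi$. Scheme-theoretic image is compatible with the (possibly infinite) scheme-theoretic union of closed subschemes: applying the left-exact functor $\pi_*$ to $0\to\ccO_{\PP_Q W}/\bigcap_i\ccI_i\hookrightarrow\prod_i\ccO_{\PP_Q W}/\ccI_i$ and using $\pi_*(\prod_i-)=\prod_i\pi_*(-)$ yields $\overline{\pi(\bigcup_iZ_i)}=\overline{\bigcup_i\overline{\pi(Z_i)}}$. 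Moreover scheme-theoretic image is transitive along compositions, and $\pi\circ(g\times\id_{\PP_\kk W})$ is the projection $\PP_{Q'}W\to\PP_\kk W$, so $\overline{\pi\big((g\times\id_{\PP_\kk W})(\linspan{\ccR_{Q'}})\big)}=\rellinspan{\ccR_{Q'}}$. Combining the two facts gives $\rellinspan{\ccR}=\overline{\pi(\linspan{\ccR})}=\overline{\bigcup_g\rellinspan{\ccR_{Q'}}}$, which is exactly the required equality. In the flat case one may instead invoke Corollary~\ref{cor_linear_span_is_union_of_subspans_over_finite_local_Gor}\ref{item_finite_Gor_subscheme_presentation} and use only embeddings $Q'\hookrightarrow Q$ of finite local Gorenstein subschemes, with identical bookkeeping.

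The only step needing genuine care — and hence the main, if modest, obstacle — is the compatibility of the scheme-theoretic image $\overline{\pi(-)}$ with the infinite scheme-theoretic union supplied by Corollary~\ref{cor_linear_span_is_union_of_subspans_over_finite_local_Gor}; once $Q$ has been reduced to the local Noetherian case this is a routine diagram chase, and the remainder of the argument is purely formal and rests only on results already established above.
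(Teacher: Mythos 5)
Your proof is correct in substance and rests on the same key ingredient as the paper's, namely Corollary~\ref{cor_linear_span_is_union_of_subspans_over_finite_local_Gor}\ref{item_finite_Gor_morphism_presentation}; the paper treats the lemma as an immediate consequence of that corollary and the definitions, so you have essentially reconstructed the intended argument with the details written out. The one place where the routes diverge is the step you flag as the main obstacle: you pass the scheme-theoretic image $\overline{\pi(-)}$ through the infinite scheme-theoretic union by a left-exactness argument, which forces you to first reduce to a local Noetherian base $Q$. That reduction is a little circular as written --- passing from $Q$ to the $\Spec\ccO_{Q,q}$ via \ref{item_local_presentation} already requires commuting the image with a union of non-closed pieces, which is the very step you want to justify only \emph{after} Noetherianity is in place. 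The cleaner (and, I believe, intended) route avoids both issues: since $\rellinspan{\ccR}=\overline{\pi(\linspan{\ccR})}$ is by definition the smallest closed subscheme through which $\pi|_{\linspan{\ccR}}$ factors, one has $\rellinspan{\ccR}\subset\ccC^{\operatorname{Gor}}$ if and only if $\linspan{\ccR}\subset\pi^{-1}(\ccC^{\operatorname{Gor}})$, and by \ref{item_finite_Gor_morphism_presentation} this containment holds if and only if $(g\times\id_{\PP_\kk W})(\linspan{\ccR_{Q'}})\subset\pi^{-1}(\ccC^{\operatorname{Gor}})$ for each $g$, which in turn unwinds (by transitivity of scheme-theoretic image along $\pi\circ(g\times\id_{\PP_\kk W})$) to $\rellinspan{\ccR_{Q'}}\subset\ccC^{\operatorname{Gor}}$, true by the definition of $\ccC^{\operatorname{Gor}}$. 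This uses only that the scheme-theoretic union is the smallest closed subscheme containing its terms and never needs to push a union forward, so no Noetherian reduction on $Q$ is required. I would encourage you to replace the $\pi_*$-and-products lemma with this preimage characterization; your overall structure and conclusion are otherwise sound.
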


\begin{proof}
  The lemma is an immediate consequence of the definitions and 
Corollary~\ref{cor_linear_span_is_union_of_subspans_over_finite_local_Gor}\ref{item_finite_Gor_morphism_presentation}.
\end{proof}

Suppose $X$ contains at least $r$ of $\kk$-points which are linearly independent
(for example $\dim X>0$ and $\dim_{\kk}\linspan{X_{\red}} \ge r-1$).
Then in the definitions of
$\cactusflat{r}{X}$ and
$\cactussch{r}{X}$
it is enough to consider \familiar's,
without the necessity of considering \familiar[(r')]'s for $r'<r$.
Indeed, locally, we can always add some $r-r'$ of the independent $\kk$-points multiplied by the local base $Q$ to obtain a \familiar{} out of \familiar[(r')].
Thus, in the proofs we will often for a simplicity of notation only consider the \familiar's.

Moreover, in the rest of the article, we will
frequently assume that the scheme $X\subset \PP_{\kk} W $
 under consideration imposes independent linear conditions 
 to its subschemes of length at most $r$, that is:
\begin{itemize}
 \item any finite subscheme $R\subset X$ of length at most $r$ is linearly independent,
 \item in particular,
    any \familiar{} in $X$ is independent.
\end{itemize}
The main example of such $X$ is the Veronese variety $\nu_d(\PP_{\kk} V) \subset \PP_{\kk}(S^{(d)} V)$ for $d\geqslant r-1$.

\begin{proposition}\label{prop_cactus_scheme_supported_at_cactus_variety}
   Suppose $X\subset \PP_{\kk} W$
   imposes independent conditions to its subschemes of length at most $r$.
   Then the reduced subschemes of
   $\cactussch{r}{X}$ and of
   $\cactusflat{r}{X}$ are equal to the ``cactus variety'' $\cactus{r}{X}
   := \overline{\bigcup_{R\subset X} \linspan{R}}$, where the union is over all  subschemes $R$ of length at most $r$.
\end{proposition}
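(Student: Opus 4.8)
The plan is to establish the chain of inclusions
\[
 \cactus{r}{X}\ \subseteq\ \reduced{\cactusflat{r}{X}}\ \subseteq\ \reduced{\cactussch{r}{X}}\ \subseteq\ \cactus{r}{X},
\]
which forces all three to coincide (recall $\cactus{r}{X}$ is a reduced scheme by construction). The middle inclusion is immediate from Proposition~\ref{prop_basic_properties_of_cactus_schemes}\ref{item_cactus_flat_in_cacactus_scheme_in_linspan}. For the first inclusion, observe that any finite subscheme $R\subset X$ of length $r'\le r$ yields a \familiar[(r')]{} $R\to\Spec\kk$, which is flat (flatness over a field is automatic) and independent (since $X$ imposes independent conditions on its subschemes of length at most $r$); as the projection $\PP_{\Spec\kk}W\to\PP_{\kk}W$ is the identity, its relative linear span is just $\linspan{R}$. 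Hence every such $\linspan{R}$ occurs among the relative linear spans used to define $\cactusflat{r}{X}$, so $\cactus{r}{X}=\overline{\bigcup_{R}\linspan{R}}\subseteq\cactusflat{r}{X}$ (union over $R\subset X$ of length $\le r$), and since $\cactus{r}{X}$ is reduced it lands inside $\reduced{\cactusflat{r}{X}}$.

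The heart of the matter is the last inclusion $\reduced{\cactussch{r}{X}}\subseteq\cactus{r}{X}$. By Lemma~\ref{lem_cactus_is_determined_by_finite_local_Gor} it is enough to bound $\reduced{\rellinspan{\ccR}}$ for an independent \familiar[(r')]{} $\ccR\to Q$ with $Q=\Spec A$ finite local Gorenstein and $r'\le r$. I will show
\[
 \reduced{\rellinspan{\ccR}}=\linspan{\ccR_{\kk}},
\]
where $\ccR_{\kk}\subset X$ denotes the fibre of $\ccR$ over the unique closed point of $Q$, a finite subscheme of $X$ of length $r'$. Indeed, $A$ is Artinian local, so $Q$ is a one-point scheme; consequently the projection $\pi\colon\PP_Q W\to\PP_{\kk}W$ is a homeomorphism on underlying topological spaces, and the scheme-theoretic image $\rellinspan{\ccR}=\overline{\pi(\linspan{\ccR})}$ is supported exactly on the image of the support of $\linspan{\ccR}$. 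Because $Q$ has only one point, the support of $\linspan{\ccR}$ agrees with the support of its scheme-theoretic fibre over that point, which equals $\linspan{\ccR_{\kk}}$ by Corollary~\ref{cor_base_change_of_linear_span_of_independent_familiar} (here the independence built into the \familiar{} is used); and $\linspan{\ccR_{\kk}}$ is reduced, being a projective linear subspace, which gives the displayed identity.

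It remains to note that, since $\ccR_{\kk}\subset X$ has length $r'\le r$, the subspace $\linspan{\ccR_{\kk}}$ is one of those appearing in the union defining $\cactus{r}{X}$. Therefore $\bigcup_{\ccR\to Q}\reduced{\rellinspan{\ccR}}=\bigcup\linspan{\ccR_{\kk}}\subseteq\cactus{r}{X}$, and passing to the closure yields $\reduced{\cactussch{r}{X}}\subseteq\cactus{r}{X}$, closing the chain. I expect the main obstacle to be the support identity for $\linspan{\ccR}$ over the non-reduced base $Q$: this is precisely where one needs $Q$ to be \emph{Artinian} local (so that the reduction of Lemma~\ref{lem_cactus_is_determined_by_finite_local_Gor} to a finite Gorenstein base, rather than to a merely local one, is genuinely used) and where independence of the \familiar{} enters, through Corollary~\ref{cor_base_change_of_linear_span_of_independent_familiar}; Example~\ref{ex_bad_linear_span} shows this step fails without independence.
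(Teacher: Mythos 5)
Your argument is correct and follows the paper's own proof in all essential respects: the easy direction via the trivial \familiar{} over $\Spec\kk$, the reduction to a finite local Gorenstein base via Lemma~\ref{lem_cactus_is_determined_by_finite_local_Gor}, and then the one-point topology of $\Spec A$ combined with Corollary~\ref{cor_base_change_of_linear_span_of_independent_familiar} to identify the support of $\rellinspan{\ccR}$ with $\linspan{\ccR_{\kk}}$. One small remark: the Gorenstein part of Lemma~\ref{lem_cactus_is_determined_by_finite_local_Gor} is not actually exploited here, only the finiteness (and hence Artinian locality) of $Q$; the paper's proof also only uses finiteness at this step.
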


\begin{prf}
   Any finite subscheme $R\subset X$ of length $r'$ with $r'\le r$ produces a ``trivial'' \familiar[(r')]{}
   $R\to \Spec \kk$ over $\Spec \kk$ in $X$.
   By our assuptions, this \familiar[(r')]{} is independent and also (trivially) flat, hence its relative linear span (which is equal to $\linspan{R}$) is contained in both  $\cactussch{r}{X}$ and
   $\cactusflat{r}{X}$, proving
  $\cactus{r}{X} \subset \reduced{\cactusflat{r}{X}}\subset \reduced{\cactussch{r}{X}}$.

  On the other hand, by Lemma~\ref{lem_cactus_is_determined_by_finite_local_Gor}
  the cactus scheme is the closure of the union of $\rellinspan{\ccR}$ for a \familiar{} over
  a finite local base $Q$.
  If $p\in \rellinspan{\ccR}$ is any point, then $p$ is an image of some point in $p'\in \linspan{\ccR}$,
  and the image of $p'$ in $Q$ must be the unique point $q \in Q$.
  Thus, $p\in \linspan{\ccR|_q}$ 
(where $\ccR|_q$ is seen as a subscheme of $X\subset \PP_{\kk} W$) by Corollary~\ref{cor_base_change_of_linear_span_of_independent_familiar}, proving that $p\in  \cactus{r}{X}$, and concluding the arguments.
\end{prf}

By the above proposition, we think of $\cactusflat{r}{X}$ and $\cactussch{r}{X}$
 as of two fairly natural (geometrically defined) choices of scheme structure on the cactus variety that was  studied previously \cite{nisiabu_jabu_cactus}, \cite{bernardi_jelisiejew_marques_ranestad_cactus_rank_of_a_general_form}, \cite{galazka_mandziuk_rupniewski_distinguishing}.
Lemma~\ref{lem_cactus_flat_is_determined_by_Hilb} 
shows that $\cactusflat{r}{X}$ is simpler to study,
 while the determinantal criterion 
 (Theorem~\ref{thm_X_in_constant_rank_one_then_cactus_in_bounded_rank})
 indicates that in fact $\cactussch{r}{X}$ 
 is more likely to be the interesting one,
 related to the structure of appropriate vanishing schemes of minors.
At this moment, we cannot give an interesting example 
 that would prove the two scheme structures are different.

Suppose $X$ is a locally closed subset of $\PP_{\kk} W$ which has at least $r$ of $\kk$-points and imposes independent linear conditions to its subschemes of length at most $r$.
We consider the Hilbert scheme $\Hilb_r(X)$ parametrising flat \familiar's.
Denote by $\ccR^{\univ}\subset \Hilb_r(X) \times X$ the universal flat \familiar,
so that the projection
$\phi^{\univ}\colon\ccR^{\univ}\to \Hilb_r(X)$ is a \familiar{}
and any flat \familiar{} $\ccR\to Q$ is a pullback of $\phi^{\univ}$ under some morphism $Q\to \Hilb_r(X)$.

\begin{lemma}\label{lem_cactus_flat_is_determined_by_Hilb}
   Suppose $X\subset \PP_{\kk} W$ has at least $r$ of $\kk$-points
   and imposes independent linear conditions to its subschemes of length at most $r$.
   In the definition of $\cactusflat{r}{X}$
  it is enough to use only one \familiar{} $\phi^{\univ} \colon \ccR^{\univ}\to \Hilb_r(X)$:
  \[
    \cactusflat{r}{X} = \rellinspan{\ccR^{\univ}}.
  \]
\end{lemma}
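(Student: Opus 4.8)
The plan is to prove the two inclusions $\cactusflat{r}{X} \subset \rellinspan{\ccR^{\univ}}$ and $\rellinspan{\ccR^{\univ}}\subset\cactusflat{r}{X}$ separately, reducing everything to the universal property of the Hilbert scheme $\Hilb_r(X)$. First I would observe that $\rellinspan{\ccR^{\univ}}$ is a well-defined relative linear span: by hypothesis $X$ imposes independent linear conditions on its subschemes of length at most $r$, so $\phi^{\univ}$ is an independent (and flat) \familiar{} in the sense of the earlier definitions, and $\Hilb_r(X)$ is a scheme with enough $\kk$-points (it is of finite type over $\kk$), so Definition~\ref{def_relative_linear_span} applies. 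Hence $\rellinspan{\ccR^{\univ}}$ is one of the terms appearing in the scheme-theoretic union defining $\cactusflat{r}{X}$, which immediately gives the inclusion $\rellinspan{\ccR^{\univ}}\subset \cactusflat{r}{X}$ (here I also use the remark, recorded just before the lemma, that it is enough to consider honest \familiar[r]'s rather than \familiar[(r')]'s for $r' < r$, since $X$ has at least $r$ independent $\kk$-points).

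For the reverse inclusion, I would take an arbitrary independent flat \familiar{} $\phi\colon \ccR\to Q$ in $X$ over a scheme $Q$ with enough $\kk$-points, and show $\rellinspan{\ccR}\subset \rellinspan{\ccR^{\univ}}$; the scheme-theoretic union (hence its closure) of all such relative spans is then contained in $\rellinspan{\ccR^{\univ}}$. By the universal property of $\Hilb_r(X)$ there is a classifying morphism $g\colon Q\to \Hilb_r(X)$ such that $\ccR = Q\times_{\Hilb_r(X)} \ccR^{\univ}$ as subschemes of $\PP_Q W$, i.e. $\ccR$ is the pullback of the universal \familiar{} along $g$. Now I invoke Proposition~\ref{prop_basic_properties_of_relative_span}\ref{item_relative_linear_span_via_pullback}: the relative linear span of a pullback \familiar{} is contained in the relative linear span of the original, so $\rellinspan{\ccR}\subset \rellinspan{\ccR^{\univ}}$, exactly as needed.

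Putting the two inclusions together yields $\cactusflat{r}{X} = \rellinspan{\ccR^{\univ}}$ (note that $\rellinspan{\ccR^{\univ}}$ is already closed, being a scheme-theoretic image of a projective-over-$\Hilb_r(X)$ scheme, so no extra closure is required on the right-hand side). The only point requiring a little care — and the step I would flag as the main obstacle — is making sure the formalism lines up: that the universal \familiar{} really is \emph{independent} (so that $\rellinspan{\ccR^{\univ}}$ is defined at all, using Definition~\ref{def_relative_linear_span} which requires independence), and that every flat \familiar{} appearing in Definition~\ref{def_cactus_schemes} genuinely arises as a pullback of $\phi^{\univ}$ under a morphism of schemes-with-enough-$\kk$-points. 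The first follows from the standing assumption on $X$ together with Proposition~\ref{prop_base_change_of_familiars} (pullbacks of \familiar's are \familiar's) and the fact that independence of each fibre is a condition on $\kk$-points, which are preserved under pullback; the second is precisely the defining universal property of the Hilbert scheme, once one checks that flat independent \familiar's of degree $r$ in $X$ are the same as flat families of length-$r$ subschemes of $X$, which is immediate from the definitions since over a reduced base flatness is automatic and independence is built into the hypothesis on $X$. Everything else is a direct appeal to results already established in the excerpt.
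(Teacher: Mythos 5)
Your proof is correct and follows essentially the same two-inclusion strategy the paper uses: the universal family $\phi^{\univ}$ is a flat independent \familiar{} (giving $\rellinspan{\ccR^{\univ}}\subset \cactusflat{r}{X}$), and for the reverse inclusion every flat independent \familiar{} is a pullback of $\phi^{\univ}$ along its classifying map, so Proposition~\ref{prop_basic_properties_of_relative_span}\ref{item_relative_linear_span_via_pullback} gives $\rellinspan{\ccR}\subset \rellinspan{\ccR^{\univ}}$. The only small blemish is your parenthetical remark that ``over a reduced base flatness is automatic'' — the bases $Q$ in Definition~\ref{def_cactus_schemes} need not be reduced, but this side remark plays no role in the argument and does not create a gap.
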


\begin{proof}
  By the assumptions $\phi^{\univ}$ is a flat and independent \familiar{} and the definition of cactus scheme implies that
  $\cactusflat{r}{X} \supset \rellinspan{\ccR^{\univ}}$.
  On the other hand, suppose $\ccR\to Q$ is an independent and flat \familiar{} in $X$.
  Then, using the universal map $Q\to \Hilb_{r}(X)$, from Proposition~\ref{prop_basic_properties_of_relative_span}\ref{item_relative_linear_span_via_pullback}
  we see that $\rellinspan{\ccR} \subset \rellinspan{\ccR^{\univ}}$ proving that also
  $\cactusflat{r}{X} \subset \rellinspan{\ccR^{\univ}}$.
\end{proof}

The following theorem is the main reason we propose the two cactus scheme structures.
Similarly to the situation in Corollary~\ref{cor_relative_linear_span_of_constant_rank_subschemes} let $M$ be a matrix whose entries are elements of $W^*$ (this time the coefficients of linear forms are in $\kk$, not in in $\ccO_Q(Q)$).
Suppose $\set{\rk M=0}$ is supported at $0$ so that
$M_{\PP}\colon \PP_{\kk} W \to \PP_{\kk}(U_1\otimes_{\kk} U_2)$ is an embedding.
For a scheme $X\subset \set{\rk M =1}\subset \PP_{\kk} W$
 and $i=1,2$ we denote by $\pi_{i, M}\colon X\to \PP_{\kk} U_i$ the composition of the embeddding
$X\to \PP_{\kk} U_1\times \PP_{\kk}U_2$  and the projection onto $\PP_{\kk} U_i$.

\begin{theorem}
\label{thm_X_in_constant_rank_one_then_cactus_in_bounded_rank}
 Suppose $M$, $X$ and $\pi_{i, M}$ are as above with additional assumptions:
 \begin{itemize}
  \item $\pi_{1,M}$ is an embedding of $X$, and
  \item $\pi_{1,M}(X)$ imposes  independent linear conditions to its subschemes of length at most $r$.
 \end{itemize}
 Then $\cactussch{r}{X}\subset\set{\rk M\le r}$.
\end{theorem}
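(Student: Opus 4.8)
The plan is to unwind the definition of $\cactussch{r}{X}$ so as to reduce to a single independent \familiar, and then let Corollary~\ref{cor_relative_linear_span_of_constant_rank_subschemes} do the essential work. Recall that $\cactussch{r}{X}$ is the scheme-theoretic closure of the union of the closed subschemes $\rellinspan{\ccR}\subset\PP_\kk W$, taken over all independent \familiar[(r')]'s $\phi\colon\ccR\to Q$ in $X$ with $r'\le r$ and $Q$ having enough $\kk$-points. Since $\set{\rk M\le r}$ is itself a closed subscheme of $\PP_\kk W$, it suffices to prove $\rellinspan{\ccR}\subset\set{\rk M\le r}$ for every such $\ccR$; by Lemma~\ref{lem_cactus_is_determined_by_finite_local_Gor} one may, if convenient, even assume $Q$ finite local Gorenstein, but the argument I have in mind does not require this.

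So I would fix one such $\phi\colon\ccR\to Q$ and write $\pi\colon\PP_Q W\to\PP_\kk W$ for the projection. First I would transport the rank condition to $\PP_Q W$: since $\set{\rk M\le i}=Z(I_i)$ is cut out by the minors ideal $I_i\subset\Sym W^*$, and the matrix $M\otimes\ccO_Q$ has the same entries viewed in $\ccO_Q\otimes_\kk\Sym W^*$, one has $\set{\rk (M\otimes\ccO_Q)\le i}=\pi^{-1}(\set{\rk M\le i})$, hence also $\set{\rk (M\otimes\ccO_Q)=i}=\pi^{-1}(\set{\rk M=i})$ as locally closed subschemes; in particular $\ccR\subset Q\times X\subset\set{\rk (M\otimes\ccO_Q)=1}$. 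Next I would check that $\pi_{1,M\otimes\ccO_Q}(\ccR)$ is a linearly independent \familiar[(r')] over $Q$: because $\pi_{1,M}$ is a closed embedding of $X$, the product $\id_Q\times\pi_{1,M}$ is a closed embedding of $Q\times X$, so $\pi_{1,M\otimes\ccO_Q}$ maps $\ccR$ isomorphically onto its image over $Q$; thus the image is again a finite \familiar[(r')], and for each $\kk$-point $q\in Q$ its fibre is a length $\le r$ subscheme of $\{q\}\times\pi_{1,M}(X)$, which is linearly independent by the hypothesis on $\pi_{1,M}(X)$.

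With these two observations in hand, Corollary~\ref{cor_relative_linear_span_of_constant_rank_subschemes}, applied to $M\otimes\ccO_Q$, to $\ccR$, and to the integer $r'$, yields $\linspan{\ccR}\subset\set{\rk (M\otimes\ccO_Q)\le r'}=\pi^{-1}(\set{\rk M\le r'})$ inside $\PP_Q W$. Consequently $\pi|_{\linspan{\ccR}}$ factors scheme-theoretically through the closed subscheme $\set{\rk M\le r'}\hookrightarrow\PP_\kk W$, and since $\rellinspan{\ccR}=\overline{\pi(\linspan{\ccR})}$ is by definition the smallest closed subscheme with this property, we obtain $\rellinspan{\ccR}\subset\set{\rk M\le r'}\subset\set{\rk M\le r}$. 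Taking the union over all admissible $\ccR$ and then the closure gives the theorem. I expect the only genuinely delicate point to be keeping track of the fact that it is the \emph{constant} rank-one locus $\set{\rk (M\otimes\ccO_Q)=1}$, and not merely $\set{\rk (M\otimes\ccO_Q)\le 1}$, that $\ccR$ inherits under base change, together with the observation that $\pi_{1,M}$ being an embedding is exactly what keeps $\pi_{1,M\otimes\ccO_Q}(\ccR)$ a \familiar{} of the same degree and linearly independent; without these, one is back in the pathological territory of Examples~\ref{ex_bad_linear_span} and~\ref{ex_injectivity_assump}.
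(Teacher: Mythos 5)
Your proof is correct and follows the same route as the paper's: reduce to a single independent \familiar{} $\ccR\to Q$, transport the rank stratification to $\PP_Q W$, verify that $\pi_{1,M\otimes\ccO_Q}(\ccR)$ inherits the properties of a linearly independent \familiar{} via the embedding hypothesis, and then invoke Corollary~\ref{cor_relative_linear_span_of_constant_rank_subschemes}. The only difference is that you make explicit the descent from $\linspan{\ccR}\subset\set{\rk(M\otimes\ccO_Q)\le r}$ in $\PP_Q W$ to $\rellinspan{\ccR}\subset\set{\rk M\le r}$ in $\PP_\kk W$ (via the fact that the latter is closed and the former is its preimage), a step the paper leaves implicit.
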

\begin{prf}
Let $Q$ be any scheme that has enough $\kk$-points and
$\ccR\to Q$ be any \familiar{} in $X$.
Then the assumptions imply that
$ (\id_{Q}\times\pi_{1,M})(\ccR) \to Q$ is a linearly independent \familiar{} in $\PP_{\kk} U_1$, thus
by
Corollary~\ref{cor_relative_linear_span_of_constant_rank_subschemes},
we have
$\rellinspan {\ccR} \subset \set{\rk M \le r}$.
From the definition of the cactus scheme as a union of such relative linear spans we conclude that
$\cactussch{r}{X} \subset \set{\rk M \le r}$ as claimed.
\end{prf}

Recall the catalecticant map
$\Upsilon^{i, d-i}\colon
\ccO_{\PP_{\kk}(S^{(d)}V)} \otimes_{\kk} S^i V^* \to
\ccO_{\PP_{\kk}(S^{(d)}V)}(1) \otimes_{\kk} S^{(d-i)}V$ from Section~\ref{sec_intro}, which we can view as a matrix, called a catalecticant matrix.
As a conclusion we obtain the vanishing of catalecticant miniors on the cactus scheme of Veronese variety.

\begin{theorem}\label{thm_cactus_scheme_contained_in_vanishing_of_catalecticant_minors}
Consider the Veronese variety
$\nu_d(\PP_{\kk} V) \subset \PP_{\kk}(S^{(d)} V)$.
For all positive integers $r$, and  $i$ such that and $r-1 \leqslant i\leqslant d-1$,
we have $\cactussch{r}{\nu_d(\PP_{\kk}V)} \subset \Upsilon^{i,d-i}_{r} =  \set{\rk \Upsilon^{i, d-i} \leqslant r}$.
\end{theorem}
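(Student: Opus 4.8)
The plan is to obtain the statement as a direct application of Theorem~\ref{thm_X_in_constant_rank_one_then_cactus_in_bounded_rank}, taking $W=S^{(d)}V$, $U_1=S^{(i)}V$, $U_2=S^{(d-i)}V$, $X=\nu_d(\PP_{\kk}V)$, and letting $M=\Upsilon^{i,d-i}$ be the matrix of linear forms on $\PP_{\kk}(S^{(d)}V)$ attached to the natural linear inclusion $S^{(d)}V\hookrightarrow S^{(i)}V\otimes_{\kk}S^{(d-i)}V$ of symmetric into partially symmetric tensors. First I would dispose of the boundary case $i=0$: there the assumption $r-1\leqslant i$ forces $r=1$, a one-row matrix has no $2\times 2$ minors, hence $\Upsilon^{0,d}_1=\PP_{\kk}(S^{(d)}V)$ and the inclusion is vacuous. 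So I assume $1\leqslant i\leqslant d-1$ from now on.

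Next I would verify, one by one, the hypotheses of Theorem~\ref{thm_X_in_constant_rank_one_then_cactus_in_bounded_rank}. (1) Since the comultiplication $S^{(d)}V\to S^{(i)}V\otimes S^{(d-i)}V$ is injective, the entries of $\Upsilon^{i,d-i}$ span $(S^{(d)}V)^{*}$; therefore $\set{\rk\Upsilon^{i,d-i}=0}$ equals the origin scheme-theoretically, and $M_{\PP}\colon \PP_{\kk}(S^{(d)}V)\to\PP_{\kk}(S^{(i)}V\otimes S^{(d-i)}V)$ is the closed embedding induced by that inclusion. (2) For $0\neq v\in V$ the map $S^{(d)}V\to S^{(i)}V\otimes S^{(d-i)}V$ sends $v^{(d)}$ to $v^{(i)}\otimes v^{(d-i)}$, a nonzero rank-one tensor; equivalently the catalecticant of $v^{(d)}$ is the rank-one map $\alpha\mapsto\langle\alpha,v^{(i)}\rangle\,v^{(d-i)}$. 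Hence all $2\times 2$ minors of $\Upsilon^{i,d-i}$ vanish at every $\kk$-point $[v^{(d)}]$, and since $\nu_d(\PP_{\kk}V)$ is reduced they vanish on it identically, so $\nu_d(\PP_{\kk}V)\subset\set{\rk\Upsilon^{i,d-i}=1}$. It follows that $\pi_{1,M}$ carries $[v^{(d)}]$ to $[v^{(i)}]$, so its restriction to $\nu_d(\PP_{\kk}V)$ is the composite $\nu_d(\PP_{\kk}V)\xrightarrow{\ \sim\ }\PP_{\kk}V\xrightarrow{\ \nu_i\ }\PP_{\kk}(S^{(i)}V)$, which is a closed embedding because $i\geqslant 1$, with image the Veronese variety $\nu_i(\PP_{\kk}V)$. (3) As $i\geqslant r-1$, this $\nu_i(\PP_{\kk}V)=\pi_{1,M}(X)$ imposes independent linear conditions on its subschemes of length at most $r$ — the standard property recalled just before Proposition~\ref{prop_cactus_scheme_supported_at_cactus_variety}. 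Theorem~\ref{thm_X_in_constant_rank_one_then_cactus_in_bounded_rank} then yields $\cactussch{r}{\nu_d(\PP_{\kk}V)}\subset\set{\rk\Upsilon^{i,d-i}\leqslant r}=\Upsilon^{i,d-i}_{r}$, as claimed.

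The one point that needs care — rather than depth — is step (2): computing the catalecticant of a divided power $v^{(d)}$ and checking that it has rank \emph{exactly} one and is nonzero, uniformly in $\cchar\kk$; working with divided powers is precisely what makes the relevant binomial coefficients equal $1$, so no positive-characteristic pathology arises. Beyond stating these identifications correctly I do not anticipate a genuine obstacle, since all the substance is already encapsulated in Theorem~\ref{thm_X_in_constant_rank_one_then_cactus_in_bounded_rank}, whose proof reduces the vanishing of the $(r+1)\times(r+1)$ catalecticant minors along the cactus scheme — via Corollary~\ref{cor_relative_linear_span_of_constant_rank_subschemes} and the Nakayama-type Theorem~\ref{thm_Nakayama_for_linear_spans} — to the fact that a linearly independent \familiar{} over a local base $\Spec A$ lies inside an $A$-linear $\PP^{r-1}_A$.
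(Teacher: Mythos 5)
Your argument is correct and follows essentially the same route as the paper's proof: both reduce the statement to Theorem~\ref{thm_X_in_constant_rank_one_then_cactus_in_bounded_rank} with $M=\Upsilon^{i,d-i}$ and $X=\nu_d(\PP_{\kk}V)$, identifying $\pi_{1,M}|_X$ with $\nu_i$ and observing that $\nu_i(\PP_{\kk}V)$ imposes independent linear conditions on subschemes of length at most $r$ once $i\geqslant r-1$. The only cosmetic difference is that where you verify $\nu_d(\PP_{\kk}V)\subset\set{\rk\Upsilon^{i,d-i}=1}$ directly by computing the catalecticant of a divided power, the paper instead cites Pucci's theorem \cite{pucci_Veronese_variety_and_catalecticant_matrices}, which gives the stronger scheme-theoretic equality.
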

\begin{proof}
For $M$ being the catalecticant matrix $\Upsilon^{i, d-i}$ and $X=\nu_d(\PP_{\kk} V)$
we verify that all the assumptions of Theorem~\ref{thm_X_in_constant_rank_one_then_cactus_in_bounded_rank} are satisfied.
Indeed, the catalecticant matrix is $0$ only at $0\in S^{(d)}V$ and
by the result of Pucci \cite{pucci_Veronese_variety_and_catalecticant_matrices}, we have
\[
   \nu_d(\PP^n)=\Upsilon^{i,d-i}_1 = \set{\rk \Upsilon^{i,d-i} = 1}.
\]
Moreover, $\pi_{1, \Upsilon^{i,d-i}}(\nu_{d}(\PP_{\kk} V)) = \nu_i(\PP_{\kk}V) \subset \PP_{\kk}(S^{(i)} V)$.
Hence,
$\pi_{1, \Upsilon^{i,d-i}}(\nu_{d}(\PP_{\kk} V))$ imposes independent linear conditions on its subschemes of length at most $r$ for $i\geqslant r-1$.
Therefore, by Theorem~\ref{thm_X_in_constant_rank_one_then_cactus_in_bounded_rank}
we have
 $\cactussch{r}{\nu_d(\PP_{\kk} V)} \subset \Upsilon^{i,d-i}_r = \set{\rk \Upsilon^{i,d-i} \le r}$
as claimed.
\end{proof}

\begin{corollary}
   Suppose $d$ and $r$ are positive integers such that 
   $d\geqslant 2r-3$. 
   Then, for any integer $i$, 
   we have $\cactussch{r}{\nu_d(\PP_{\kk}^n)} \subset \Upsilon^{i,d-i}_{r} =  \set{\rk \Upsilon^{i, d-i} \leqslant r}$.
\end{corollary}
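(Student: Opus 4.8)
The plan is to reduce everything to Theorem~\ref{thm_cactus_scheme_contained_in_vanishing_of_catalecticant_minors} by a short case analysis on the integer $i$, combining that theorem with the transpose symmetry $\Upsilon^{i,d-i}=(\Upsilon^{d-i,i})^T$ (hence $\Upsilon^{i,d-i}_r=\Upsilon^{d-i,i}_r$) and treating the degenerate indices directly.

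First I would dispose of the degenerate range. If $i<0$ or $i>d$, then one of $S^iV^*$ and $S^{(d-i)}V$ is zero, so $\Upsilon^{i,d-i}$ is an empty matrix and $\set{\rk\Upsilon^{i,d-i}\leqslant r}=\PP_{\kk}(S^{(d)}V)$. If $i=0$ (respectively $i=d$) the matrix has exactly one row (respectively one column), hence no $(r+1)\times(r+1)$ minors at all since $r\geqslant 1$, and again $\set{\rk\Upsilon^{i,d-i}\leqslant r}=\PP_{\kk}(S^{(d)}V)$. In each of these cases the asserted inclusion holds trivially.

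Next I would treat the main range $1\leqslant i\leqslant d-1$. If $i\geqslant r-1$, then $r-1\leqslant i\leqslant d-1$ and Theorem~\ref{thm_cactus_scheme_contained_in_vanishing_of_catalecticant_minors} applies verbatim. If instead $i\leqslant r-2$, I would run that theorem for the index $d-i$ in place of $i$: the hypothesis $d\geqslant 2r-3$ gives $d-i\geqslant d-(r-2)=d-r+2\geqslant r-1$, while $i\geqslant 1$ gives $d-i\leqslant d-1$, so $r-1\leqslant d-i\leqslant d-1$. The theorem then yields $\cactussch{r}{\nu_d(\PP_{\kk}^n)}\subset\Upsilon^{d-i,i}_r$, and the transpose symmetry recalled above identifies $\Upsilon^{d-i,i}_r=\Upsilon^{i,d-i}_r$, which finishes this case.

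I do not expect a genuine obstacle: the argument is essentially bookkeeping, and the only place the hypothesis $d\geqslant 2r-3$ is actually used is the inequality $d-r+2\geqslant r-1$ in the last case. This is precisely what makes the range $[r-1,d-1]$ covered by Theorem~\ref{thm_cactus_scheme_contained_in_vanishing_of_catalecticant_minors}, together with its mirror image $[1,d-r+1]$ obtained from the transpose symmetry and the endpoints $i\in\set{0,d}$ (as well as $i<0$ and $i>d$) from the degenerate analysis, jointly exhaust all integers $i$. The one point worth double-checking carefully is this covering at the boundary value $d=2r-3$, where the two intervals just meet at $i=r-2$ and $i=r-1$; the computation above confirms that no index is missed.
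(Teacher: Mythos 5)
Your proof is correct and follows essentially the same route as the paper's: dispose of the trivial indices outside $[1,d-1]$, apply Theorem~\ref{thm_cactus_scheme_contained_in_vanishing_of_catalecticant_minors} directly when $r-1\leqslant i\leqslant d-1$, and for $1\leqslant i\leqslant r-2$ pass to $d-i$ via the transpose symmetry, checking $d-i\geqslant r-1$ from $d\geqslant 2r-3$. (A trifling slip: for $i=0$ the catalecticant matrix has one column, not one row, since the domain $S^0V^*$ is one-dimensional; this does not affect the conclusion that no $(r+1)\times(r+1)$ minors exist.)
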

\begin{prf}
   If $i\leqslant 0$ or $i\geqslant d$, then the catalecticant minors are trivial and there is nothing to prove.
   If $r-1\leqslant i \leqslant d-1$, then we apply Theorem~\ref{thm_cactus_scheme_contained_in_vanishing_of_catalecticant_minors}.
   If $1\leqslant i \leq r-2$, then
   the symmetry of catalecticant minors provides
   $\set{\rk \Upsilon^{i, d-i} \leqslant r}=\set{\rk \Upsilon^{d-i, i} \leqslant r}$,
    and we have $r-1\leqslant d-i \leqslant d-1$.
    Thus, the same theorem with the roles of $i$ and $d-i$
 swapped proves also this case.
\end{prf}

\section{Relative apolarity}
\label{sec_relative_apolarity}
In this section, we provide a very general setting of relative apolarity developing 
the classical apolarity theory \cite{emsalem}, 
and some of its extended versions in \cite{joachimapo} and \cite{KlKlep}.
Suppose $V$ is a vector space over $\kk$ and $Q$ is a scheme with a morphism $F \colon Q \to \PP S^{(d)}V$.
The main aim of this section is to define and investigate the annihilator of $F$,
which is a homogeneous ideal sheaf in the graded 
$\ccO_Q$-algebra:
$\ccO_Q \otimes_{\kk} \Sym V^* =
\bigoplus_{i=0}^{\infty}
\ccO_Q \otimes_{\kk} S^i V^*$.
For a purpose of further reference, 
we will consider a slightly more general setting.

Throughout this section, for consistency, we assume that $S^{(i)} V =0$ for every $i<0$.

\subsection{Apolarity action}\label{sec_apolarity_action}

We define the apolarity pairing:
\[
 \hook \colon \ccO_Q \otimes_{\kk} \Sym V^*
 \times \prod_{i=0}^{\infty}
\ccO_Q \otimes_{\kk} S^{(i)} V \to
\prod_{i=0}^{\infty}
\ccO_Q \otimes_{\kk} S^{(i)} V.
\]
Here
\[
\prod_{i=0}^{\infty} \ccO_Q \otimes_{\kk} S^{(i)} V = \Hom_{\kk} (\ccO_Q \otimes_{\kk} \Sym V^*, \kk),
\]
and the apolarity action is defined by the following formula: for any open subset $U\subset Q$
choose $\Theta \in \ccO_Q(U) \otimes_{\kk} \Sym V^*$ and
$F \in \prod_{i=0}^{\infty} \ccO_Q(U) \otimes_{\kk} S^{(i)} V$, define
\[
  (\Theta \hook F) (\Psi) = F(\Theta\cdot \Psi)
\]
for all $\Psi \in \ccO_Q(U) \otimes_{\kk} \Sym V^*$.
If we choose a basis $\setfromto{x_0}{x_n}$ of $V$ and its dual basis $\setfromto{\alpha_0}{\alpha_n}$ of $V^*$, then
we can identify
$\prod_{i=0}^{\infty} \ccO_Q \otimes_{\kk} S^{(i)} V$ with the divided power series ring $\kk_{dp}[[\fromto{x_0}{x_n}]]$.
The apolarity action satisfies the following properties :
\begin{enumerate}
 \item For any index $i$
 \[
   \alpha_i \hook x_0^{(a_0)} \dotsm x_n^{(a_n)} =
   \begin{cases}
      x_0^{(a_0)} \dotsm x_i^{(a_i-1)}\dotsm x_n^{(a_n)} &  \text{if } a_i >0\\
      0 & \text{if } a_i=0.
   \end{cases}
 \]
 \item The pairing is $\ccO_Q$ bilinear, that is, for any open subscheme $U\subset Q$,
 if $s_1,s_2,t_1,t_2\in \ccO_Q(U)$, $\Theta_1, \Theta_2\in \ccO_Q(U) \otimes_{\kk} \Sym V^*$,
 and $G_1, G_2 \in
\bigoplus_{i=0}^{\infty}
\ccO_Q(U) \otimes_{\kk} S^{(i)} V$ we have:
  \begin{multline*}
      (s_1\Theta_1 + s_2 \Theta_2)
      \hook (t_1 G_1 + t_2 G_2) =
        s_1t_1 \Theta_1 \hook G_1
      + s_1t_2 \Theta_1 \hook G_2
      + s_2t_1 \Theta_2 \hook G_1
      + s_2t_2 \Theta_2 \hook G_2.
  \end{multline*}
\item
   The apolarity action
   is consistent with the algebra structure
   of $\ccO_Q \otimes_{\kk} \Sym V^*$,
   that is for any open subscheme $U\subset Q$,
  $\Theta_1, \Theta_2\in \ccO_Q(U) \otimes_{\kk} \Sym V^*$, and $G \in \bigoplus_{i=0}^{\infty}
\ccO_Q(U) \otimes_{\kk} S^{(i)} V$:
   \[
      (\Theta_1 \Theta_2) \hook G = \Theta_1 \hook (\Theta_2 \hook G)
      =\Theta_2 \hook (\Theta_1 \hook G).
   \]
\end{enumerate}
These properties determine uniquely the pairing $\hook$ on
the polynomial part
\begin{align*}
\bigoplus_{i=0}^{\infty}
\ccO_Q \otimes_{\kk} S^{(i)} V &\subset
\prod_{i=0}^{\infty}
\ccO_Q \otimes_{\kk} S^{(i)} V, \text{ or equivalently,}\\
\kk_{dp}[\fromto{x_0}{x_n}] &\subset
\kk_{dp}[[\fromto{x_0}{x_n}]].
\end{align*}
For further properties of $\hook$ see for instance
\cite[\S3.1, \S4.4]{jelisiejew_PhD}.
In particular, $\hook$ behaves well with respect to morphisms of the base scheme:
\begin{equation}\label{equ_hook_preserved_by_pullback}
\text{if } f\colon Q'\to Q, \text{ then  } f^*\otimes \id_{\kk_{dp}[[\fromto{x_0}{x_n}]]} (\Theta \hook G) = f^* \otimes \id_{\Sym V^*}( \Theta) \hook f^*\otimes \id_{\kk_{dp}[[\fromto{x_0}{x_n}]]}( G).
\end{equation}
\begin{notation}\label{not_skip_id}
  For brevity and clarity of presentation, 
we will frequently skip tensorising maps with identity $\otimes \id_{\dots}$, whenever no confusion may arise.
  For instance, a shorter version of \eqref{equ_hook_preserved_by_pullback} is $f^* (\Theta \hook G) = f^* \Theta \hook f^*  G$.
  Slightly more formally,
  whenever we have a morphisms of rings,
  modules, or sheaves $\varphi\colon \ccF\to \ccG$,
  and $W$ is another ring, module, or sheaf,
  such that it makes sense to consider 
   $\ccF \otimes_{\bullet} W$ and $\ccG \otimes_{\bullet} W$,
   then the induced morphism $\ccF \otimes_{\bullet} W\longrightarrow \ccG \otimes_{\bullet} W$
 is denoted by the same symbol $\varphi$.
\end{notation}

Another important property is that the apolarity respects grading in the following sense:
\[
\hook \colon \ccO_Q \otimes_{\kk} S^{i} V^*
 \times \ccO_Q \otimes_{\kk} S^{(j)} V \to
\ccO_Q \otimes_{\kk} S^{(j-i)} V.
\]
In fact, it similarly preserves also non-standard gradings, see Lemma~\ref{lem_homogeneity_of_annihilator}.

\begin{example}
   If $Q=\Spec \kk[s,t]/(s^2, t^2)$, $d\ge 2$,
   \begin{align*}
     \Theta& = \alpha_1^2 + s\alpha_2\alpha_0, \text{ and} \\
     G &= x_0^{(d)}+ s x_0^{(d-1)}x_1 + t x_0^{(d-1)}x_2,\\
     \text{then } \Theta \hook G&= s t x_0^{(d-2)}.
   \end{align*}
   Note that if instead $Q=\Spec \kk[s,t]/(s^2, st, t^2)$,
   then the same $\Theta$ and $G$
   give $\Theta \hook G =0$.
\end{example}

Let
$\Upsilon^{i, d-i}\colon
\ccO_{\PP_{\kk}(S^{(d)}V)} \otimes_{\kk} S^i V^* \to
\ccO_{\PP_{\kk}(S^{(d)}V)}(1) \otimes_{\kk} S^{(d-i`)}V$ be  the catalecticant map as in Sections~\ref{sec_intro} and \ref{sec_cactus_scheme}.
If $Q=\Spec A$ is a finite scheme and $F\colon Q\to \PP_{\kk}(S^{(d)}V)$ is a morphism with a lifting $\hat{F}\colon Q\to S^{(d)}V$ and  the corresponding tensor $F_{\otimes}$ as in \S\ref{sec_finite_subschemes_as_tensors},
then $\Upsilon^{i, d-i}|_{Q}
\colon A \otimes_{\kk} S^i V^* \to
A \otimes_{\kk} S^{(d-i)}V$ is equal to:
\begin{equation}
  \Upsilon^{i, d-i}|_{Q} (\Theta) =
    u \cdot \Theta \hook F_{\otimes},
\end{equation}
where $u\in A$ is an invertible element responsible for the choices involved:
\begin{itemize}
 \item the choice of trivialization
    $\ccO_{\PP_{\kk}(S^{(d)}V)}(1)|_{Q} \simeq \ccO_Q$, and
 \item the choice of lifting
         $\hat{F}$.
\end{itemize}

\subsection{Definition of relative annihilator ideal sheaf}
Informally, similarly to the classically studied
non-relative case,
the annihilator of $F$ in the relative setting
is defined as all $\Theta$ such that $\Theta \hook F=0$.
We commence formalizing the definition, by restricting to an affine case, 
then proving it can be glued together to a quasi-coherent sheaf.

Whenever there is no risk of harm for the clarity, we state our definitions in a more general setting than needed just for the purpose of this paper.
We hope this will serve as a reference in further research.

\begin{definition}[Relative annihilator for affine $Q$]
\label{def_rel_ann_for_affine}
  Suppose $Q= \Spec A$ is an affine scheme and
    $F\colon Q\to \affinespaceof{S^{(\le d)}V} = \affinespaceof{\bigoplus_{j=0}^{d} S^{(j)}V}$ is a morphism.
  Then define
  \[
   \ann:=
      \set{\Theta\in A \otimes_{\kk} \Sym V^*\mid
      \Theta \hook F_{\otimes} =0}.
  \]
\end{definition}

We commence with the following semicontinuity observation:
\begin{lemma}[Semicontinuity of $\ann$ with respect to $Q$]\label{lem_semicontinuity of annihilator}
   Suppose $Q$ and $Q'$ are two affine schemes, and $G\colon Q'\to Q$,
    $F\colon Q\to \affinespaceof{S^{(\le d)}V}$ are two morphisms.
   Denote $F' = F \circ G\colon Q'\to \affinespaceof{S^{(\le d)}V}$.
   Then,
   \[
     G^*(\ann) \subset \ann[F'].
   \]
\end{lemma}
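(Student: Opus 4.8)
The plan is to deduce the inclusion directly from the compatibility of the apolarity action with base change, recorded in \eqref{equ_hook_preserved_by_pullback}, together with the compositional behaviour of the associated tensors.

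First I would apply Proposition~\ref{prop_basic_properties_of_tensor_of_a_scheme}\ref{item_composition_in_terms_of_tensors} to the morphisms $G\colon Q'\to Q$ and $F\colon Q\to \affinespaceof{S^{(\le d)}V}$, obtaining
\[
  F'_{\otimes} = (F\circ G)_{\otimes} = (G^*\otimes \id)(F_{\otimes}) = G^*(F_{\otimes}),
\]
where $G^*\colon A\to A'$ is the underlying ring homomorphism and we use the shorthand of Notation~\ref{not_skip_id}. Here I would note that $S^{(\le d)}V$ is a finite-dimensional $\kk$-vector space, so the tensor construction of \S\ref{sec_finite_subschemes_as_tensors} applies even though neither $Q$ nor $Q'$ is assumed finite.

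Next, given $\Theta\in \ann$, so that $\Theta\hook F_{\otimes}=0$, I would apply $G^*$ (extended by the identity on the divided power part, in the sense of Notation~\ref{not_skip_id}) to this identity and invoke the base-change formula \eqref{equ_hook_preserved_by_pullback} to rewrite
\[
  (G^*\Theta)\hook F'_{\otimes} = (G^*\Theta)\hook (G^* F_{\otimes}) = G^*\bigl(\Theta\hook F_{\otimes}\bigr) = 0 .
\]
Hence $G^*\Theta\in \ann[F']$, and since $G^*(\ann)$ is by definition the image of $\ann$ under $G^*\colon A\otimes_{\kk}\Sym V^*\to A'\otimes_{\kk}\Sym V^*$, this gives $G^*(\ann)\subset \ann[F']$, as claimed.

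I do not expect any genuine obstacle in this argument: the only points meriting a moment's care are that \eqref{equ_hook_preserved_by_pullback} is stated for an arbitrary morphism of base schemes, so it applies with $f=G$, and that $F_{\otimes}$, which lies in the truncated polynomial part $A\otimes_{\kk}S^{(\le d)}V$ of $\prod_{i}A\otimes_{\kk}S^{(i)}V$, is an admissible argument of $\hook$ to which the pullback formula applies.
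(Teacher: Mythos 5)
Your proof is correct and follows essentially the same route as the paper's: identify $F'_{\otimes}=G^*(F_{\otimes})$ via Proposition~\ref{prop_basic_properties_of_tensor_of_a_scheme}\ref{item_composition_in_terms_of_tensors}, then apply the base-change compatibility \eqref{equ_hook_preserved_by_pullback} of $\hook$ to conclude $(G^*\Theta)\hook F'_{\otimes}=G^*(\Theta\hook F_{\otimes})=0$. The remarks at the end about applicability of \eqref{equ_hook_preserved_by_pullback} and the admissibility of $F_{\otimes}$ as an argument of $\hook$ are sound and do not hide any gap.
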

Note that by writting $G^*(\ann)$ in the statement of the lemma we exploit 
Notation~\ref{not_skip_id}.
Similarly, we use this notation in the proof.
\begin{prf}
   Let $Q= \Spec A$ and $Q'=\Spec A'$,
     and suppose
     $\Theta \in \ann \subset A\otimes_{\kk}\Sym V^*$.
   Then
   \[
     G^*(\Theta)\hook F'_{\otimes}
      = G^*(\Theta)\hook (F \circ G)_{\otimes} 
\stackrel{\text{by Prop.~\ref{prop_basic_properties_of_tensor_of_a_scheme}\ref{item_composition_in_terms_of_tensors}}}{=} G^*(\Theta)\hook
       (G^*F_{\otimes})
\stackrel{\text{by \eqref{equ_hook_preserved_by_pullback}}}{=}
  G^*(\Theta\hook F_{\otimes}) = G^*(0) =0.
  \]
  Therefore, $G^*(\ann) \subset \ann[F']$ as claimed.
\end{prf}

The following examples illustrate failures of ``continuity'' of $\ann$.
\begin{example}
   Let $Q_{k}= \Spec \kk[t]/(t^k)$,
   and $F_{k, \otimes} = x^d + ty^d + t^2 z^d$.
   Then
   \begin{align*}
      \ann[F_{k}] &
          = (t^{k-1}\beta, t^{k-2}\gamma,
                 \alpha\beta, \alpha\gamma, \beta\gamma,
                 \beta^d - t\alpha^d, \gamma^d - t^2 \alpha ^d,
                 \alpha^{d+1}
                ) &\text{ for } k\geqslant 2,\\
\text{and } \ann[F_1] &= (\beta,\gamma, \alpha^{d+1})
&\text{ for } k= 1.
\end{align*}
   It is straightforward to see that
      $\ann[F_{k}]|_{Q_{i}} \subsetneqq  \ann[F_{i}]$ for $0\leqslant i<k$.
   As one example,
      $\ann[F_{k}]|_{Q_1} =
          (\alpha\beta, \alpha\gamma, \beta\gamma,
                 \beta^d, \gamma^d, \alpha^{d+1}) \neq
                 \ann[F_1]$.
\end{example}

\begin{example}
   Consider $Q=\Spec \kk[t]/(t^2)$ and $Q'=\Spec \kk$ with the unique embedding $G\colon Q'\to Q$.
   Suppose $F_{gen}\in S^{(d)} V$ is a general element.
   Let
   $F_{\otimes}:=x_0^{(d)} + t F_{gen} \in \kk[t]/(t^2)\otimes_{\kk} S^{(d)} V$.
   Pick a homogeneous $\Theta + t \Theta'\in \kk[t]/(t^2)\otimes_{\kk} S^iV^*$.
   Then:
   \[
    (\Theta + t \Theta')\hook (x_0^{(d)} + t F_{gen}) =
    \Theta\hook x_0^{(d)} + t (\Theta \hook F_{gen} + \Theta'\hook x_0^{(d)}).
   \]
   Hence, $\Theta + t \Theta'\in \ann$ if and only if
   $\Theta \in \ann[x_0^{(d)}]$ and $\Theta \hook F_{gen}
   = -\Theta'\hook x_0^{(d)}$.
   We always have $-\Theta'\hook x_0^{(d)} =c \cdot x_0^{(d-i)}$ for some $c\in \kk$,
   but if $1\leqslant i \leqslant \frac{d}{2}$ then we never have
   $\Theta \hook F_{gen} \in \linspan{x_0^{(d-i)}}$ (unless $\Theta=0$).
   In particular, in low degrees,
    we must have strict inclusion
   $0=\ann_{i}|_{Q'} \subsetneqq  \ann[x_0^{(d)}]_i \neq 0$.
\end{example}

There are however two types of situations, where the continuity of $\ann$ holds, at least partially.
The first one is the case of open subsets (Proposition~\ref{prop_continuity_of_annihilator_for_open_subsets}), and this property will allow us to extend the definition of annihilator to a sheaf of ideals on any (not necessarily affine) scheme $Q$.
The second is the flat case,
when the rank of a catalecticant matrix is constant on $Q$, see Proposition~\ref{prop_continuity_of_Ann_for_flat_case}.

\begin{proposition}
\label{prop_continuity_of_annihilator_for_open_subsets}
    Suppose $Q=\Spec A$ is an affine scheme, $s\in A$,  and $U\simeq \Spec A_s \subset Q$ is an open subscheme with $U= Q \setminus \set{s=0}$.
    Denote by $G^* \colon A\to A_s$ the localization map.
    Then for any
       $F\colon Q\to \affinespaceof{S^{(\leqslant d)}V}$
     we have
    \[
       \ann[F|_U] = A_s\cdot G^*(\ann).
    \]
\end{proposition}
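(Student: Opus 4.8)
The plan is to prove the two inclusions separately, using the semicontinuity of $\ann$ (Lemma~\ref{lem_semicontinuity of annihilator}) for one direction and a direct denominator-clearing argument for the other. First I would observe that $G\colon U\to Q$ is a morphism, so applying Lemma~\ref{lem_semicontinuity of annihilator} to $F$ and $F|_U = F\circ G$ immediately gives $G^*(\ann) \subset \ann[F|_U]$. Since $\ann[F|_U]$ is an $A_s$-module, it follows that $A_s\cdot G^*(\ann) \subset \ann[F|_U]$, which is the easy inclusion.

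For the reverse inclusion $\ann[F|_U] \subset A_s\cdot G^*(\ann)$, I would take an arbitrary element $\Phi \in \ann[F|_U] \subset A_s \otimes_{\kk} \Sym V^*$, so that $\Phi \hook (F|_U)_{\otimes} = 0$ in $A_s \otimes_{\kk} \prod_j S^{(j)}V$. By definition of localization, $\Phi$ can be written as $\Phi = \frac{1}{s^m}G^*(\Theta)$ for some $\Theta \in A\otimes_{\kk}\Sym V^*$ and some $m\geqslant 0$ (here I use that $\Sym V^*$ is a free $\kk$-module with a basis of monomials, clearing denominators coordinate by coordinate, or simply that localization commutes with the tensor product $A\otimes_\kk \Sym V^*$). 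Then $(F|_U)_{\otimes} = G^*(F_{\otimes})$ by Proposition~\ref{prop_basic_properties_of_tensor_of_a_scheme}\ref{item_composition_in_terms_of_tensors}, and using $\ccO_Q$-bilinearity of $\hook$ together with \eqref{equ_hook_preserved_by_pullback} we get
\[
  0 = \Phi \hook G^*(F_{\otimes}) = \frac{1}{s^m} G^*(\Theta) \hook G^*(F_{\otimes}) = \frac{1}{s^m} G^*(\Theta \hook F_{\otimes}).
\]
Hence $G^*(\Theta \hook F_{\otimes}) = 0$ in $A_s \otimes_{\kk} \prod_j S^{(j)}V$, which means $\Theta \hook F_{\otimes}$ is annihilated by a power of $s$ in $A$. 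Say $s^k \cdot (\Theta \hook F_{\otimes}) = 0$; then by bilinearity $(s^k \Theta)\hook F_{\otimes} = 0$, so $s^k\Theta \in \ann$. Therefore $\Phi = \frac{1}{s^{m+k}} G^*(s^k\Theta) \in A_s \cdot G^*(\ann)$, completing the argument.

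The main subtlety I expect is the denominator-clearing step: writing an element of $A_s \otimes_\kk \Sym V^*$ as $\frac{1}{s^m}G^*(\Theta)$ with a \emph{uniform} power $s^m$. This is fine because $\ann[F|_U]$ lives inside the $A_s$-module $A_s \otimes_\kk \Sym V^*$, and any single element involves only finitely many monomials in $\Sym V^*$, so one can take $m$ to be the maximum of the denominator exponents appearing; then $G^*(\Theta)$ makes sense because $A_s \otimes_\kk \Sym V^* = (A\otimes_\kk \Sym V^*)_s$ and $G^*$ is just the canonical map to this localization (this uses that $A\otimes_\kk \Sym V^*$ is flat, in fact free, over $A$, so localization passes through the tensor factor). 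The only other point to handle carefully is that $\prod_{j=0}^\infty \ccO_Q \otimes_\kk S^{(j)}V$ is an infinite product; but in Definition~\ref{def_rel_ann_for_affine} the target is really $\affinespaceof{S^{(\leqslant d)}V}$, so $F_{\otimes}$ has only finitely many graded components and localization behaves well. Everything else is routine bilinearity bookkeeping with $\hook$.
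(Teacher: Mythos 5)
Your proof is correct and follows essentially the same route as the paper: the easy inclusion via Lemma~\ref{lem_semicontinuity of annihilator}, then for the reverse direction clearing denominators to write $\Phi = s^{-m}G^*(\Theta)$, deducing $G^*(\Theta\hook F_\otimes)=0$, and using finite-dimensionality of $S^{(\leqslant d)}V$ to extract a uniform power $s^k$ annihilating $\Theta\hook F_\otimes$. The details (taking a maximum over finitely many coordinates, keeping track of the total exponent $m+k$) line up with the paper's argument.
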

Again, we exploit Notation~\ref{not_skip_id} both in the statement and the proof of the proposition.
\begin{prf}
   The inclusion
   $A_s \cdot G^* (\ann) \subset \ann[F|_U]$
   follows from the general semicontinuity,
    see Lemma~\ref{lem_semicontinuity of annihilator}.
    Thus, to prove the opposite inclusion,
      pick any $\Theta'\in \ann[F|_U]
      \subset A_s \otimes_{\kk} \Sym V^*$.
    Therefore, there exists an integer $k\ge 0$
      such that $s^k \cdot \Theta'$ is in the image of
      $G^* \colon A \otimes_{\kk} \Sym V^*\to A_s \otimes_{\kk} \Sym V^*$,
      that is there exists $\Theta \in A \otimes_{\kk} \Sym V^*$ such that $G^*(\Theta) = s^k \cdot \Theta'$.
      Then:
    \begin{align*}
      G^* (\Theta\hook F_{\otimes})& \stackrel{\text{By \eqref{equ_hook_preserved_by_pullback}}}{=} G^*(\Theta)\hook (F|_{U})_{\otimes} = s^k \cdot \Theta' \hook (F|_{U})_{\otimes} =0, \text{that is}
      \\
      \Theta \hook F_{\otimes}&\in \ker (G^*\colon A \otimes_{\kk} S^{(\leqslant d)}V \to A_s \otimes_{\kk} S^{(\leqslant d)}V)
      \\
      &= \set{t \in A \mid t s^l =0 \text{ for some } l>0}\otimes_{\kk} S^{(\leqslant d)}V.
    \end{align*}
    Therefore, choosing a basis
      $\setfromto{f_0}{f_N}$ of $S^{(\leqslant d)}V$,
      we can write
      $
        \Theta \hook F_{\otimes} =
          \sum_{i=0}^{N} t_i f_i
      $
      with each $t_i\cdot s^{l_i} =0$ for some $l_i>0$.
      Let $l= \max(l_i \mid i \in \setfromto{0}{N})$.
      We have $s^l\cdot \Theta \hook F_{\otimes}=0$,
        that is $s^l\cdot \Theta \in \ann$.
      This proves the claim, as
      \[
         \Theta' = s^{-k-l}\cdot G^*(s^l \cdot \Theta) \in s^{-k-l}\cdot G^*(\ann).
      \]
\end{prf}

\begin{definition}
   Suppose $Q$ is a scheme and
   $F\colon Q \to \affinespaceof{S^{(\leqslant d)}V}$ is a morphism of schemes.
   We define
   $\ann \subset \ccO_Q \otimes_{\kk} \Sym V^*$
   to be the unique quasi-coherent sheaf of ideals
   such that for each open affine subset $U \subset Q$ the value of the sheaf
   $\ann(U)$ is the ideal $\ann[F|_U]$ as in Definition~\ref{def_rel_ann_for_affine}.
   Such ideal sheaf exists by Proposition~\ref{prop_continuity_of_annihilator_for_open_subsets}.
\end{definition}

Before proceeding we prove one more technical lemma about the annihilator in the case of finite local schemes. This lemma says that despite we do not have the continuity of annihilators in general, we can sometimes read the annihilator of a point from the annihilator of a family.

\begin{lemma}\label{lem_annihilator_of_a_point_from_socle}
   Suppose $Q = \Spec A$ is a finite local scheme over $\kk$ and $F\colon Q\to S^{(\le d)} V$ is a morphism.
   Let $F_{\kk}\colon \Spec \kk \to S^{(\le d)} V$ be the restriction of $F$ to the closed reduced point $\Spec \kk \in Q$ and let $s\in A$ be a non-zero element of the socle of $A$.
   Then the map of $\Sym V^*$-modules
   $\Sym V^* \to A\otimes_{\kk} \Sym V^*$
   taking $\Phi$ to $s\cdot \Phi$
   maps $\ann[F_{\kk}]$ isomorphically onto $\ann \cap s\cdot \Sym V^*$.
\end{lemma}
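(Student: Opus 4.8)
The plan is to translate the statement into the language of modules over the free module $\freemodule{W}$ where $W = S^{(\le d)}V$, and then apply the Gorenstein duality machinery from \S\ref{sec_Gorenstein_duality}. First I would fix a lifting $\hat F\colon Q\to \affinespaceof{S^{(\le d)}V}$ (which exists since $Q$ is finite local, by Lemma~\ref{lem_lifting_finite_schemes_from_projective_spaces}) and work with the corresponding tensor $F_{\otimes}\in A\otimes_{\kk}S^{(\le d)}V$. Since $A$ is finite local with residue field $\kk\subset A$, the splitting $A=\kk\cdot 1\oplus \gotm$ gives $F_{\otimes} = (F_{\otimes})_{\kk} + (F_{\otimes})_{\gotm}$, and by Proposition~\ref{prop_residue_and_derivative_of_tensor_corresponding_to_scheme}\ref{item_support_using_residue} the residue $(F_{\otimes})_{\kk}\in S^{(\le d)}V$ is a lift of $F_{\kk}$, so $\ann[F_{\kk}] = \set{\Phi\in\Sym V^*\mid \Phi\hook (F_{\otimes})_{\kk} = 0}$. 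The key algebraic input is that, because $s$ lies in the socle, $s\cdot \gotm = 0$, so for any $\Phi\in\Sym V^*$ we have $s\cdot(\Phi\hook F_{\otimes}) = \Phi\hook(s\cdot F_{\otimes}) = \Phi\hook(s\cdot (F_{\otimes})_{\kk})$ (the $\gotm$-part is killed), and moreover $s\cdot(F_{\otimes})_{\kk} \ne 0$ in $A\otimes_\kk S^{(\le d)}V$ precisely because $(F_{\otimes})_{\kk}\ne 0$ in $S^{(\le d)}V$ and $s\ne 0$.

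The argument then proceeds in two directions. For ``$s\cdot\ann[F_{\kk}]\subset \ann\cap s\Sym V^*$'': if $\Phi\hook (F_{\otimes})_{\kk} = 0$, then $(s\Phi)\hook F_{\otimes} = s\cdot(\Phi\hook F_{\otimes}) = \Phi\hook(s(F_{\otimes})_{\kk}) = s\cdot(\Phi\hook(F_{\otimes})_{\kk}) = 0$, so $s\Phi\in\ann$, and clearly $s\Phi\in s\Sym V^*$. For injectivity of $\Phi\mapsto s\Phi$ on $\ann[F_{\kk}]$: actually the map $\Sym V^*\to A\otimes_\kk \Sym V^*$, $\Phi\mapsto s\Phi$ is injective on all of $\Sym V^*$ since $s$ is a nonzero element of $\kk\cdot 1$-free part... more carefully, $\Sym V^* \to s\cdot\Sym V^*$ via the fixed isomorphism $\kk\cong\gots$ identifies $s\Sym V^*$ with $\Sym V^*$, so injectivity is automatic. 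The nontrivial containment is surjectivity onto $\ann\cap s\Sym V^*$: given $s\Psi\in\ann$ with $\Psi\in\Sym V^*$, we need $\Psi\in\ann[F_{\kk}]$. From $(s\Psi)\hook F_{\otimes} = 0$ and the socle computation, $0 = s\cdot(\Psi\hook F_{\otimes}) = \Psi\hook(s(F_{\otimes})_{\kk}) = s\cdot(\Psi\hook(F_{\otimes})_{\kk})$. Now $\Psi\hook(F_{\otimes})_{\kk}\in S^{(\le d-i)}V\subset S^{(\le d)}V$ is an honest element (coefficients in $\kk$), and $s\cdot(-)\colon \kk\otimes S^{(\le d)}V \to s\cdot S^{(\le d)}V \subset A\otimes S^{(\le d)}V$ is injective, so $\Psi\hook(F_{\otimes})_{\kk} = 0$, i.e. $\Psi\in\ann[F_{\kk}]$.

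The main obstacle I anticipate is bookkeeping the two roles of $s$: as an element of $A$ acting on $A\otimes_\kk\Sym V^*$ (giving the submodule $s\Sym V^*$ via the chosen isomorphism $\gots\simeq\kk$) and as the operator whose defining property $s\gotm = 0$ collapses $F_{\otimes}$ to its residue. One must be careful that ``$\ann\cap s\cdot\Sym V^*$'' genuinely means the intersection inside $A\otimes_\kk\Sym V^*$ of the ideal $\ann$ with the $\kk$-subspace (in fact $\Sym V^*$-submodule) $s\cdot(A\otimes_\kk\Sym V^*)\cap (\text{image of }\Sym V^*)$; since $s\gotm = 0$, we have $s\cdot(A\otimes_\kk\Sym V^*) = s\cdot\Sym V^*$, so no ambiguity remains. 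Apart from this, everything reduces to the two short implications above together with the observation that the apolarity action is $\ccO_Q$-bilinear (property (2) of \S\ref{sec_apolarity_action}), which is what licenses pulling $s$ through $\hook$. No use of the full strength of Proposition~\ref{prop_perp_of_perp_for_Gorenstein} is needed, though the statement is morally the ``perpendicular'' incarnation of it.
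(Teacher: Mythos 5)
Your argument is correct and is essentially the same as the paper's: decompose $F_{\otimes}=(F_{\otimes})_{\kk}+(F_{\otimes})_{\gotm}$ via the splitting $A=\kk\oplus\gotm$, observe that $s\cdot\gotm=0$ kills the $\gotm$-part, and conclude $s\Phi\hook F_{\otimes}=s(\Phi\hook (F_{\otimes})_{\kk})$, which vanishes iff $\Phi\hook(F_{\otimes})_{\kk}=0$ by injectivity of multiplication by $s$ on $\kk\otimes_\kk S^{(\le d)}V$. The only cosmetic deviation is the invocation of Lemma~\ref{lem_lifting_finite_schemes_from_projective_spaces}, which is unnecessary here since $F$ already lands in the affine space $\affinespaceof{S^{(\le d)}V}$ rather than $\PP_{\kk}(S^{(\le d)}V)$.
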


\begin{prf}
   Let $\gotm \subset A$ be the maximal ideal.
   We
   write $F_{\otimes} = F_{\kk} + F_{\gotm}$,
     where $F_{\gotm} \in \gotm \otimes_{\kk} \Sym V^*$.
   Take $\Phi \in \Sym V^*$.
   We have
   \[
    s\Phi \hook F_{\otimes} = s(\Phi\hook F_{\kk}) + \Phi \hook (s F_{\gotm}) = s(\Phi\hook F_{\kk}).
   \]
   Therefore, $\Phi  \in \ann[F_{\kk}]$
     if and only if $s\Phi \in \ann$ as claimed.
\end{prf}

\subsection{Descending the annihilator to projective space}

The goal of this section is to prove that we can almost equally well define the annihilator for morphisms $F\colon Q\to \PP_{\kk}(S^{(\leqslant d)}V)$.
To commence, we state the following lemma,
 whose proof and its consequence are straightforward,
 and we skip the details.

\begin{lemma}[Invariance of annihilator under rescalings]
\label{lem_invariance_of_ann_under_rescalings}
   Suppose $Q=\Spec A$ is an affine scheme,
   $F\colon Q\to \affinespaceof{S^{(\leqslant d)}V}$
    is a morphism, and $s\in A$ is any element.
   Let $sF \colon Q\to \affinespaceof{S^{(\leqslant d)}V}$ be the morphism, whose corresponding tensor is
   $s\cdot F_{\otimes}$.
   Then $\ann\subset \ann[sF]$.
   In particular, if $s$ is invertible, then $\ann[sF]= \ann$.
\end{lemma}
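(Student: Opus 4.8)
The plan is to unwind the definition of the annihilator in the affine setting and use the $\ccO_Q$-bilinearity of the apolarity pairing $\hook$. Recall that by Definition~\ref{def_rel_ann_for_affine} we have $\ann = \{\Theta \in A\otimes_{\kk}\Sym V^* \mid \Theta\hook F_{\otimes} = 0\}$, and that the tensor corresponding to the morphism $sF$ is by construction $(sF)_{\otimes} = s\cdot F_{\otimes}$.

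First I would take any $\Theta \in \ann$ and compute $\Theta \hook (sF)_{\otimes} = \Theta\hook (s\cdot F_{\otimes})$. By the $\ccO_Q$-bilinearity of $\hook$ (property~(2) in \S\ref{sec_apolarity_action}, applied with the scalar $s\in A$ factored out of the second argument), this equals $s\cdot(\Theta\hook F_{\otimes}) = s\cdot 0 = 0$. Hence $\Theta \in \ann[sF]$, proving the inclusion $\ann \subset \ann[sF]$.

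Next, for the ``in particular'' clause, suppose $s$ is invertible in $A$. Then $F_{\otimes} = s^{-1}\cdot(sF)_{\otimes}$, so $F$ is obtained from $sF$ by rescaling by the invertible element $s^{-1}$. Applying the inclusion just proved with $sF$ in place of $F$ and $s^{-1}$ in place of $s$ gives $\ann[sF] \subset \ann[s^{-1}(sF)] = \ann$. Combined with the first inclusion this yields $\ann[sF] = \ann$.

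There is no real obstacle here: the only point that requires a moment's care is confirming that $\hook$ genuinely pulls the scalar $s\in A$ out of the second slot, which is exactly the content of the bilinearity property~(2) stated in \S\ref{sec_apolarity_action} (taking $s_1=1$, $s_2=0$, $t_1 = s$, $t_2=0$). Everything else is a one-line formal manipulation, which is why the excerpt flags the proof as straightforward and omits it.
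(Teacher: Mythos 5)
Your proof is correct and is precisely the one-line bilinearity argument the paper has in mind (the paper itself omits the proof as ``straightforward''). Both the inclusion via pulling $s$ out of the second slot of $\hook$ and the ``in particular'' step via applying the inclusion twice with $s$ and $s^{-1}$ are exactly right.
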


\begin{corollary}\label{cor_apolar_of_multiplied_F_is_the_same}
   Suppose $Q$ is a scheme
   and  $F, F'\colon Q\to \affinespaceof{S^{(\leqslant d)}V}$
   are two morphisms such that:
   \begin{itemize}
      \item $0\in\affinespaceof{S^{(\leqslant d)}V}$ is not in the image of $F$ or of $F'$, and
      \item the compositions of $F$ and $F'$ with
      $\pi \colon \affinespaceof{S^{(\leqslant d)}V} \setminus \set{0} \to \PP_{\kk}(S^{(\leqslant d)}V)$ are equal.
   \end{itemize}
   Then $\ann = \ann[F']$.
\end{corollary}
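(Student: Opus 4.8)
The plan is to reduce Corollary~\ref{cor_apolar_of_multiplied_F_is_the_same} to the local case and then apply Lemma~\ref{lem_invariance_of_ann_under_rescalings}. Since the annihilator sheaf $\ann$ is defined by gluing the affine-local annihilators $\ann[F|_U]$, and since equality of quasi-coherent ideal sheaves can be checked on an affine open cover, it suffices to prove the statement when $Q=\Spec A$ is affine. So first I would fix an affine open $U = \Spec A\subset Q$ over which the line bundle comparison can be made explicit, and replace $Q$ by $U$.

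Next, I would exploit the hypothesis that $0$ is not in the image of either $F$ or $F'$ together with the fact that their compositions with $\pi$ agree. Over the affine scheme $Q=\Spec A$, the morphism $F$ corresponds to the tensor $F_{\otimes} \in A\otimes_{\kk} S^{(\leqslant d)}V$, and similarly $F'_{\otimes}$. The condition that $\pi\circ F = \pi \circ F'$ means that, locally, $F_{\otimes}$ and $F'_{\otimes}$ differ by multiplication by an invertible element: indeed, both determine the same map to $\PP_{\kk}(S^{(\leqslant d)}V)$, hence the same invertible subsheaf (the pullback of $\ccO(-1)$) inside $A\otimes_{\kk} S^{(\leqslant d)}V$, with $F_{\otimes}$ and $F'_{\otimes}$ both generating it; since $\Spec A$ has enough $\kk$-points (or after further shrinking $U$ to a basic open set trivializing the pullback line bundle), there is a unit $u\in A^\times$ with $F'_{\otimes} = u\cdot F_{\otimes}$. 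A small technical point here is to justify that one may shrink $Q$ so that the relevant pullback of $\ccO_{\PP}(1)$ is trivial; this is where checking on an affine cover is used, and it is the only mildly delicate step, though entirely routine.

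Once $F'_{\otimes} = u\cdot F_{\otimes}$ with $u$ invertible, Lemma~\ref{lem_invariance_of_ann_under_rescalings} (its ``in particular'' clause) gives directly $\ann[F'] = \ann[uF] = \ann[F]$ over each such affine open, and gluing these equalities yields $\ann = \ann[F']$ as sheaves on $Q$. I expect no serious obstacle: the entire content is the local triviality of the pullback line bundle plus the already-proven invariance lemma; the proof is essentially a bookkeeping argument, which is why the authors flag the details as straightforward and omit them.

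\begin{prf}
   Equality of quasi-coherent ideal sheaves can be verified on an affine open cover of $Q$, so we may assume $Q = \Spec A$ is affine and, after further shrinking, that the pullback of $\ccO_{\PP_{\kk}(S^{(\leqslant d)}V)}(1)$ along the common composition $\pi\circ F = \pi \circ F'$ is trivial on $Q$.
   Then $F_{\otimes}$ and $F'_{\otimes}$ in $A\otimes_{\kk} S^{(\leqslant d)}V$ both generate the same invertible $A$-submodule (the pullback of $\ccO(-1)$), and since $0$ is not in the image of $F$ or $F'$, there is a unit $u\in A$ with $F'_{\otimes} = u\cdot F_{\otimes}$.
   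By Lemma~\ref{lem_invariance_of_ann_under_rescalings}, $\ann[F'] = \ann[uF] = \ann$ over each such affine open, and gluing gives $\ann = \ann[F']$ on all of $Q$.
\end{prf}
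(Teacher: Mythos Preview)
Your proof is correct and matches what the paper intends; the authors explicitly flag both Lemma~\ref{lem_invariance_of_ann_under_rescalings} and this corollary as straightforward and omit the details, and your argument---reduce to affine, produce a unit $u$ with $F'_{\otimes}=u\cdot F_{\otimes}$, apply the lemma---is exactly the expected one. One minor remark: the shrinking step to trivialize the pullback of $\ccO(1)$ is harmless but unnecessary, since the existence of the lift $F$ already provides a section of the $\mathbf{G}_{m,\kk}$-torsor $\affinespaceof{S^{(\leqslant d)}V}\setminus\{0\}\to\PP_{\kk}(S^{(\leqslant d)}V)$ pulled back to $Q$, so it is automatically trivial and the unit $u$ exists over any affine open.
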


\begin{lemma}\label{lem_good_pullback_for_G_m_action}
   Suppose
   $F\colon Q\to \affinespaceof{S^{(\leqslant d)}V}$
   and set
   $F'\colon \mathbf{G}_{m, \kk} \times_{\kk} Q\to \affinespaceof{S^{(\leqslant d)}V}$
   to be the natural morphism coming from the action
   of $\mathbf{G}_{m, \kk}$ on $\affinespaceof{S^{(\leqslant d)}V}$ by rescaling.
   Let $G_1$ be any section $Q\to \mathbf{G}_{m, \kk} \times_{\kk} Q$, and $G_2$ be the projection $\mathbf{G}_{m, \kk} \times_{\kk} Q \to Q$.
   Then $G_1^*(\ann[F'])$ generates $\ann[F]$ and
    $G_2^*(\ann[F])$ generates $\ann[F']$.
\end{lemma}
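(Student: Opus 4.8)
The plan is to reduce to the affine case and then compute everything explicitly in coordinates on $\mathbf{G}_{m,\kk}$. Since $\ann$ and $\ann[F']$ are quasi-coherent ideal sheaves whose value on an affine open is given by Definition~\ref{def_rel_ann_for_affine}, and since the preimage of an affine open $U\subset Q$ under $G_2$ is the affine scheme $\mathbf{G}_{m,\kk}\times_\kk U$ while its preimage under $G_1$ is $U$ itself (because $G_2\circ G_1=\id_Q$), it suffices to treat the case $Q=\Spec A$ affine. I would then write $\mathbf{G}_{m,\kk}\times_\kk Q=\Spec A[\lambda,\lambda^{-1}]$, so that $G_2^*\colon A\to A[\lambda,\lambda^{-1}]$ is the natural inclusion; by the universal property of the fibre product a section $G_1$ corresponds to a unit $g\in A^\times$, and $G_1^*\colon A[\lambda,\lambda^{-1}]\to A$ is the ring map sending $\lambda\mapsto g$ and restricting to $\id_A$ on $A$.

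The one identification to make carefully is the tensor of $F'$: as $F'$ is the composite of $\id\times F$ with the rescaling action on $\affinespaceof{S^{(\leqslant d)}V}$, Proposition~\ref{prop_basic_properties_of_tensor_of_a_scheme}\ref{item_composition_in_terms_of_tensors} together with the description of that action gives $F'_{\otimes}=\lambda\cdot G_2^*(F_{\otimes})$ inside $A[\lambda,\lambda^{-1}]\otimes_\kk S^{(\leqslant d)}V$. Using that $A[\lambda,\lambda^{-1}]=\bigoplus_{k\in\ZZ}A\lambda^k$ as an $A$-module, I would decompose
\[
  A[\lambda,\lambda^{-1}]\otimes_\kk\Sym V^*=\bigoplus_{k\in\ZZ}\lambda^k\bigl(A\otimes_\kk\Sym V^*\bigr),
\]
and similarly with $S^{(\leqslant d)}V$ in place of $\Sym V^*$. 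For $\Theta=\sum_k\lambda^k\Theta_k$ with $\Theta_k\in A\otimes_\kk\Sym V^*$, the $\ccO$-bilinearity of $\hook$ and its compatibility with pullback \eqref{equ_hook_preserved_by_pullback} yield
\[
  \Theta\hook F'_{\otimes}=\sum_{k\in\ZZ}\lambda^{k+1}\,G_2^*\bigl(\Theta_k\hook F_{\otimes}\bigr).
\]
Since these summands lie in distinct graded components and $G_2^*$ is injective, this vanishes precisely when $\Theta_k\hook F_{\otimes}=0$ for every $k$, i.e.\ when every $\Theta_k\in\ann[F]$. Hence $\ann[F']=\bigoplus_{k\in\ZZ}\lambda^k\,\ann[F]=A[\lambda,\lambda^{-1}]\cdot G_2^*(\ann[F])$, which is exactly the assertion that $G_2^*(\ann[F])$ generates $\ann[F']$.

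Finally, applying $G_1^*$ to the identity just obtained, and noting $G_1^*(\lambda^k\Theta_k)=g^k\Theta_k$, gives $G_1^*(\ann[F'])=\sum_{k\in\ZZ}g^k\,\ann[F]$; since $g$ is a unit and $\ann[F]$ is an ideal of $A$, each $g^k\,\ann[F]$ equals $\ann[F]$, so the sum is $\ann[F]$, and in particular $G_1^*(\ann[F'])$ generates $\ann[F]$. I do not anticipate a genuine difficulty here: the only points requiring attention are the legitimacy of the reduction to an affine base and the bookkeeping in the two identifications (the formula $F'_{\otimes}=\lambda\,G_2^*(F_{\otimes})$ and the $\lambda$-graded decomposition of the apolarity pairing); the rest is a formal consequence of the bilinearity of $\hook$ and of the invertibility of $\lambda$.
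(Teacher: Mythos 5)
Your proof is correct and rests on the same core computation as the paper's (affine reduction, the identity $F'_{\otimes}=\lambda\cdot G_2^*(F_{\otimes})$, and the Laurent-series decomposition of $A[\lambda,\lambda^{-1}]\otimes_{\kk}\Sym V^*$), but you package it more directly. The paper routes through two auxiliary facts --- Lemma~\ref{lem_invariance_of_ann_under_rescalings} to identify $\ann[F']$ with $\ann[F\circ G_2]$ (and similarly for $G_1$), and Lemma~\ref{lem_semicontinuity of annihilator} to get the ``easy'' inclusions $G_2^*(\ann)\subset\ann[F']$ and $G_1^*(\ann[F'])\subset\ann$ --- and then closes the gap with a degree-by-degree argument. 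You instead extract the single clean structural identity $\ann[F']=\bigoplus_{k\in\ZZ}\lambda^k\,\ann[F]$, from which both halves of the lemma drop out immediately: the generation claim for $G_2^*$ is a restatement of this equality, and applying $G_1^*$ and using that $g\in A^{\times}$ gives $G_1^*(\ann[F'])=\ann[F]$ outright, making the semicontinuity inclusion unnecessary. This is a genuine streamlining: it proves equality rather than just generation in the $G_1$ direction, and it avoids invoking the rescaling-invariance lemma. One trivial slip: $\ann[F]$ is an ideal in $A\otimes_{\kk}\Sym V^*$, not in $A$, though your conclusion $g^k\ann[F]=\ann[F]$ is unaffected since $g$ is a unit of the coefficient ring.
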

\begin{prf}
   It is enough to check the statement
      for affine $Q=\Spec A$.
   Then $\mathbf{G}_{m, \kk} \times_{\kk} Q = \Spec A[t, t^{-1}]$ and $G_2^* \colon A \to A[t, t^{-1}]$ is the natural inclusion, while $G_1^*\colon A[t, t^{-1}] \to A$ is identity on $A$ and maps $t$ to an invertible element of $A$. We have $G_1^* \circ G_2^* = \id_A$.
   Moreover, using Notation~\ref{not_skip_id}, it is straightforward to check that
   \begin{align*}
     F'_{\otimes} & = t \cdot G_2^*(F_{\otimes}) \in A[t, t^{-1}]\otimes_{\kk} S^{(\leqslant d)}V, \text{ and}\\
     G_1^*(F'_{\otimes}) & = G_1^*(t) \cdot G_1^*G_2^*(F_{\otimes}) = G_1^*(t) \cdot F_{\otimes} \in A \otimes_{\kk} S^{(\leqslant d)}V.
   \end{align*}
   In particular, since both $t\in A[t, t^{-1}]$ and
   $G_1^*( t)\in A$ are invertible, we have:
   \[
     \ann[F\circ G_2] = \ann[F'] \text{ and }
     \ann[F'\circ G_1] = \ann
   \]
   by
   Lemma~\ref{lem_invariance_of_ann_under_rescalings}.
   Therefore,
   by Lemma~\ref{lem_semicontinuity of annihilator}
   we must have $G_2^*(\ann) \subset \ann[F']$ and
    $G_1^*(\ann[F']) \subset \ann$ and to prove the statement it is enough to check that:
   \begin{align*}
     \ann[F \circ G_2] & \subset (A[t, t^{-1}] \otimes_{\kk} \Sym V^*)\cdot G_2^*(\ann), \text{ and}\\
    \ann &\subset G_1^*(\ann[F']).
   \end{align*}

   Any $\Theta' \in A[t, t^{-1}] \otimes_{\kk} \Sym V^*$ can be expressed as
   $\Theta' = \sum_{i \in \ZZ} G_2^*(\Theta'_i) t^{i}$ with
   $\Theta'_i \in A\otimes_{\kk} \Sym V^*$ with only finitely many $\Theta'_i\ne 0$.
   Then $\Theta' \in \ann[F\circ G_2]$ if and only if
     $G_2^* \Theta'_i \hook G_2^* F_{\otimes} = G_2^* (\Theta'_i \hook F_{\otimes})=0$ for all $i$,
     which is only possible if $\Theta'_i \in \ann$, proving the first claim above.

   Now pick $\Theta \in \ann[F'\circ G_1] = \ann$ and consider $G_2^* \Theta$. By Lemma~\ref{lem_semicontinuity of annihilator} and
   Corollary~\ref{cor_apolar_of_multiplied_F_is_the_same} we have
   $G_2^* \Theta \in \ann[F\circ G_2] = \ann[F']$.
   Thus,
   \[
    \Theta \hook G_1^* F'_{\otimes}
      = G_1^* G_2^* \Theta \hook G_1^* F'_{\otimes}
      = G_1^* (G_2^* \Theta \hook F'_{\otimes}) = 0
   \]
   showing the second claim above, and concluding the proof of the lemma.
\end{prf}

We denote by $\pi\colon \left(\affinespaceof{S^{(\leqslant d)}V} \setminus\set{0}\right)\to \PP_{\kk}(S^{(\leqslant d)}V)$ the natural quotient map.

\begin{proposition}
   \label{prop_continuity_of_Ann_for_flat_case}
   Suppose $Q$ is a scheme and $F\colon Q\to \PP_{\kk}(S^{(\leqslant d)}V)$ is a morphism.
   Then there exists a unique quasicoherent sheaf of ideals $\ann\subset \ccO_Q\otimes_{\kk}\Sym V^*$
   such that for any open subset $U\subset Q$
   and any lift $\hat{F}_U\colon U \to \affinespaceof{S^{(\leqslant d)}V}$ such that $\pi\circ\hat{F}_U =F|_U$, the equality of ideals holds
   $\ann|_U = \ann[\hat{F}_U]$.
\end{proposition}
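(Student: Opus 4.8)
The plan is to construct the sheaf $\ann$ by gluing the local annihilators obtained from local lifts, and then verify that the result is independent of all choices. First I would observe that any morphism $F\colon Q\to \PP_{\kk}(S^{(\leqslant d)}V)$ admits local lifts: by Lemma~\ref{lem_lifting_finite_schemes_from_projective_spaces}-style reasoning (or more directly, since $\ccO_{\PP}(1)$ is invertible), around each point $q\in Q$ there is an open neighbourhood $U$ and a trivialization of $F|_U^*\ccO_{\PP_{\kk}(S^{(\leqslant d)}V)}(1)$, which determines a lift $\hat{F}_U\colon U\to \affinespaceof{S^{(\leqslant d)}V}\setminus\set{0}$ with $\pi\circ\hat{F}_U = F|_U$. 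On each such $U$ we already have the quasi-coherent sheaf of ideals $\ann[\hat{F}_U]\subset \ccO_U\otimes_{\kk}\Sym V^*$ from the previous subsection.

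The key step is to show these agree on overlaps. Given two lifts $\hat{F}_U$ and $\hat{F}'_U$ of $F|_U$ over the same open affine $U=\Spec A$, they differ by multiplication by an invertible $s\in A$ (the transition function of the line bundle), so $\ann[\hat{F}_U] = \ann[\hat{F}'_U]$ by Corollary~\ref{cor_apolar_of_multiplied_F_is_the_same} (or directly by Lemma~\ref{lem_invariance_of_ann_under_rescalings}). Hence for two overlapping opens $U, U'$ in the cover, both $\ann[\hat{F}_U]$ and $\ann[\hat{F}_{U'}]$ restrict on $U\cap U'$ to the annihilator of the common restriction of $F$, which is independent of the lift. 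Restriction of $\ann[\hat{F}_U]$ to a smaller open set is compatible with taking the annihilator of the restricted lift: this follows from Proposition~\ref{prop_continuity_of_annihilator_for_open_subsets}, which is exactly the statement that $\ann$ localizes correctly on distinguished opens. Therefore the local pieces $\ann[\hat{F}_U]$ glue to a well-defined quasi-coherent sheaf of ideals $\ann\subset \ccO_Q\otimes_{\kk}\Sym V^*$, and by construction $\ann|_U = \ann[\hat{F}_U]$ for every open $U$ carrying a lift $\hat{F}_U$.

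For uniqueness, suppose $\ccJ$ is another quasi-coherent ideal sheaf with the stated property. Covering $Q$ by opens $U$ admitting lifts, we get $\ccJ|_U = \ann[\hat{F}_U] = \ann|_U$ for all such $U$, and since these cover $Q$ and both are sheaves, $\ccJ = \ann$. It remains to check that the stated property holds for \emph{arbitrary} open $U\subset Q$ and \emph{arbitrary} lift $\hat{F}_U$ over it, not just those in the chosen cover: this follows by the same overlap argument, since on each piece $U\cap V$ of a refinement the two ideals agree by the lift-independence from Corollary~\ref{cor_apolar_of_multiplied_F_is_the_same} together with the localization compatibility of Proposition~\ref{prop_continuity_of_annihilator_for_open_subsets}.

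The main obstacle is bookkeeping rather than conceptual: one must be careful that ``lift'' is only locally defined (a global lift need not exist when $F^*\ccO(1)$ is nontrivial), and that the equality $\ann[\hat{F}_U] = \ann[s\hat{F}_U]$ requires $s$ to be invertible on $U$, which is automatic for a transition function but must be stated. Once the lift-independence (Corollary~\ref{cor_apolar_of_multiplied_F_is_the_same}) and the open-restriction compatibility (Proposition~\ref{prop_continuity_of_annihilator_for_open_subsets}) are invoked, the gluing is formal. Lemma~\ref{lem_good_pullback_for_G_m_action}, comparing the annihilator on $Q$ with that on $\mathbf{G}_{m,\kk}\times_{\kk} Q$, provides an alternative route: one can pull $F$ back to the total space of the $\mathbf{G}_m$-torsor associated to $F^*\ccO(1)$, where a tautological lift exists, form the annihilator there, and descend it; but the direct cover-and-glue argument is cleaner and is what I would write up.
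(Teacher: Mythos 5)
Your proof is correct, and it takes a genuinely different route from the paper's. The paper works globally: it forms the $\mathbf{G}_{m,\kk}$-torsor $\hat Q := Q\times_{\PP_{\kk}(S^{(\leqslant d)}V)}\bigl(\affinespaceof{S^{(\leqslant d)}V}\setminus\set{0}\bigr)$, on which a tautological lift $\hat F$ of $F$ exists, observes that $\ann[\hat F]$ is $\mathbf{G}_{m,\kk}$-invariant by Corollary~\ref{cor_apolar_of_multiplied_F_is_the_same}, descends it to $Q$, and then matches the descended sheaf against the local $\ann[\hat{F}_U]$ on opens admitting lifts via Lemma~\ref{lem_good_pullback_for_G_m_action}. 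You instead cover $Q$ by opens carrying lifts and glue the local sheaves $\ann[\hat{F}_U]$ directly, using Corollary~\ref{cor_apolar_of_multiplied_F_is_the_same} for independence of the choice of lift on overlaps, and Proposition~\ref{prop_continuity_of_annihilator_for_open_subsets} (through the already-established quasi-coherence of $\ann$ for targets $\affinespaceof{S^{(\leqslant d)}V}$) for compatibility with restriction. Your version is slightly more economical in its inputs: Lemma~\ref{lem_good_pullback_for_G_m_action} is not needed at all. The torsor descent is more intrinsic --- it never chooses a trivialization or a cover and so avoids any cocycle check --- which is presumably why the paper prefers it, but you correctly anticipate it in your final paragraph. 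Both proofs hinge on the same core fact, namely the invariance of $\ann$ under multiplication by a unit.
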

\begin{prf}
 Define $\hat{Q} := Q\times_{\PP_{\kk}\left(S^{(\leqslant d)}V\right)} \left(\affinespaceof{S^{(\leqslant d)}V} \setminus\set{0}\right)$, which is a $\mathbf{G}_{m, \kk}$-bundle over $Q$.
 Then, for $\hat{F}\colon \hat{Q} \to \affinespaceof{S^{(\leqslant d)}V}$
   the ideal sheaf $\ann[\hat{F}]$ is $\mathbf{G}_{m, \kk}$-invariant
   by Corollary~\ref{cor_apolar_of_multiplied_F_is_the_same}.
 Therefore $\ann[\hat{F}]$ descents to an ideal sheaf over the quotient $Q$. This descent is $\ann$ which we are looking for.

 If $U\subset Q$ lifts to the affine space then it also lifts to $\hat{Q}$ and the restriction of the $\mathbf{G}_{m, \kk}$-bundle $\hat{Q}\to Q$ to $U$ is trivial.
 $\ann|_U$ agrees with $\ann[\hat{F}_U]$ by Lemma~\ref{lem_good_pullback_for_G_m_action}.
 Uniqueness of $\ann$ follows from
 Corollary~\ref {cor_apolar_of_multiplied_F_is_the_same}
 as locally any two liftings $U \to \affinespaceof{S^{(\leqslant d)}V}$ will lead to the same ideal.
\end{prf}

\begin{lemma}
    \label{lem_no_constants_in_the_annihilator}
    Suppose $Q=\Spec A$ is a local scheme and $F\colon Q\to \PP_{\kk}(S^{(\leqslant d)}V)$ is a morphism.
    Then $\ann_0=0$.
\end{lemma}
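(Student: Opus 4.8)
The statement to prove is that if $Q = \Spec A$ is a local scheme and $F\colon Q\to \PP_{\kk}(S^{(\leqslant d)}V)$ is a morphism, then $\ann_0 = 0$, i.e. the degree-$0$ part of the annihilator ideal sheaf vanishes. The plan is to work with a local lift and extract a contradiction from the definition of the apolarity action in degree $0$.

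First I would pass to a local lift. Since $Q = \Spec A$ with $A$ local, by Proposition~\ref{prop_continuity_of_Ann_for_flat_case} (or directly by Lemma~\ref{lem_lifting_finite_schemes_from_projective_spaces} when $Q$ is finite local, but the argument below only needs an affine lift, which exists for local $Q$ because the image lands in some standard affine chart $\{\alpha \ne 0\}$ of $\PP_{\kk}(S^{(\leqslant d)}V)$) there is a morphism $\hat{F}\colon Q \to \affinespaceof{S^{(\leqslant d)}V}$ with $\pi\circ\hat{F} = F$ and $\ann = \ann[\hat{F}]$. Write $F_{\otimes} = \hat{F}_{\otimes} \in A\otimes_{\kk} S^{(\leqslant d)}V$ for the corresponding tensor, and decompose $F_{\otimes} = \sum_{j=0}^{d} F_j$ with $F_j \in A\otimes_{\kk} S^{(j)}V$. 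Because $0$ is not in the image of $\hat{F}$, the tensor $F_{\otimes}$ is not identically zero; in fact, more is true: since $\hat{F}$ factors through the affine chart $\{\alpha \ne 0\}$ for some $\alpha \in (S^{(\leqslant d)}V)^*$, the value $\alpha(F_{\otimes})\in A$ is a unit, so $F_{\otimes}$ generates a rank-one free summand — in particular $F_{\otimes} \ne 0$ even after reduction modulo the maximal ideal $\gotm$.

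Next I would unwind the degree-$0$ apolarity action. An element $\Theta \in \ann_0$ is just an element $a\in A = \ccO_Q(Q)\otimes_{\kk} S^0 V^*$ lying in $\ann$; and $a \hook F_{\otimes}$ is, by the bilinearity and the grading property of $\hook$, simply scalar multiplication: $a\hook F_{\otimes} = a\cdot F_{\otimes}$, since a degree-$0$ element acts on $S^{(j)}V$ as multiplication by the corresponding scalar. So $a\in \ann_0$ means $a\cdot F_{\otimes} = 0$ in $A\otimes_{\kk} S^{(\leqslant d)}V$. Now I invoke that $\alpha(F_{\otimes})$ is a unit in $A$: applying $\id_A\otimes\alpha$ to $a\cdot F_{\otimes} = 0$ gives $a\cdot \alpha(F_{\otimes}) = 0$, and since $\alpha(F_{\otimes})$ is invertible we conclude $a = 0$. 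Hence $\ann_0 = 0$.

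The only subtle point — and the step I'd expect to need a little care — is justifying that some linear functional $\alpha$ takes a unit value on $F_{\otimes}$, i.e. that the lift $\hat{F}$ really factors through a standard affine chart; this is exactly the content of the argument in Lemma~\ref{lem_lifting_finite_schemes_from_projective_spaces} and the surrounding discussion, using that $Q$ is local (so $F(Q)$ lands in a single chart). Once that is in place, everything else is a one-line computation. I should also double-check that the degree-$0$ piece of $\ann$ as a sheaf is computed on the single affine open $Q$ itself, which is immediate from the definition of $\ann$ as the quasi-coherent ideal sheaf with $\ann(Q) = \ann[\hat F]$. So the whole proof is short: lift, observe degree-$0$ action is scalar multiplication, and use the unit coefficient.
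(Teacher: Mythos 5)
Your proof is correct and takes essentially the same route as the paper: lift to the affine cone, observe that degree-$0$ apolarity is scalar multiplication so $a\hook F_{\otimes}=aF_{\otimes}$, and use that some coordinate of $F_{\otimes}$ is a unit (equivalently, $F_{\otimes}\not\equiv 0\bmod\gotm$, which the paper extracts from Proposition~\ref{prop_residue_and_derivative_of_tensor_corresponding_to_scheme} and Lemma~\ref{lem_lifting_finite_schemes_from_projective_spaces}) to cancel and get $a=0$. Your version just makes the cancellation explicit by evaluating a functional $\alpha$ with $\alpha(F_{\otimes})$ a unit, which is a fine way to phrase the same step.
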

\begin{prf}
    Suppose $a\in A$ and $a\in \ann$, so that
    $a \hook F_{\otimes}= aF_{\otimes} =0$.
    But $F_\otimes \modulo \gotm \neq 0$ by Proposition~\ref{prop_residue_and_derivative_of_tensor_corresponding_to_scheme}
    and Lemma~\ref{lem_lifting_finite_schemes_from_projective_spaces}.
    This is only possible if $a=0$.
\end{prf}

\subsection{Equivariant behaviour of apolarity and a graded case}

Suppose $\psi\colon \Sym V^* \to \Sym V^*$ is a $\kk$-algebra
 automorphism,
 and denote by $\overline{\psi}$ the $\kk$-linear dual automorphism:
\[
\overline{\psi}\colon \prod_{i=0}^{\infty}S^{(i)} V \to \prod_{i=0}^{\infty}S^{(i)} V,
\text{ with }
  \overline{\psi}(F)(\Theta) =  F( \psi^{-1}(\Theta)),
\]
taking into account that
$\prod_{i=0}^{\infty}S^{(i)} V = \Hom_{\kk}(\Sym V^*, \kk)$.
Since $\psi$ preserves the algebra structure of $\Sym V^*$, the automorphism $\overline{\psi}$ preserves the $\Sym V^*$-module structure on $\prod_{i=0}^{\infty}S^{(i)} V$ given by $\hook$:
\[
   \overline{\psi}(\Theta \hook F) = (\psi(\Theta) \hook \overline{\psi}(F)).
\]
Assume in addition that $\psi$ preserves the maximal ideal $\gotn=\bigoplus_{i=1}^{\infty} S^i V^*$, so that $\psi(\gotn^i) = \gotn^i$ for all $i$.
Since $(\gotn^i)^{\perp} = S^{(\le i-1)} V$, for such $\psi$ the dual map $\overline{\psi}$ preserves the space of polynomials $\bigoplus_{i=0}^{\infty}S^{(i)} V$. By a slight abuse of notation, we will denote the restricted automorphism of $\bigoplus_{i=0}^{\infty}S^{(i)} V$ also by $\overline{\psi}$.

We also want to keep exploiting Notation~\ref{not_skip_id}. Note that so far we were using it with respect to tensorising on the second factor, skipping for instance $\otimes \id_{\Sym V^*}$ or $\otimes \id_{S^{(\leqslant d)} V}$. Below we will use it also to skip the identity on the first factor, $\id_{\ccO_Q}
\otimes $ or $\id_{A}
\otimes $, by considering $\psi(\ann)$ or $\overline{\psi}(F_{\otimes})$ and similar.

\begin{lemma}[Equivariance of annihilator]
   Suppose $\psi$ is a $\kk$-algebra automorphism of $\Sym V^*$ preserving $\gotn$.
   Let $Q$ be a scheme and $F\colon Q\to \affinespaceof{S^{(\leqslant d)}V}$ a morphism of schemes.
   Then, $\ann[\overline{\psi}\circ F] = \psi(\ann)$.
\end{lemma}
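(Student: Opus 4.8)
The plan is to reduce immediately to the affine case and then unwind the definition of the annihilator, using the pointwise equivariance identity $\overline{\psi}(\Theta \hook G) = \psi(\Theta) \hook \overline{\psi}(G)$ recorded just above the lemma. First I would note that both $\ann[\overline{\psi}\circ F]$ and $\psi(\ann)$ are quasi-coherent sheaves of ideals in $\ccO_Q \otimes_{\kk} \Sym V^*$ --- the latter because $\psi$ acts locally as the $\kk$-algebra automorphism $\id_{\ccO_Q(U)} \otimes \psi$ of $\ccO_Q(U) \otimes_{\kk} \Sym V^*$ --- and that on an affine open $U = \Spec A$ their values are $\ann[\overline{\psi}\circ F|_U]$ and $\psi(\ann[F|_U])$. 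So it suffices to prove $\ann[\overline{\psi}\circ F] = \psi(\ann)$ when $Q = \Spec A$ is affine.

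Next I would identify the corresponding tensor of $\overline{\psi}\circ F$. Since $\psi$ preserves $\gotn$ it preserves each power $\gotn^i$, so $\overline{\psi}$ preserves $(\gotn^{d+1})^{\perp} = S^{(\leqslant d)}V$ and restricts to a $\kk$-linear automorphism of this finite-dimensional space, hence induces a linear automorphism of $\affinespaceof{S^{(\leqslant d)}V}$. Post-composing $F$ with a linear automorphism of the target space transforms the corresponding tensor by $\id_A \otimes \overline{\psi}$, so $(\overline{\psi}\circ F)_{\otimes} = \overline{\psi}(F_{\otimes})$ in the sense of Notation~\ref{not_skip_id}. I would then extend the equivariance identity from the constant-coefficient setting to coefficients in $A$: both sides of $\overline{\psi}(\Theta \hook G) = \psi(\Theta) \hook \overline{\psi}(G)$ are $A$-bilinear in $(\Theta, G)$ and $\psi$, $\overline{\psi}$ are $A$-linear, so the identity for $\Theta \in A \otimes_{\kk} \Sym V^*$ and $G \in A \otimes_{\kk} S^{(\leqslant d)}V$ follows by checking it on pure tensors and extending by bilinearity.

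With these in hand the conclusion is a short computation. For $\Theta \in A \otimes_{\kk} \Sym V^*$, writing $\Theta = \psi(\Theta')$ with $\Theta' = \psi^{-1}(\Theta)$, I get $\Theta \hook (\overline{\psi}\circ F)_{\otimes} = \psi(\Theta') \hook \overline{\psi}(F_{\otimes}) = \overline{\psi}(\Theta' \hook F_{\otimes})$; since $\Theta' \hook F_{\otimes}$ lies in $A \otimes_{\kk} S^{(\leqslant d)}V$ (apolarity with a form of degree greater than $d$ annihilates $S^{(\leqslant d)}V$) and $\overline{\psi}$ acts injectively there, this vanishes if and only if $\Theta' \hook F_{\otimes} = 0$, i.e. $\Theta' \in \ann$. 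Hence $\Theta \in \ann[\overline{\psi}\circ F]$ if and only if $\Theta \in \psi(\ann)$, which is the claim. I do not expect a genuine obstacle: the only points that need slight care are the bookkeeping that the corresponding tensor transforms by $\overline{\psi}$ under post-composition with $\overline{\psi}$, and the passage of the equivariance identity from the already-established constant-coefficient case to coefficients in $A$ --- both being routine consequences of $A$-bilinearity of $\hook$ together with Notation~\ref{not_skip_id}.
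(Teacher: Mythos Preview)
Your proposal is correct and follows essentially the same approach as the paper's proof: reduce to the affine case, identify $(\overline{\psi}\circ F)_{\otimes} = \overline{\psi}(F_{\otimes})$, and use the equivariance identity $\overline{\psi}(\Theta' \hook F_{\otimes}) = \psi(\Theta') \hook \overline{\psi}(F_{\otimes})$ to obtain the chain of equivalences $\Theta \in \ann[\overline{\psi}\circ F] \iff \psi^{-1}(\Theta) \in \ann \iff \Theta \in \psi(\ann)$. You are simply more explicit than the paper about the bookkeeping (the passage of the equivariance identity to $A$-coefficients and the identification of the corresponding tensor), but the argument is the same.
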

\begin{proof}
  Without loss of generality, suppose $Q = \Spec A$ is affine.
  Assume $\Theta \in A\otimes_{\kk} \Sym V^*$. Then, 
  \begin{align*}
     \Theta \in\ann[\overline{\psi}\circ F]
     \iff & \Theta \hook (\overline{\psi}\circ F)_{\otimes} =0
     \iff \overline{\psi}(\psi^{-1}(\Theta) \hook F_{\otimes})  =0 \iff \\
     \psi^{-1}(\Theta) \hook F_{\otimes}  =0
     \iff &\psi^{-1}(\Theta) \in\ann
     \iff \Theta \in\psi(\ann).
  \end{align*}
\end{proof}

  Suppose now that $\Sym V^*$ is a positively graded (in the sense of \cite[p.~726]{haiman_sturmfels_multigraded_Hilb}) by a finitely generated abelian group $H$, and let $H^* = \Hom(H, \mathbf{G}_{m, \kk})$.
   Then the action of $H^*$ on $\Sym V^*$ preserves the maximal ideal $\gotn$ and $H^*$ acts on $\affinespaceof{S^{(\leqslant d)}V}$ for each $d$.
\begin{lemma}[Homogeneity of annihilator of a homogeneous $F$]
\label{lem_homogeneity_of_annihilator}
   If $W\subset S^{(\le d)} V$ is a weight space of the action of $H^*$, and $F\colon Q \to \affinespaceof{W}$ is a morphism of schemes,
   then $\Ann(F)$ is an $H$-homogeneous ideal sheaf in $\ccO_Q\otimes_{\kk} \Sym V^*$, where the grading of $\ccO_Q$ is trivial.
\end{lemma}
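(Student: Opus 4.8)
The plan is to reduce the statement to the affine case (which is legitimate since being $H$-homogeneous is a local condition on $Q$ and $\ann$ is defined via affine charts), so assume $Q=\Spec A$ and $F\colon Q\to \affinespaceof{W}$ with $W$ a single weight space. Fix a character $\lambda \in H$ so that $W = (\Sym V^*)$-... more precisely, recall that $H^*$ acts on $S^{(\le d)}V$ with $W$ sitting in a single weight, say of weight $-\mu$ for some $\mu\in H$ (the sign convention coming from the duality between $\Sym V^*$ and $\prod S^{(i)}V$). Then for $t\in H^*$, the induced action on $\affinespaceof{W}$ is scaling by $t(\mu)$, so $\overline{t}\circ F = t(\mu)\cdot F$ as morphisms $Q\to \affinespaceof{W}$, where $t(\mu)\cdot F$ has corresponding tensor $t(\mu)\cdot F_\otimes$ (a scalar in $\kk^\times$, hence in particular an invertible element after tensoring up). Here I am using that $F_\otimes \in A\otimes_\kk W$ and the $H^*$-action on $W$ is by the single scalar $t(\mu)$.

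Next I would combine two already-proved facts. By the Equivariance of annihilator lemma, for the automorphism $\psi = t$ of $\Sym V^*$ (which preserves $\gotn$ since the grading is positive), we have $\ann[\overline{t}\circ F] = t(\ann)$, where $t$ acts on $A\otimes_\kk \Sym V^*$ via $\id_A\otimes t$. On the other hand, $\overline{t}\circ F = t(\mu)\cdot F$, and by the Invariance of annihilator under rescalings (Lemma~\ref{lem_invariance_of_ann_under_rescalings}), since $t(\mu)\in\kk^\times\subset A^\times$ is invertible, $\ann[t(\mu)\cdot F] = \ann[F]$. Therefore $t(\ann) = \ann$ as a sheaf of ideals in $\ccO_Q\otimes_\kk \Sym V^*$, for every $t\in H^*$; that is, $\ann$ is stable under the $H^*$-action on $\ccO_Q\otimes_\kk\Sym V^*$ in which $\ccO_Q$ sits in weight $0$.

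Finally I would translate this $H^*$-stability into $H$-homogeneity. Since $\Sym V^*$ is positively graded by a finitely generated abelian group $H$ in the sense of Haiman--Sturmfels, the $H$-graded decomposition of $\ccO_Q\otimes_\kk\Sym V^*$ (with $\ccO_Q$ in weight $0$) is precisely the weight-space decomposition for the $H^* = \Hom(H,\mathbf{G}_{m,\kk})$-action, and an $A$-submodule (here, ideal sheaf) is $H$-graded if and only if it is $H^*$-stable. Since we have just shown $\ann$ is $H^*$-stable, it is $H$-homogeneous, as claimed. The main obstacle I anticipate is purely bookkeeping: pinning down the exact weight of $W$ and the precise sign/duality convention so that the scalar $\overline{t}$ acts by on $\affinespaceof{W}$ is the constant claimed, and making sure the equivalence ``$H^*$-stable $\iff$ $H$-graded'' is invoked with the correct hypothesis (this requires, in the Haiman--Sturmfels positive grading setting, that each graded piece be a finitely generated projective — here free — $\ccO_Q$-module, which holds because $\Sym V^*$ has finite-dimensional graded pieces over $\kk$). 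No deeper difficulty is expected, since both ingredient lemmas are already available.
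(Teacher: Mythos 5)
Your route differs from the one the paper most plausibly intends (it supplies no proof, signaling a one-line check). Since $W$ is a weight space, i.e.\ a single $H$-graded piece, the tensor $F_{\otimes}$ lies in a single $H$-degree $D$ of $\ccO_Q\otimes_\kk\prod_i S^{(i)}V$ (with $\ccO_Q$ in degree $0$), and the pairing $\hook$ shifts degrees: $\deg(\Theta\hook G)=\deg G-\deg\Theta$. Decomposing $\Theta=\sum_{h\in H}\Theta_h$ then places each $\Theta_h\hook F_{\otimes}$ in the distinct degree $D-h$, so $\Theta\hook F_{\otimes}=0$ forces $\Theta_h\hook F_{\otimes}=0$ for every $h$, hence each $\Theta_h\in\ann$. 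This uses no group action at all and is valid unconditionally.

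Your argument via the Equivariance lemma is an appealing reuse of earlier machinery and does correctly show that $\ann$ is stable under every $t\in H^*(\kk)=\Hom(H,\kk^\times)$. However, the final step, that $H^*$-stability implies $H$-homogeneity, has a genuine gap: it holds only when the $\kk$-points of $H^*$ separate the $H$-degrees occurring in $\Sym V^*$, and this can fail. The paper allows $\cchar\kk=p>0$, and the Haiman--Sturmfels positivity assumption does not exclude $p$-torsion in $H$. For instance, over $\kk$ of characteristic $p$ with $H=\ZZ\oplus\ZZ/p\ZZ$, $\deg\alpha_0=(1,0)$, $\deg\alpha_1=(1,1)$, the grading is positive, yet $H^*(\kk)\simeq\kk^\times$ acts only through the first coordinate, so $\alpha_0$ and $\alpha_1$ lie in distinct $H$-degrees but share the same $H^*(\kk)$-weight; thus the ideal $(\alpha_0-\alpha_1)$ is $H^*(\kk)$-stable without being $H$-homogeneous. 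Stability under the full group scheme $H^*$ (equivalently, under the $\kk[H]$-comodule structure) would suffice, but your proof only establishes stability under $\kk$-points, so either that stronger statement needs to be proved or, more simply, the degree-shift computation above should replace this last step.
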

\noprf

From now on we will frequently consider the \emph{apolar algebra} of a morphism $F\colon Q\to \affinespaceof{S^{(\le d)} V}$:
let $\apolar:= \ccO_Q \otimes_{\kk} \Sym V^*/\ann$. 
Thus, $\apolar$ is a sheaf of $\ccO_Q$-algebras,
 which are all finite modules over $\ccO_Q$.
In case $Q=\Spec A$ is affine, we will simply consider $\apolar$ to be the algerba of global sections.

In particular, the special case $Q=\Spec \kk$ is vastly and classically explored in the literature, as $\apolar$ is a finite (over $\kk$) local Gorenstein algebra and all such algebras arise in this way, see for instance \cite[\S2.1, 2.3, 3.3--3.5]{jelisiejew_PhD} (and references therein) for a review of properties of these algebras.
Perhaps the most widely used and known property is that for homogeneous $F$ the apolar algebra is self dual, with a shift in gradings.
We show that an analogous duality holds more generally, whenever the base $Q$ is itself a finite local Gorenstein scheme.

\begin{lemma}[Duality]\label{lem_duality_of_apolar_algebra}
  Suppose $W\subset S^{(\le d)} V$ is a weight space of the action of $H^*$ of degree $D \in H$.
  Assume $Q=\Spec A$ is a finite local Gorenstein scheme over $\kk$,
  and $F\colon Q \to \affinespaceof{W}$ is a morphism of schemes.
  Then for all degrees $I \in H$ the following $A$-modules are isomorphic:
  \[
     \apolar_{I} = \Hom_A\left(\apolar_{D-I}, A\right).
  \]
\end{lemma}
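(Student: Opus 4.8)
The plan is to reduce the relative duality statement to the classical (over $\kk$) homogeneous duality for apolar algebras, combined with the Gorenstein duality for submodules of free modules established in \S\ref{sec_Gorenstein_duality}, in particular Proposition~\ref{prop_perp_of_perp_for_Gorenstein}. First I would fix a lift $\hat{F}\colon Q\to \affinespaceof{W}$ (available since $Q$ is local, using Lemma~\ref{lem_lifting_finite_schemes_from_projective_spaces} only if we start from a projective target; here the target is already affine) and work with the tensor $F_{\otimes}\in A\otimes_{\kk} W$. The key observation is that, by definition, $\ann_I = \set{\Theta\in A\otimes_{\kk} S^I V^* \mid \Theta\hook F_{\otimes}=0}$, and the apolarity pairing restricted to the relevant graded pieces gives an $A$-bilinear map
\[
  A\otimes_{\kk} S^{I}V^* \times A\otimes_{\kk} S^{(D-I)}V \longrightarrow A\otimes_{\kk} S^{(D-D)}V \subset A,
\]
wait --- more precisely $\hook$ lands in $A\otimes_{\kk}S^{(D-I-?)}V$; the correct bookkeeping is that $\Theta\hook F_{\otimes}\in A\otimes_{\kk}S^{(D-I)}V$ for $\Theta$ of degree $I$, and then one pairs once more with $S^{I}V^*$-coefficients, or alternatively one views the catalecticant map $\Upsilon$. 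So the cleaner route: use the catalecticant reformulation from \S\ref{sec_apolarity_action}, namely that $\Upsilon^{I,D-I}|_Q\colon A\otimes_{\kk}S^I V^* \to A\otimes_{\kk}S^{(D-I)}V$ is (up to a unit) the map $\Theta\mapsto \Theta\hook F_{\otimes}$, and hence $\ann_I = \Ker(\Upsilon^{I,D-I}|_Q)$ and $\apolar_I = \im(\Upsilon^{I,D-I}|_Q) \subset A\otimes_{\kk}S^{(D-I)}V$ as $A$-modules (the image, by the first isomorphism theorem, since $\apolar_I = (A\otimes_{\kk}S^I V^*)/\ann_I$).

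Next I would identify $\apolar_I$, as a submodule of the free module $\freemodule{(S^{(D-I)}V)} := A\otimes_{\kk}S^{(D-I)}V$, with a perpendicular module. The symmetry $\Upsilon^{I,D-I}=(\Upsilon^{D-I,I})^T$ (noted in the introduction) says the two catalecticant maps are $A$-linear transposes of each other under the pairing $\ev\colon \freemodule{(S^{(D-I)}V)^*}\times \freemodule{(S^{(D-I)}V)}\to A$, once we use the canonical identification $(S^{(D-I)}V)^* \simeq S^{D-I}V^*$. Concretely, $\im(\Upsilon^{I,D-I}|_Q) = (\Ker \Upsilon^{D-I,I}|_Q)^{\perp} = (\ann_{D-I})^{\perp}$, because the image of a matrix is the perpendicular of the kernel of its transpose — and this equality of submodules of $\freemodule{(S^{(D-I)}V)}$ is exactly where Proposition~\ref{prop_perp_of_perp_for_Gorenstein} is needed: over a general $A$ the inclusion $\im \subset (\Ker^T)^{\perp}$ can be strict, but Proposition~\ref{prop_perp_of_perp_for_Gorenstein}\ref{item_perp_of_perp_for_Gorenstein} gives $M^{\perp\perp}=M$ for every submodule $M$ of a free module over a finite local Gorenstein $A$, which upgrades the naive inclusion to equality. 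Thus $\apolar_I \simeq (\ann_{D-I})^{\perp}$.

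Finally I would invoke Proposition~\ref{prop_perp_of_perp_for_Gorenstein}\ref{item_dual_of_submodule_versus_perp}: for a submodule $M\subset \freemodule{W'}$ over finite local Gorenstein $A$ one has $\Hom_A(M,A) \simeq \freemodule{(W')^*}/M^{\perp}$. Applying this with $M = \ann_{D-I} \subset \freemodule{S^{(D-I)}V^{\,*}}$ (regarding the degree-$(D-I)$ piece of $\Sym V^*$ as $A\otimes_\kk$ a $\kk$-vector space, whose $\kk$-dual is $S^{(D-I)}V$), we get $\Hom_A(\ann_{D-I}, A) \simeq \freemodule{(S^{(D-I)}V)}/(\ann_{D-I})^{\perp}$. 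But $\freemodule{(S^{(D-I)}V)}/(\ann_{D-I})^{\perp} \simeq \apolar_{I}$ by the previous paragraph, and symmetrically $\apolar_{D-I} \simeq \freemodule{(S^{(I)}V)}/(\ann_{I})^{\perp}$ together with $\Hom_A(\ann_I,A)\simeq \freemodule{(S^{(D-I)}V)^{*}}/(\ann_I)^{\perp\perp} = \freemodule{S^{(D-I)}V^*}/\ann_I$ — so dualizing once more and using $M^{\perp\perp}=M$ twice yields $\Hom_A(\apolar_{D-I},A) \simeq \apolar_I$. The homogeneity of $\ann$ (Lemma~\ref{lem_homogeneity_of_annihilator}) is what guarantees these are honestly the graded pieces and not merely ambient intersections. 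I expect the main obstacle to be purely bookkeeping: correctly matching up the three dualities in play — the ``weird'' free-module duality $\freemodule{W^*}$ versus $\freemodule{W}$, the Gorenstein duality $A\simeq\Hom_\kk(A,\kk)$, and the canonical pairing between $S^I V^*$ and $S^{(I)}V$ — and checking that the unit $u\in A$ relating $\Upsilon|_Q$ to $\hook F_\otimes$ (and the choice of socle generator / the isomorphism $A\simeq\Hom_\kk(A,\kk)$) does not disturb the module isomorphism, which it does not since it is an automorphism of $A$-modules. No genuinely new ideas beyond \S\ref{sec_Gorenstein_duality} should be required; the finite local Gorenstein hypothesis on $A$ enters exactly through Proposition~\ref{prop_perp_of_perp_for_Gorenstein}, and Example~\ref{ex_nonGorenstein_fails_double_perpendicularity} and Example~\ref{ex_infinite_fails_double_perpendicularity} show why it cannot be dropped.
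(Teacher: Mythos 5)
Your approach is the same as the paper's in essence: both proofs reduce the claim to the identity
\(\apolar_I \cong \ann_{D-I}^{\perp}\) as \(A\)-submodules of \(\freemodule{S^{(D-I)}V}\), and both need Proposition~\ref{prop_perp_of_perp_for_Gorenstein}\ref{item_perp_of_perp_for_Gorenstein} exactly once, to pass from the automatic inclusion
\(\im\Upsilon^{I,D-I}|_Q\subset(\Ker\Upsilon^{D-I,I}|_Q)^{\perp}\)
(equivalently, the paper's two inclusions~\eqref{equ_apolar_inclusions_and_perps}) to an equality. Your catalecticant/transpose phrasing is a reasonable repackaging of the paper's ``two inclusions'' plus \(\perp\perp\); up to that translation the two arguments coincide, and your first two paragraphs are correct.

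There is, however, a genuine bookkeeping error in your final paragraph that breaks the stated chain of isomorphisms. You write ``\(\freemodule{(S^{(D-I)}V)}/(\ann_{D-I})^{\perp}\simeq\apolar_I\) by the previous paragraph,'' but your previous paragraph established
\(\apolar_I\simeq(\ann_{D-I})^{\perp}\),
which is the \emph{submodule}, not the quotient. These two modules have different \(\kk\)-dimensions in general (already for \(A=\kk\), \(F=x^{(2)}\), \(I=0\)), so the identification does not hold. The same submodule-versus-quotient confusion also makes the line ``\(\Hom_A(\ann_I,A)\simeq\freemodule{(S^{(D-I)}V)^*}/(\ann_I)^{\perp\perp}\)'' incorrect: \(\ann_I\) lives in \(\freemodule{S^I V^*}\), so Proposition~\ref{prop_perp_of_perp_for_Gorenstein}\ref{item_dual_of_submodule_versus_perp} produces a quotient of \(\freemodule{S^{(I)}V}\), not of \(\freemodule{S^{D-I}V^*}\) (unless \(I=D-I\)). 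In short, the object you compute via Proposition~\ref{prop_perp_of_perp_for_Gorenstein}\ref{item_dual_of_submodule_versus_perp} is \(\Hom_A(\ann_{D-I},A)\), which is not what the lemma asks for.

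The fix is shorter than what you attempted and requires no further use of Proposition~\ref{prop_perp_of_perp_for_Gorenstein}. Once you have \(\apolar_I\simeq(\ann_{D-I})^{\perp}\), simply observe that
\[
  \Hom_A\!\left(\apolar_{D-I},A\right)
  =\Hom_A\!\left(\freemodule{S^{D-I}V^*}\big/\ann_{D-I},\,A\right)
  =\set{\phi\in\Hom_A(\freemodule{S^{D-I}V^*},A)\mid \phi(\ann_{D-I})=0}
  =\ann_{D-I}^{\perp},
\]
because an \(A\)-linear map out of a quotient of a free module is exactly an \(A\)-linear map on the free module vanishing on the kernel, and \(\Hom_A(\freemodule{S^{D-I}V^*},A)=\freemodule{S^{(D-I)}V}\) by the paper's identification of the ``weird'' duality. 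This last step is tautological and needs no Gorenstein hypothesis; the Gorenstein input is already fully spent in establishing \(\apolar_I\simeq\ann_{D-I}^{\perp}\). Combining gives \(\apolar_I\simeq\Hom_A(\apolar_{D-I},A)\), which is the lemma.
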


\begin{proof}
      Consider the map $A\otimes_{\kk} \Sym V^* \to A\otimes_{\kk} S^{(\le d)} V$ given by $\Theta \mapsto \Theta \hook F_{\otimes}$.
      Its kernel is $\ann$, so that
      $A\otimes_{\kk} \Sym V^* \hook F_{\otimes}= A\otimes_{\kk} \Sym V^* / \ann
      = \apolar$ (as $A$-modules).
      Moreover, the map reverses the grading:
      \begin{equation}
         \label{equ_grading_reversed}
       \apolar_I
       = \left(A\otimes_{\kk} \Sym V^* \hook F_{\otimes} \right)_{D-I}
      \end{equation}
      Next, we verify that the following containments of $A$-submodules  (respectively, of $A\otimes_{\kk}(S^{\leqslant d} V)_{D-I}$ and of $A\otimes_{\kk}(\Sym V^*)_{D-I}$) hold:
      \begin{equation}\label{equ_apolar_inclusions_and_perps}
      \begin{aligned}
         \left(A\otimes_{\kk} \Sym V^* \hook F_{\otimes} \right)_{D-I} & \subset \ann_{D-I}^{\perp} \text{ and}
         \\
         \left(A\otimes_{\kk} \Sym V^* \hook F_{\otimes} \right)_{D-I}^{\perp} & \subset \ann_{D-I}.
      \end{aligned}
      \end{equation}
      Indeed, suppose
         $G \in \left(A\otimes_{\kk} \Sym V^* \hook F_{\otimes} \right)_{D-I}$,
         that is $G = \Theta \hook F_{\otimes}$ for some homogeneous form
         $\Theta \in \left(A\otimes_{\kk} \Sym V^*  \right)_{I}$.
      Then, for any $\Psi\in \ann_{D-I}$,
      \[
        \Psi(G) = \Psi\hook G =\Psi\hook (\Theta \hook F_{\otimes}) = (\Psi\cdot \Theta) \hook F_{\otimes} = \Theta \hook(\Psi \hook F_{\otimes})=0,
      \]
      so $G\in \ann_{D-I}^{\perp}$ as claimed in the first inclusion of \eqref{equ_apolar_inclusions_and_perps}.

      Now let $\Theta\in\left(A\otimes_{\kk}
        \Sym V^* \hook F_{\otimes} \right)_{D-I}^{\perp}$.
      Then for all $\Psi\in \left(A\otimes_{\kk} \Sym V^*  \right)_{I} $
        we have:
      \[
         0= \Theta(\Psi\hook F_{\otimes})
          = \Psi(\Theta \hook F_{\otimes}),
      \]
      so that $\Theta \hook F_{\otimes} 
      \in \left(A\otimes_{\kk} \Sym V^*  \right)_{I}^{\perp}$.
      Since $\Theta \hook F_{\otimes}$ is in the weight space of degree $I$,
        it follows that $\Theta \hook F_{\otimes}=0$, proving the second inclusion of \eqref{equ_apolar_inclusions_and_perps}.

      Now we combine the first inclusion of
       \eqref{equ_apolar_inclusions_and_perps}
       with the the perpendicular parts of the second one.
      By \eqref{equ_double_perp_and_reversing_inclussion} we obtain:
      \[
         \left(A\otimes_{\kk} \Sym V^* \hook F_{\otimes} \right)_{D-I}  \subset \ann_{D-I}^{\perp}
         \subset \left(A\otimes_{\kk} \Sym V^* \hook F_{\otimes} \right)_{D-I}^{\perp\perp}.
      \]

     To conclude, we must use the hypothesis
       on $Q$ or, more precisely, on $A$, that they finite local Gorenstein.
     We have to use
       Proposition~\ref{prop_perp_of_perp_for_Gorenstein}.
     The two containments \eqref{equ_apolar_inclusions_and_perps} imply:
      \begin{align*}
         \ann_{D-I}^{\perp} & \stackrel{\text{\ref{prop_perp_of_perp_for_Gorenstein}\ref{item_perp_of_perp_for_Gorenstein}}}{=} \left(A\otimes_{\kk} \Sym V^* \hook F_{\otimes} \right)_{D-I} \stackrel{\text{\eqref{equ_grading_reversed}}}{=} \apolar_{I}.
      \end{align*}
      To conclude, apply Proposition~\ref{prop_perp_of_perp_for_Gorenstein} to $M=\ann_{D-I}^{\perp}$:
      \begin{align*}
        \apolar_{I} &= \ann_{D-I}^{\perp}
        \stackrel{\text{\ref{prop_perp_of_perp_for_Gorenstein}\ref{item_dual_of_submodule_versus_perp}}}{=} \Hom\left(\bigl(A \otimes_{\kk} \Sym V^* / \ann^{\perp\perp}\bigr)_{D-I}, A\right)
        \\
        &\stackrel{\text{\ref{prop_perp_of_perp_for_Gorenstein}\ref{item_perp_of_perp_for_Gorenstein}}}{=} \Hom\left(\bigl(A \otimes_{\kk} \Sym V^* / \ann\bigr)_{D-I}, A\right)
        = \Hom\left(\apolar_{D-I}, A\right).
      \end{align*}
\end{proof}

The proof presented above is perhaps not the most direct one.
Instead, one could exploit from the very beginning the presentation of $A$ as a quotient of a polynomial ring by the annihilator of a polynomial in a dual ring. Then present $\apolar$ in the same way, but using polynomial rings with more variables, and with a grading in which some variables have degree $0$.
Then the statement would follow from the classical Gorenstein duality.
Our proof has the advantage that a large part of it holds for any base algebra.
The finite Gorenstein assumption is only required for the implication of Proposition~\ref{prop_perp_of_perp_for_Gorenstein}.
Even though the following example shows this assumption is necessary to get the conclusion as in the lemma,
we believe that the partial results
(such as inclusions of \eqref{equ_apolar_inclusions_and_perps}) have a potential to be applied in further research.

\begin{example}
   Suppose $A=\kk[s,t]/(s^2, st,t^2)$
     and define $F_{\otimes}= s x^{(d)} + t y^{(d)}\in A\otimes_{\kk} S^{(d)}V$ for a two dimensional $V$ spanned by $x, y$.
   Then $\ann = (s,t, \alpha \beta, \alpha^{d+1}, \beta^{d+1})$.
   In particular, $A\otimes_{\kk} \Sym V^*/\ann$
     in degree $0$ is isomorphic to $\kk$,
     while in degree $d$ is isomorphic to $\kk^{2}$.
   Therefore, unlike in the Gorenstein case,
     the two gradings fail to be dual to one another.
\end{example}

\subsection{Apolarity lemma}

We conlude this section with a proof of relative apolarity lemma.
In the classical case, the apolarity lemma is a foundation for many research projects,
such as \cite{ranestad_schreyer_VSP},
\cite{nisiabu_jabu_border_apolarity},
\cite{ranestad_voisin_VSP_and_divisors_in_the_moduli_of_cubic_fourfolds}, \cite{bernardi_taufer_Waring_tangential_cactus_decompositions}, to mention just a few.
Informally, apolarity lemma appears whenever one needs set-theoretic membership tests for points of secant or cactus varieties, or recover Waring or cactus decomposition.
The ultimate goal of the relative apolarity lemma is to upgrade the set-theoretic tests and equations to scheme-theoretic ones.

In this article, from now on we will only consider cases homogeneous with respect to the standard grading.
That is, we will consider only maps of schemes $F\colon Q \to \affinespaceof{S^{(d)} V}$ (without lower degree terms)
or its projective variant $F\colon Q\to \PP_{\kk}(S^{(d)} V)$.
Some more general versions of classical apolarity lemma have been studied,
 see \cite{galazka_mgr_publ} and references therein,
 and most likely they generalize to the relative setting analogous to
 the setting of this paper, but proving it would require to dive into the setting of toric varieties and their Cox rings, which is outside of the applications we have in mind here.

Throughout this subsection $Q$ is a scheme that has enough of $\kk$-points. Frequently, either in the statements or in the proofs, we will restrict to the local case, so that $Q=\Spec A$ for a local $\kk$-algebra $A$ with maximal ideal $\gotm$ and residue field
$A/\gotm = \kk$.
Recall, that $\nu_d$ denotes the Veronese map.
Similarly to Notation~\ref{not_skip_id},
by a  minor abuse  we will denote also by $\nu_d$ (instead of $\id_Q\times \nu_d$) the Veronese map $\PP_{Q} V
\to  \PP_{Q}(S^{(d)}V)$.

\begin{theorem}[Relative apolarity lemma]
   \label{thm_relative_apolarity_lemma}
   Suppose $F\colon Q \to \PP_{\kk}(S^{(d)}V)$ and
   $\ccR \subset \PP_{Q} V$ is a family of subschemes.
   Let $\overline{F} \subset \PP_{Q}(S^{(d)}V)$
   be the graph of $F$.
   Then
   $\overline{F} \subset \linspan{\nu_d(\ccR)}$ if and only if $\ccI(\ccR) \subset \ann$.
\end{theorem}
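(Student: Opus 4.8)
The plan is to reduce the statement to a local, then pointwise, computation using the machinery already developed, and to match up the two conditions degree by degree. First I would note that both conditions in the statement are local on $Q$: the relative linear span $\rellinspan{\ccR}$ is computed via the ideal sheaf $(\ccI(\ccR)_{\leqslant 1})$, whose formation commutes with restriction to opens, while $\ann$ is a quasicoherent sheaf of ideals by Proposition~\ref{prop_continuity_of_Ann_for_flat_case}, so its vanishing or containment can be checked on an affine cover. Hence I would assume $Q = \Spec A$ with $(A,\gotm,\kk)$ local, and pick a lift $\hat{F}\colon Q \to \affinespaceof{S^{(d)}V}$ with corresponding tensor $F_{\otimes} \in A\otimes_{\kk} S^{(d)}V$, so that $\ann = \set{\Theta \mid \Theta \hook F_{\otimes}=0}$ by definition.

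The second step is to translate the condition ``$\overline{F}\subset \linspan{\nu_d(\ccR)}$''. By Proposition~\ref{prop_basic_properties_of_tensor_of_a_scheme}\ref{item_ideal_of_graph_in_terms_of_tensor}, the ideal of the graph $\overline{F}$ inside $A\otimes_{\kk}\Sym(S^{(d)}V)^*$ consists exactly of those polynomials vanishing at $F_{\otimes}$; restricting to linear forms, the linear part of the ideal of $\overline{F}$ is $\set{\lambda \in A\otimes_{\kk}(S^{(d)}V)^* \mid \lambda(F_{\otimes})=0}$. On the other hand, $\linspan{\nu_d(\ccR)}$ is the family of projective linear subspaces cut out by the degree-$\leqslant 1$ part of the homogeneous ideal of $\nu_d(\ccR)$ in $A\otimes_{\kk}\Sym(S^{(d)}V)^*$. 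Now here is the key identification: a linear form on $\PP_{\kk}(S^{(d)}V)$ is an element of $(S^{(d)}V)^* = S^dV^*$, and a degree-$d$ form $\Theta\in S^dV^*$ vanishes on $\nu_d(\ccR)$ precisely when $\Theta$, viewed as an element of $\ccI(\ccR)$, has $\Theta \in \ccI(\ccR)_d$ — this is the standard fact that the ideal of the Veronese reembedding in degree $1$ corresponds to the degree-$d$ part of the ideal of the original scheme. Moreover, evaluation of such a linear form at $\nu_d(v)$ is $\Theta(v^{(d)})$, and more generally evaluation at the tensor $(\nu_d)_\otimes(\ccR)$ matches the apolar pairing $\Theta \hook F_{\otimes}$ in degree $0$. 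So the containment $\overline{F}\subset\linspan{\nu_d(\ccR)}$ becomes: every $\Theta \in \ccI(\ccR)_d$ satisfies $\Theta \hook F_{\otimes}=0$, i.e.\ $\ccI(\ccR)_d \subset \ann_d$.

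The third step upgrades this degree-$d$ equivalence to an equivalence in all degrees, which is exactly where homogeneity of the annihilator enters. Since $F$ maps into the single weight space $S^{(d)}V$ for the standard $\Gm$-action on $\Sym V^*$, Lemma~\ref{lem_homogeneity_of_annihilator} tells us $\ann$ is a homogeneous ideal sheaf. I would argue that $\ann_e = 0$ for $e > d$ (a form of degree exceeding $d$ annihilates every element of $S^{(d)}V$) and that the containment in degree $d$ forces the containment in all lower degrees as well: if $\Theta \in \ccI(\ccR)_e$ with $e \le d$, then for every $\Phi \in S^{d-e}V^*$ we have $\Phi\Theta \in \ccI(\ccR)_d \subset \ann_d$, hence $\Theta\hook F_{\otimes} \in S^{(d-e)}V$ is killed by all of $S^{d-e}V^*$, forcing $\Theta \hook F_{\otimes}=0$, so $\Theta \in \ann_e$. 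Conversely $\ann \subset \ccI(\nu_d(\ccR))$ in degree $1$ means $\overline{F}$ lies in the linear span. Combined with the local-to-global reduction, this gives the theorem. I expect the main obstacle to be the bookkeeping in step two: carefully verifying that the ``double role'' of $S^dV^*$ (degree-$d$ forms in $\Sym V^*$ versus degree-$1$ forms on $\PP_{\kk}(S^{(d)}V)$) really does identify the apolarity pairing in degree $0$ with evaluation of linear forms at the graph tensor, uniformly in the non-reduced base $A$ — in particular making sure the lift $\hat F$ and the invertible scaling ambiguity it introduces do not affect either side (which follows from Lemma~\ref{lem_invariance_of_ann_under_rescalings} and Corollary~\ref{cor_apolar_of_multiplied_F_is_the_same}, and from projective invariance of the linear span).
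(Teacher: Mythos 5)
Your proposal follows essentially the paper's route: localize to $Q=\Spec A$ with $(A,\gotm,\kk)$ local, pick a lift $\hat F$ with tensor $F_{\otimes}$, match $\overline F\subset\linspan{\nu_d(\ccR)}$ to a degree-$d$ containment $\ccI(\ccR)_d\subset\ann_d$, and propagate downward in degree by the same apolarity argument that appears in the paper as Lemma~\ref{lem_ideals_contained_in_annihilators}; the rescaling-ambiguity check is handled exactly as the paper does. However there is a concrete gap and a couple of slips.

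The gap: the linear span $\linspan{\nu_d(\ccR)}$ is cut out by the degree $\leqslant 1$ part of the homogeneous ideal of $\nu_d(\ccR)$, which includes the degree-$0$ piece $\ccI(\ccR)_0\subset A$. Containment of the graph of $F$ in the linear span forces $\ccI(\ccR)_0 = 0$, because a constant $a\in A$ lies in the ideal of the graph of $\hat F$ only if $a=0$. You never address this condition in either direction of the equivalence, and in particular you never explain why $\ccI(\ccR)\subset\ann$ implies $\ccI(\ccR)_0=0$; the paper handles this with Lemma~\ref{lem_no_constants_in_the_annihilator}, which shows $\ann_0=0$ for local $Q$ since $F_{\otimes}\bmod\gotm\ne 0$. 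Without this the converse direction is incomplete, and the forward direction is missing a necessary conclusion. Relatedly, your sentence ``Conversely $\ann\subset\ccI(\nu_d(\ccR))$ in degree $1$ means $\overline F$ lies in the linear span'' does not parse: these two ideals live in different rings, $A\otimes_{\kk}\Sym V^*$ versus $A\otimes_{\kk}\Sym(S^d V^*)$; what you want is that $\ccI(\ccR)\subset\ann$ gives $\ccI(\ccR)_d\subset\ann_d$ (trivially), which together with $\ccI(\ccR)_0=0$ gives the containment of graphs via your step two. Finally, a small slip: for $e>d$ you have $\ann_e = A\otimes_{\kk}S^e V^*$, not $\ann_e=0$ as written; your parenthetical justification (any form of degree $>d$ annihilates every element of $S^{(d)}V$) gives the opposite of the displayed formula.
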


We state two lemmas.
The first one in the classical case is standard, see for instance \cite[Lem.~2.15]{iarrobino_kanev_book_Gorenstein_algebras},
or \cite[Prop.~3.4(ii)\&(iii)]{nisiabu_jabu_cactus}
and the proof in the relative case is no different.
We repeat the argument for the sake of completeness.

\begin{lemma}\label{lem_ideals_contained_in_annihilators}
   Suppose $F\colon Q \to \affinespaceof{S^{(d)}V}$ and $\ccI \subset \ccO_Q \otimes \Sym V^*$ is a homogeneous ideal sheaf.
   Then for any integers $i$ and $j$ such that
    $i \leqslant j \leqslant d$, if $\ccI_j \subset \ann_j$, then $\ccI_i \subset \ann_i$.
    In particular, if $\ccI_d \subset \ann_d$, then $\ccI \subset \ann$.
\end{lemma}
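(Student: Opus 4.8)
The plan is to reduce the statement to a purely local and graded computation with the apolarity action $\hook$. First I would observe that the claim is local on $Q$, so we may assume $Q = \Spec A$ is affine and work with the ideal $I = \ann[F] \subset A \otimes_\kk \Sym V^*$ and a homogeneous ideal $J = \ccI(U) \subset A \otimes_\kk \Sym V^*$. The key point is the following mechanism: if $\Theta \in J_i$ with $i \leqslant j$, then for any monomial (or arbitrary form) $\Xi \in (\Sym V^*)_{j-i}$ we have $\Xi \cdot \Theta \in J_j$ by homogeneity of $J$, hence $\Xi \cdot \Theta \in \ann[F]_j$ by hypothesis, which means $(\Xi\Theta) \hook F_\otimes = 0$. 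Using the module property of the apolarity action (property (3) in \S\ref{sec_apolarity_action}), $(\Xi\Theta)\hook F_\otimes = \Xi \hook (\Theta \hook F_\otimes)$.

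The second step is to deduce from ``$\Xi \hook (\Theta \hook F_\otimes) = 0$ for all $\Xi$ of degree $j - i$'' that in fact $\Theta \hook F_\otimes = 0$, i.e. $\Theta \in \ann[F]_i$. Here $G := \Theta \hook F_\otimes$ lies in $A \otimes_\kk S^{(d-i)}V$, a finite free $A$-module, and the condition says $G$ is annihilated by all of $(\Sym V^*)_{j-i}$ under $\hook$. Since $j - i \leqslant d - i$ (because $j \leqslant d$), and the pairing $(\Sym V^*)_{d-i} \times S^{(d-i)}V \to \kk$ together with its $A$-linear extension is a perfect pairing componentwise, the form $G \in A\otimes_\kk S^{(d-i)}V$ being killed by all of $(\Sym V^*)_{j-i}$ forces $G = 0$: explicitly, for $j - i < d-i$ one uses that $\Xi \hook G = 0$ for all degree-$(j-i)$ forms $\Xi$ means all ``partial derivatives of order $j-i$'' of $G$ vanish, and for a polynomial in divided powers this implies $G$ itself vanishes (in characteristic-free language: if $\alpha^{(a)} \hook G = 0$ for all multi-indices $a$ with $|a| = j-i$, then since $d - i \geqslant j - i$ every coefficient of $G$ appears as a coefficient of some such $\alpha^{(a)} \hook G$, so all coefficients of $G$ are zero). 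This gives $\Theta \in \ann[F]_i$, proving $J_i \subset \ann[F]_i$.

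The main obstacle — though it is more of a bookkeeping point than a deep difficulty — is handling the divided-power combinatorics cleanly in arbitrary characteristic: one must be careful that ``$G$ is killed by all forms of degree $j-i$'' genuinely recovers all coefficients of $G$ when $G$ has degree $d-i \geqslant j-i$. The clean way is to note that for a single variable-free statement it suffices to check, for each monomial $x^{(b)}$ with $|b| = d-i$ appearing in $G$, that one can choose $\Xi = \alpha^{a}$ with $a \leqslant b$ and $|a| = j-i$ so that the $x^{(b-a)}$-coefficient of $\Xi \hook G$ is (up to a binomial that is a unit by the divided-power normalization, in fact equal to $1$ in the divided power setup) the $x^{(b)}$-coefficient of $G$; since this coefficient lies in $A$ and vanishes, $G = 0$. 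The ``in particular'' clause is then immediate: taking $j = d$ and letting $i$ range over all degrees $\leqslant d$, and noting $\ann[F]$ is generated in degrees $\leqslant d$ is not even needed — we simply get $\ccI_i \subset \ann[F]_i$ for every $i$, hence $\ccI \subset \ann[F]$. Finally I would remark that gluing over an affine cover of $Q$ is harmless since both $\ccI$ and $\ann[F]$ are quasi-coherent and the inclusion can be checked on an affine cover.
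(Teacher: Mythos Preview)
Your proof is correct and uses essentially the same mechanism as the paper's: multiply $\Theta\in\ccI_i$ by forms of the appropriate degree to land in $\ccI_j\subset\ann_j$, then use nondegeneracy of the apolarity pairing to deduce $\Theta\hook F_\otimes=0$. The only organizational difference is that the paper first proves the special case $j=d$ (which is the most commonly used form) and then reduces the general case to it by replacing $\ccI$ with the ideal generated by $\ccI_{\leqslant j}$ and observing $\ccI_d\subset\ann_d$; you instead prove the general $i\leqslant j$ case directly, which requires the slightly stronger (but equally easy) fact that a form of degree $d-i$ annihilated by all of $S^{j-i}V^*$ with $j-i\leqslant d-i$ must vanish, rather than only the perfect-pairing case $j=d$. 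Your divided-power bookkeeping is correct and characteristic-free. One small omission in your ``in particular'' clause: you only obtain $\ccI_i\subset\ann_i$ for $i\leqslant d$; the paper explicitly notes that for $i>d$ one has $\ann_i=\ccO_Q\otimes_\kk S^iV^*$ trivially, which you should mention as well.
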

\begin{prf}
  The proof relies on the following property:
  \begin{equation}\label{equ_if_derivatives_zero_then_polynomial_zero}
     \text{if }
     F'_{\otimes}\in \ccO_Q\otimes S^{(i)}V
   \text{ is such that } S^{i}V^*\hook F'_{\otimes} \equiv 0, \text{ then } F'_{\otimes} = 0.
  \end{equation}

  First we prove the latter claim: if $\ccI_d \subset \ann_d$, then $\ccI \subset \ann$.
  Since $\ccI$ is homogeneous,
    it is enough to check the claim in each degree $k$.
   If $k>d$, then $\ann_k= \ccO_Q \otimes_{\kk} S^k V^*$ and there is nothing to check.
   If $k=d$, then the assumption and the claim coincide.
   If $k<d$ then $\Phi\in \ccI_{k}$ implies  $S^{d-k}V^*\cdot \Phi \subset \ccI_{d} \subset \ann_{d}$.
   Hence $S^{d-k}V^* \hook (\Phi \hook F_{\otimes}) =0$, and by \eqref{equ_if_derivatives_zero_then_polynomial_zero}  $\Phi \hook F_{\otimes} =0$, that is $\Phi \in \ann$ as claimed.

   In general, in order to prove the former claim of the lemma, it is enough to assume $\ccI$ is generated in degrees up to $j$, since $\ccI_i$ and $\ccI_j$ is not affected by the higher degree generators.
   Then $\ccI_d\subset \ann_d$ and we conclude that $\ccI\subset \ann$, in particular $\ccI_i\subset \ann_i$.
\end{prf}

\begin{lemma}
   \label{lem_containment_in_linear_span_in_terms_of_ideal}
   Suppose the notation for $F, \overline{F}, \ccR$
    is as in  Theorem~\ref{thm_relative_apolarity_lemma}
    and that the base scheme $Q=\Spec A$ is local.
   Then,
   $\overline{F} \subset \linspan{\nu_d(\ccR)}$ if and only if
   \begin{itemize}
     \item $I(\ccR)_0=0$, and
     \item for all $\Phi \in I(\ccR)_d$
           we have $\Phi(F_{\otimes}) =0$.
   \end{itemize}
\end{lemma}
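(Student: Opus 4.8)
The plan is to unwind the definition of the linear span $\linspan{\nu_d(\ccR)}$ and translate the scheme-theoretic containment $\overline{F} \subset \linspan{\nu_d(\ccR)}$ into the two numerical conditions on the ideal $I(\ccR)$. Recall that $\linspan{\nu_d(\ccR)}$ is the family of projective linear subspaces of $\PP_{\kk}(S^{(d)}V)$ cut out by the linear ideal sheaf $(\ccJ_{\leqslant 1})$, where $\ccJ = I(\nu_d(\ccR)) \subset \ccO_Q \otimes_{\kk} \Sym (S^{(d)}V)^*$ is the homogeneous ideal sheaf of $\nu_d(\ccR)$. The key point is that, because $\nu_d$ identifies $S^{(d)}V^*$ with the degree $1$ part $S^1(S^{(d)}V^*)$ and the Veronese reembedding is linearly normal, the degree $\leqslant 1$ part of $\ccJ$ is determined by the degree $0$ and degree $d$ parts of $\ccI = I(\ccR)$. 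Concretely, $\ccJ_0 = \ccI_0$ and $\ccJ_1 \subset \ccO_Q \otimes_{\kk} (S^{(d)}V^*) = \ccO_Q \otimes_{\kk} S^d V^*$ equals $\ccI_d$. So the first step is to verify this identification carefully, which is essentially the statement that the ideal of the Veronese variety in degree one (together with the degree-zero constants) corresponds precisely to the degree-$d$ part of the ideal of $\ccR$ under the isomorphism $S^d V^* = S^1(S^{(d)}V^*)$; over the local base $Q=\Spec A$ this is the $A$-module version of the classical fact and follows from flatness of $\ccO_Q$-modules in the relevant degrees (or can be checked directly since $\Sym V^*$ and its Veronese subalgebra are nice).

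Second, having this dictionary, I would spell out what $\overline{F} \subset \linspan{\nu_d(\ccR)}$ means. The graph $\overline{F} \subset \PP_Q(S^{(d)}V)$ is an $A$-point, and by Proposition~\ref{prop_basic_properties_of_tensor_of_a_scheme}\ref{item_ideal_of_graph_in_terms_of_tensor} its ideal in $A \otimes_{\kk} \Sym(S^{(d)}V)^*$ is $\set{\Theta \mid \Theta(\hat F_{\otimes}) = 0}$, where $\hat F_\otimes \in A \otimes_{\kk} S^{(d)}V$ is the tensor of a local lift of $F$. Containment of schemes $\overline{F} \subset \linspan{\nu_d(\ccR)}$ is equivalent to the reverse inclusion of ideals, which, since $\linspan{\nu_d(\ccR)}$ is defined by a linear ideal, reduces to the containment holding in degrees $0$ and $1$: that is, $\ccJ_0 \subset \{$ideal of $\overline F\}_0$ and $\ccJ_1 \subset \{$ideal of $\overline F\}_1$. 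The degree $0$ part says $\ccJ_0 = \ccI_0 = 0$ (an element $a \in A$ lies in the ideal of $\overline F$ iff $a \cdot \hat F_\otimes = 0$ iff $a = 0$, using that $\hat F_\otimes \not\equiv 0 \bmod \gotm$ as in Lemma~\ref{lem_no_constants_in_the_annihilator} / Proposition~\ref{prop_residue_and_derivative_of_tensor_corresponding_to_scheme}). The degree $1$ part says: every $\Phi \in \ccJ_1 = \ccI_d$ satisfies $\Phi(\hat F_\otimes) = 0$, where here $\Phi$ is viewed as a linear form on $S^{(d)}V$ via $S^d V^* = S^1(S^{(d)}V^*)$; and evaluating that linear form at $\hat F_\otimes$ is exactly the apolarity pairing value $\Phi \hook \hat F_\otimes \in A$ (contraction of a degree $d$ form against a degree $d$ divided power). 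This gives precisely the two bullet conditions in the statement.

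The third step is just to assemble these equivalences into the biconditional, being careful that "the ideal of $\overline F$ contains $\ccJ_{\leqslant 1}$" is genuinely equivalent to "$\overline F \subset \linspan{\nu_d(\ccR)}$ as schemes": this holds because $\linspan{\nu_d(\ccR)}$ is defined by the linear ideal $(\ccJ_{\leqslant 1})$, and an inclusion of the generating linear ideal into the ideal of $\overline F$ forces inclusion of the full ideal $(\ccJ_{\leqslant 1})$ into it, hence the scheme containment; conversely the scheme containment gives the ideal containment in all degrees, in particular in degrees $0$ and $1$. I would also note that we may pass to the local case by Proposition~\ref{prop_scheme_inclusion_equivalence} or simply because the hypothesis of the lemma already fixes $Q=\Spec A$ local.

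The main obstacle I anticipate is the bookkeeping in the first step: making the identification $\ccJ_{\leqslant 1} \leftrightarrow (\ccI_0, \ccI_d)$ precise over a possibly non-reduced local base, since one must know that the Veronese reembedding behaves well — i.e.\ that $H^0$ of the twisting sheaf $\ccO(1)$ on $\nu_d(\ccR)$ is computed correctly and that there are no unexpected degree-one equations of $\nu_d(\ccR)$ beyond those pulled back from degree-$d$ equations of $\ccR$ and the Veronese ideal itself. In the classical ($Q = \Spec \kk$) case this is standard, but here one wants the $A$-linear statement; this should follow either from the fact that the relevant graded pieces of $\ccO_Q \otimes \Sym V^*$ and the Veronese subalgebra are free $\ccO_Q$-modules so the identification is obtained by base change from the field case, or by a direct check using that the ideal of $\nu_d(\PP V)$ is generated in degree $2$ and the degree-one piece of $I(\nu_d(\ccR))$ is spanned by the image of $I(\ccR)_d$. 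Once that translation is nailed down, the rest is a short formal chase through Proposition~\ref{prop_basic_properties_of_tensor_of_a_scheme} and the definition of the apolarity pairing.
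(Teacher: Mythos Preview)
Your proposal is correct and follows essentially the same route as the paper: identify $I(\nu_d(\ccR))_{\leqslant 1}$ with $I(\ccR)_0 \oplus I(\ccR)_d$, lift $F$ to the affine space, and use Proposition~\ref{prop_basic_properties_of_tensor_of_a_scheme}\ref{item_ideal_of_graph_in_terms_of_tensor} to reduce the scheme containment to the vanishing of these generators at $F_\otimes$. The paper is simply terser about two points where you elaborate: it asserts $I(\nu_d(\ccR))_1 = I(\ccR)_d$ without comment (this is immediate from the fact that the Veronese ring map $A\otimes \Sym(S^dV^*)\to A\otimes \Sym V^*$ is the identity on degree~$1\leftrightarrow d$, so your ``main obstacle'' dissolves), and for the degree~$0$ condition it just notes that evaluating a constant $a\in A$ at $F_\otimes$ returns $a$, so the vanishing condition is $I(\ccR)_0=0$ directly---your detour through $a\cdot\hat F_\otimes=0$ is unnecessary but harmless.
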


\begin{prf}
 By definition,
   the linear span $\linspan{\nu_d(\ccR)}$
   is the subscheme of $\PP_A(S^{(d)}V)$
   defined by the ideal generated by
   $I(\ccR)_0+I(\nu_d(\ccR))_1
   \subset A\otimes_{\kk} \Sym (S^d V^*)$.
   Note that $I(\nu_d(\ccR))_1=I(\ccR)_d\subset S^d V^*$.
 Consider the affine subscheme
   $\ccW \subset \affinespaceof[A]{S^{(d)}V}$ defined by the same ideal in $\Sym (S^d V)$,
   that is the ideal generated by $I(\ccR)_0+I(\ccR)_d$.

Pick the lift of $F$ to the affine space,
    $\hat{F} \colon Q\to \affinespaceof{S^{(d)}V}$,
    that corresponds to the tensor $F_{\otimes}$.
Note that since $Q$ is local and the image $\hat{F}(Q)$ avoids the origin,
   we have $\overline{F} \subset \linspan{\nu_d(\ccR)}$ if and only if the graph of $\hat F$ is contained in $\ccW$.
The latter holds if and only if the generators of the ideal of $\ccW$ are all in the ideal of the graph of $\hat F$.
That is, by Proposition~\ref{prop_basic_properties_of_tensor_of_a_scheme}\ref{item_ideal_of_graph_in_terms_of_tensor},
this happens if and only if the equations of
 $I(\ccR)_0$ and of  $I(\ccR)_d$ at $F_{\otimes}$ are zero.
 For $I(\ccR)_d$ this is precisely the second item. But $I(\ccR)_0 \subset A$, and the evaluation does not do anything. So the condition for vanishing evaluations of $I(\ccR)_0$ is just $I(\ccR)_0=0$, as claimed in the first item of the lemma, concluding the proof.
\end{prf}

\begin{prf}[ of Theorem~\ref{thm_relative_apolarity_lemma}]
   First we note,
      that since $Q$ has enough of $\kk$-points,
      it is enough to prove the equivalence for local $Q=\Spec A$.
  Indeed, there are four objects involved in the equivalence:
  two of them are sheaves, and the other two are schemes.
  Replacing $Q$ with  local schemes around a closed points
   is standard for sheaves.
  For $\overline{F} \subset \linspan{\nu_d(\ccR)}$ it is also enough
   to check the inclusion locally.
  For $\linspan{\nu_d(\ccR)}$, the restriction of the linear span behaves continuously by Lemma~\ref{lem_flat_base_change_of_linear_spans}.

  By Lemma~\ref{lem_containment_in_linear_span_in_terms_of_ideal} we have to prove that
  $I(\ccR) \subset \ann$ if and only if:
   \begin{itemize}
     \item $I(\ccR)_0=0$, and
     \item for all $\Phi \in I(\ccR)_d$
           we have $\Phi(F_{\otimes}) =0$.
   \end{itemize}
   Since $F_{\otimes}$ is a lift from the projective space, it follows that $F_{\otimes} \modulo \gotm \ne 0$ (where $\gotm \subset A$ is the maximal ideal).
   Thus, by Proposition~\ref{prop_perp_of_perp_for_section}
   for $W= S^{(d)}V$ and $M\subset \freemodule[A]{W}$
    being the module generated by $F_{\otimes}$, we have
   $\Phi(F_{\otimes}) =0$ if and only if
   $\Phi \in M^{\perp} = \ann_d$.
   That is, the two items above are equivalent to
   \begin{itemize}
     \item $I(\ccR)_0=0$, and
     \item $I(\ccR)_d \subset \ann_d$.
   \end{itemize}

   By Lemma~\ref{lem_ideals_contained_in_annihilators}     the second item implies that $I(\ccR) \subset \ann$ as claimed.
   On the other hand if $I(\ccR) \subset \ann$, then the second item holds automatically,
   and $\ann_0=0$ by Lemma~\ref{lem_no_constants_in_the_annihilator} proving also the first item.
\end{prf}

\begin{proposition}\label{prop_continuity_for_flat_case}
   Suppose $F\colon Q \to \PP_{\kk}(S^{(d)}V)$ is a morphism such that its image is contained in $\set{\rk \Upsilon^{i, d-i}=r}$ for some $i$ and some $r$.
   Then $\ann_i$ is a locally free sheaf on $Q$.
   Moreover, for any morphism of schemes 
   $G\colon Q'\to Q$ if we denote by $F'$ the composition $F \circ G$, then $\ann[F']_i = G^*\ann_i$.
\end{proposition}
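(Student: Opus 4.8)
The plan is to reduce the statement to a question about the catalecticant map and then invoke the freeness criteria and apolarity developed earlier. First I would work locally, so assume $Q = \Spec A$ with $A$ local (the sheaf statements are local by construction of $\ann$, and base change $G$ can be checked after localizing $Q'$). Choose a lift $\hat F\colon Q \to \affinespaceof{S^{(d)}V}$ with corresponding tensor $F_{\otimes}$, so that, by the formula in \S\ref{sec_apolarity_action}, the catalecticant matrix $\Upsilon^{i,d-i}|_Q$ equals (up to an invertible scalar) the $A$-linear map $\Theta \mapsto \Theta \hook F_{\otimes}$ from $A\otimes_{\kk} S^i V^*$ to $A\otimes_{\kk} S^{(d-i)}V$. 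Its kernel is exactly $\ann_i$, and its image is $\apolar_i$. The hypothesis that the image of $F$ lies in $\set{\rk \Upsilon^{i,d-i} = r}$ says that all $(r+1)\times(r+1)$ minors vanish on $Q$ while some $r\times r$ minor is a unit somewhere; over the local ring $A$ this forces the image module $\apolar_i = (A\otimes_{\kk}\Sym V^*\hook F_{\otimes})_i$ to be free of rank $r$ — indeed an $r\times r$ minor being a unit exhibits $r$ of the columns as a basis of a free rank-$r$ summand of the free target module $A\otimes_{\kk}S^{(d-i)}V$, and the vanishing of all larger minors forces the remaining columns into that summand.

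Once $\apolar_i$ is free of rank $r$, I would conclude freeness of $\ann_i$ as follows. From the exact sequence
\[
 0 \to \ann_i \to A\otimes_{\kk}S^i V^* \to \apolar_i \to 0
\]
and freeness (hence projectivity) of $\apolar_i$ the sequence splits, so $\ann_i$ is a direct summand of the free module $A\otimes_{\kk}S^i V^*$, hence projective, hence free over the local ring $A$. This gives the first assertion. Alternatively one can phrase this via Lemma~\ref{lem_criterion_for_free_submodule}, checking that $\dim_\kk(\ann_i \cap s\cdot (A\otimes_{\kk}S^iV^*))$ matches the fibre dimension using that $s\cdot(-)$ is compatible with the catalecticant map and that the special fibre has $\dim_\kk (\ann[F_{\kk}])_i = \dim_\kk S^iV^* - r$; but the splitting argument is cleaner.

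For the base change statement, let $G\colon Q' \to Q$ with $Q'=\Spec A'$, and $F' = F\circ G$, so $F'_{\otimes} = G^*(F_{\otimes})$ by Proposition~\ref{prop_basic_properties_of_tensor_of_a_scheme}\ref{item_composition_in_terms_of_tensors}. The semicontinuity Lemma~\ref{lem_semicontinuity of annihilator} gives the inclusion $G^*\ann_i \subset \ann[F']_i$, and we must promote it to an equality. Tensoring the split exact sequence above with $A'$ over $A$ keeps it exact and split, yielding $A'\otimes_A \apolar_i = \apolar[F']_i'$ where the right side denotes $(A'\otimes_{\kk}\Sym V^*\hook F'_{\otimes})_i$ — here one uses that $\hook$ commutes with pullback, \eqref{equ_hook_preserved_by_pullback}, to identify the image of the base-changed map with the image of the catalecticant map for $F'$. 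Comparing kernels in the commutative diagram with exact rows
\[
\begin{tikzcd}
 A'\otimes_A\ann_i \arrow[r]\arrow[d] & A'\otimes_{\kk}S^iV^* \arrow[r]\arrow[d,equal] & A'\otimes_A\apolar_i\arrow[r]\arrow[d,equal] & 0\\
 0\arrow[r] & \ann[F']_i \arrow[r] & A'\otimes_{\kk}S^iV^* \arrow[r] & \apolar[F']_i'\arrow[r] & 0
\end{tikzcd}
\]
(the top row exact because tensoring a split sequence is exact), the leftmost map $A'\otimes_A\ann_i \to \ann[F']_i$ is an isomorphism by the five lemma / snake lemma; since $\ann_i$ is free, $A'\otimes_A\ann_i = G^*\ann_i$, giving $\ann[F']_i = G^*\ann_i$.

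The main obstacle I anticipate is the passage from the rank condition on the catalecticant matrix (a statement about vanishing and non-vanishing of minors over the not-necessarily-reduced local ring $A$) to the actual freeness of rank $r$ of the image module $\apolar_i$: over a non-reduced base, "constant rank" must be argued carefully, and I would want to reduce to the dual-numbers case $A = \kk[t]/(t^2)$ via Lemma~\ref{lem_reduction_to_dual_numbers} (or a direct Nakayama argument pinning down a free rank-$r$ summand from a unit $r\times r$ minor) to make this rigorous. Everything after that — the splitting and the base-change diagram chase — is formal.
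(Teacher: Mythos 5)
Your argument is correct and follows essentially the same route as the paper's: reduce to a local affine base and a lift of $F$ to the affine cone, identify the map $\Theta\mapsto\Theta\hook F_{\otimes}$ with $F^*\Upsilon^{i,d-i}$ so that $\ann_i$ is its kernel, deduce local freeness of the kernel from the constant-rank hypothesis, and obtain the base-change statement from exactness of pullback for the resulting (split) short exact sequence $0\to\ann_i\to\ccO_Q\otimes S^iV^*\to\apolar_i\to 0$. The paper states the constant-rank-implies-free step and the pullback step in one line each; you supply the underlying Fitting-ideal/local-ring argument (a unit $r\times r$ minor together with vanishing $(r+1)$-minors forces $\apolar_i$ to be a free rank-$r$ direct summand of the target) and the explicit diagram chase, which is exactly what the paper leaves implicit — so there is no need for the dual-numbers reduction you contemplate in the final paragraph, and your parenthetical Nakayama-style alternative is the right one.
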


\begin{prf}
  To prove the first claim, covering $Q$ by sufficiently small open affine subsets, by Proposition~\ref{prop_continuity_of_annihilator_for_open_subsets}
  it suffices to prove it for $Q=\Spec A$  affine, 
    and for $F$ such that lifts to a morphism
    $\hat{F} \colon Q\to \affinespaceof[\kk]{S^{(d)} V}$.
  Then, the map
  \[
    \cdot \hook F_{\otimes}\colon A\otimes_{\kk} S^i V^* \to A\otimes_{\kk} S^{(d-i)}V
  \]
 is a homomorphism of $A$-modules that geometrically corresponds
    to the morphism of sheaves
  $F^*\Upsilon^{i, d-i} \colon \ccO_Q \otimes_{\kk} S^i V^* \to \ccO_Q(1) \otimes_{\kk} S^{(d-i)}V$.
  Note that since $F$ lifts to the affine space,
  $\ccO_Q(1) \simeq \ccO_Q$.
  Thus, $\ann_i = \ker F^*\Upsilon^{i, d-i}$
   and the locally free claim for $\ann_i$
   follows from the constant rank assumption for $F^*\Upsilon^{i, d-i}$.

  The second claim follows from the first as pulling back short exact sequences of locally free sheaves is exact.
\end{prf}

We note that Example~\ref{ex_locally_free_but_not_constant_rank_positive_dim} shows that the inverse of this statement is not true in general, even if $Q$ is local:
locally free graded part of the annihilator does not necessarily imply constant rank of catalecticant.
This observation and its vast generalizations lay out the foundations of border apolarity, 
\cite{nisiabu_jabu_border_apolarity}.

\begin{example}\label{ex_locally_free_but_not_constant_rank_positive_dim}
   Suppose $A = \kk[t]$ or $\kk[[t]]$, $Q=\Spec A$, and $F_{\otimes} = x^{(d)} + t y^{(d)}$.
   Then $\ann= (\alpha \beta, t\alpha^d-\beta^d, \alpha^{d+1})$ and $\ann_i$ is a free $A$-module for each $i$.
   However, $F(Q)$ is not contained in $\set{\rk\Upsilon^{i, d-i} =2}$ for any $1 \leqslant i \leqslant d-1$.
\end{example}
%
%
%

\section{Cactus scheme of Veronese varieties}\label{sec_cactus_of_Veronese}

In this concluding section we prove our main results:
about the structure of catalecticant minors (Theorem~\ref{thm_Upsilon_contained_in_cactus})
and about the comparison of the structure of the cactus scheme to the structure of Gorenstein Hilbert scheme of points (Theorem~\ref{thm_universal_linear_span_isomorphic_to_cactus}).
Combining these with earlier propositions and theorems leads to proofs of Theorems~\ref{thm_cactus_scheme_and_minors_for_PV} and \ref{thm_universal_linear_span_isomorphic_to_cactus_intro} stated in the introduction.

\subsection{Catalecticant minors produce flat families}

\begin{theorem}\label{thm_Upsilon_contained_in_cactus}
   Fix any positive integer $r$ and a $\kk$-vector space $V$.
   Then for all $d$ and $i$ such that $r \leqslant i \leqslant d-r$ (in particular $d\geqslant 2r$) we have
   \[
     \set{\rk \Upsilon^{i,d-i} =r}\subset
     \cactusflat{r}{\nu_d(\PP_{\kk} V)} \setminus \cactusflat{r-1}{\nu_d(\PP_{\kk} V)}.
   \]
\end{theorem}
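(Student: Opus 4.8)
The plan is to reduce the statement to a local, pointwise verification and then produce an explicit flat family of $r$ points whose relative linear span contains a given point $[F] \in \set{\rk \Upsilon^{i,d-i} = r}$, while ruling out membership in the $(r-1)$-th cactus scheme. Since $\set{\rk \Upsilon^{i,d-i} = r}$ is open in $\set{\rk \Upsilon^{i,d-i} \leqslant r}$, which is closed, and since $\cactusflat{r}{\nu_d(\PP_{\kk} V)}$ is closed, it suffices (by the scheme inclusion criterion, Proposition~\ref{prop_scheme_inclusion_equivalence}, applied to the locally closed subscheme $\set{\rk \Upsilon^{i,d-i}=r}$) to check the inclusion on finite local Gorenstein subschemes $Q = \Spec A$ of $\set{\rk \Upsilon^{i,d-i}=r}$. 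So fix such a $Q$ together with the tautological morphism $F\colon Q \to \PP_{\kk}(S^{(d)}V)$ landing in $\set{\rk \Upsilon^{i,d-i}=r}$.

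Next I would analyze the apolar algebra $\apolar = \ccO_Q\otimes_{\kk}\Sym V^*/\ann$. By Proposition~\ref{prop_continuity_for_flat_case}, the constant rank assumption gives that $\ann_i$ is a locally free (hence free, since $A$ is local) $A$-module, and $\apolar_i = \freemodule[A]{S^iV^*}/\ann_i$ is free of rank $r$; similarly, using the symmetry $\Upsilon^{i,d-i} = (\Upsilon^{d-i,i})^T$ and $d-i \geqslant r$, we also get $\apolar_{d-i}$ free of rank $r$. Now I would run the relative Macaulay/Gotzmann machinery: let $J = (\ann)_{\leqslant i}$, so $J$ is generated in degrees $\leqslant i$ and $(\freemodule[A]{S}/J)_i$ is free of rank $r$; since $i \geqslant r$, Proposition~\ref{prop_relative_Macaulay_and_Gotzmann}\ref{item_Macaulay_and_Gotzmann_for_Jsat} shows $(\freemodule[A]{S}/J^{\sat})_k$ is free of rank $r$ for all $k \geqslant r-1$. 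The key point is then that $J^{\sat}$ is a \emph{linear} ideal sheaf — i.e.\ generated in degrees $0$ and $1$ — at least after we pass through the flat case; here I would invoke that $J^{\sat}$ is flat over $A$ with $(\freemodule[A]{S}/J^{\sat})_k$ free of rank $r$ for $k \gg 0$, so its special fibre and socle fibre both cut out a scheme of length $r$, and a flat family of degree-$r$ subschemes with linear Hilbert polynomial $r$ is defined by a linear ideal (this is the flat analogue of Conjecture~\ref{conj_linear_ideals_are_saturated}, which is known in the flat case). Thus $\ccR := Z(J^{\sat}) \subset \PP_Q V$ is a flat family of degree-$r$ subschemes of $\PP_{\kk}V$, i.e.\ a flat \familiar[r] in $\nu_d(\PP_{\kk}V)$ after Veronese reembedding; since $d \geqslant r-1$, it is automatically linearly independent. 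By construction $\ccI(\ccR)_{\leqslant i} \subset \ann$, hence $\ccI(\ccR) \subset \ann$ by Lemma~\ref{lem_ideals_contained_in_annihilators} (as $\ccI(\ccR)$ is generated in degrees $\leqslant 1 \leqslant i$), and the Relative Apolarity Lemma (Theorem~\ref{thm_relative_apolarity_lemma}) yields $\overline{F} \subset \linspan{\nu_d(\ccR)}$, whence $F$ factors through $\rellinspan{\nu_d(\ccR)} \subset \cactusflat{r}{\nu_d(\PP_{\kk}V)}$. This gives the inclusion $\set{\rk\Upsilon^{i,d-i}=r}\subset\cactusflat{r}{\nu_d(\PP_{\kk}V)}$.

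It remains to show disjointness from $\cactusflat{r-1}{\nu_d(\PP_{\kk}V)}$, i.e.\ that no $\kk$-point $[F_0]$ with $\rk \Upsilon^{i,d-i}(F_0) = r$ lies in $\cactusflat{r-1}{\nu_d(\PP_{\kk}V)}$. By Theorem~\ref{thm_cactus_scheme_contained_in_vanishing_of_catalecticant_minors} (applied with $r-1$ in place of $r$, which needs $r-2 \leqslant i \leqslant d-1$, satisfied here) we have $\cactussch{r-1}{\nu_d(\PP_{\kk}V)} \subset \set{\rk \Upsilon^{i,d-i} \leqslant r-1}$, and $\cactusflat{r-1}{} \subset \cactussch{r-1}{}$ by Proposition~\ref{prop_basic_properties_of_cactus_schemes}; but $\rk \Upsilon^{i,d-i}(F_0) = r > r-1$, so $[F_0] \notin \set{\rk\Upsilon^{i,d-i}\leqslant r-1} \supset \cactusflat{r-1}{\nu_d(\PP_{\kk}V)}$. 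Since both $\set{\rk\Upsilon^{i,d-i}=r}$ and $\cactusflat{r}{\nu_d(\PP_{\kk}V)}\setminus\cactusflat{r-1}{\nu_d(\PP_{\kk}V)}$ are the respective loci away from the rank-$\leqslant(r-1)$ part, and we have shown containment of the reduced structures plus the scheme-theoretic inclusion above, the locally closed subscheme inclusion follows; combining the two parts completes the proof.

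The main obstacle I anticipate is the step asserting that $J^{\sat}$ is generated in degrees $0$ and $1$ (linearity of the saturation) and that $Z(J^{\sat})$ is genuinely a flat family of honest degree-$r$ subschemes of $\PP_{\kk}V$ rather than something pathological as in Example~\ref{ex_bad_linear_span}; this is exactly where the flat case of the "linear ideals are saturated" phenomenon and the freeness criteria of Lemmas~\ref{lem_criterion_for_free_module_over_Gor}, \ref{lem_criterion_for_free_submodule} must be marshalled carefully, and where the hypotheses $r \leqslant i$ and $i \leqslant d-r$ (giving freeness of \emph{both} $\apolar_i$ and $\apolar_{d-i}$, hence control of the socle fibre) are genuinely used. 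A secondary subtlety is checking that $\ccR$ is contained in $\nu_d(\PP_{\kk}V)$ as required in the definition of the cactus scheme, which follows because $\ccR$ is a family of subschemes of $\PP_{\kk}V$ and we reembed via $\nu_d$, but one must confirm the linear-independence hypothesis of the apolarity lemma is met (it is, since $d \geqslant 2r \geqslant r-1$ and the Veronese of degree $d$ imposes independent conditions on length-$\leqslant r$ subschemes).
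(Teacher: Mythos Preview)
Your overall strategy matches the paper's exactly: reduce via Proposition~\ref{prop_scheme_inclusion_equivalence} to finite local Gorenstein $Q\subset\set{\rk\Upsilon^{i,d-i}=r}$, build a flat \familiar{} $\ccR$ from the saturation of $(\ann)_{\leqslant i}$ using Proposition~\ref{prop_relative_Macaulay_and_Gotzmann}, invoke the relative apolarity lemma, and deduce disjointness from $\cactusflat{r-1}$ via Theorem~\ref{thm_cactus_scheme_contained_in_vanishing_of_catalecticant_minors}. The paper packages the construction step as Lemma~\ref{lem_Q_in_Upsilon_lifts_to_Hilb}.

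There is, however, a genuine error at the point you yourself flag as the main obstacle. Your claim that $J^{\sat}$ is a \emph{linear} ideal sheaf (generated in degrees $0$ and $1$) is false: the saturated ideal of a general length-$r$ subscheme of $\PP_{\kk}V$ is essentially never generated in degree $\leqslant 1$ --- already three collinear points in $\PP_{\kk}^2$ require a cubic generator. Your appeal to Conjecture~\ref{conj_linear_ideals_are_saturated} is inverted: that conjecture asks whether saturating a linear ideal stays linear, not whether an arbitrary saturated ideal is linear. Consequently your justification ``$\ccI(\ccR)\subset\ann$ \dots\ (as $\ccI(\ccR)$ is generated in degrees $\leqslant 1$)'' fails.

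The good news is that linearity is not needed for either conclusion you draw from it. For flatness of $\ccR=Z(J^{\sat})$: Proposition~\ref{prop_relative_Macaulay_and_Gotzmann}\ref{item_Macaulay_and_Gotzmann_for_Jsat} already gives $(\freemodule[A]{S}/J^{\sat})_k$ free of rank $r$ for all $k\geqslant r-1$; since $J^{\sat}$ is saturated and $\ccR\to Q$ is finite, this makes $\HHH^0(\ccO_{\ccR})$ free of rank $r$, i.e.\ $\ccR\to Q$ is a flat \familiar{}. For the containment $J^{\sat}\subset\ann$: Proposition~\ref{prop_relative_Macaulay_and_Gotzmann}\ref{item_Macaulay_and_Gotzmann_for_J} gives $J^{\sat}_k=J_k$ for $k\geqslant i$, in particular $J^{\sat}_d=J_d\subset\ann_d$, and then Lemma~\ref{lem_ideals_contained_in_annihilators} (which propagates containment \emph{downward} from degree $d$, not upward from degree $1$) yields $J^{\sat}\subset\ann$. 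With this repair your argument is the paper's proof. One minor caveat: your use of the transpose symmetry $\Upsilon^{i,d-i}=(\Upsilon^{d-i,i})^T$ to obtain freeness in degree $d-i$ works when $i\neq d-i$; for the case $d=2i$ the paper instead uses the Gorenstein duality of $\apolar[F_{\kk}]$ together with Lemma~\ref{lem_criterion_for_free_submodule} to produce a second degree in the required range.
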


A key technical step in the proof of the theorem is the following lemma.

\begin{lemma}\label{lem_Q_in_Upsilon_lifts_to_Hilb}
   Assume positive integers $r, d, i$ satisfy
   $r \leqslant i \leqslant d-r$
   and that
   $Q$ is a finite local Gorenstein scheme  with a morphism
   \[
     F\colon Q \to \set{\rk \Upsilon^{i,d-i} = r} \subset \PP_{\kk} (S^{(d)} V).
   \]
   Then there exists a flat \familiar{} $\ccR\to Q$ with $\ccR \subset \PP_{\kk} V\times Q$ such that $F(Q)\subset \rellinspan{\nu_d(\ccR)}$.
\end{lemma}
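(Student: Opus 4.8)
\textbf{Proof plan for Lemma~\ref{lem_Q_in_Upsilon_lifts_to_Hilb}.}
The plan is to extract the desired family $\ccR$ from the kernel of the catalecticant map, which by Proposition~\ref{prop_continuity_for_flat_case} is locally free of rank $\dim_\kk S^iV^* - r$. Concretely, let $\hat F\colon Q\to \affinespaceof{S^{(d)}V}$ be a lift (possible since $Q$ is finite local, using Lemma~\ref{lem_lifting_finite_schemes_from_projective_spaces}), with tensor $F_\otimes\in A\otimes_\kk S^{(d)}V$ where $Q=\Spec A$. By hypothesis the map $\cdot\hook F_\otimes\colon A\otimes_\kk S^iV^*\to A\otimes_\kk S^{(d-i)}V$ has constant rank $r$, so its kernel $\ann_i$ is a free $A$-submodule of $A\otimes_\kk S^iV^*$ of corank $r$. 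First I would use $\ann_i$ to define a flat family of linear subspaces: the subbundle $(\ann_i)^\perp\subset A\otimes_\kk S^iV$ of rank $r$ gives a flat linear $\PP_A^{r-1}\subset\PP_A(S^{(i)}V)$. The hope is that this is the span $\linspan{\nu_i(\ccR)}$ of the Veronese reembedding of the family $\ccR$ we are after, but we must produce $\ccR\subset\PP_A V$ itself.

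Next I would produce $\ccR$ via relative apolarity applied in degree $i$ rather than $d$. Set $\ccI:=(\ann_i)\subset A\otimes_\kk\Sym V^*$, the ideal generated by $\ann_i$; I would check it is a homogeneous ideal sheaf and that the quotient $(\freemodule{S}/\ccI)$ has its degree-$i$ graded piece free of rank $r$. Now apply Proposition~\ref{prop_relative_Macaulay_and_Gotzmann}: taking $J=\ccI_{\leqslant i}$, part~\ref{item_Macaulay_and_Gotzmann_for_Jsat} gives that $(\freemodule{S}/J^{\sat})_k$ is flat of rank $r$ for all $k\geqslant r-1$, and $J^{\sat}$ cuts out a closed subscheme $\ccR\subset\PP_A V$ which is finite and flat of degree $r$ over $Q$ — i.e.\ a flat \familiar{}. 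One then checks $\ccR$ is linearly independent: this follows because the Hilbert function of each fibre equals $r$ in all degrees $\geqslant r-1$, in particular in degree $1$ once we note $r-1\geqslant 0$ and use that $\nu_d(\PP_\kk V)$, hence $\PP_\kk V$ via the $i$-th Veronese, imposes independent conditions on schemes of length $\leqslant r$ for $i\geqslant r-1$. Since $\ccR$ is flat over the connected finite scheme $Q$, it is a \familiar[r].

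Finally I would verify $F(Q)\subset\rellinspan{\nu_d(\ccR)}$. By the relative apolarity lemma (Theorem~\ref{thm_relative_apolarity_lemma}), $\overline F\subset\linspan{\nu_d(\ccR)}$ is equivalent to $I(\ccR)\subset\ann$, and since both are homogeneous it suffices by Lemma~\ref{lem_ideals_contained_in_annihilators} to check $I(\ccR)_d\subset\ann_d$. Here $I(\ccR)=J^{\sat}$, so $I(\ccR)_d$ is generated over $\freemodule{S}$ by $I(\ccR)_i=(\ann_i)_i=\ann_i$ together with the linear forms — but actually by Gotzmann persistence (Proposition~\ref{prop_relative_Macaulay_and_Gotzmann}\ref{item_Macaulay_and_Gotzmann_for_J}, noting $i\geqslant r$) we have $I(\ccR)_d=(\ann_i\cdot S^{d-i}V^*)$ inside $\freemodule{S}_d$ modulo the degrees where $J$ and $J^{\sat}$ agree, i.e.\ $I(\ccR)_d = J_d = (\ann_i \cdot S^{(d-i)}V^*)$ since $d\geqslant i$. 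Then for $\Theta\in\ann_i$ and any $\Psi\in S^{d-i}V^*$ we get $(\Psi\Theta)\hook F_\otimes=\Psi\hook(\Theta\hook F_\otimes)=\Psi\hook 0=0$, so $I(\ccR)_d\subset\ann_d$ as needed. Passing to $\rellinspan{}$ (the scheme-theoretic image under $\PP_Q W\to\PP_\kk W$) gives $F(Q)\subset\rellinspan{\nu_d(\ccR)}$.

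\textbf{Main obstacle.} The delicate point is controlling the ideal $\ccR=Z(J^{\sat})$ well enough: one must confirm that the saturation of the ideal generated by $\ann_i$ really defines a \emph{finite flat} family of degree exactly $r$ (not larger, and with no embedded or lower-dimensional components appearing over the non-reduced base $Q$), and that it is linearly independent so that $\nu_d(\ccR)$ has the expected span. This is exactly where the relative Macaulay/Gotzmann machinery of Proposition~\ref{prop_relative_Macaulay_and_Gotzmann} is essential — the classical argument over a field does not suffice because $Q$ is non-reduced — and where the hypotheses $r\leqslant i\leqslant d-r$ are used: we need $i\geqslant r$ for Gotzmann persistence to kick in at degree $i$, and $d-i\geqslant r$ (equivalently $i\leqslant d-r$) so that multiplying up from degree $i$ to degree $d$ stays in the stable range. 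A secondary subtlety is checking that the construction is independent of the choice of affine lift $\hat F$ and glues over $Q$, but this follows from Corollary~\ref{cor_apolar_of_multiplied_F_is_the_same} and Proposition~\ref{prop_continuity_for_flat_case}.
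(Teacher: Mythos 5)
Your high-level strategy matches the paper's: use Proposition~\ref{prop_continuity_for_flat_case} to make $\ann_i$ locally free, feed this into Proposition~\ref{prop_relative_Macaulay_and_Gotzmann} to produce a flat \familiar{} $\ccR$ from the saturation of the low-degree part of the annihilator, and close with Theorem~\ref{thm_relative_apolarity_lemma}. However, there is a concrete gap exactly at the step you flag as the main obstacle, and it is not resolved by ``the relative Macaulay/Gotzmann machinery'' because that machinery cannot even be entered as you set it up. Proposition~\ref{prop_relative_Macaulay_and_Gotzmann} requires as hypothesis that $(\freemodule{S}/I)$ be flat of rank $r$ in \emph{two} degrees $j > i \geqslant r$. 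You establish this only in degree $i$. Moreover, your choice $\ccI = (\ann_i)$ is the wrong ideal to feed in: for $j>i$ one has $(\freemodule{S}/\ccI)_j = A\otimes S^jV^*/(S^{j-i}V^*\cdot\ann_i)$, and all we know a priori is $S^{j-i}V^*\cdot\ann_i \subset \ann_j$, possibly strictly, so there is no reason this quotient is flat of rank $r$. The paper applies the proposition to $I=\ann$ (or equivalently to $J=(\ann_{\leqslant i})$), not to the ideal generated by $\ann_i$ alone, and it supplies the second degree as follows: Lemma~\ref{lem_duality_of_apolar_algebra} — the Gorenstein duality of the apolar algebra over the finite local \emph{Gorenstein} base $A$, which is precisely where the Gorenstein hypothesis on $Q$ does its real work — shows that $\apolar_i$ free of rank $r$ implies $\apolar_{d-i}$ free of rank $r$. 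The paper then pins down the intermediate degrees by passing to the special fibre $\ann_\kk$ and the socle fibre $\ann_\gots$, identifying $\Sym V^*/\ann_\gots = \Sym V^*/\ann[F_\kk]$ via Lemma~\ref{lem_annihilator_of_a_point_from_socle}, using the classical Gorenstein symmetry of its Hilbert function together with the Macaulay/Gotzmann bounds, and invoking the freeness criterion Lemma~\ref{lem_criterion_for_free_submodule} to get $\ann_k$ free for all $i\leqslant k\leqslant d-r+1$; only after all this does Proposition~\ref{prop_relative_Macaulay_and_Gotzmann} apply and give the flat \familiar{}.

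Your proposal never invokes Lemma~\ref{lem_duality_of_apolar_algebra}, Lemma~\ref{lem_annihilator_of_a_point_from_socle}, or Lemma~\ref{lem_criterion_for_free_submodule}, and never mentions the socle-fibre ideal $\ann_\gots$; so the freeness-in-two-degrees hypothesis is simply missing. A more elementary patch does exist for the second freeness — the transpose symmetry $\Upsilon^{i,d-i}=(\Upsilon^{d-i,i})^T$ shows $F(Q)\subset\set{\rk\Upsilon^{d-i,i}=r}$, so Proposition~\ref{prop_continuity_for_flat_case} applies again in degree $d-i$ — but you would still need to replace $\ccI=(\ann_i)$ by $I=\ann$ before invoking Proposition~\ref{prop_relative_Macaulay_and_Gotzmann}, and you would need to handle the boundary case $i=d-i$ (where the two degrees coincide and one must extract a second one from the Macaulay/Gotzmann analysis, as the paper does). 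As written, the proposal does not yield the flat \familiar{} and so the final appeal to the relative apolarity lemma has nothing to apply to.
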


\begin{prf}
  Without loss of generality we may assume $i\leqslant d-i$. Suppose $Q = \Spec A$ and
  consider $\ann \subset A\otimes_{\kk} \Sym V^*$.
  Note that $\ann_i$
  is locally free (hence flat) over $A$ of rank $\dim_{\kk} S^iV^* - r$ by Proposition~\ref{prop_continuity_for_flat_case}.
  Therefore both $(A\otimes_{\kk} \Sym V^*/ \ann)_{i}$ and $(A\otimes_{\kk} \Sym V^*/ \ann)_{d-i}$ are locally free  over $A$ and of rank $r$ by Lemma~\ref{lem_duality_of_apolar_algebra}.
  By the Macaulay bound from Proposition~\ref{prop_classical_Macaulay_and_Gotzmann}
  \[
     \dim_{\kk}(\Sym V^*/ \ann_{\kk})_{k} \leqslant r \text{ and }
     \dim_{\kk}(\Sym V^*/ \ann_{\gots})_{k} \leqslant r
  \]
  for all $k > i$.
  Since $\Sym V^*/ \ann_{\gots} = \Sym V^*/ \ann[F_{\kk}]$
 by Lemma~\ref{lem_annihilator_of_a_point_from_socle}, and
  by the
  duality of the Hilbert function of finite graded Gorenstein algebras, 
  we also have
    \[
      \dim_{\kk}(\Sym V^*/ \ann_{\gots})_{k} \leqslant r \text{ for all }k.
    \]
  Thus, by Gotzmann's Persistence from Proposition~\ref{prop_classical_Macaulay_and_Gotzmann}\ref{item_classical_Gotzmann} (and the duality), for all $r-1 \leqslant k
  \leqslant d-r+1$ one has
  $\dim_{\kk}(\Sym V^*/ \ann_{\gots})_{k} = r$.

  Since $\ann_{\kk} \subset \ann_{\gots}$,
  we must also have
  $\dim_{\kk}(\Sym V^*/ \ann_{\kk})_{k} = r$ for $i\leqslant k \leqslant d-r+1$
  and thus in the same range
  $(\ann_{\kk})_{k}=(\ann_{\gots})_{k}$ and $\ann_{k}$ is a free $A$-module by Lemma~\ref{lem_criterion_for_free_submodule}.

  By Proposition~\ref{prop_relative_Macaulay_and_Gotzmann}
  the ideal $J$ generated by $(\ann_{\kk})_{\leq i}$ defines $\ccR \to Q$, a flat \familiar{} with $\ccR\subset \PP_A V$.
  Moreover $(J^{\sat})_i = J_i$
  and furthermore $J^{\sat} \subset \ann$
  by Lemma~\ref{lem_ideals_contained_in_annihilators}.
  Thus, by Theorem~\ref{thm_relative_apolarity_lemma} 
   (the relative apolarity lemma), 
   $F(Q)\subset \rellinspan{\nu_d(\ccR)}$
   as claimed.
\end{prf}

\begin{prf}[ of Theorem~\ref{thm_Upsilon_contained_in_cactus}]
Suppose $Q = \Spec A \subset \set{\rk \Upsilon^{i,d-i}=r}$
  is any finite local Gorenstein subscheme.
By Lemma~\ref{lem_Q_in_Upsilon_lifts_to_Hilb} and the definition of the cactus scheme
we have $Q \subset \cactusflat{r}{\nu_d(\PP_{\kk} V)}$.
By Proposition~\ref{prop_scheme_inclusion_equivalence},
 we also must have
  \[
    \set{\rk \Upsilon^{i,d-i}=r}
\subset
\cactusflat{r}{\nu_d(\PP_{\kk} V)}.
  \]

  It remains to check that for any
  closed $\kk$-point $F\in \set{\rk \Upsilon^{i,d-i}=r}$
  we cannot have $F\in \cactusflat{r-1}{\nu_d(\PP_{\kk} V)}$.
Indeed, if $F\in \cactusflat{r-1}{\nu_d(\PP_{\kk} V)}$, then  $F\in     \set{\rk \Upsilon^{i,d-i}\leqslant r-1}
$ by Theorem~\ref{thm_cactus_scheme_contained_in_vanishing_of_catalecticant_minors}, a contradiction.
\end{prf}

Recall that the statement of Theorem~\ref{thm_cactus_scheme_and_minors_for_PV} is about inclusions of schemes:
\[
   \cactusflat{r}{\nu_d(\PP_{\kk} V)} \subset \cactussch{r}{\nu_d(\PP_{\kk} V)} \subset \Upsilon^{i,d-i}_r(\PP_{\kk} V)
\]
for $r-1\leqslant i\leqslant d-1$ and
\[
 \Upsilon^{i,d-i}_r(\PP_{\kk} V) \setminus \Upsilon^{i,d-i}_{r-1}(\PP_{\kk} V)\subset \cactusflat{r}{\nu_d(\PP_{\kk} V)} \setminus \cactusflat{r-1}{\nu_d(\PP_{\kk} V)}
\]
for $r\leqslant i\leqslant d-r$.

\begin{prf}[ of Theorem~\ref{thm_cactus_scheme_and_minors_for_PV}]
  The first inclusion
  $\cactusflat{r}{\nu_d(\PP_{\kk} V)} \subset \cactussch{r}{\nu_d(\PP_{\kk} V)}$  holds by Proposition~\ref{prop_basic_properties_of_cactus_schemes}\ref{item_cactus_flat_in_cacactus_scheme_in_linspan}.
  The second one
  $\cactussch{r}{\nu_d(\PP_{\kk} V)} \subset \Upsilon^{i,d-i}_r(\PP_{\kk} V)$
   for $ r-1 \leqslant i \leqslant d-1$
   is proven in
   Theorem~\ref{thm_cactus_scheme_contained_in_vanishing_of_catalecticant_minors}.
  These two inclusions prove Item~\ref{item_inclusion_of_cactus_in_catalecticant}.

  Note that for $d\ge 2r$ and $ r-1 \leqslant i \leqslant d-r+1$ the supports of
  $\cactusflat{r-1}{\nu_d(\PP_{\kk} V)}$,
  $\cactussch{r-1}{\nu_d(\PP_{\kk} V)}$, and
  $\Upsilon^{i,d-i}_{r-1}(\PP_{\kk} V)$ coincide by
  Proposition~\ref{prop_cactus_scheme_supported_at_cactus_variety} and \cite[Thm~1.5]{nisiabu_jabu_cactus}, or
  \cite[Thm~7.5]{jabu_jelisiejew_finite_schemes_and_secants}.
  Thus, to conclude the proof of \ref{item_equality_of_cactus_in_catalecticant}
 it is enough to observe that the third inclusion of schemes,
  \[
 \Upsilon^{i,d-i}_r(\PP_{\kk} V) \setminus \Upsilon^{i,d-i}_{r-1}(\PP_{\kk} V)\subset \cactusflat{r}{\nu_d(\PP_{\kk} V)} \setminus \cactusflat{r-1}{\nu_d(\PP_{\kk} V)}
\]
  for $r\leqslant i\leqslant d-r$ follows from Theorem~\ref{thm_universal_linear_span_isomorphic_to_cactus}.
\end{prf}

\subsection{Relation to Hilbert scheme}

Consider
$\ccH:=\Hilb_r{\PP_{\kk} V}$, the Hilbert scheme of finite subschemes of $\PP_{\kk}V$ of length $r$.
As in Section~\ref{sec_cactus_scheme} define
 $\ccR^{\univ} \subset \ccH \times \PP_{\kk} V$
 to be the universal flat \familiar{} in $\PP_{\kk} V$,
 so that $\phi^{\univ}:\ccR^{\univ} \to \ccH$
 is flat and finite of degree $r$.
By a slight abuse of notation we denote by $\nu_d (\ccR^{\univ})\subset \ccH \times \PP_{\kk}(S^d V)$ the image of $\ccR^{\univ}$ under $\id_{\ccH} \times \nu_d$.
Thus, for $d\geqslant r-1$, 
the morphism $\nu_d (\ccR^{\univ}) \to \ccH$ is a flat independent \familiar{} in $\nu_d(\PP_{\kk} V)$ over $\ccH$.
Now consider the linear span and relative linear span of
$\nu_d(\ccR^{\univ})$:
\[
\begin{tikzcd}
\ccH \times
    \PP_{\kk}( S^{(d)}V)
 \arrow[rr, twoheadrightarrow]
 \arrow[ddr, twoheadrightarrow]
 & & \PP_{\kk}( S^{(d)}V) \\
 & \linspan{\nu_d(\ccR^{\univ})} \arrow[r, twoheadrightarrow]
 \arrow[d, twoheadrightarrow, "\linspan{\nu_d(\phi^{\univ})}"]
 \arrow[ul, hook']
 &\rellinspan{\nu_d(\ccR^{\univ})}
 \arrow[u, hook']
 \\
 & \ccH  &
\end{tikzcd}
\]
Recall from Lemma~\ref{lem_cactus_flat_is_determined_by_Hilb} that $\rellinspan{\nu_d(\ccR^{\univ})} = \cactusflat{r}{\nu_d(\PP V)}$. We claim in Theorem~\ref{thm_universal_linear_span_isomorphic_to_cactus} that the surjective map
$\linspan{\nu_d(\ccR^{\univ})} \to \cactusflat{r}{\nu_d(\PP V)}$ is in fact an isomorphism of schemes when restricted to  suitable open subschemes.
We proceed to define these open subschemes, or more precisely, their complements, which are closed subsets of $\linspan{\nu_d(\ccR^{\univ})}$ and $\cactusflat{r}{\nu_d(\PP V)}$ respectively.

First, we will construct $Z \subset \linspan{\nu_d(\ccR^{\univ})}$ as the union of all linear spans of strict subschemes of fibres of $\nu_d(\ccR^{\univ}) \to \ccH$.
More formally, let $\tilde{\ccH}\to \ccH$ be the flag Hilbert scheme
 parameterizing pairs $R'\subset R\subset \PP_{\kk} V$ with length of $R$ equal to $r$
 and length of $R'$ equal to $r-1$.
Then let $\tilde{\ccR}^{\univ} \subset \tilde{\ccH} \times_{\ccH} \ccR^{\univ}$ be the universal family of the smaller scheme $R'$.
Thus, the projection map 
$\tilde{\ccR}^{\univ}\to \ccR^{\univ}$ takes the scheme $R'$ over a pair $R'\subset R$ in $\tilde{\ccH}$ onto its image inside $R$. The linear span $\linspan{\nu_d(\tilde{\ccR}^{\univ})}$ of the \familiar[(r-1)]{} $\nu_d(\tilde{\ccR}^{\univ})\to \tilde{\ccH}$ maps into
$\linspan{\nu_d(\ccR^{\univ})}$.
Its image is our desired scheme $Z \subset \linspan{\nu_d(\ccR^{\univ})}$. Note that all the schemes $\tilde{\ccH}$, $\ccR^{\univ}$  and
$\linspan{\nu_d(\tilde{\ccR}^{\univ})}$ are projective, and thus $Z$ is a closed subscheme, whose all points are images of points in $\linspan{\nu_d(\tilde{\ccR}^{\univ})}$, that is they are all in the linear span of $\nu_d(R')$ for a subscheme $R'$ length $r-1$.

Secondly, the other open subscheme is simply
$\cactusflat{r}{\nu_d(\PP V)}\setminus \cactusflat{r-1}{\nu_d(\PP V)}$.
Note that $\cactusflat{r}{\nu_d(\PP V)}\setminus \cactusflat{r-1}{\nu_d(\PP V)}$ is open and dense in $\cactusflat{r}{\nu_d(\PP V)}$ while $\linspan{\nu_d(\tilde{\ccR}^{\univ})}\setminus Z$ is open, but not always dense in $\linspan{\nu_d(\tilde{\ccR}^{\univ})}$.

\begin{theorem}\label{thm_universal_linear_span_isomorphic_to_cactus}
   The following two quasiprojective schemes are isomorphic:
   \[
     \linspan{\nu_d(\ccR^{\univ})} \setminus Z \simeq \cactusflat{r}{\nu_d(\PP V)}\setminus \cactusflat{r-1}{\nu_d(\PP V)}
   \]
\end{theorem}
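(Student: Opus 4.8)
The map $\linspan{\nu_d(\ccR^{\univ})} \to \cactusflat{r}{\nu_d(\PP V)} = \rellinspan{\nu_d(\ccR^{\univ})}$ (coming from the projection $\ccH \times \PP_{\kk}(S^{(d)}V) \to \PP_{\kk}(S^{(d)}V)$) is surjective by construction, so the task is to show that after removing $Z$ on the source and $\cactusflat{r-1}{\nu_d(\PP V)}$ on the target it becomes an isomorphism. The first thing I would check is that the map sends $\linspan{\nu_d(\ccR^{\univ})} \setminus Z$ into $\cactusflat{r}{\nu_d(\PP V)} \setminus \cactusflat{r-1}{\nu_d(\PP V)}$, i.e.\ that a point of $\linspan{\nu_d(\ccR^{\univ})}$ lying over some $\kk$-point $F$ of $\cactusflat{r-1}{\nu_d(\PP V)}$ must lie in $Z$. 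For this one uses Proposition~\ref{prop_cactus_scheme_supported_at_cactus_variety} and the known identification of the support $\reduced{\cactusflat{r-1}{\nu_d(\PP V)}}$ with the cactus variety $\cactus{r-1}{\nu_d(\PP V)}$: a $\kk$-point of the cactus variety of length $r-1$ lies in $\linspan{\nu_d(R')}$ for some $R'$ of length $\le r-1$, and then the apolarity lemma (Theorem~\ref{thm_relative_apolarity_lemma}) together with a comparison of Hilbert functions forces the corresponding fibre of $\nu_d(\ccR^{\univ})$ to contain $R'$, so the point sits in $Z$.

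Next I would construct the inverse morphism. Given a $Q$-point $F\colon Q \to \cactusflat{r}{\nu_d(\PP V)} \setminus \cactusflat{r-1}{\nu_d(\PP V)}$ for $Q$ a scheme with enough $\kk$-points (it suffices to treat $Q = \Spec A$ finite local Gorenstein by Proposition~\ref{prop_scheme_inclusion_equivalence} and Lemma~\ref{lem_cactus_is_determined_by_finite_local_Gor}), I want to produce a canonical lift $\tilde F\colon Q \to \linspan{\nu_d(\ccR^{\univ})}\setminus Z$, equivalently a canonical flat \familiar{} $\ccR \to Q$ in $\PP_{\kk}V$ with $F(Q) \subset \rellinspan{\nu_d(\ccR)}$ and with all fibres of length exactly $r$ (not less). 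The natural candidate is the one built in Lemma~\ref{lem_Q_in_Upsilon_lifts_to_Hilb}: the ideal $J$ generated by $(\ann_{\kk})_{\le i}$ (for any $r\le i\le d-i$; the range $r\le i\le d-r$ makes this nonempty), which by Proposition~\ref{prop_relative_Macaulay_and_Gotzmann} cuts out a flat \familiar. Here one needs the input that $F$ factors through $\set{\rk\Upsilon^{i,d-i} = r}$ near each point — this is exactly where $F\notin \cactusflat{r-1}$ (via Theorem~\ref{thm_cactus_scheme_contained_in_vanishing_of_catalecticant_minors}, rank is $\ge$ off $\cactusflat{r-1}$ and $\le r$ on the cactus scheme) enters, guaranteeing $\ann_i$ is locally free of the right rank (Proposition~\ref{prop_continuity_for_flat_case}, Lemma~\ref{lem_duality_of_apolar_algebra}). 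The resulting classifying map $Q \to \ccH$ composed with the tautological section into $\linspan{\nu_d(\ccR^{\univ})}$ gives $\tilde F$, and one checks $\tilde F(Q) \cap Z = \emptyset$ because every fibre has full length $r$ and $F\notin \cactusflat{r-1}$.

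The remaining work is to verify that these two constructions are mutually inverse, which amounts to a \emph{uniqueness} statement: the flat \familiar{} $\ccR$ with $F(Q) \subset \rellinspan{\nu_d(\ccR)}$, fibrewise of length exactly $r$, is unique. Over a $\kk$-point this is classical (a point of rank $r$ but not $r-1$ has, by Macaulay/Gotzmann and the apolarity lemma, a unique length-$r$ apolar scheme given by $\apolar_{\le i}$ saturated — the ideal of $\ccR_{\kk}$ is forced to be $(\ann[F_{\kk}])^{\sat}$ in degrees $\le i$); over $\Spec A$ one bootstraps this using that the relevant graded pieces are free (Lemma~\ref{lem_criterion_for_free_submodule}) and that both the linear span and the annihilator behave well under the flat base change $\Spec\kk \to Q$ is \emph{not} available, so instead one argues directly that any such $\ccR$ has ideal equal to the saturation of $J = ((\ann_F)_{\kk})_{\le i}$ in the relevant degrees, hence coincides with the one from Lemma~\ref{lem_Q_in_Upsilon_lifts_to_Hilb}. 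The \textbf{main obstacle} is precisely this uniqueness over a non-reduced base: one must show that the datum of $F$ alone (a point of the open stratum of the cactus scheme) pins down the apolar subscheme scheme-theoretically, with no room for deformation — this is what makes $\linspan{\nu_d(\ccR^{\univ})}\setminus Z$ and $\cactusflat{r}{\nu_d(\PP V)}\setminus\cactusflat{r-1}{\nu_d(\PP V)}$ carry literally the same functor of points. Once uniqueness is in hand, mutual inverseness and hence the isomorphism follow formally.
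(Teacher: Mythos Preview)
Your overall strategy---build an explicit inverse via the relative apolarity construction of Lemma~\ref{lem_Q_in_Upsilon_lifts_to_Hilb}, then prove uniqueness of the lifted flat \familiar{}---is genuinely different from the paper's. The paper never constructs an inverse. Instead it verifies that the given projection is (A) well-defined, (B) injective on $\kk$-points, (C) injective on Zariski tangent vectors, and (D) surjective on Zariski tangent vectors; the key technical step (C) is handled by an ad hoc ``derivative ideal'' $\partial I(\ccR)$ for a nonconstant family over $\Spec\kk[t]/(t^2)$. Your uniqueness argument, once made precise, is arguably cleaner than (C): from $I(\ccR)\subset\ann$ and flatness one gets $I(\ccR)_i\subset\ann_i$ an inclusion of free $A$-modules of the same rank that is an equality modulo $\gotm$, hence an equality by Nakayama; then relative Gotzmann (Proposition~\ref{prop_relative_Macaulay_and_Gotzmann}) forces $I(\ccR)_j=((\ann)_{\le i})_j$ in all degrees $j\ge i$, pinning down $\ccR$.

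However, there is a real gap in how you assemble the inverse. You claim it suffices to produce the lift for each finite local Gorenstein $Q$-point, citing Proposition~\ref{prop_scheme_inclusion_equivalence} and Lemma~\ref{lem_cactus_is_determined_by_finite_local_Gor}. Those results let you \emph{test equality of subschemes} on such $Q$; they do not let you \emph{construct a morphism} from an assignment on such $Q$. To get an honest morphism $\cactusflat{r}{\nu_d(\PP V)}\setminus\cactusflat{r-1}{\nu_d(\PP V)}\to\ccH$ you must build a single flat family over the whole open stratum. This is possible: take $F=\id$ on $\set{\rk\Upsilon^{i,d-i}=r}$, use Proposition~\ref{prop_continuity_for_flat_case} to see $\ann_i$ is locally free and base-changes correctly, then run the argument of Lemma~\ref{lem_Q_in_Upsilon_lifts_to_Hilb} and Proposition~\ref{prop_relative_Macaulay_and_Gotzmann} globally (locally on an affine cover, with gluing guaranteed by the base-change property). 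Once the inverse exists as a morphism, your uniqueness argument then does reduce the check that the two composites are identities to finite local Gorenstein test schemes via Proposition~\ref{prop_scheme_inclusion_equivalence} applied to the equalizer. A smaller slip: in your well-definedness step you assert that $R'$ must be contained in the fibre $R$; that is false in general. The correct statement (used in the paper's treatment of (A) and (B), citing \cite{jabu_ginensky_landsberg_Eisenbuds_conjecture}, \cite{jabu_jelisiejew_finite_schemes_and_secants}) is that $F\in\linspan{\nu_d(R\cap R')}$, and then one enlarges $R\cap R'$ to a length-$(r-1)$ subscheme of $R$.
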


\begin{prf}
  Note that by construction of $Z$ and Proposition~\ref{prop_cactus_scheme_supported_at_cactus_variety} the image of the support of  $Z$ is in $\cactusflat{r-1}{\nu_d(\PP V)}$.
  Moreover, $\linspan{\nu_d(\ccR^{\univ})}$
   maps surjectively onto $\cactusflat{r}{\nu_d(\PP V)}$.

  Thus, it remains to prove:
  \renewcommand{\theenumi}{(\Alph{enumi})}
  \begin{enumerate}
   \item \label{item_well_defined}
   If $p\in \linspan{\nu_d(\ccR^{\univ})} \setminus Z $ then its image is not in $\cactusflat{r-1}{\nu_d(\PP V)}$.
   \item \label{item_1_to_1}
   If $p,q\in \linspan{\nu_d(\ccR^{\univ})}$
   are two different $\kk$-points that are mapped to the same $F\in \cactusflat{r}{\nu_d(\PP V)} \subset \PP_{\kk}(S^{(d)}V)$,
   then $p$ and $q$ are both in $Z$.
   \item \label{item_separates_tangent_directions}
   If
   $Q= \Spec \kk[t]/(t^2) \subset\linspan{\nu_d(\ccR^{\univ})}$ and
   $Q$ is contracted to a reduced point $F$ in
   $ \cactusflat{r}{\nu_d(\PP V)} \subset \PP_{\kk}(S^{(d)}V)$ under the projection,
   then the support of $Q$ is in $Z$.
   \item \label{item_surjective_on_tangent_directions}
   If $Q= \Spec\kk[t]/(t^2) \subset \cactusflat{r}{\nu_d(\PP V)}\setminus \cactusflat{r-1}{\nu_d(\PP V)}$,
    then $Q$ lifts to
    $\linspan{\nu_d(\ccR^{\univ})} \setminus Z$.
  \end{enumerate}
By Item \ref{item_well_defined}, the projection     $\linspan{\nu_d(\ccR^{\univ})} \setminus Z \to \cactusflat{r}{\nu_d(\PP V)}\setminus \cactusflat{r-1}{\nu_d(\PP V)}$ is well defined,
by Item~\ref{item_1_to_1} it is $1$ to $1$ on points, and by Item~\ref{item_separates_tangent_directions} it separates tangent directions, thus it is an injective morphism. Finally, Item~\ref{item_surjective_on_tangent_directions} shows that the morphism is surjective on tangent spaces.

Items~\ref{item_well_defined} and \ref{item_1_to_1} are set-theoretic and they have been treated in \cite[Thm~1.6(ii)]{nisiabu_jabu_cactus} and \cite[Cor.~4.17, Lem.~4.18]{jabu_jelisiejew_finite_schemes_and_secants}. Roughly, if there are two presentations of $F\in \cactusflat{r}{\nu_d(\PP V)}$: $F\in \linspan{\nu_d(R_1)} \cap \linspan{\nu_d(R_2)}$ with $\dim R_1=\dim R_2=0$, and $\deg(R_1), \deg(R_2) \le r$ and $d\ge 2r-1$, then $F\in \linspan{\nu_d(R_1 \cap R_2)}$ by \cite[Cor.~2.7]{jabu_ginensky_landsberg_Eisenbuds_conjecture},  \cite[Lem.~4.18]{jabu_jelisiejew_finite_schemes_and_secants}.
Therefore, we have $p, q\in Z$.

Now we prove Item~\ref{item_separates_tangent_directions}
following a similar idea.
Consider the map $\linspan{\nu_d(\phi^{\univ})}|_{Q}\colon Q\to \ccH$. Note that this map cannot contract $Q$ to a reduced point, as in such case $Q$ would be contained in the fibre of
$\linspan{\nu_d(\phi^{\univ})}$, and the fibre (which is a linear subspace of $\PP_{\kk}(S^{(d)}V)$) is embedded into $\PP_{\kk}(S^{(d)}V)$ under the second projection (so it cannot contract $Q$ to a point $F$).
We can use $\linspan{\nu_d(\phi^{\univ})}|_{Q}$ to pullback the universal family of subschemes and we obtain a non-constant flat \familiar{} $\ccR\to Q$ in $\PP_{\kk} V$.
Let $I(\ccR) \subset \kk[t]/(t^2)\otimes_{\kk} \Sym V^*$ be the ideal of $\ccR$.
Using the embedding $\kk\to \kk[t]/(t^2)$ we consider $F$ as an element of $\kk[t]/(t^2) \otimes_{\kk} S^{(d)} V$, so that $\ann \subset \kk[t]/(t^2)\otimes_{\kk} \Sym V^*$.
By the relative apolarity, Theorem~\ref{thm_relative_apolarity_lemma},
we must have $I(\ccR)\subset \ann$.

Analogously to \S\ref{sec_Gorenstein_duality},  for $\Theta \in \kk[t]/(t^2)\otimes_{\kk} \Sym V^*$ write it as  $\Theta= \Psi + t \Psi'$ with $\Psi, \Psi' \in \Sym V^*$ and define $\partial \Theta:=\Psi'$,
and define the ideal $\partial I(\ccR) \subset \Sym V^*$:
\[
  \partial I(\ccR) :=
  \set{\partial\Theta \mid \Theta\in I(\ccR)}.
\]
Let $\partial \ccR = Z(\partial I(\ccR))$ be the subscheme of $\PP_{\kk} V$ defined by the homogeneous ideal $\partial I(\ccR)$, so that
$I(\partial \ccR) = \partial I(\ccR)^{\sat}$.
This scheme plays here the role of the intersection of ``schemes in the family $\ccR \to Q$'', by analogy to the case of two distict points.
 \renewcommand{\theenumi}{($\mathfrak{\roman{enumi}}$)}
At this moment we do not even know if $\partial \ccR \neq \emptyset$, but we will prove it at the end of the argument.
Below we list and prove several properties of $\partial\ccR$ and $\partial \ccI(\ccR)$.
\begin{interruptedenumerate}
\begin{insideenumerate}
\item \label{item_I_R_k_subset_partial_I_R}
First, $I (\ccR)_{\kk} \subset \partial I(\ccR)$ --- by Lemma~\ref{lem_properties_of_partial}\ref{item_fibre_contained_in_partial}.
\item $\partial I(\ccR)\subset \ann$.
\label{item_partial_I_R_subset_Ann}
\end{insideenumerate}
For $\Theta=\Psi + t\Psi'\in I(\ccR) \subset \ann$ we have:
$
   0= \Theta\hook F =(\Psi + t\Psi') \hook F =
   \Psi \hook F + t(\Psi' \hook F),
$
hence both components $\Psi \hook F$ and $\Psi' \hook F$ are zero, proving~\ref{item_partial_I_R_subset_Ann}.
\begin{insideenumerate}
\item \label{item_partial_I_R_saturated}
The ideal $\partial I(\ccR)$ is saturated in degrees $i\geqslant 2r-1$, that is $\partial I(\ccR)_i=(\partial I(\ccR))^{\sat}_i$.
\end{insideenumerate}
Explicitly, $I(\ccR)$ is saturated,
and since $\ccR\to Q$ is flat and finite of degree $r$,
the $\kk$-codimension of
$I(\ccR)_i \subset \kk[t]/(t^2) \otimes_{\kk} S^i V^*$ is constantly equal to $2r$ for all $i\ge r-1$.
Moreover, $I(\ccR)\cap \Sym V^*$ is also saturated in $\Sym V^*$,
and its codimension in each degree is bounded from above by the same number,
that is:
for each $i$ we must have
 $\codim_{\kk} (I(\ccR)_i\cap S^i V^* \subset S^i V^*)\leqslant 2r$.
Thus, (by saturatedness) this latter codimension is
 constant (independent of $i$) at least for all $i\geqslant 2r-1$.
Moreover, by Lemma~\ref{lem_properties_of_partial}\ref{item_dimensions_of_partial_and_projection}  we  have (for $i\ge r-1$):
\begin{align*}
  \codim_{\kk} \left(\partial I(\ccR)_i  \subset S^i V^*\right)
+ \codim_{\kk} \left(I(\ccR) \cap S^i V^*\subset S^i V^*\right) &\stackrel{\text{Lem.~\ref{lem_properties_of_partial}\ref{item_dimensions_of_partial_and_projection}}}{=}\\
  \codim_{\kk} \left(I(\ccR)_i \subset \kk[t]/(t^2) \otimes_{\kk} S^i V^*\right) &= 2r.
\end{align*}
Therefore, for $i\geqslant 2r-1$ the codimension of $\partial I(\ccR)_i$ in $S^i V^*$
is constant and equal to $2r- \codim_{\kk} \left(I(\ccR) \cap S^i V^*\subset S^i V^*\right)$, which is at most $2r$ (in fact, it is at most $r$ by \ref{item_I_R_k_subset_partial_I_R}),
and thus the ideal $\partial I(\ccR)$ is saturated in these degrees proving \ref{item_partial_I_R_saturated}.
\begin{insideenumerate}
\item \label{item_I_R_neq_partial_I_R}
Finally, $\ccR_{\kk} \neq \partial \ccR$.
\end{insideenumerate}
Suppose by contradiction that $\partial \ccR = \ccR_{\kk}$.
Translating this into the language of ideals, 
$(I(\ccR)_{\kk})^{\sat}= I(\partial \ccR)$.
Since $I(\partial \ccR)  = (\partial I(\ccR))^{\sat}$,
thus for sufficiently large degrees $i$
we must have $\left(I (\ccR)_{\kk}\right)_i = \left(\partial I(\ccR)\right)_{i}$.
Then for each $\Theta=\Psi + t\Psi'\in I(\ccR)_{i}$
both $\Psi$ and $\Psi'$ are in $\left(I (\ccR)_{\kk}\right)_i$,
  thus $I(\ccR)_i=(I (\ccR)_{\kk})_i\otimes \kk[t]/(t^2)$.
This equality contradicts the nontriviality of the family (if this was the case, then the map to the Hilbert scheme $\ccH$ would contract $Q$ to a point), which proves \ref{item_I_R_neq_partial_I_R}.
\end{interruptedenumerate}

To conclude the proof of \ref{item_separates_tangent_directions},
since $d\geqslant 2r-1$, we have
\[
\partial I(\ccR)^{\sat}_d \stackrel{\text{by \ref{item_partial_I_R_saturated}}}{=} \partial I(\ccR)_d\stackrel{\text{by \ref{item_partial_I_R_subset_Ann}}}{\subset} \ann_d, \text{ and thus } I(\partial \ccR) = \partial I(\ccR)^{\sat} \stackrel{\text{by Lem.~\ref{lem_ideals_contained_in_annihilators}}}{\subset} \ann.
\]
Since $\ann_0=0$, so also $\partial I(\ccR)^{\sat}_0=0$,
it follows that $F\in \linspan{\nu_d(\partial \ccR)}$ by Theorem~\ref{thm_relative_apolarity_lemma}.
In particular, $\partial \ccR\neq \emptyset$.
We also know that $\partial \ccR \subsetneqq \ccR_{\kk} \subset \PP_{\kk} V$ by \ref{item_I_R_k_subset_partial_I_R} and \ref{item_I_R_neq_partial_I_R}, so that $\partial \ccR$ is a finite subscheme of degree less than $r$.
That is, $F\in Z$ as claimed in \ref{item_separates_tangent_directions}.

Finally, to prove \ref{item_surjective_on_tangent_directions}
suppose
\[
Q= \Spec\kk[t]/(t^2) \subset \cactusflat{r}{\nu_d(\PP V)}\setminus \cactusflat{r-1}{\nu_d(\PP V)}.
 \]
Then, by Theorem~\ref{thm_cactus_scheme_contained_in_vanishing_of_catalecticant_minors}
we also have
$Q\subset \set{\Upsilon^{i,d-i}=r}$.
By Lemma~\ref{lem_Q_in_Upsilon_lifts_to_Hilb}
  there is a flat \familiar{} $\ccR\to \Spec \kk[t]/(t^2) \simeq Q$ in $\PP_{\kk} V$
  such that $Q\subset \rellinspan{\nu_d(\ccR)} \subset \PP_{\kk} (S^{(d)}V)$.
Thus, $Q$ admits two maps: 
$Q\to \ccH$ coming from the universal property of the Hilbert scheme and $Q\to \PP_{\kk}(S^{(d)} V)$ coming from the original embedding.
Consequently, we obtain the map into the product $\ccH\times \PP_{\kk}(S^{(d)} V)$
and by construction the image is contained
in $\linspan{\nu_d(\ccR)} \subset\linspan{\nu_d(\ccR^{\univ})}$.
This produces the desired lifting concluding the proof of \ref{item_surjective_on_tangent_directions} and also of the theorem.
\end{prf}

\begin{prf}[ of Theorem~\ref{thm_universal_linear_span_isomorphic_to_cactus_intro}]
Let $\ccH^{\Gor}:=\HilbGor{r}{\PP_{\kk} V}$ be the Hilbert scheme of Gorenstein subchemes of length $r$.
By construction  \cite[Def.~4.33]{jelisiejew_PhD} the Gorenstein Hilbert scheme is an open (not necessarily dense) subscheme
$\ccH^{\Gor} \subset \ccH$. Consider the map
$\linspan{\nu_d(\ccR^{\univ})} \to \ccH$.
Each $\kk$-point  $[R]\in \ccH\setminus \ccH^{\Gor}$ corresponds to a finite subscheme $R\subset\PP_{\kk} V$ of degree $r$.
By Corollary~\ref{cor_base_change_of_linear_span_of_independent_familiar}
the fibre $\linspan{\nu_d(\ccR^{\univ})}_{[R]}$ is equal to
  $\linspan{\nu_d(R)}$. Considering the restricted map
$\linspan{\nu_d(\ccR^{\univ})} \setminus Z \to \ccH$
we claim that:
\begin{itemize}
 \item
    the map factorises through $\ccH^{\Gor}$,
    that is $\linspan{\nu_d(\ccR^{\univ})}\setminus Z  \to \ccH^{\Gor} \hookrightarrow \ccH$, and
 \item the map $\linspan{\nu_d(\ccR^{\univ})}\setminus Z  \to \ccH^{\Gor}$ is surjective.
\end{itemize}

The first claim (about factorization) is equivalent to equality of the fibres
 $Z_{[R]}=\linspan{\nu_d(\ccR^{\univ})}_{[R]} = \linspan{\nu_d(R)}$ over each $\kk$-point  $[R]\in \ccH\setminus \ccH^{\Gor}$.
The latter span is covered by the spans of the smaller subschemes $\nu_d(R')\subsetneqq \nu_d(R)$ by \cite[Lem.~2.3]{nisiabu_jabu_cactus} or
\cite[Lem.~6.19]{jabu_jelisiejew_finite_schemes_and_secants}.
Hence $\linspan{\nu_d(R)}\subset Z_{[R]}$.
Since
$Z\subset \linspan{\nu_d(\ccR^{\univ})}$,
the other inclusion
$Z_{[R]} \subset \linspan{\nu_d(\ccR^{\univ})}_{[R]}$ is automatic.

The second claim (about surjectivity) is also set theoretic,
hence pick a $\kk$-point
$[R]\in \ccH^{\Gor}$
and consider the fibre
$\linspan{\nu_d(R)}$.
We want to prove that
$Z\cap \left( \set{[R]}\times \linspan{\nu_d(R)}\right)$ is not dense.
By construction of $Z$ this intersection is equal to the union of linear spans of smaller schemes,
\[
  Z\cap \left( \set{[R]}\times \linspan{\nu_d(R)}\right) = \bigcup_{R'\subsetneq R, \ \deg R'=r-1}{\linspan{\nu_d(R')}}.
\]
There are only finitely many subschemes $R'\subsetneq R$ of degree $r-1$
by \cite[Lem.~5.16]{jabu_jelisiejew_finite_schemes_and_secants} and each span  $\linspan{\nu_d(R')}$ is of dimension $r-2$.
Thus, $Z\cap \left( \set{[R]}\times \linspan{\nu_d(R)}\right)$
  cannot cover $\linspan{\nu_d(R)} \simeq \PP^{r-1}$.
 That is $Z\cap \left( \set{[R]}\times \linspan{\nu_d(R)}\right)$ is not dense and there is a $\kk$-point in the complement
 $\linspan{\nu_d(\ccR^{\univ})} \setminus Z$ that is mapped to $[R]$, as claimed.

Let $\ccE$ be the vector bundle over $\ccH^{\Gor}$ such that
\[
\linspan{\nu_d(\ccR^{\univ})}|_{\ccH^{\Gor}} = \PP_{\kk} (\ccE)
\]
and $Z^{\Gor}: = Z\cap \PP_{\kk} (\ccE)$.
By the first item above, we have $\PP_{\kk} (\ccE) \setminus Z^{\Gor} = \linspan{\nu_d(\ccR^{\univ})} \setminus Z$, while the latter is isomorphic to $\cactusflat{r}{\nu_d(\PP V)}\setminus \cactusflat{r-1}{\nu_d(\PP V)}$
by Theorem~\ref{thm_universal_linear_span_isomorphic_to_cactus}.
Moreover, $\PP_{\kk} (\ccE) \setminus Z^{\Gor} \to \ccH^{\Gor}$ is surjective by the second item above. These statements complete the proof of the theorem.
\end{prf}

As a concluding remark we note that in all Theorems~\ref{thm_cactus_scheme_and_minors_for_PV},
\ref{thm_universal_linear_span_isomorphic_to_cactus_intro},
\ref{thm_Upsilon_contained_in_cactus}, and
\ref{thm_universal_linear_span_isomorphic_to_cactus} we assume $d\geqslant 2r$ (and $r\leqslant i \leqslant d-r$, where $i$ is relevant).
In the set theoretic variant of the same results, \cite[\S8.2]{nisiabu_jabu_cactus}, it is hinted that similar conclusions should also be true for $d=2r-1$ (and $i=r-1$ or $i=r$).
In fact, going carefully through the proofs above, the reader can check that the only problem with $d=2r-1$ is Lemma~\ref{lem_Q_in_Upsilon_lifts_to_Hilb}.
It should be possible to generalise this lemma to the case $d=2r-1$ following the same idea as in \cite[Prop.~8.2]{nisiabu_jabu_cactus}, but it requires much more care and considering many subcases.
We leave this as a project for future research.

\addcontentsline{toc}{section}{References}

\bibliographystyle{alpha}
\bibliography{cactus_scheme.bbl}

\end{document}